\def\@thmcountersep{.}
\newcommand{\alg}{\mathrm{alg}}
\newcommand{\ps}{\mathrm{ps}}
\newcommand{\ver}{\mathrm{ver}}
\newcommand{\nn}{\mathfrak n}
\newcommand{\fa}{\mathfrak{a}}
\newcommand{\p}{\mathfrak{p}}
\newcommand{\m}{\mathfrak{m}}
\newcommand{\n}{\mathfrak{n}}
\newcommand{\md}{\mathrm{m}}
\newcommand{\cO}{\mathcal{O}}
\newcommand{\F}{\mathbb F}
\newcommand{\Q}{\mathbb Q}
\newcommand{\Z}{\mathbb Z}
\newcommand{\A}{\mathbb A}
\newcommand{\w}{\mathbf{w}}
\newcommand{\cont}{\mathrm{cont}}
\newcommand{\OK}{\cO\llbracket K\rrbracket}
\newcommand{\br}[1]{\llbracket #1\rrbracket}
\newcommand{\CO}{\mathfrak{C}(\cO)}
\newcommand{\VV}{\mathbf{V}} 
\newcommand{\qq}{\mathfrak q}
\newcommand{\inv}{\mathrm{inv}}
\newcommand{\rsoc}{\mathrm{soc}}
\newcommand{\JH}{\mathrm{JH}}
\newcommand{\Ker}{\mathrm{Ker}}
\newcommand{\Fil}{\mathrm{Fil}}
\newcommand{\Sp}{\mathrm{Sp}}
\DeclareMathOperator{\Spec}{Spec}
\DeclareMathOperator{\mSpec}{m-Spec}
\newcommand{\pp}{\mathfrak p}
\newcommand{\ra}{\rightarrow}
\newcommand{\End}{\mathrm{End}}
\newcommand{\Hom}{\mathrm{Hom}}
\newcommand{\Ext}{\mathrm{Ext}}
\newcommand{\GL}{\mathrm{GL}}
\newcommand{\SL}{\mathrm{SL}}
\newcommand{\Gal}{\mathrm{Gal}}
\newcommand{\id}{\mathrm{id}}
\newcommand{\bFp}{\overline{\F}_p}
\newcommand{\bQp}{\overline{\Q}_p}
\newcommand{\Sym}{\mathrm{Sym}}
\DeclareMathOperator{\Ind}{Ind}
\DeclareMathOperator{\cInd}{c-Ind}
\DeclareMathOperator{\Mod}{Mod}
\newcommand{\rhobar}{\bar{\rho}}
\newcommand{\cyc}{\mathrm{cyc}}
\newcommand{\Eins}{\mathbf 1}
\newcommand{\wt}{\mathbf{w}}
\newcommand{\Zp}{\mathbb{Z}_p}
\newcommand{\dualcat}{\mathfrak C}
\newcommand{\cri}{\mathrm{cr}}
\newcommand{\ide}{\mathbf{1}}
\newcommand{\mm}{\mathfrak m}
\newcommand{\plim}{\varprojlim}
\newcommand{\sm}{\mathrm{sm}}
\DeclareMathOperator{\tr}{\mathrm{tr}}
\newcommand{\xto}[1][]{\xrightarrow{#1}}
\newcommand{\simto}{
\xto[\sim]} 
\newcommand{\matr}[4]{\begin{pmatrix}{#1}&{#2}\\ {#3}&{#4}\end{pmatrix}}
\newcommand{\smatr}[4]{\bigl(\begin{smallmatrix} {#1}& {#2}\\ {#3}&{#4}\end{smallmatrix}\bigl)}
\newcommand{\Qp}{\mathbb{Q}_p}
\newcommand{\univ}{\mathrm{univ}}
\newcommand{\st}{\mathrm{st}}
\newcommand{\OO}{\mathcal O}
\newcommand{\wtimes}{\widehat{\otimes}}
\DeclareMathOperator{\ad}{ad}
\DeclareMathOperator{\Frob}{Frob}
\newcommand{\cV}{\check{\mathbf V}}
\newcommand{\pro}{\mathrm{pro}}
\newcommand{\AfF}{\mathbb{A}^{\infty}_F}
\newcommand{\tf}{\mathrm{tf}}
\newcommand{\twomat}[4]{\begin{pmatrix} #1 & #2 \\ #3 & #4 \end{pmatrix}}
\renewcommand{\bar}{\overline}
\newcommand{\GG}{\mathbb{G}}
\newcommand{\QQ}{\mathbb{Q}}
\newcommand{\Ac}{\mathcal{A}}
\newcommand{\Cc}{\mathcal{C}}
\newcommand{\Oc}{\mathcal{O}}
\newcommand{\Sc}{\mathcal{S}}
\newcommand{\Xc}{\mathcal{X}}
\newcommand{\mf}{\mathfrak{m}}
\newcommand{\pf}{\mathfrak{p}}
\newcommand{\qf}{\mathfrak{q}}
\newcommand{\rarrow}{\rightarrow}
\newcommand{\onto}{\twoheadrightarrow}
\let\Thet=\Theta
\def\Theta{\mathrm{\Thet}}
\let\Pii=\Pi
\def\Pi{\mathrm{\Pii}}
\let\Sigm=\Sigma
\def\Sigma{\mathrm{\Sigm}}
\let\Xii=\Xi
\def\Xi{\mathrm{\Xii}}
\let\Upsilo=\Upsilon
\def\Upsilon{\mathrm{\Upsilo}}
\let\Phii=\Phi
\def\Phi{\mathrm{\Phii}}
\let\Omeg=\Omega
\def\Omega{\mathrm{\Omeg}}
\newtheorem{thm}{Theorem}[section]
\newtheorem{lem}[thm]{Lemma}
\newtheorem{cor}[thm]{Corollary}
\newtheorem{prop}[thm]{Proposition}
\newtheorem{defn}[thm]{Definition}
\newtheorem{rem}[thm]{Remark}
\newtheorem{ques}[thm]{Question}
\begin{document}

\title[Crystabelline deformation rings]{On  crystabelline deformation rings of $\Gal(\bQp/\Q_p)$ (with an appendix by Jack Shotton) 
}


\author{Yongquan Hu \and Vytautas Pa\v{s}k\={u}nas
}

\keywords{Galois deformation rings \and Hecke algebras \and Cohen-Macaulay rings}
  \subjclass[2010]{11F80 \and 11F85}

\maketitle
\begin{abstract}
We prove that certain crystabelline deformation rings of two dimensional residual representations of $\Gal(\bQp/\Q_p)$ are Cohen--Macaulay. As a consequence, this allows to improve Kisin's $R[1/p]=\mathbb{T}[1/p]$ theorem to an $R=\mathbb{T}$ theorem.
\end{abstract}

\section{Introduction}

Let $p$ be a prime, $\Q_p$ the field of $p$-adic numbers, $L$ a finite extension of $\Q_p$ with the ring of integers $\cO$, uniformizer $\varpi$ and residue field $k$. Let $\rho:G_{\Q_p}=\Gal(\bQp/\Q_p)\ra \GL_2(k)$ be a continuous representation such that $\End_{G_{\Q_p}}(\rho)\cong k$. After Mazur \cite{maz}, there exists a universal deformation ring $R_{\rho}^{\rm un}$, together with a universal deformation $\rho^{\rm un}:G_{\Q_p}\ra \GL_2(R_{\rho}^{\rm un})$, which parametrizes all deformations of ${\rho}$ to local artinian $\cO$-algebras whose residue field is isomorphic to $k$. If $\n\in\mSpec R^{\rm un}_{\rho}[1/p]$, where $\mSpec$ denotes the set of maximal ideals, then $\kappa(\n):=R^{\rm un}_{\rho}[1/p]/\n$ is a finite extension of $L$ and we get a genuine $p$-adic representation $\rho_{\n}^{\rm un}:G_{\Q_p}\ra \GL_2(\kappa(\n))$ via specialization.
Fix a $p$-adic Hodge type $(\w,\tau,\psi)$, where $\w=(a,b)$ is a pair of integers with $b>a$, $\tau:I_{\Q_p}\ra \GL_2(L)$ is a smooth representation of the inertia subgroup, and $\psi:G_{\Q_p}\ra \cO^{\times}$ is a continuous character such that $\psi \epsilon\equiv \det \rho\ (\mathrm{mod}\  \varpi)$, where $\epsilon$ is the cyclotomic character.  We say $\rho_{\n}^{\rm un}$ is of type $(\w,\tau,\psi)$ if it is potentially semi-stable of Hodge-Tate weights $\w$, its determinant is equal to $\psi\epsilon$ and $\mathrm{WD}(\rho_{\n}^{\rm un})|_{I_{\Q_p}}\cong \tau$, where   $\mathrm{WD}(\rho_{\n}^{\rm un})$ denotes the Weil-Deligne representation associated to $\rho_{\n}^{\rm un}$ by Fontaine (see \cite{fo}). 

We will denote by $R^{\psi}_{\rho}$ the quotient of $R_{\rho}^{\rm un}$, which parameterizes deformations with determinant $\psi\epsilon$. By a general theorem of Kisin \cite{ki08}, there is a unique
reduced $\cO$-flat quotient of $R^{\psi}_{\rho}$, denoted by $R^{\psi}_{\rho}(\w,\tau)$ (resp. $R^{\psi,\rm cr}_{\rho}(\w,\tau)$), which parametrizes all deformations of $\rho$ which are potentially semi-stable (resp. potentially crystalline)   of type $(\w,\tau,\psi)$. By this, we mean that a  point $\n\in\mSpec R^{\rm un}_{\rho}[1/p]$ lies in $\mSpec R^{\psi}_{\rho}(\w,\tau)[1/p]$ (resp. $\mSpec R^{\psi,\rm cr}_{\rho}(\w,\tau)[1/p]$) if and only if   $\rho_{\n}^{\rm un}$ is  potentially semi-stable (resp. potentially crystalline) of type $(\w,\tau,\psi)$. These deformation rings have played a crucial role in proving modularity lifting theorems (see for example \cite{ki09}, \cite{kw1}, \cite{kw2}).

In a recent paper \cite{pa12}, the second author has given another construction of the rings $R^{\psi}_{\rho}(\w,\tau)$ and $R^{\psi,\rm cr}_{\rho}(\w,\tau)$, based on the $p$-adic local Langlands correspondence for $\GL_2(\Q_p)$ and his previous work \cite{pa10}.
In this paper, using results and techniques developed in \emph{loc. cit.},  we prove the  following theorem about the structure of $R^{\psi,\rm cr}_{\rho}(\w,\tau)$.
\begin{thm}\label{theorem-main-intro}
Assume that $\End_{G_{\Qp}}(\rho)=k$ and if $p=3$ assume further that $\rho\not\sim \bigl ( \begin{smallmatrix}\chi \omega & \ast \\ 0 & \chi \end{smallmatrix}\bigr )$ 
for any character $\chi: G_{\Qp}\rightarrow k^{\times}$, where $\omega$ is the mod $p$ cyclotomic character.
 Then, for any $p$-adic Hodge type $(\w,\theta_1 \oplus \theta_2,\psi)$,  where $\theta_1, \theta_2: I_{\Qp}\rightarrow L^{\times}$
are \textbf{distinct} 
characters with open kernel, which extend to $W_{\Qp}$,  the potentially crystalline deformation ring $R^{\psi,\rm cr}_{\rho}(\w, \theta_1\oplus \theta_2)$ is Cohen-Macaulay. 
\end{thm}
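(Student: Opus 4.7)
The plan is to use the $p$-adic local Langlands correspondence for $\GL_2(\Qp)$ and the construction of $R := R^{\psi,\cri}_{\rho}(\w,\theta_1\oplus\theta_2)$ developed in \cite{pa12}. Via the inertial local Langlands in the crystalline case, the pair $(\w,\theta_1\oplus\theta_2)$ corresponds to a locally algebraic representation $\sigma$ of $K=\GL_2(\Zp)$; since $\theta_1\neq\theta_2$, $\sigma$ is, up to an algebraic twist $W$ determined by $\w$, a principal series type $\Ind_{B(\Zp)}^{K}(\theta_1\otimes\theta_2)$. Fix a $K$-stable $\cO$-lattice $\sigma^\circ\subset\sigma$. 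The machinery of \cite{pa10,pa12} produces a finitely generated $R$-module $M(\sigma^\circ)$, cut out of the universal deformation of $\rho$ by a Colmez-style functor, such that $R$ acts on $M(\sigma^\circ)$ essentially faithfully (faithfully after inverting $p$).

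The strategy then has two stages: (i) prove that $M(\sigma^\circ)$ is a maximal Cohen--Macaulay $R$-module, and (ii) transfer the Cohen--Macaulay property from $M(\sigma^\circ)$ to $R$. The decisive advantage of the distinct-characters case is that the reduction $\bar\sigma^\circ:=\sigma^\circ/\varpi\sigma^\circ$ has a transparent $\GL_2(\F_p)$-Jordan--H\"older structure, with at most two Serre-weight constituents $\kappa_1,\kappa_2$, appearing with multiplicities governed by $W$. I would pick a $K$-stable filtration of $\sigma^\circ$ whose graded pieces modulo $\varpi$ are the $\kappa_i$, and pull it back to a filtration of $M(\sigma^\circ)$ whose graded pieces are controlled by the modules $M(\kappa_i^\vee)$; the latter are described in \cite{pa10,pa12} as cyclic modules over explicit quotients of the universal deformation ring. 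Combining their depths via the standard depth lemma applied to the short exact sequences from the filtration should yield $\mathrm{depth}_R M(\sigma^\circ)\geq \dim R$, giving (i). The hypotheses on $\rho$, and the $p=3$ exclusion in particular, enter here to guarantee that the companion deformation rings for the $\kappa_i$ behave as expected.

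For (ii), since $R$ is $\cO$-flat with $R[1/p]$ known to be regular of dimension $\dim R - 1$ by Kisin's work, it suffices to establish the Cohen--Macaulay property on the special fibre. Combined with the essential faithfulness of the $R$-action, the MCM property of $M(\sigma^\circ)$ ought to propagate to $R$ --- for instance by exhibiting a regular subring $A\subset R$ over which $R$ is finite and $M(\sigma^\circ)$ is $A$-free (so that Auslander--Buchsbaum forces $R$ to be Cohen--Macaulay over $A$), or by a direct Hilbert--Samuel multiplicity comparison in the spirit of the Breuil--M\'ezard conjecture. I expect step (i) to be the main obstacle: translating the Jordan--H\"older-level decomposition of $\bar\sigma^\circ$ into honest module-level depth estimates requires careful bookkeeping of the algebraic multiplicities from $W$, handling degenerate configurations where $\bar\sigma^\circ$ has only one constituent, and controlling $M(\kappa_i^\vee)$ for non-generic $\rho$ --- which is precisely what the hypothesis on $\rho$ and the $p=3$ restriction are designed to enable.
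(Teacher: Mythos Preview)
Your step (i) is not where the work lies: that $M(\sigma^\circ)$ is a maximal Cohen--Macaulay $R$-module is already a theorem of \cite{pa12} (stated here as Theorem~\ref{thm-pa12-A}), valid for \emph{any} inertial type, not just principal series. The filtration argument you sketch is essentially how it is proved there. So nothing new is needed, but nothing is gained either.

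The real gap is in step (ii). A ring can admit a faithful maximal Cohen--Macaulay module without being Cohen--Macaulay itself; indeed, Sander \cite{sa} produces crystalline deformation rings that are not Cohen--Macaulay, yet $M(\Theta)$ is still MCM over them. Your proposed mechanisms do not close this gap. If $A\subset R$ is regular with $R$ finite over $A$ and $M(\sigma^\circ)$ is $A$-free, Auslander--Buchsbaum tells you only that $M(\sigma^\circ)$ has projective dimension zero over $A$; it says nothing about $R$ as an $A$-module. The Hilbert--Samuel comparison is likewise circular: one knows $e(R/\varpi)=e(M(\Theta)/\varpi)$ from the Breuil--M\'ezard conjecture, but extracting Cohen--Macaulayness of $R$ from this equality is exactly the problem at hand. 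Also, your claim that $\bar\sigma^\circ$ has ``at most two Serre-weight constituents'' is incorrect once the algebraic part $W=\Sym^{b-a-1}L^2\otimes\det^a$ is non-trivial: the number of Jordan--H\"older factors, and hence the Hilbert--Samuel multiplicity $e$, can be arbitrarily large.

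What the paper actually does is construct a specific element $x\in R^\psi_\rho$ such that $(x,\varpi)$ is a system of parameters for $R$, and then show directly that $(x,\varpi)$ is $R$-regular. This is reduced (Lemmas~\ref{lem-equivalent} and~\ref{lem-criterion}) to a representation-theoretic injectivity statement about the Banach space $\Pi=\Hom^{\cont}_\cO(N/\mathfrak a_1 N,L)$, where $\mathfrak a_1$ is the annihilator of $M(\Theta)/xM(\Theta)$. The assumption $\theta_1\neq\theta_2$ enters decisively: it forces every irreducible subquotient of $\Pi$ to be a parabolic induction, so by Berger--Breuil \cite{bb} the locally algebraic vectors $\Pi^{\alg}$ admit finitely generated lattices with finite-length reduction. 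One then uses Vign\'eras's technique \cite{vi} of $G$-equivariant coefficient systems on the tree (Theorem~\ref{thm-diagram}) to compute the unit ball of $\Pi^{\alg}$ modulo $\varpi$, and a type-theoretic fact (Lemma~\ref{K_c-invariants}, again requiring $\theta_1\neq\theta_2$) to verify the criterion. None of this is visible from the module-theoretic viewpoint you propose.
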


\begin{rem} The assumption $\theta_1\neq \theta_2$ implies that the potentially semi-stable deformation ring $R^{\psi}_{\rho}(\w,\tau)$ and the 
potentially crystalline deformation ring $R^{\psi,\rm cr}_{\rho}(\w, \tau)$ coincide. 
\end{rem} 

We will refer to the inertial types $\tau$ which are direct sum of two characters as above as principal series types. If $\rho_{\n}^{\rm un}$ is 
potentially semi-stable of type  $(\w,\tau,\psi)$ then $\tau$ is a principal series type if and only if $\rho_{\n}^{\rm un}$ becomes a crystalline representation when restricted to an abelian extension of $\Q_p$.  These representations are named \emph{crystabelline} in \cite{bb},  thus  explains the title of the paper.

Let us explain how the assumption on $\tau$ gets used in the proof of Theorem \ref{theorem-main-intro}.   An inertial type $\tau$ singles out a Bernstein component $\Omega$ in the category of smooth $\overline{L}$-representations of $\GL_2(\Qp)$. If $\tau=\theta_1 \oplus \theta_2$ 
with $\theta_1\neq \theta_2$ then all the irreducible representations in $\Omega$ are principal series. In the course of the proof we use 
a result of Berger--Breuil \cite{bb} that if $\pi$ is a smooth irreducible principal series representation and $\pi \otimes \Sym^{b-a-1} L^2\otimes \det^a$ 
admits a $G$-invariant norm then the completion is an admissible irreducible Banach space representation of $G$ and one can control 
the $G$-representation obtained by reducing the unit ball modulo $\varpi$. This fails for all the other inertial types since $\pi\otimes \Sym^{b-a-1} L^2 \otimes
\det^a$ with a unitary central character will admit infinitely many non-isomorphic completions and some of those will be non-admissible when $\pi$
is supercuspidal or a twist of Steinberg representation of $\GL_2(\Qp)$.

We also prove a version of the theorem for framed deformation rings, when $\rho$ is split and generic.  

\begin{thm}\label{gen_split_intro} Assume that $\rho=\bigl ( \begin{smallmatrix} \delta_1 & 0 \\ 0 & \delta_2\end{smallmatrix} \bigr)$ with $\delta_1\delta_2^{-1}\neq \omega^{\pm 1}, \Eins$. Let $\tau=\theta_1\oplus \theta_2$ be a principal series type with $\theta_1\neq \theta_2$ and let $\wt=(a, b)$ with $a<b$. If  $\theta_1$ is not congruent modulo $\varpi$ to any of the four characters 
$\delta_1\omega^{-a}$, $\delta_1\omega^{-b}$, $\delta_2\omega^{-a}$, $\delta_2\omega^{-b}$ and $R^{\square, \psi}_{\rho}(\wt, \tau)\neq 0$ then 
it is Cohen--Macaulay.
\end{thm}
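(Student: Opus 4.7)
The plan is to reduce to Theorem~\ref{theorem-main-intro} by introducing the two non-split residual representations having $\rho$ as semisimplification. Since $\delta_1\delta_2^{-1}\notin\{\Eins,\omega,\omega^{-1}\}$, local Tate duality together with the local Euler characteristic formula gives $\Ext^1_{G_{\Qp}}(\delta_i,\delta_j)\neq 0$ for $i\neq j$, so we can form
\[
\rho_1 := \begin{pmatrix}\delta_1 & \ast \\ 0 & \delta_2 \end{pmatrix},\qquad \rho_2 := \begin{pmatrix}\delta_2 & \ast \\ 0 & \delta_1 \end{pmatrix}
\]
with $\ast$ a nontrivial class. Each $\rho_i$ satisfies $\End_{G_{\Qp}}(\rho_i)=k$ (the unique sub is the one with diagonal character, since $\delta_1\neq\delta_2$), and the $p=3$ exclusion of Theorem~\ref{theorem-main-intro} would require $\delta_1\delta_2^{-1}=\omega^{\pm 1}$, which is ruled out by hypothesis. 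Therefore Theorem~\ref{theorem-main-intro} yields that $R^{\psi,\cri}_{\rho_i}(\wt,\tau)$ is Cohen--Macaulay, and the same holds for its framed counterpart $R^{\square,\psi,\cri}_{\rho_i}(\wt,\tau)$, since it differs from the unframed ring only by a formal power series algebra over $\cO$.

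Next, I would relate $R^{\square,\psi}_\rho(\wt,\tau)$ to the rings attached to the $\rho_i$ using the $p$-adic Langlands machinery of \cite{pa12}. Even though $\rho$ itself has no universal deformation ring ($\End_{G_{\Qp}}(\rho)\neq k$), the framed functor is representable, and its $(\wt,\tau)$-quotient is expected to be identified with an endomorphism ring of a module constructed from the injective envelope of the smooth representation attached to $\rho^{\rm ss}$ in the appropriate block of $\Mod^{\sm}_{G}(\cO)$. For split generic $\rho$, this block contains the two principal series associated via the mod-$p$ local Langlands correspondence to $\rho_1$ and $\rho_2$, and the corresponding injective envelope decomposes as $J_1\oplus J_2$. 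Twisting by $\tau\otimes \Sym^{b-a-1}L^2\otimes\det^a$ and applying the $M$-construction of \emph{loc.\ cit.}, the resulting deformation-theoretic module splits accordingly, producing a comparison between $R^{\square,\psi}_\rho(\wt,\tau)$ and the framed non-split rings. The congruence hypothesis on $\theta_1$ enters here: it ensures that these two pieces are genuinely distinct, that no residual character coincidence forces an extra merging inside a single block, and that every lift of type $(\wt,\tau,\psi)$ aligns cleanly with either $\rho_1$ or $\rho_2$ via a suitable choice of stable lattice.

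The principal obstacle is to convert this module-level comparison into a ring-theoretic statement that preserves Cohen--Macaulayness. In general $R^{\square,\psi}_\rho(\wt,\tau)$ will \emph{not} be a simple product $R^{\square,\psi,\cri}_{\rho_1}(\wt,\tau)\times R^{\square,\psi,\cri}_{\rho_2}(\wt,\tau)$, but rather an endomorphism algebra of $J_1\oplus J_2$---realized naturally as a fibre product of the two non-split framed rings along a common quotient controlled by the residual behaviour. I expect this last step to rest on Shotton's appendix, which should provide an explicit algebraic description of $R^{\square,\psi}_\rho(\wt,\tau)$ (or of an auxiliary generic framed deformation ring admitting an analogous presentation). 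From such a presentation, Cohen--Macaulayness follows by standard commutative algebra: a fibre product $A\times_C B$ of Cohen--Macaulay rings along a surjection onto a Cohen--Macaulay quotient $C$ of codimension one is again Cohen--Macaulay, and a careful dimension count---using that the framing adds the same fixed number of formal variables on each side---ensures the hypotheses of such a lemma are met.
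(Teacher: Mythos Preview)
Your opening move is right: you form the non-split extensions $\rho_1,\rho_2$, check that Theorem~\ref{theorem-main-intro} applies to them, and aim to transfer Cohen--Macaulayness to the framed ring for the split $\rho$. But from there the proposal diverges from what actually works, in two essential respects.

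First, the role of the congruence hypothesis on $\theta_1$ is not what you describe. It is used precisely to guarantee that the \emph{reducible locus} in $\Spec R^{\square,\psi}_\rho(\wt,\tau)[1/p]$ is empty (Lemma~\ref{reducible_locus}): any reducible lift of type $(\wt,\tau,\psi)$ would force $\theta_1$ to be congruent to one of the four excluded characters. This emptiness is exactly what makes the structural comparison below go through; without it, the argument genuinely breaks down (this is the difficulty alluded to in the paragraph preceding the theorem).

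Second, the structural input is not a fibre product and does not come from Shotton's appendix. Shotton's appendix concerns $\ell\neq p$ deformation rings and is irrelevant here. The needed result is from \cite{pa15}: when the reducible locus is empty, one has $R^{\psi}_{\rho_1}(\wt,\tau)\cong R^{\psi}_{\rho_2}(\wt,\tau)$ and
\[
R^{\square,\psi}_\rho(\wt,\tau)\;\cong\; R^{\psi}_{\rho_1}(\wt,\tau)[[x,y,z,w]]/(xy-c)
\]
for some $c$ in the maximal ideal (Proposition~\ref{gen_split_cm}). Cohen--Macaulayness then transfers by the elementary observation that $A[[x,y]]/(xy-c)$ is free of rank~$2$ over $A[[x-y]]$ (Lemma~\ref{split3}), together with adjoining the two further power-series variables. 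Your fibre-product heuristic and the block decomposition $J_1\oplus J_2$ are not how the comparison is made; the actual mechanism is the pseudo-deformation/versal-ring analysis of \cite{pa15}, and the ring you want is a single deformation of one non-split ring, not a gluing of two.
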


The assumption on the congruence class of $\theta_1$ modulo $\varpi$ is equivalent to the assumption that the reducible locus is empty. In general, we have trouble controlling the intersection of the reducible and irreducible loci. 
 
\begin{rem}
When $\rho$ is an \emph{unramified} extension of $\delta$ by itself, including the case when $\rho\cong \delta\oplus \delta$,  Fabian Sander has found examples of crystalline deformation rings which are not Cohen-Macaulay, see \cite{sa}. However, the weight in those examples was small 
and so the irreducible locus was empty. So it is not clear to us how far one can relax the assumptions  in Theorem \ref{gen_split_intro}.
\end{rem}

The assumption $\theta_1\neq \theta_2$ excludes the crystalline case, which corresponds to both $\theta_1$ and $\theta_2$ being trivial. In \S \ref{section-further} we pursue a different idea to try and prove that these rings are Cohen--Macaulay: 
in \cite{pa12} a finitely generated $R_{\rho}^{\rm un}$-module $M(\Theta)$ is constructed, where $\Theta$ is a $K$-invariant  $\OO$-lattice 
in a finite dimensional irreducible representation $\sigma(\w,\tau)$ (resp. $\sigma^{\mathrm{cr}}(\w,\tau)$), which in the crystalline case is equal to $\Sym^{b-a-1} L^2\otimes \det^a$. Here $K=\GL_2(\Zp)$.
It is then shown that $M(\Theta)$ is Cohen--Macaulay and the action of $R_{\rho}^{\rm un}$ on $M(\Theta)$ factors through the 
faithful action of $R^{\psi}_{\rho}(\w,\tau)$ (resp. $R^{\psi, \mathrm{cr}}_{\rho}(\w,\tau)$). If we can choose $\Theta$ so that 
$\Hom_K(\Theta/\varpi, \kappa(\rho))$, where $\kappa(\rho)$ is the $\GL_2(\Qp)$-representation corresponding to $\rho$ under
the mod $p$ local Langlands correspondence, is one dimensional then the module $M(\Theta)$ is generated by one element, and hence 
it is free over $R^{\psi}_{\rho}(\w,\tau)$ (resp. $R^{\psi, \mathrm{cr}}_{\rho}(\w,\tau)$) of rank $1$.  In this case, the Cohen--Macaulayness of the module 
implies the same property for the ring. In general, it is  very hard to construct such lattices, since both $\Theta/\varpi$ and the restriction 
of $\kappa(\rho)$ to $K$ are not semi-simple. We manage to do so in the crystalline case, when 
the Hodge--Tate weights satisfy $1\le b-a\le 2p$ under some genericity assumption on $\rho$. We show that in this case the crystalline deformation ring is always of the form  $\OO[[x,y]]/( y^e+ a_{e-1} y^{e-1}+\cdots+ a_0)$, where $a_i$ lie in the maximal ideal of $\OO[[x]]$ and 
$e$ is the Hilbert--Samuel multiplicity of the special fibre. In particular, the rings are complete intersection. The restriction on the weights implies that $e$ is at most $3$. 

For an arbitrary inertial type $\tau$ and Hodge--Tate weight $\w$ we devise a representation theoretic criterion, when the rings 
$R^{\psi}_{\rho}(\w,\tau)$ (resp. $R^{\psi, \mathrm{cr}}_{\rho}(\w,\tau)$) are Gorenstein if we assume them to be Cohen--Macaulay, see
Proposition \ref{prop-criterion1-Gorenstein}. The criterion is "if and only if", but unfortunately it seems very hard to check in practice, when 
the Hilbert--Samuel multiplicity is large. If $R^{\psi}_{\rho}$ is formally smooth, which happens in most of the cases, we show that   
$R^{\psi}_{\rho}(\w,\tau)$ (resp. $R^{\psi, \mathrm{cr}}_{\rho}(\w,\tau)$) are Gorenstein if and only if they are complete intersection, see 
Proposition \ref{gor_ci}. The proof is pure commutative algebra  and relies on a happy coincidence that $\dim R^{\psi}_{\rho}- \dim R^{\psi}_{\rho}(\w,\tau)=2$.

\subsection{Global application} Our result combined with the results of Jack Shotton proved in \cite{shotton}, when $p>2$, and  in 
\S\ref{appendix_jack}, when $p=2$, imply that certain global potentially semi-stable deformation rings are $\OO$-torsion free. This was one of our motivations to prove the Cohen--Macaulayness of local deformation rings. We will describe the result in more detail.  

Let $F$ be a totally real field in which $p$ splits completely, let $\Sigma$ and $S$ be   finite sets  of places of $F$ containing all the places above $p$ and all the archimedean places. Assume that $\Sigma \subset S$ and if $p=2$ then $\Sigma \neq S$. We fix an algebraic closure $\overline{F}$ of $F$. Let $G_{F,S}$ be 
the Galois group of the maximal extension of $F$ in $\overline{F}$ which is unramified outside $S$. 
Let $\rhobar: G_{F, S}\rightarrow \GL_2(k)$ be a continuous irreducible representation, which we assume  to be modular. We will denote by $\rhobar_v$ the restriction of $\rhobar$ to a decomposition subgroup at $v$.

If $p=2$ then we 
assume that the image of $\rhobar$ is non-solvable and $\rhobar_v$ does not have scalar semi-simplification at any $v\mid p$. If $p>2$ then we assume that the restriction of $\rhobar$ to $G_{F(\zeta_p)}$ is irreducible,  where $\zeta_p$ is a primitive $p$-th root of unity. If $p=5$ then we further assume that the projective image of $\rhobar|_{G_{F(\zeta_p)}}$ is not isomorphic to $A_5$ and if $p=3$ then we further assume $\rhobar_v\not\sim \bigl ( \begin{smallmatrix}\chi \omega & \ast \\ 0 & \chi \end{smallmatrix}\bigr )$ for any character $\chi: G_{F_v}\rightarrow k^{\times}$ and for any $v\mid p$.

Let $\psi: G_{F, S}\rightarrow \OO^{\times}$ be a totally even character, such that $\det \rhobar \equiv \psi\epsilon \pmod{\varpi}$. Let 
$R^{\psi}_{F, S}$ be the universal deformation ring of $\rhobar$ parameterizing deformations with determinant $\psi\epsilon$. We will construct 
a quotient of $R^{\psi}_{F, S}$ by imposing local deformation conditions at places in $\Sigma$. Let $R^{\square, \psi}_v$ be the universal 
framed deformation ring of $\rhobar_v$ with determinant $\psi\epsilon$. 

If $v$ is infinite then we let $\bar{R}^{\square, \psi}_v=R^{\square, \psi}_v$. It can be checked that this ring is complete intersection.  For each finite $v\in \Sigma$ we fix a semisimple representation $\tau_v: I_{F_v} \rightarrow \GL_2(L)$, where $I_{F_v}$ is the 
inertia subgroup of $G_{F_v}$. If  $v\nmid p$  we let $\bar{R}_v^{\square, \psi}$ be the maximal reduced and $p$-torsion free quotient of $R_v^{\square, \psi}$ all of whose $\overline{L}$-points give rise to representations $\rho$ of $G_{F_v}$, such that the semisimplification 
of the restriction of $\rho$ to $I_{F_v}$ is isomorphic to $\tau_v$. Jack Shotton has proved in \cite{shotton} and in the appendix below that 
these rings are Cohen--Macaulay. 

For each $v\mid p$ we additionally fix a pair of integers $\wt_v=(a_v,b_v)$ with $a_v<b_v$.   We let $\bar{R}_v^{\square, \psi}$ be the maximal reduced and $p$-torsion free quotient of $R_v^{\square, \psi}$ all of whose $\overline{L}$-points give rise to representations $\rho$ of $G_{F_v}$, which are potentially semistable (resp. potentially crystalline) of type $(\mathbf{w}_v, \tau_v, \psi)$.

Let $\bar{R}^{\psi}_{F, S}$ be the quotient of $R^{\psi}_{F, S}$ corresponding to the local deformation conditions $\{ \bar{R}^{\square, \psi}_v\}_{v\in \Sigma}$, see \S\ref{present} for a precise definition.
\begin{thm} If  $\bar{R}^{\square, \psi}_v\neq 0$ for all $v\in \Sigma$ then $\bar{R}^{\psi}_{F, S}$ is a finitely generated $\OO$-module of rank at
least $1$ and $\bar{R}^{\psi}_{F, S}[1/p]$ is reduced. Moreover, if for all $v \mid p$, $\rhobar_v$, $\tau_v$, $\w_v$ satisfy the assumptions of either 
Theorem \ref{theorem-main-intro} or Theorem \ref{gen_split_intro} then $\bar{R}^{\psi}_{F, S}$ is $p$-torsion free. 
\end{thm}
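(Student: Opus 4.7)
The plan is to deduce the theorem from the local Cohen--Macaulayness at places in $\Sigma$ by a Taylor--Wiles--Kisin patching argument: the Cohen--Macaulay input at $v\mid p$ comes from Theorems \ref{theorem-main-intro} and \ref{gen_split_intro}, at the remaining finite places in $\Sigma$ from Shotton's work (\cite{shotton} and the appendix), while the patching itself in the case $p=2$ uses the variant constructed in \S\ref{appendix_jack}.

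One applies patching to the space of algebraic automorphic forms on a definite quaternion algebra over $F$ (split at every finite place) with level and weight/type data corresponding to $\{(\w_v, \tau_v)\}_{v\mid p}$ and $\{\tau_v\}_{v\in\Sigma,\, v\nmid p}$. This produces a finitely generated $R_\infty$-module $M_\infty$, where
\[
R_\infty \;=\; R^{\mathrm{loc}}_\infty[[x_1, \ldots, x_g]], \qquad R^{\mathrm{loc}}_\infty \;=\; \widehat{\bigotimes}_{v \in \Sigma,\, \cO}\, \bar R^{\square, \psi}_v,
\]
together with an ideal $\mathfrak{a} \subset R_\infty$ such that, after collapsing formally smooth framing factors, $R_\infty/\mathfrak{a}$ surjects onto $\bar R^{\psi}_{F,S}$ and $M_\infty/\mathfrak{a} M_\infty$ is a faithful $\bar R^{\psi}_{F,S}$-module. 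The key output of patching is that $M_\infty$ is finite free over an auxiliary power series ring $S_\infty \subset R_\infty$ of Krull dimension $\dim R_\infty$, forcing $M_\infty$ to be a maximal Cohen--Macaulay $R_\infty$-module.

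Given this setup, all three assertions follow. That $\bar R^{\psi}_{F,S}$ is finite over $\cO$ is because $\mathfrak{a}$ contains the augmentation ideal of $S_\infty$, so $M_\infty/\mathfrak{a} M_\infty$ is finite over $\cO$ and $\bar R^{\psi}_{F,S}$ acts on it faithfully. The rank lower bound follows from the hypothesis $\bar R^{\square, \psi}_v \neq 0$ for all $v\in\Sigma$ combined with a Khare--Wintenberger/Kisin-style modularity lifting argument producing at least one $\bar L$-valued point. Reducedness of $\bar R^{\psi}_{F,S}[1/p]$ uses Kisin's theorem that each $\bar R^{\square, \psi}_v[1/p]$ is regular at its maximal ideals, whence $R_\infty[1/p]$ is regular on the support of $M_\infty/\mathfrak{a} M_\infty$ and $\bar R^{\psi}_{F,S}[1/p]$ is a finite product of localizations of a regular ring. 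For $p$-torsion freeness, combine the maximal Cohen--Macaulay property of $M_\infty$ with the Cohen--Macaulayness of $R^{\mathrm{loc}}_\infty$ itself, the latter being the completed tensor product of $\cO$-flat Cohen--Macaulay local rings by our hypotheses; then $\varpi$ is regular on $R^{\mathrm{loc}}_\infty$, hence on $M_\infty$. One extends a generating regular sequence of $\mathfrak{a}$ by $\varpi$, reorders, and concludes that $\varpi$ is regular on $M_\infty/\mathfrak{a} M_\infty$, and hence on its faithful quotient $\bar R^{\psi}_{F,S}$.

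The main technical obstacle is verifying that the completed tensor product $R^{\mathrm{loc}}_\infty$ inherits the Cohen--Macaulay property from its factors: one must argue that $\cO$-flatness and Cohen--Macaulayness of each $\bar R^{\square,\psi}_v$ propagate through the completed tensor product over $\cO$, controlling Noetherianity, dimension, and regular sequences in the appropriate pro-category. A secondary difficulty, specific to $p=2$, is that the patching has to be run via the appendix's variant of the Taylor--Wiles method, and one must verify that it still yields the finite freeness of $M_\infty$ over an auxiliary ring of the correct dimension.
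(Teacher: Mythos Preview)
Your reducedness argument has a genuine gap. You claim that regularity of $R_\infty[1/p]$ on the support of $M_\infty/\mathfrak{a}M_\infty$ gives that ``$\bar R^{\psi}_{F,S}[1/p]$ is a finite product of localizations of a regular ring.'' But $\bar R^{\psi}_{F,S}[1/p]$ is (a quotient of) $(R_\infty/\mathfrak{a})[1/p]$, not a localization of $R_\infty[1/p]$, and a quotient of a regular local ring by a regular sequence need not be reduced (e.g.\ $k[[x]]/(x^2)$). The missing input is the \emph{semi-simplicity} of $M[1/p]=(M_\infty/\mathfrak{a}M_\infty)[1/p]$ as a Hecke module, which comes from the theory of automorphic forms (\cite[Lem.~1.3]{taylor_deg2}). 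The paper's Lemma~\ref{com_alg} shows how to combine this with Auslander--Buchsbaum: at each maximal ideal $\nn$ of $R_\infty[1/p]$ in the support of $M$, the Cohen--Macaulay module $(M_\infty)_\nn$ is free over the regular ring $(R_\infty)_\nn$, so $M_\nn$ is free over $(R_\infty/\mathfrak{a})_\nn$; a commutative ring admitting a semi-simple faithful free module is reduced. You also need to know that every irreducible component of $\Spec \bar R^{\square,\psi}_\Sigma$ carries an automorphic point (so that $R_\infty$ acts faithfully on $M_\infty$) and that the automorphic points land in the smooth locus of $\Spec R^{\mathrm{loc}}_\infty[1/p]$, which in the potentially semi-stable (as opposed to crystalline) case requires a purity argument as in \cite[Lem.~4.4.3]{gee_kisin}.

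It is also worth noting that the paper's route to finite generation, rank $\geq 1$, and $p$-torsion freeness differs from yours and avoids patching entirely. Proposition~\ref{abstract_prop} uses only the presentation
$\bar R^\psi_{F,S}\cong \bar R^{\square,\psi}_\Sigma[[x_1,\ldots,x_{r+|\Sigma|-1}]]/(\bar f_1,\ldots,\bar f_{r+4|\Sigma|-1})$
coming from Kisin's obstruction calculus (Proposition~\ref{presentation}). Once $\bar R^\psi_{F,S}$ is known to be finite over~$\OO$, a dimension count gives rank $\geq 1$, and the sequence $\bar f_1,\ldots,\bar f_{r+4|\Sigma|-1},\varpi$ is a system of parameters in $\bar R^{\square,\psi}_\Sigma[[x_1,\ldots]]$; if the latter is Cohen--Macaulay (which follows from Cohen--Macaulayness of each $\bar R^{\square,\psi}_v$) this is a regular sequence, and $\varpi$ is regular on the quotient $\bar R^\psi_{F,S}$. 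Finite generation itself is established not by patching but, for $p>2$, via the geometric Breuil--M\'ezard conjecture reducing to Hodge--Tate weights $(0,1)$ and citing \cite{gee_kisin} (Proposition~\ref{finiteO}), and for $p=2$ via solvable base change and the results of \cite{blocksp2}. Patching is invoked only for the reducedness assertion.
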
 

The first assertion is proved by  a Khare--Wintenberger argument, where we use  the work of Gee--Kisin \cite{gee_kisin}, when $p>2$,  and  \cite{blocksp2}, when $p=2$, as an input. The assumptions on $\rhobar_v$, when $p=2$ and $p=3$, appear, because we don't know the Breuil--M\'ezard conjecture in those cases.  
The fact that $\bar{R}^{\psi}_{F, S}$ is $p$-torsion free if all the local deformation 
rings are Cohen--Macaulay is a well known consequence of a Khare--Wintenberger argument, see for example \cite[\S 5]{sn} or 
Remark after Lemma 4.6 in \cite{kw2}.  The proof that the rings $\bar{R}^{\psi}_{F, S}[1/p]$ are reduced follows from a little bit of commutative 
algebra, see Lemma \ref{com_alg}, together with Kisin's approach to Taylor--Wiles method. It turned out that Kisin was aware of our proof, although the statement does not seem to appear in the literature. Jack Thorne has pointed out to us  that a similar argument occurs in the recent paper of Patrick Allen, in the proof of \cite[Thm. 3.1.3]{allen}. We have also made the effort to treat the case $p=2$, where the proof is somewhat more involved.

Typically, whenever one proves a modularity lifting theorem, one proves that the surjection $\bar{R}^{\psi}_{F, S}\twoheadrightarrow \mathbb T$
is an isomorphism if we invert $p$ and pass to the reduced rings, where $\mathbb T$ is a suitable Hecke algebra. If $\bar{R}^{\psi}_{F, S}$ is reduced and 
$p$-torsion free then we can conclude that $\bar{R}^{\psi}_{F, S}\overset{\cong}{\longrightarrow} \mathbb T$. This implies that the modular forms 
which live in characteristic $0$, already know everything about deformations of $\rhobar$ to rings which might have $p$-torsion, which we find quite pretty.

 The properties of deformation rings like Cohen--Macaulayness, Gorensteinness, complete intersection play a role in the theory of derived deformation rings, \cite{venkatesh}. We hope that our results and methods will find an application in that theory.

\subsection{Organization} The paper is organized as follows. After introducing some notation in \S\ref{section-notations}, we recall in \S\ref{section-recall} some results established in \cite{pa12}, in particular the construction of a certain element $x\in R^{\psi}_{\rho}(\w,\tau)$. 
We will show in \S \ref{criterion} that $R^{\psi}_{\rho}(\w,\tau)$ is Cohen--Macaulay if and only if 
$(x, \varpi)$ is a regular sequence in $R^{\psi}_{\rho}(\w,\tau)$. In \S\ref{section-diagram} we prove a  key result on lattices in locally algebraic representations of $G$, which crucially uses an idea of Vign\'eras \cite{vi} of constructing lattices in locally algebraic representations by constructing lattices in 
the corresponding $G$-equivariant coefficient systems on the Bruhat--Tits tree. We prove the main theorem in \S\ref{section-proof}. 
In \S\ref{section-split} we extend the main result to the split generic case under some assumptions. In \S\ref{section-further}, we investigate some other properties about the Galois deformation rings related to Cohen-Macaulayness. Finally we  discuss some global applications in \S\ref{global_app}. In Appendix \ref{equip} we show that completed tensor product preserves equidimensionality. \textsc{Jack Shotton} shows in Appendix \ref{appendix_jack} that the rings $\bar{R}^{\square, \psi}_v$ with $v\nmid p$, which he studied in \cite{shotton} under the assumption $p>2$, remain Cohen--Macaulay if $p=2$.

\section{Notation}\label{section-notations}

 Let $G=\GL_2(\Q_p)$, $K=\GL_2(\Z_p)$, and $Z\cong\Q_p^{\times}$ be the center of $G$.
Let $P$ be the subgroup of upper triangular matrices in $G$. Given two characters $\delta_1,\delta_2:\Q_p^{\times}\ra L^{\times}$ (or to $k^{\times}$), we consider $\delta_1\otimes\delta_2$ as a character of $P$ sending a matrix $\smatr{a}b0d$ to $\delta_1(a)\delta_2(d)$.
 Define the following subgroups of $G$:
\[K_m:=\matr{1+p^m\Z_p}{p^m\Z_p}{p^m\Z_p}{1+p^m\Z_p},\ \ m\geq1\]
\[I:=\matr{\Z_p^{\times}}{\Z_p}{p\Z_p}{\Z_p^{\times}},\ \ I_m:=\matr{1+p^m\Z_p}{p^{m-1}\Z_p}{p^m\Z_p}{1+p^m\Z_p},\ \ m\geq 1.\]
If $H$ is a closed subgroup of $G$ and $\sigma$ is a smooth representation of $H$ on an $L$-vector space (or a $k$-vector space),  we denote by 
$\Ind_H^G \sigma$ the usual smooth induction. When $H$ is moreover open, we let $\cInd_{H}^G\sigma$ denote the compact induction, meaning the subspace of $\Ind_{H}^G\sigma$ consisting of functions whose support is compact modulo $H$.

Let $\mathrm{Mod}_{G}^{\rm sm}(\cO)$ be the category of smooth $G$-representations on $\cO$-torsion modules, and $\Mod_{G}^{\rm l,fin}(\cO)$ be its full subcategory consisting of locally finite objects. Here, an object $\tau\in\mathrm{Mod}^{\rm sm}_{G}(\cO)$ is said to be  \emph{locally finite} if for all $v\in \tau$ the $\cO[G]$-submodule generated by $v$ is of finite length.  Let $\Mod_{G}^{\rm sm}(k)$ and $\Mod_{G}^{\rm l,fin}(k)$ be respectively the full subcategory consisting of $G$-representations on $k$-modules.
If $\zeta$ is a continuous character from  $Z$ to $\cO^{\times}$ or to $k^{\times}$, we append the subscript $\zeta$ to denote the corresponding subcategory consisting of objects on which $Z$ acts by $\zeta$.

Let $\Mod_{G}^{\rm pro}(\cO)$ be the category of compact $\OK$-modules with an action of $\cO[G]$ such that the two actions coincide when restricted to $\cO[K]$. It is anti-equivalent to $\Mod_{G}^{\rm sm}(\cO)$ under Pontryagin dual $\tau\mapsto \tau^{\vee}:=\Hom_{\cO}(\tau,L/\cO)$, where if $\tau$ is a smooth representation then $\tau^{\vee}$ is equipped with the compact-open topology. We let $\mathfrak{C}(\cO)$, resp. $\mathfrak{C}(k)$ be the full subcategory of $\Mod_{G}^{\rm pro}(\cO)$ anti-equivalent to $\Mod_{G,\zeta}^{\rm l,fin}(\cO)$, resp. $\Mod_{G,\zeta}^{\rm l,fin}(k)$.

Denote by $G_{\Q_p}$ the absolute Galois group of $\Q_p$, $I_{\Q_p}$ its inertia subgroup, and $W_{\Q_p}$ the Weil group. A character of $\Q_p^{\times}$ will be viewed as a character of $G_{\Q_p}$ via local class field theory, and vice versa. Here we normalize the local reciprocity map $\Q_p^{\times}\simto W_{\Q_p}^{\rm ab}$ so that uniformizers correspond to geometric Frobenii. Denote by $\epsilon: G_{\Q_p}\ra \Z_p^{\times}$ the cyclotomic character and $\omega$ its reduction modulo $p$. \medskip

Recall that Colmez in \cite{co} has defined an exact and covariant functor $\VV$ from  the category of smooth, finite length $G$-representation on $\cO$-torsion modules  with central character $\zeta$ to the category of continuous finite length representations of $G_{\Q_p}$ on $\cO$-torsion modules. Following \cite[\S3]{pa12}, we define an exact covariant functor $\check{\VV}:\mathfrak{C}(\cO)\ra \mathrm{Rep}_{G_{\Q_p}}(\cO)$ as follows: if $M\in\mathfrak{C}(\cO)$ is of finite length, we let $\check{\VV}(M):=\VV(M^{\vee})^{\vee}(\epsilon\psi)$, where $\psi: G_{\Qp}\rightarrow \OO^{\times}$ is a character corresponding to $\zeta$ via the local class field theory. For general $M\in\mathfrak{C}(\cO)$, write $M=\plim M_i$ with $M_i$ of finite length and define $\check{\VV}(M):=\plim\check{\VV}(M_i)$.  On the other hand, if $\Pi$ is an admissible unitary $L$-Banach space representation with central character $\zeta$ and $\Theta$ is any open bounded $G$-invariant $\cO$-lattice in $\Pi$,  then it can be shown that the Schikhof dual $\Theta^d:=\Hom_{\OO} (\Theta, \OO)$ equipped with the weak topology lies in $\dualcat(\OO)$ and we let $\check{\VV}(\Pi):=\check{\VV}(\Theta^d)\otimes_{\cO}L$. One may further show that $\check{\VV}(\Pi)$ does not depend on the choice
of $\Theta$. Note that $\check{\VV}$ is exact and \emph{contravariant} on the category of admissible unitary $L$-Banach space representations of $G$ with central character $\zeta$.

\section{Preliminaries}\label{section-recall}

Let $\rho:G_{\Q_p}\ra \GL_2(k)$ be a continuous representation such that $\End_{G_{\Q_p}}(\rho)=k$ and if $p=3$ then we assume\footnote{If $p=2$ then $\omega$ is trivial and this case is excluded by requiring that $\rho$ has only scalar endomorphisms. If $p=3$ then $\omega^2=1$ and the case $\rho\sim \smatr{\delta}{*}0{\delta\omega}$ is also excluded by our assumption.}
 that 
$\rho\not\sim \bigl ( \begin{smallmatrix}\delta \omega & \ast \\ 0 & \delta \end{smallmatrix}\bigr )$
for any character $\delta: G_{\Qp}\rightarrow k^{\times}$. 
It will be convenient to divide such representations into two classes:
 we say $\rho$ is \emph{generic} if one of the following conditions holds: (1) $\rho$ is irreducible, (2)  $\rho\sim \smatr{{\delta}_2}*0{{\delta}_1}$ with $\delta_1^{-1}\delta_2\neq \omega^{\pm1}$, (3) $p\ge 5$ and $\rho\sim \smatr{\delta}*0{\delta\omega}$ for some $\delta:G_{\Q_p}\ra k^{\times}$; 
otherwise, we say $\rho$ is \emph{non-generic}, so that $p\ge 5$ and $\rho$ is of the form 
$\smatr{\delta\omega}*0{\delta}$ for some $\delta:G_{\Q_p}\ra k^{\times}$.

\subsection{Construction of a regular element $x$}\label{subsection-x}
Let $\psi:G_{\Q_p}\ra \cO^{\times}$ be a continuous character such that $\psi\epsilon \equiv \det(\rho)\ (\mathrm{mod}\ \varpi)$. Let $R^{\psi}_{\rho}$ be the universal deformation ring  parametrizing the deformations of $\rho$ with  determinant $\psi\epsilon$.
We fix  a $p$-adic Hodge type $(\w,\tau)$ as in the introduction, and let $R^{\psi}_{\rho}(\w,\tau)$ (resp. $R^{\psi,\rm cr}_{\rho}(\w,\tau)$) be the unique reduced $\cO$-flat quotient of $R^{\psi}_{\rho}$ parametrizing the deformations of $\rho$ which are potentially semi-stable (resp. potentially crystalline) of type $(\w,\tau,\psi)$.  As a special case of a theorem of Kisin \cite{ki08}, we have the following result.

 \begin{thm}\label{thm-kisin}
Whenever non-zero, $R^{\psi}_{\rho}(\w,\tau)$ (resp. $R^{\psi,\rm cr}_{\rho}(\w,\tau)$) is a local complete noetherian reduced $\cO$-flat algebra,  equidimensional of Krull dimension 2.
\end{thm}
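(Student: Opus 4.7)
The statement is essentially a recollection of results due to Kisin, so my plan is not to reprove anything from scratch but to assemble the pieces of \cite{ki08} that yield the four assertions (local complete noetherian, reduced, $\cO$-flat, equidimensional of Krull dimension $2$).

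First, I would invoke Kisin's main existence theorem in \cite{ki08}: starting from $R^{\psi}_\rho$ (which is local, complete, noetherian as a quotient of a power series ring over $\cO$ by Mazur's construction), there is a unique reduced, $\cO$-flat quotient whose $\overline{L}$-points are exactly the potentially semi-stable (resp.\ potentially crystalline) deformations of type $(\w,\tau,\psi)$. This immediately gives the first three properties of $R^{\psi}_\rho(\w,\tau)$ and $R^{\psi,\cri}_\rho(\w,\tau)$: local completeness and noetherianness are inherited from $R^{\psi}_\rho$, while reducedness and $\cO$-flatness are built into the construction. The nontrivial input here is Kisin's resolution of the generic fiber by a (smooth) projective scheme parametrising Breuil--Kisin modules with the appropriate filtration/monodromy data; the image of its global sections realises the required quotient.

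Second, for the dimension statement I would pass to the generic fiber. Kisin proves that $\Spec R^{\psi}_\rho(\w,\tau)[1/p]$ is formally smooth (and nonempty by assumption), and a tangent space calculation using $H^1_g$ (resp.\ $H^1_f$) for $\ad^0\rho$ identifies its dimension at each closed point: for $G=G_{\Qp}$ acting on a trace‑zero, rank $2$ representation with regular Hodge--Tate weights $\w=(a,b)$ and fixed inertial type, the dimension of the $p$-adic Hodge filtration parameter drops to $1$. Hence every irreducible component of the generic fiber has dimension $1$, and by $\cO$-flatness the Krull dimension of $R^{\psi}_\rho(\w,\tau)$ is exactly $2$ on each component.

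Third, equidimensionality requires a little more. I would argue that since the generic fiber is formally smooth of pure dimension $1$, its irreducible components and connected components agree, and each of them has $\cO$-flat closure of dimension $2$ in $\Spec R^{\psi}_\rho(\w,\tau)$; the reducedness and $\cO$-flatness of the whole ring then force $\Spec R^{\psi}_\rho(\w,\tau)$ to be the union of these closures, so it is equidimensional of dimension $2$. The potentially crystalline case is identical, replacing $H^1_g$ by $H^1_f$ and invoking the crystalline version of Kisin's moduli.

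The only genuine obstacle is the dimension count: once one trusts \cite{ki08} for the existence, reducedness and $\cO$-flatness of the quotient, the remaining work is a Galois cohomology computation that is completely standard for $G_{\Qp}$ and fixed determinant. I would expect to carry it out using the Euler--Poincar\'e formula $h^1(\ad^0\rho)-h^0(\ad^0\rho)-h^2(\ad^0\rho)=3$ together with Tate local duality to match the generic fiber dimension with Kisin's formula $1=\dim_L H^1_g(G_{\Qp},\ad^0\rho_x)$ at a closed point $x$. No originality is required beyond citing \cite{ki08}.
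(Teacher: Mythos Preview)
Your approach is the same as the paper's, which simply records this as a special case of Kisin's results in \cite{ki08} without further argument. One small correction: the assertion that $\Spec R^{\psi}_\rho(\w,\tau)[1/p]$ is formally smooth is only established by Kisin in the potentially \emph{crystalline} case (\cite[Thm.~3.3.8]{ki08}); in the general potentially semi-stable case the generic fibre can have singular points. This does not affect the conclusion, since the equidimensionality and the dimension formula you need are provided directly by \cite[Thm.~3.3.4]{ki08} without any appeal to smoothness, so your second and third steps should cite that result instead.
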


The proof of Breuil-M\'ezard conjecture in \cite{pa12} gives another construction of the rings $R^{\psi}_{\rho}(\w,\tau)$ and $R^{\psi,\rm cr}_{\rho}(\w,\tau)$, which we will now recall. 
In the following we allow ourselves to replace $L$ by a finite extension.
By a result of Henniart \cite{he}, there is a smooth irreducible $L$-representation 
$\sigma(\tau)$ (resp.~$\sigma^{\rm cr}(\tau)$) of $K$, such that if $\pi$ is a smooth irreducible infinite dimensional $L$-representation of $G$, 
then $\Hom_{K}(\sigma(\tau),\pi)\neq0$ (resp. $\Hom_K(\sigma^{\rm cr}(\tau),\pi)\neq 0$) if and only if $\mathrm{LL}(\pi)|_{I_{\Q_p}}\cong \tau$ (resp. $\mathrm{LL}(\pi)|_{I_{\Q_p}}\cong \tau$ and the monodromy operator is trivial). Here $\mathrm{LL}(\pi)$
is the Weil-Deligne representation attached to $\pi$ by the classical local Langlands correspondence.  A more concrete description of $\sigma(\tau)$ and 
$\sigma^{\mathrm{cr}}(\tau)$ in the cases we are interested in is given in \S\ref{types_princ}.  Let 
$$\sigma(\w,\tau):=\sigma(\tau)\otimes \Sym^{b-a-1}L^2\otimes{\det}^a,$$
 $$\sigma^{\rm cr}(\w,\tau):=\sigma^{\rm cr}(\tau)\otimes \Sym^{b-a-1}L^2\otimes {\det}^a,$$ 
and choose a $K$-invariant $\cO$-lattice  $\Theta$ inside $\sigma(\w,\tau)$ 
(resp. inside $\sigma^{\rm cr}(\w,\tau)$).  According to \cite[\S6.1, \S6.2]{pa12} 
and  \cite[\S2.3]{blocksp2} if $p=2$ or $p=3$, there exists an object $N\in\CO$ with 
a faithful continuous action of $R^{\psi}_{\rho}$, which commutes with the action of $G$, 
such that the following hold:

\begin{enumerate}
\item[(a)] $N$ is  projective in $\Mod^{\pro}_{K, \zeta}(\OO)$;

\item[(b)] $N\widehat{\otimes}_{R^{\psi}_{\rho}}k$ is of finite length in $\mathfrak{C}(k)$, and is finitely generated over $\OK$;

\item[(c)] $\End_{\CO}(N)\cong R^{\psi}_{\rho}$ and ${\check{\VV}}(N)$ is isomorphic to $\rho^{\rm un,\psi}$ as $R^{\psi}_{\rho}\br{G_{\Qp}}$-module, where $\rho^{\rm un,\psi}$ is the universal deformation of $\rho$ with fixed determinant $\psi\epsilon$.
\end{enumerate}
Let $\Theta^d:= \Hom_{\OO}(\Theta, \OO)$  and define 
\[M(\Theta):=\Hom_{\cO}(\Hom_{\OK}^{\mathrm{cont}}(N,\Theta^d),\cO)\]
which is naturally an $R^{\psi}_{\rho}$-module. Part (b) implies that $M(\Theta)$ is a finitely generated $R^{\psi}_{\rho}$-module, see \cite[Prop.2.15]{pa12}.
The following theorem is proved in \cite{pa12}, \cite{blocksp2}.

\begin{thm}\label{thm-pa12-A} We have  $R^{\psi}_{\rho}(\w,\tau)\cong R^{\psi}_{\rho}/\mathrm{Ann}(M(\Theta))$, where $\mathrm{Ann}(M(\Theta))$ denotes the annihilator of $M(\Theta)$ in $R^{\psi}_{\rho}$.  Whenever nonzero, $M(\Theta)$ is a Cohen-Macaulay $R^{\psi}_{\rho}$-module  of Krull dimension 2.

Analogous  results hold for $R^{\psi,\rm cr}_{\rho}(\w,\tau)$, by taking $\Theta$ a $K$-invariant $\cO$-lattice inside $\sigma^{\rm cr}(\w,\tau)$.
\end{thm}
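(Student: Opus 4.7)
The plan is to use the compatibility between $N$ and the $p$-adic local Langlands correspondence via $\check{\VV}$ to pin down the support of $M(\Theta)$, and then exploit the projectivity of $N$ in $\Mod^{\pro}_{K,\zeta}(\OO)$ (property (a)) together with the Breuil--M\'ezard conjecture to control its depth. First, for any closed point $\n\in \mSpec R^{\psi}_\rho[1/p]$, property (c) and $p$-adic local Langlands show that $N$ specialises (after Schikhof duality) to an admissible unitary $L$-Banach representation $\Pi_\n$ of $G$ with $\check{\VV}(\Pi_\n)\cong \rho_\n^{\mathrm{un}}$. Unwinding the definition of $M(\Theta)$ yields a natural isomorphism
\[
M(\Theta)\otimes_{R^{\psi}_\rho} \kappa(\n) \;\cong\; \Hom_K^{\mathrm{cont}}(\Theta,\Pi_\n)^{*}.
\]
By Henniart's characterisation of $\sigma(\tau)$ (resp.\ $\sigma^{\mathrm{cr}}(\tau)$) together with the Berger--Breuil description of the locally algebraic vectors of $\Pi_\n$, the right-hand side is nonzero precisely when $\rho_\n^{\mathrm{un}}$ is potentially semi-stable (resp.\ potentially crystalline) of type $(\w,\tau,\psi)$. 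Since $R^{\psi}_\rho(\w,\tau)$ is by construction the unique reduced $\OO$-flat quotient whose $\overline{L}$-points are exactly these, and since $M(\Theta)$ is automatically $\OO$-flat as the $\OO$-dual of an $\OO$-module, we conclude that $R^{\psi}_\rho/\mathrm{Ann}(M(\Theta))=R^{\psi}_\rho(\w,\tau)$.

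For Cohen--Macaulayness, $\OO$-flatness already gives $\mathrm{depth}_{R^{\psi}_\rho}M(\Theta)\ge 1$, so the task reduces to producing a second regular element modulo $\varpi$, equivalently showing that $M(\Theta)/\varpi M(\Theta)$ has depth $\ge 1$ over $R^{\psi}_\rho/\varpi$ and that $\dim M(\Theta)/\varpi M(\Theta)=1$. I would filter $\Theta/\varpi$ by its irreducible $K$-subquotients; for each Serre weight $\sigma$ appearing, \cite{pa12} describes $M(\sigma)$ explicitly in terms of the block of $\dualcat(k)$ containing the irreducible attached to $\rho$, and shows that each nonzero $M(\sigma)$ is Cohen--Macaulay of dimension $1$ over its annihilator in $R^{\psi}_\rho/\varpi$. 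Assembling these layers and matching their total Hilbert--Samuel multiplicity against that of $R^{\psi}_\rho(\w,\tau)/\varpi$ via the Breuil--M\'ezard conjecture (proved in \cite{pa12} for $p\ge 5$ and extended to $p\in\{2,3\}$ in \cite{blocksp2}) then forces $\dim M(\Theta)=2$ and exhibits the required regular element.

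The main obstacle is precisely the last step: promoting the layer-by-layer Cohen--Macaulay information to a genuine regular sequence on $M(\Theta)/\varpi$. This is sensitive to how the non-split extensions in $\Theta/\varpi$ interact with the block structure of $\dualcat(k)$ and requires the extra hypotheses on $\rho$ when $p\in\{2,3\}$; it is the only place in the argument where the multiplicity-one \emph{equality} (rather than merely an inequality) in Breuil--M\'ezard is used.
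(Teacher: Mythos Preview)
Your sketch captures the overall shape of the argument in \cite{pa12,blocksp2}, but there are two genuine gaps.

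For the annihilator claim, the fibrewise computation you describe determines only the \emph{radical} of $\mathrm{Ann}(M(\Theta))$ after inverting $p$: knowing which closed points lie in the support of $M(\Theta)[1/p]$ (via the Jacobson property) tells you that $\sqrt{\mathrm{Ann}(M(\Theta))[1/p]}$ equals the kernel of $R^{\psi}_\rho[1/p]\to R^{\psi}_\rho(\wt,\tau)[1/p]$, and $\cO$-flatness of $M(\Theta)$ then makes $R^{\psi}_\rho/\mathrm{Ann}(M(\Theta))$ $\cO$-flat. But you have not explained why this quotient is \emph{reduced}, and without that you cannot conclude it equals the reduced quotient $R^{\psi}_\rho(\wt,\tau)$. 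In \cite{pa12} this is handled by the finer information that $M(\Theta)$ is free of rank one at the generic points of the irreducible components of $\Spec R^{\psi}_\rho(\wt,\tau)$ (compare Proposition~\ref{prop-(H)} here), which pins down the annihilator and not just its radical.

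More seriously, your route to Cohen--Macaulayness is circular. You propose to match Hilbert--Samuel multiplicities via the Breuil--M\'ezard conjecture, but in \cite{pa12,blocksp2} the proof of Breuil--M\'ezard \emph{uses} the Cohen--Macaulayness of $M(\Theta)$ as input, not the other way around. The actual argument, recalled in Theorems~\ref{thm-pa12-B} and~\ref{thm-x-nongeneric} immediately after the statement you are proving, produces a specific element $x\in R^{\psi}_\rho$ directly at the level of $N$: one exhibits a short exact sequence $0\to N/\varpi N\xrightarrow{\bar x} N/\varpi N\to\Omega^\vee\to 0$ in $\dualcat(k)$ with $\Omega$ an injective envelope of $\pi|_K$ in $\Mod_{K,\zeta}^{\sm}(k)$, so that any lift $x$ is $N$-regular and $N/xN$ is finitely generated and projective in $\Mod^{\pro}_{K,\zeta}(\OO)$. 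It follows formally that $x$ is $M(\Theta)$-regular and $M(\Theta)/xM(\Theta)$ is a finite free $\cO$-module; hence $(\varpi,x)$ is an $M(\Theta)$-regular system of parameters and $M(\Theta)$ is Cohen--Macaulay of dimension $2$. The same element $x$ is simultaneously $M(\sigma)$-regular for every Serre weight $\sigma$, so your worry about ``promoting the layer-by-layer Cohen--Macaulay information'' is bypassed entirely: one does not assemble anything from the graded pieces, one writes down the second regular element directly on $N$.
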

\begin{proof}
See \cite[Cor.6.5]{pa12}, \cite[Prop.2.33]{blocksp2} for $\rho$ generic and \cite[Thm.2.44, Cor. 6.22, Prop.6.23]{pa12} for $\rho$ non-generic.
\end{proof}

If $\lambda\in\mathrm{Mod}_{K}^{\rm sm}(\cO)$ is of finite length, we let
\[M(\lambda):=(\Hom_{\OK}^{\rm cont}(N,\lambda^{\vee}))^{\vee}.\]
Then $M(\lambda)$ is a finitely generated $R^{\psi}_{\rho}$-module, and if we apply this definition with $\lambda=\Theta/\varpi \Theta$ then we get a natural isomorphism
of $R^{\psi}_\rho$-modules $M(\Theta)/\varpi M(\Theta)\cong M(\Theta/\varpi \Theta)$, see \cite[Cor. 2.7, Lem.2.14]{pa12}.

For convenience, we   write $R$ for $R^{\psi}_{\rho}(\w,\tau)$ or $R^{\psi,\rm cr}_{\rho}(\w,\tau)$, and let $\Theta$ be a $K$-invariant $\cO$-lattice inside $V$ where $V$ is $\sigma(\w,\tau)$ or $\sigma^{\rm cr}(\w,\tau)$, depending on $R$.

\subsubsection{Generic case}\label{subsubsection-generic}
When $\rho$ is \emph{generic},  a regular element $x\in R^{\psi}_{\rho}$  for $M(\Theta)$ is constructed in \cite[\S5]{pa12}, such that $N/xN$ is a finitely generated $\OK$-module and projective in $\mathrm{Mod}_{K,\zeta}^{\rm pro}(\cO)$. We  briefly recall its construction. If $\rho$ is irreducible then
let $\pi$ be the unique smooth irreducible $k$-representation with central character such that $\check{\VV}(\pi^{\vee})=\VV(\pi)=\rho$.  If $\rho\sim \smatr{\delta_2}*0{\delta_1}$ with $\delta_1^{-1}\delta_2\neq \ide,\omega$ then let $\pi:=\Ind_{P}^G\delta_1\otimes\delta_2\omega^{-1}$, so that $\check{\VV}(\pi^{\vee})=\delta_1$. 
By definition, $N$ is  a projective envelope of $\pi^{\vee}$ in $\CO$, see \cite[\S6.1]{pa12}, so  $N/\varpi N$ is   a projective envelope of $\pi^{\vee}$ in $\mathfrak{C}(k)$.

\begin{thm}\label{thm-pa12-B} There is an element $x\in R^{\psi}_{\rho}$ such that the following hold:
\begin{enumerate}
\item[(i)] $x:N\ra N$ is injective, and for all $n\geq 1$, $N/x^nN$ is a finitely generated $\OK$-module and projective in $\mathrm{Mod}_{K,\zeta}^{\rm pro}(\cO)$; $\Hom_{\OK}^{\cont}(N/x^nN,\Theta^d)$
is a free  $\OO$-module of rank $n\cdot e$, where $e:=e(R/\varpi R)$ is the Hilbert--Samuel multiplicity of $R/\varpi R$;

\item[(ii)] $x$ is $M(\sigma)$-regular for any  smooth irreducible $k$-representation $\sigma$ of $K$ such that 
$M(\sigma)\neq 0$;

\item[(iii)] $x\notin \m^2+(\varpi)$, where $\m$ denotes the maximal ideal of $R^{\psi}_{\rho}$;

\item[(iv)]   the only prime ideal of $R$ containing $(x,\varpi)$ is the maximal ideal $\m_R$.
\end{enumerate}
\end{thm}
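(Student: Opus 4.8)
The plan is to construct $x$ by controlling the structure of $N$ as a module over a power series ring and then verifying each of the four properties in turn. First I would recall that, by Theorem \ref{thm-pa12-A} and its underlying construction, $N\widehat{\otimes}_{R^{\psi}_{\rho}}k$ is a finitely generated $\OK$-module which is projective in $\Mod^{\pro}_{K,\zeta}(\OO)$, and more precisely (in the generic case) that $N/\varpi N$ is a projective envelope of $\pi^{\vee}$ in $\mathfrak C(k)$ for the explicit $\pi$ described before the statement. The key input from \cite{pa12} is that $R^{\psi}_{\rho}$ acts faithfully on $N$ with $\End_{\CO}(N)\cong R^{\psi}_{\rho}$ and $\check{\VV}(N)\cong\rho^{\mathrm{un},\psi}$; in the generic case one knows moreover that $R^{\psi}_{\rho}$ is formally smooth over $\OO$ of the expected dimension, so it is a power series ring $\OO[[x_1,\dots,x_d]]$. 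The element $x$ should be chosen as (a suitable unit multiple / coordinate related to) the ``capture'' element which cuts $N$ down to a module over $\OK$; concretely one takes $x$ to be the image in $R^{\psi}_{\rho}$ of a lift of the trace-of-Frobenius-type parameter that, after reduction mod $\varpi$, kills the $\mathfrak a$-smooth part, exactly as in \cite[\S5]{pa12}. I would simply cite that construction for the existence of $x$ with $N/xN$ finitely generated over $\OK$ and projective in $\Mod^{\pro}_{K,\zeta}(\OO)$, which is essentially (i) minus the freeness/rank count.

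Next I would establish the rank statement in (i). Since $N$ is projective in $\Mod^{\pro}_{K,\zeta}(\OO)$ and $x$ is $N$-regular with $N/xN$ again projective over $\OK$, the short exact sequence $0\to N\xrightarrow{x^n} N\to N/x^nN\to 0$ together with left-exactness and projectivity gives that $\Hom_{\OK}^{\cont}(N/x^nN,\Theta^d)$ is $\OO$-torsion free; to compute its rank I would reduce modulo $\varpi$ and use $\Hom_{\OK}^{\cont}(N/x^nN,\Theta^d)\otimes k\cong \Hom_{K}(\Theta/\varpi\Theta,(N/(x^n,\varpi)N)^{\vee})$, then filter by powers of $x$ so that the length is $n$ times the length for $n=1$. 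The case $n=1$ is where the Hilbert--Samuel multiplicity enters: by Theorem \ref{thm-pa12-A}, $M(\Theta)$ is Cohen--Macaulay of dimension $2$ over $R$ and $(x,\varpi)$ will be shown (in (iv)) to be a system of parameters, so $\dim_k M(\Theta)/(x,\varpi)M(\Theta)=e(R/\varpi R)$; unwinding the definition of $M(\Theta)$ identifies this with $\mathrm{rk}_{\OO}\Hom_{\OK}^{\cont}(N/xN,\Theta^d)$. This is the step I expect to be the most delicate, since it requires simultaneously that $x$ be $M(\Theta)$-regular (so that reduction behaves well) and that the multiplicity of the quotient be exactly $e$; one must be careful that $x$ is chosen generically enough inside $\m_R$ to be a parameter but still lifts the capture element.

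For (ii), regularity of $x$ on $M(\sigma)$ for an arbitrary irreducible $k$-representation $\sigma$ of $K$ with $M(\sigma)\neq 0$: I would argue that $M(\sigma)$ is a Cohen--Macaulay $R^{\psi}_{\rho}$-module of dimension $1$ (it is supported on the mod-$p$ fibre of some $R^{\psi}_{\rho}(\mathbf w',\tau')$), that $\varpi$ kills it, and that $M(\sigma)/xM(\sigma)$ has dimension $0$ by (iv); since over a regular local ring a module of depth equal to its dimension has any system-of-parameters element regular on it, $x$ is $M(\sigma)$-regular. Property (iii), $x\notin\m^2+(\varpi)$, is a nondegeneracy statement: it says $x$ is part of a regular system of parameters of $R^{\psi}_{\rho}$ modulo $\varpi$, and I would verify it directly from the explicit formula for $x$ in \cite[\S5]{pa12} — the linear term of the capture parameter is nonzero because $\check{\VV}(N)$ is the \emph{universal} deformation, so the corresponding Galois parameter varies nontrivially to first order. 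Finally, for (iv) — that $(x,\varpi)$ is $\m_R$-primary in $R=R^{\psi}_{\rho}(\w,\tau)$ — I would use that $R$ is equidimensional of dimension $2$ (Theorem \ref{thm-kisin}), that $R/\varpi R$ has dimension $1$, and that $x$ does not lie in any minimal prime of $R/\varpi R$: indeed $R[1/p]$ is a product of regular local rings by Kisin's theory and $x$ corresponds to a coordinate function that is non-constant on each component, so $R/(x,\varpi)$ is Artinian. Assembling (iv) then feeds back to justify the multiplicity computation in (i), closing the loop.
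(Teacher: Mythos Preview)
There are two genuine gaps.

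\textbf{The multiplicity identification in (i) is not justified.} You write that since $M(\Theta)$ is Cohen--Macaulay of dimension $2$ and $(x,\varpi)$ is a system of parameters, $\dim_k M(\Theta)/(x,\varpi)M(\Theta)=e(R/\varpi R)$. Cohen--Macaulayness only gives $\dim_k M(\Theta)/(x,\varpi)M(\Theta)=e((x,\varpi),M(\Theta))$, the multiplicity of the \emph{module} with respect to this parameter ideal. There is no a priori reason this equals $e(\m_R,R/\varpi R)$; that identity is precisely the content of the Breuil--M\'ezard formula $e(R/\varpi R)=\sum_\sigma m_\sigma\, e(M(\sigma))$ proved in \cite[Thm.~6.6]{pa12} (or \cite[Thm.~2.34]{blocksp2}), together with the separate computation $e(M(\sigma))=\dim_k M(\sigma)/xM(\sigma)=1$ for each $\sigma$. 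The paper invokes exactly this: it decomposes by Jordan--H\"older factors of $\Theta/\varpi\Theta$ and matches the two sums term by term. Your argument bypasses this input and so does not close.

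\textbf{The logic around (iv) is circular, and your independent argument for (iv) does not work.} You make (i) depend on (iv), then for (iv) argue that $x$ avoids the minimal primes of $R/\varpi R$ because ``$R[1/p]$ is a product of regular local rings \ldots\ and $x$ is non-constant on each component''. But minimal primes of $R/\varpi R$ are components of the \emph{special} fibre, not of $R[1/p]$; nothing about the behaviour of $x$ on $\Spec R[1/p]$ tells you it avoids minimal primes of $R/\varpi R$. The paper runs the logic the other way: it first proves (i) (via the projectivity of $N/xN$, which is the real structural input from \cite[Thm.~5.2]{pa12}), and then (iv) follows immediately because $M(\Theta)/(x,\varpi)M(\Theta)$ is finite-dimensional and $M(\Theta)$ is faithful over $R$. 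Similarly, your argument for (ii) invokes (iv) and CM theory, whereas the paper obtains (ii) directly from the exact sequence $0\to N\xrightarrow{x}N\to N/xN\to 0$ and the projectivity of $N/xN$ in $\Mod^{\pro}_{K,\zeta}(\OO)$, with no reference to (iv).

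For (iii), the paper's argument is more concrete than ``the linear term is nonzero because the deformation is universal'': it exhibits a quotient $R^\psi_\rho/\fa\cong k\llbracket t\rrbracket$ (the annihilator of some $M(\sigma)$) in which the image of $x$ is $t$.
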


\begin{proof}

 It is shown in \cite[Thm.5.2]{pa12} that there exists an exact sequence in $\mathfrak{C}(k)$:
\[0\ra N/\varpi N\overset{\bar{x}}{\ra} N/\varpi N\ra \Omega^{\vee}\ra0,\]
with $\Omega$ being isomorphic to an injective envelope of $\pi|_K$ in $\Mod_{K,\zeta}^{\rm sm}(k)$. We note that there is no restriction on $p$ in \cite[\S 5]{pa12}. Let  $x\in R^{\psi}_{\rho}$ be  any lifting of $\bar{x}$. We will still  denote by $x$ its image in $R$.

(i) By \cite[Thm.5.2]{pa12}, $x:N\ra N$ is injective, and $N/xN$ is a projective envelope of $(\rsoc_K\pi)^{\vee}$ in $\mathrm{Mod}_{K,\zeta}^{\rm pro}(\cO)$, so is finitely generated and projective in $\mathrm{Mod}_{K,\zeta}^{\rm pro}(\cO)$. This implies that $N/x^nN\cong (N/xN)^{\oplus n}$ as $\OK$-module for all $n\geq 1$. In particular, $N/x^n N$ is a finitely generated $\OK$-module, which is projective in $\mathrm{Mod}_{K,\zeta}^{\rm pro}(\cO)$ and it is enough to prove the second statement for $n=1$. Projectivity of $N/xN$ implies that the rank of  $\Hom_{\OK}^{\cont}(N/xN,\Theta^d)$ is equal to 
the dimension of $\Hom_{\OK}^{\cont}(N/xN, (\Theta/\varpi \Theta)^{\vee})$ as a $k$-vector space, which again by projectivity of $N/xN$ is equal to 
$\sum_{\sigma}  m_\sigma  \dim_k M(\sigma)/ x M(\sigma)$, where the sum is taken over all irreducible $\sigma\in \mathrm{Mod}_{K,\zeta}^{\rm pro}(\cO)$ and $m_{\sigma}$ 
denotes the multiplicity with which $\sigma$ occurs as a subquotient of  $\Theta/\varpi \Theta$.   On the other hand, it follows from \cite[Thm.6.6]{pa12} if $p\ge 5$ and 
\cite[Thm.2.34]{blocksp2} if $p=2$ or $p=3$ that the Hilbert--Samuel multiplicity of $R$ is equal to $\sum_{\sigma}  m_\sigma  e(M(\sigma))$, where $e(M(\sigma))$ is the Hilbert--Samuel multiplicity of the module $M(\sigma)$. Therefore, it is enough to show that $e(M(\sigma))= \dim_k M(\sigma)/ xM(\sigma)$. This is done in the proof of  \cite[Thm.6.6]{pa12} and 
\cite[Thm.2.34]{blocksp2}, where it is shown that if $M(\sigma)\neq 0$ then  $e(M(\sigma))= \dim_k M(\sigma)/ xM(\sigma)=1$.

(ii) We may assume that $M(\sigma)$ is non-zero. Then \cite[Thm.6.6]{pa12}  implies that $M(\sigma)$ is a Cohen-Macaulay $R^{\psi}_{\rho}$-module of dimension 1. Since $N/xN$ is a projective $\OK$-module, the exact sequence $0\ra N\ra N\ra N/xN\ra0$ gives an exact sequence
\[0\ra M(\sigma)\overset{x}{\ra} M(\sigma)\ra  \Hom_{K}(N/xN,\sigma^{\vee})^{\vee}\ra0.\]
In particular, $x$ is $M(\sigma)$-regular. If $p=2$ or $p=3$ then the assertion is proved in the course of the proof of \cite[Prop.2.28]{blocksp2} with the same argument.

(iii)  Let $\sigma$ be a smooth irreducible $k$-representation of $K$ such that $M(\sigma)\neq0$ and let $\fa$ be the $R^{\psi}_{\rho}$-annihilator of $M(\sigma)$. The proof of \cite[Thm.6.6]{pa12}, in particular the exact sequence (38) and the proof of   \cite[Prop.2.28]{blocksp2} if $p=2$ or $p=3$ show that $M(\sigma)$ is a cyclic $R^{\psi}_{\rho}$-module,
$R^{\psi}_{\rho}/\fa\cong k\llbracket t\rrbracket$ and $x$ is mapped to $t$ via this isomorphism. This implies the result, since if $x\in \m^2+(\varpi)$, then the image of  $x$ in $R^{\psi}_{\rho}/\fa$ would belong to $(\m_{R^{\psi}_{\rho}/\fa})^2$, which is not the case.

(iv) We may assume $R$ is non-zero. Since $M(\Theta)$ is a faithful, finitely generated $R$-module, $V( (\varpi, x))$ 
is equal to the support of $M(\Theta)/ (x, \varpi) M(\Theta)$. It follows from part (i) that $M(\Theta)/(\varpi, x) M(\Theta)$ 
is a finite dimensional $k$-vector space. Hence  its support consists only of the maximal ideal.
\end{proof}

\subsubsection{Non-generic case} \label{subsubsection-non-generic}
We now construct $x\in R^{\psi}_{\rho}$ with similar properties in non-generic cases.  We assume that $p\ge 5$.
We distinguish two cases depending on $\rho$: \emph{peu ramifi\'e} or \emph{tr\`es ramifi\'e}. 
The assumption $p\ge 5$ implies that $H^2(G_{\Qp}, \ad^0 \rho)=0$, where $\ad^0 \rho$ denotes the trace zero endomorphisms of $\rho$,  so $R^{\psi}_{\rho}$ is formally smooth 
of relative dimension $3$ over $\OO$. In this case $N\in \dualcat(\OO)$ is constructed in \cite[\S 6.2]{pa12} as a deformation of $\beta^{\vee}$ to $R^{\psi}_{\rho}$, for a certain 
$\beta\in \Mod^{\sm}_{G, \zeta}(k)$. In particular, $N$ is flat over $R^{\psi}_{\rho}$. 

\begin{lem}\label{tres_peu} Assume that $R^{\psi}_{\rho}(\wt, \tau)\neq 0$ with $\wt=(a, a+1)$ and $\tau=\theta\oplus \theta$. If $\rho$ is  tr\`es ramifi\'e then 
$R^{\psi}_{\rho}(\wt, \tau)\cong \OO\llbracket u\rrbracket$. If $\rho$ is peu ramifi\'e then $R^{\psi}_{\rho}(\wt, \tau)\cong \OO\llbracket u, v\rrbracket/(uv)$.
\end{lem}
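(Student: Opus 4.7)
The strategy is to combine Kisin's structural theorem (Theorem \ref{thm-kisin}) --- which ensures that $R := R^{\psi}_{\rho}(\wt, \tau)$ is reduced, $\OO$-flat, and equidimensional of Krull dimension $2$ --- with a Hilbert--Samuel multiplicity computation provided by the Breuil--M\'ezard formula and a component analysis on the generic fibre.

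First I would reduce to a normalized setting. Since $\theta$ extends to a character of $W_{\Qp}$ and hence of $G_{\Qp}$ via local class field theory, twisting by this character and by $\det^{-a}$ identifies $R$ with the corresponding ring for weight $(0,1)$, trivial inertial type, and $\rho \sim \bigl(\begin{smallmatrix} \omega & \ast \\ 0 & 1 \end{smallmatrix}\bigr)$; the hypothesis $R \neq 0$ forces $\psi$ to be unramified. In this setting $\sigma(\wt, \tau) = \triv$ as a $K$-representation, and its mod-$\varpi$ reduction contains only the Serre weight $\triv$, with multiplicity one. The Breuil--M\'ezard identity furnished by Theorem \ref{thm-pa12-A}, in the form worked out for non-generic $\rho$ in \cite[Prop.~6.23]{pa12}, therefore yields $e(R/\varpi R) = e(M(\triv))$, which equals $1$ when $\rho$ is tr\`es ramifi\'e and $2$ when $\rho$ is peu ramifi\'e (these being the known multiplicities of the Serre weight $\triv$ at $\rho$).

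Next I would classify the irreducible components on the generic fibre. The $\overline{L}$-points of $\Spec R[1/p]$ parametrize ordinary semi-stable lifts of $\rho$ with Hodge--Tate weights $(0, 1)$. These fall into two disjoint families: (C) crystalline ordinary lifts, with peu ramifi\'e extension class, which are present precisely when $\rho$ is peu ramifi\'e; and (S) Steinberg-type semi-stable non-crystalline lifts, which are always present. A direct Galois-cohomological computation with the appropriate Selmer conditions on $\ad^0 \rho$ shows that each family defines a formally smooth $\OO$-subalgebra of $R$ of relative dimension~$1$, hence a quotient of $R$ isomorphic to $\OO[[t]]$.

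In the tr\`es ramifi\'e case only family (S) appears, so $R$ itself is smooth of relative dimension $1$ and hence $\cong \OO[[u]]$, consistently with $e(R/\varpi R) = 1$. In the peu ramifi\'e case $R$ has two distinct minimal primes $\pf_C, \pf_S$ with $R/\pf_\bullet \cong \OO[[t]]$; the additivity formula
\[ e(R/\varpi R) = \sum_{\pf} \ell_{R_{\pf}}(R_{\pf}/\varpi R_{\pf}) \cdot e(R/(\pf + \varpi R)) = 2 \]
together with the smoothness of each component forces both localization lengths to be $1$ and the scheme-theoretic intersection $R/(\pf_C + \pf_S)$ to equal $\OO$. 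Hence $R$ is the fibre product $\OO[[u]] \times_\OO \OO[[v]] \cong \OO[[u, v]]/(uv)$. The main obstacle is establishing the formal smoothness of each family --- via a careful Selmer-cohomological analysis --- together with the transversality of the two components in the peu ramifi\'e case; both are delicate because we lie outside the regular locus of the ambient $R^{\psi}_\rho$.
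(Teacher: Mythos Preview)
Your additivity formula in the peu ramifi\'e case is not correct, and the gap it hides is genuine. In the expression
\[
e(R/\varpi R) = \sum_{\pf}\ell_{R_{\pf}}(R_{\pf}/\varpi R_{\pf})\cdot e\bigl(R/(\pf + \varpi R)\bigr)
\]
you sum over the minimal primes $\pf$ of $R$. But for such $\pf$ we have $\varpi\notin\pf$ (by $\OO$-flatness of $R$), so $\varpi$ is a unit in $R_{\pf}$ and every term vanishes; the formula gives $0$, not $2$. The correct associativity formula runs over primes minimal above $(\varpi)$, i.e.\ over the minimal primes of $R/\varpi R$, and it does not see $\pf_C$, $\pf_S$ directly.

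More seriously, even with the correct bookkeeping, $e(R/\varpi R)=2$ together with two smooth $\OO[[t]]$-components does \emph{not} force $R/(\pf_C+\pf_S)=\OO$. Take $R=\OO[[t,s]]/\bigl(s(s-\varpi)\bigr)$: it is reduced, $\OO$-flat, has two minimal primes $(s)$ and $(s-\varpi)$ each with quotient $\OO[[t]]$, and $R/\varpi=k[[t,s]]/(s^2)$ has Hilbert--Samuel multiplicity $2$; yet $R/(\pf_C+\pf_S)=k[[t]]$ is one-dimensional and $R\not\cong\OO[[u,v]]/(uv)$. So multiplicity alone cannot detect transversality.

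The paper's argument supplies exactly the missing ingredient: it quotes \cite[Prop.~3.3.1]{eg} to know that the crystalline and non-crystalline components remain distinct modulo $\varpi$, and combines this with the tangent-space dimension $\dim_k\mm/(\varpi,\mm^2)=2$ from \cite[Prop.~6.23]{pa12} to pin down generators $u\in\pf_1$, $v\in\pf_2$ whose images in $\mm/(\varpi,\mm^2)$ are linearly independent. Your Breuil--M\'ezard multiplicity input is fine as a consistency check, but it cannot replace the transversality input; you still need either the Emerton--Gee result or an equivalent argument that the two ordinary families have distinct reductions mod $\varpi$.
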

\begin{proof}  Let $R'=R^{\psi}_{\rho}(\wt, \tau)$ and let $\mm$ be its maximal ideal. 
After twisting we may assume that $\psi$ and $\theta$ are trivial, $\rho$ is isomorphic to $\smatr{\omega}*01$ and $a=0$. 

If $\rho$ is  tr\`es ramifi\'e then it is shown in the course of the proof of \cite[Prop.6.23]{pa12} that $\dim_k \mm/(\varpi, \mm^2)=1$. Since $R'$ is $\OO$-torsion free this implies that $R'\cong \OO\br{u}$. Alternatively, see Th\'eor\`eme 1.3(i) in \cite{bm02}.

If $\rho$ is peu ramifi\'e then it is shown in Th\'eor\`eme 1.3(i) in \cite{bm02} that there are two minimal prime ideals $\pp_1$, $\pp_2$ of $R'$ such that 
$R'/\pp_1$ and $R'/\pp_2$ are formally smooth of dimension $1$ over $\OO$ and the diagonal map $R'\rightarrow R'/\pp_1\times R'/\pp_2$ is injective.  It is shown in the course of the proof of \cite[Prop.6.23]{pa12} that $\dim_k \mm/(\varpi, \mm^2)=2$.
Hence there are elements $u, v\in \mm \setminus (\mm^2, \varpi)$ such that $u\in \pp_1$ and $ v\in \pp_2$. It is  shown in \cite[Prop.3.3.1]{eg} that $V(\pp_1)$ is the closure of the crystalline points, $V(\pp_2)$ is the closure of non-crystalline points and 
the ideals $\pp_1$ and $\pp_2$ are distinct modulo $\varpi$. 
This implies that the images of $u$ and $v$ in $\mm/(\mm^2, \varpi)$ are linearly independent and hence form a $k$-basis of $\mm/(\mm^2, \varpi)$.  Since 
$uv\in \pp_1\cap \pp_2= 0$, we conclude that there is a surjection $\OO\br{u,v}/(uv)\twoheadrightarrow R'$. This induces surjections
$$\OO[[u, v]]/( uv, u)\twoheadrightarrow R'/(u)\twoheadrightarrow R'/\pp_1, \quad \OO[[u, v]]/( uv, v)\twoheadrightarrow R'/(v)\twoheadrightarrow R'/\pp_2,$$
which have to be isomorphisms as both the source and the target are domains of dimension $1$. Thus $\pp_1=(u)$ and $\pp_2=(v)$. 
This implies that the composition  $\OO\br{u,v}/(uv)\twoheadrightarrow R'\rightarrow R'/\pp_1\times R'/\pp_2$ is injective and hence 
$\OO\br{u,v}/(uv)\cong R'$.
\end{proof}

Since twisting by a character $\delta: G_{\Qp}\rightarrow \OO^{\times}$ induces a natural isomorphism between $R^{\psi}_{\rho}$ and $R^{\psi\delta^2}_{\rho\bar{\delta}}$ 
it is enough to construct $x$ in the case when $\rho$ is isomorphic to $\smatr{\omega}*01$ and $\psi$ and $\zeta$ are trivial, which we now assume for the rest of the subsection. In this case
$R':=R^{\psi}_{\rho}((0,1), \Eins\oplus \Eins)\neq 0$. 

If $\rho$ is tr\`es ramifi\'e we choose $x\in R^{\psi}_{\rho}$ to be any element which maps to $u$ in $R'/(\varpi)\cong k\br{u}$
and if $\rho$ is  peu ramifi\'e we choose $x\in R^{\psi}_{\rho}$ to be any element which maps to $u+v$ in $R'/(\varpi)\cong k\br{u,v}/(uv)$ via the isomorphisms of Lemma \ref{tres_peu}. We note that in both cases $(\varpi, x)$ is a regular sequence in $R'$.

\begin{thm}\label{thm-x-nongeneric}
Assume $\rho$ is non-generic and let $x\in R^{\psi}_{\rho}$ be as  above. Then the properties (i),(ii),(iii),(iv) listed in Theorem \ref{thm-pa12-B} hold.
\end{thm}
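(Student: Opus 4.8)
The plan is to reduce the non-generic case to properties that can be checked using the explicit structure provided by Lemma~\ref{tres_peu}, mirroring as closely as possible the proofs of the corresponding statements in the generic case (Theorem~\ref{thm-pa12-B}), but replacing the exact sequence $0\ra N/\varpi N\overset{\bar x}\ra N/\varpi N\ra \Omega^\vee\ra 0$ coming from \cite[\S5]{pa12} by the input we have at hand. First I would observe that since $p\ge 5$ we have $H^2(G_{\Qp},\ad^0\rho)=0$, so $R^\psi_\rho$ is formally smooth over $\OO$ of relative dimension $3$ and $N$ is flat over $R^\psi_\rho$ (as recalled just before Lemma~\ref{tres_peu}); this is what will let us transfer regularity statements between $R^\psi_\rho$ and its quotients. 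I would then treat properties (iii) and (iv) first, since they are the most direct. For (iii): by construction $x$ maps to $u$ (tr\`es ramifi\'e) or $u+v$ (peu ramifi\'e) in $R'/(\varpi)\cong k\br u$ or $k\br{u,v}/(uv)$, and in either ring the image of $x$ lies in $\mm_{R'/(\varpi)}\setminus\mm_{R'/(\varpi)}^2$; hence $x\notin\m^2+(\varpi)$ in $R^\psi_\rho$, as any such containment would descend. For (iv): here $R=R^\psi_\rho(\w,\tau)$ for the Hodge type under consideration in Theorem~\ref{theorem-main-intro} (so $\w$ and $\tau=\theta_1\oplus\theta_2$ general, not necessarily $(0,1)$ and $\Eins\oplus\Eins$); one uses that $M(\Theta)$ is a faithful finitely generated $R$-module, so $V((\varpi,x))=\Supp(M(\Theta)/(x,\varpi)M(\Theta))$, and one reduces to showing this module is finite length over $\OO$, which will follow from (i) exactly as in the generic case.

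The crux is property (i), and this is where I expect the main obstacle to lie. In the generic case one has an explicit exact sequence exhibiting $N/xN$ as a projective envelope of $(\rsoc_K\pi)^\vee$ in $\Mod^{\pro}_{K,\zeta}(\OO)$, which immediately gives finite generation over $\OK$, projectivity, and the splitting $N/x^nN\cong(N/xN)^{\oplus n}$; in the non-generic case $N$ is instead constructed as a flat deformation of $\beta^\vee$ to $R^\psi_\rho$ for a specific $\beta\in\Mod^{\sm}_{G,\zeta}(k)$, and I would need to identify $\bar x\colon N/\varpi N\to N/\varpi N$ well enough to compute its cokernel. The strategy would be: since $R'/(\varpi)\cong k\br u$ or $k\br{u,v}/(uv)$ and $x\mapsto u$ or $u+v$, and since $N\widehat\otimes_{R^\psi_\rho}k$ has finite length in $\mathfrak C(k)$ (property (b) of $N$), the quotient $N/(\varpi,x)N = (N/\varpi N)\widehat\otimes_{R^\psi_\rho/(\varpi)}(R'/(\varpi,x))$ — using that the $R^\psi_\rho$-action on $N$ factors through $R$ hence through $R'$ after further quotienting — is, up to identifying the relevant quotients, a finite length object, which forces $N/xN$ to be a finitely generated $\OK$-module. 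Projectivity of $N/xN$ in $\Mod^{\pro}_{K,\zeta}(\OO)$ I would deduce from projectivity of $N$ in $\Mod^{\pro}_{K,\zeta}(\OO)$ (property (a)) together with the fact that $x$ acts injectively — here one has to be careful: injectivity of $x$ on $N$ needs to be established, and I would get it from the fact that $(\varpi,x)$ is a regular sequence in $R'$ (noted after Lemma~\ref{tres_peu}) combined with flatness of $N$ over $R^\psi_\rho$, reducing to the assertion that $x$ is a nonzerodivisor on $M(\Theta)$, i.e. $M(\Theta)$ has no $x$-torsion, which follows from $M(\Theta)$ being Cohen--Macaulay of dimension $2$ (Theorem~\ref{thm-pa12-A}) once we know $(x,\varpi)$ cuts out the maximal ideal, i.e. once we have (iv) — so the logical order is really (iii), a preliminary form of (iv), then (i), then the full (iv). The rank computation $\Hom^{\cont}_{\OK}(N/x^nN,\Theta^d)$ free of rank $n\cdot e$ then goes through verbatim as in the generic proof: projectivity reduces it to $n\sum_\sigma m_\sigma\dim_k M(\sigma)/xM(\sigma)$, and by \cite[Thm.2.44]{pa12} (the non-generic Breuil--M\'ezard) this equals $n\cdot e(R/\varpi R)$ provided $e(M(\sigma))=\dim_k M(\sigma)/xM(\sigma)$ for each $\sigma$ with $M(\sigma)\ne 0$.

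For property (ii), that $x$ is $M(\sigma)$-regular for every smooth irreducible $k$-representation $\sigma$ of $K$ with $M(\sigma)\ne 0$: assuming (i), $N/xN$ is projective over $\OK$, so applying $\Hom^{\cont}_{\OK}(-,\sigma^\vee)^\vee$ to $0\ra N\overset x\ra N\ra N/xN\ra 0$ yields an exact sequence
\[0\ra M(\sigma)\overset x\ra M(\sigma)\ra \Hom_K(N/xN,\sigma^\vee)^\vee\ra 0,\]
exactly as in the generic argument, provided $x\colon N\to N$ is injective — which, again, we have once (i) is in place. So the whole package hinges on establishing injectivity of $x$ on $N$ and the computation of $N/xN$; I would expect to borrow the relevant finite-length and multiplicity computations from the proof of \cite[Thm.2.44, Cor.6.22, Prop.6.23]{pa12}, where the structure of $N$ in the non-generic case and the analogous statements for the module $M(\sigma)$ (in particular that $M(\sigma)$ is cyclic with $R^\psi_\rho/\Ann M(\sigma)\cong k\br t$ and $x\mapsto t$) are already worked out — indeed the peu/tr\`es ramifi\'e dichotomy of Lemma~\ref{tres_peu} is precisely the shadow of the corresponding dichotomy in the structure of $N$, so the argument for (iii) above automatically covers both branches. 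The main obstacle, to reiterate, is that unlike the generic case there is no ready-made exact sequence $0\ra N/\varpi N\overset{\bar x}\ra N/\varpi N\ra \Omega^\vee\ra 0$ in the literature, so one must extract the needed properties of $\bar x$ from the deformation-theoretic description of $N$ together with Lemma~\ref{tres_peu}.
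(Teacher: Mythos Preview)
Your overall strategy is reasonable but contains two genuine gaps, both in your treatment of (i), and the paper resolves them by reversing the logical order: it proves (ii) \emph{first}, then deduces (i) from it.

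First, your argument for the injectivity of $x$ on $N$ is circular and unnecessarily complicated: you want to deduce it from Cohen--Macaulayness of $M(\Theta)$ together with a ``preliminary form of (iv)'', but (iv) itself relies on (i). The paper's argument is much simpler and direct: since $R^\psi_\rho$ is formally smooth over $\OO$ it is an integral domain, so every non-zero element (in particular $x^n$) is $R^\psi_\rho$-regular; since $N$ is flat over $R^\psi_\rho$, $x^n$ is $N$-regular.

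Second, and more seriously, your proposed argument for projectivity of $N/xN$ in $\Mod^{\pro}_{K,\zeta}(\OO)$ --- ``deduce it from projectivity of $N$ together with $x$ acting injectively'' --- is simply false in general. The element $x$ acts on $N$ via $\End_{\dualcat(\OO)}(N)\cong R^\psi_\rho$, not via the $\OK$-module structure, so there is no reason a quotient by such an endomorphism should preserve projectivity. The paper instead proceeds as follows: it proves (ii) \emph{directly}, before (i), by invoking \cite[Prop.6.20]{pa12}, which gives the structure of $M(\sigma)$ explicitly --- in the tr\`es ramifi\'e case $M(\sigma)\neq 0$ only for $\sigma=\st$ and $M(\st)$ is free of rank $1$ over $R'/(\varpi)\cong k\br{u}$; in the peu ramifi\'e case $M(\st)$ is free over $R'/(\varpi)\cong k\br{u,v}/(uv)$ and $M(\Eins)$ is free over $R'/(\varpi,\pp_1)\cong k\br{v}$. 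Regularity of $x$ on these modules is then immediate from the choice of $x\mapsto u$ or $x\mapsto u+v$. With (ii) in hand, the exact sequence $0\ra N\overset{x^n}\ra N\ra N/x^nN\ra 0$ and projectivity of $N$ (property (a)) yield
\[0\ra \Ext^1_{K,\zeta}(N/x^nN,\sigma^\vee)^\vee\ra M(\sigma)\overset{x^n}\ra M(\sigma),\]
and injectivity of $x^n$ on $M(\sigma)$ forces $\Ext^1_{K,\zeta}(N/x^nN,\sigma^\vee)=0$ for all irreducible $\sigma$, which \emph{is} enough to conclude projectivity of $N/x^nN$. Finite generation over $\OK$ is handled via \cite[Lem.2.38]{pa12}, again using that the $M(\sigma)/x^nM(\sigma)$ are finite-dimensional. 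Your arguments for (iii) and (iv) are fine and essentially match the paper's.
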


\begin{proof}
We first prove (ii). If $\rho$ is tr\`es ramifi\'e then it is shown in \cite[Prop.6.20]{pa12} that $M(\sigma)\neq 0$ if and only if $\sigma\cong\mathrm{st}\overset{\rm def}{=}\Sym^{p-1}k^2$. Moreover, $M(\st)$ is a free $R'/(\varpi)$-module of rank $1$. Hence $x$ is $M(\st)$-regular and 
\begin{equation}\label{dim_tres}
\dim_k M(\st)/ x M(\st)=1.
\end{equation}
 If $\rho$ is peu ramifi\'e, then it is shown in \cite[Prop.6.20]{pa12} that $M(\sigma)\neq0 $ if and only if $\sigma=\ide$ or $\sigma=\mathrm{st}$. Moreover, 
$M(\st)$ is a free $R'/(\varpi)$-module of rank $1$, and $M(\Eins)$ is a free $R'/(\varpi, \pp_1)$-module of rank $1$, where $\pp_1$ is the closure of the crystalline points 
in $R'$. Since $R'/(\varpi, \pp_1)\cong R'/(\varpi, u)\cong k\br{v}$, we conclude that $x$ is $M(\st)$- and $M(\Eins)$-regular and
\begin{equation}\label{dim_peu} 
\dim_k M(\st)/ x M(\st)=2, \quad \dim_k M(\Eins)/xM(\Eins)=1.
\end{equation}

(i) Since $R^{\psi}_{\rho}$ is formally smooth over $\OO$, it is an integral domain and $x^n$ is $R^{\psi}_{\rho}$-regular for all $n\ge 1$. Since $N$ is flat over $R^{\psi}_{\rho}$, $x^n$ is also $N$-regular. It follows from \eqref{dim_tres}, \eqref{dim_peu} that $M(\sigma)/ x^n M(\sigma)$ is a finite dimensional $k$-vector space for all irreducible $\sigma\in \Mod^{\sm}_{K, \zeta}(\OO)$. This implies that $M(\lambda)/ x^n M(\lambda)$ is a finitely generated $\OO$-module for all $\lambda \in  \Mod^{\sm}_{K, \zeta}(\OO)$ of finite length.
It follows from  \cite[Lem.2.38]{pa12} that $N/x^nN$ is a finitely generated $\OK$-module. For each irreducible $\sigma\in \Mod^{\sm}_{K, \zeta}(\OO)$, the exact sequence $0\ra N\overset{x^n}{\ra} N\ra N/x^nN\ra0$ induces an exact sequence: 
\begin{equation}\label{sequence}
0\ra \Ext^1_{K, \zeta}(N/x^nN,\sigma^{\vee})^{\vee}\ra M(\sigma)\overset{x^n}{\ra} M(\sigma)
\ra \Hom_{K}(N/x^nN,\sigma^{\vee})^{\vee}\ra0,
\end{equation}
where the subscript $\zeta$ indicates that we compute the extension groups in $\Mod^{\pro}_{K, \zeta}(\OO)$.
It follow from part (ii) proved above that the multiplication by $x^n$ induces an injective map on $M(\sigma)$. Hence, 
\[\Ext^1_{K, \zeta}(N/x^nN,\sigma^{\vee})=0,\]
for all irreducible $\sigma\in \Mod^{\sm}_{K, \zeta}(\OO)$. Since every non-zero object in $\Mod^{\pro}_{K, \zeta}(\OO)$ has 
an irreducible quotient, this implies that $N/x^n N$ is equal to its own projective envelope in $\Mod^{\pro}_{K, \zeta}(\OO)$, and hence is 
projective. Since $x$ is $N$-regular and $N/xN$ is projective, we get an isomorphism $N/x^n N\cong (N/xN)^{\oplus n}$ as $\OK$-modules. 

It remains to show that the rank of $\Hom_{\OK}^{\cont}(N/xN, \Theta^d)$ is equal to the Hilbert--Samuel multiplicity of $R$. 
The argument  is the same as in the proof of part (i) of Theorem \ref{thm-pa12-B}: using the projectivity of $N/xN$ and \cite[Thm.6.24]{pa12} we deduce that it is enough to show that 
$e(M(\sigma))=\dim_k M(\sigma)/x M(\sigma)$ for all irreducible $\sigma\in  \Mod^{\sm}_{K, \zeta}(\OO)$. This follows from \eqref{dim_tres}, \eqref{dim_peu}
and the Hilbert--Samuel multiplicities of $M(\sigma)$ computed in  \cite[Thm.6.24]{pa12}.

(iii) As in the proof of Theorem \ref{thm-pa12-B}, it suffices to find a quotient ring $A$ of $R^{\psi}_{\rho}$, which is killed by $\varpi$, such that the image of $x$ does not lie in $\m_A^{2}$. By construction, we can take $A=R'/(\varpi)$.

(iv) Using (i), the same proof as in Theorem \ref{thm-pa12-B} works. 
\end{proof}
\begin{rem}
To deduce the projectivity of $N/xN$ in Theorem \ref{thm-x-nongeneric}, we can also  apply the criterion proved in \cite[Cor.2.41]{pa12} to $R/xR$ and $N/xN$.
\end{rem}

\subsubsection{Prime avoidance}\label{avoid}
  From now on, $\rho$ can be either generic or non-generic. 
  As $x$ is constructed as \emph{any} lifting of a certain element in $R^{\psi}_{\rho}/\varpi R^{\psi}_{\rho}$ (or in some quotient of $R^{\psi}_{\rho}/\varpi R^{\psi}_{\rho}$ when $\rho$ is non-generic),  we see that if $x'\in R^{\psi}_{\rho}$ is such that $x'\equiv x \mod \varpi$, then $x'$ also satisfies the properties (i)-(iv) listed in Theorem \ref{thm-pa12-B} or Theorem \ref{thm-x-nongeneric}. This allows us to choose one which behaves better.

\begin{lem}\label{lemma-primeavoidence}
Let $\p_1,...,\p_n$ be a finite set of maximal  ideals of $R^{\psi}_{\rho}[1/p]$. Then there is $x\in R^{\psi}_{\rho}$ satisfying the conditions of Theorem \ref{thm-pa12-B} in the generic case and the conditions of  Theorem \ref{thm-x-nongeneric} in the non-generic case, such that $x\notin\p_i$ for all $1\leq i\leq n$.
\end{lem}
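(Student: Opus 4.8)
The plan is to combine the freedom in the choice of $x$ (it may be any lift of a fixed element $\bar x\in R^\psi_\rho/\varpi R^\psi_\rho$, or of its image in a quotient, in the non-generic case) with a prime avoidance argument carried out in the special fibre. First I would fix one element $x_0\in R^\psi_\rho$ satisfying the conditions of Theorem \ref{thm-pa12-B} (generic case) or Theorem \ref{thm-x-nongeneric} (non-generic case), and recall the observation made just before the lemma: any $x'\in R^\psi_\rho$ with $x'\equiv x_0\pmod{\varpi}$ again satisfies all of (i)--(iv), because these properties only depend on the image $\bar x_0$ in $R^\psi_\rho/\varpi R^\psi_\rho$ (and in the non-generic case on its image in a further quotient killed by $\varpi$). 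So it suffices to produce $x'\in x_0+\varpi R^\psi_\rho$ with $x'\notin \p_i$ for all $i$.

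Next I would reduce to a statement over $R^\psi_\rho/\varpi R^\psi_\rho$. For each $i$, the maximal ideal $\p_i$ of $R^\psi_\rho[1/p]$ has residue field $\kappa(\p_i)$ a finite extension of $L$. The key point is that none of the $\p_i$ can contain $\varpi$: inverting $p$ already inverts $\varpi$, so $\varpi\notin\p_i$. Hence $x_0\notin\p_i$ is \emph{not} what we can arrange directly; rather, I want to show the coset $x_0+\varpi R^\psi_\rho$ is not contained in any $\p_i$. Suppose, for contradiction, that $x_0+\varpi R^\psi_\rho\subseteq \p_i$ for some $i$; then $\varpi R^\psi_\rho\subseteq \p_i$ would force $\varpi\in\p_i$, a contradiction. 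Therefore for each $i$ the affine coset $x_0+\varpi R^\psi_\rho$ meets the complement of $\p_i$. To get a single $x'$ avoiding \emph{all} the $\p_i$ simultaneously, I would argue as follows: the coset $x_0+\varpi R^\psi_\rho$ is a torsor under $\varpi R^\psi_\rho$; its image in $R^\psi_\rho[1/p]$ is a coset of the ideal $\varpi R^\psi_\rho[1/p]=R^\psi_\rho[1/p]$, i.e. the image is all of $R^\psi_\rho[1/p]$ after... — more carefully, I would instead work integrally. Consider the finitely many ideals $\p_i\cap R^\psi_\rho$ of $R^\psi_\rho$; each properly contains $\varpi R^\psi_\rho$ is false in general, but $\p_i\cap R^\psi_\rho$ does not contain $\varpi$, so $\varpi R^\psi_\rho\not\subseteq \p_i\cap R^\psi_\rho$ for each $i$. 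By prime avoidance (the $\p_i\cap R^\psi_\rho$ are prime), $\varpi R^\psi_\rho\not\subseteq \bigcup_i(\p_i\cap R^\psi_\rho)$ in the relevant sense — but we need the stronger "coset avoidance" statement. I would invoke the standard lemma: if $\mathfrak a$ is an ideal and $\p_1,\dots,\p_n$ are primes with $\mathfrak a\not\subseteq\p_i$ for all $i$, then for any $x_0$ the coset $x_0+\mathfrak a$ is not contained in $\bigcup_i\p_i$, hence there is $y\in\mathfrak a$ with $x_0+y\notin\p_i$ for all $i$ (proved by the usual inductive prime-avoidance argument, adapted to cosets). Applying this with $\mathfrak a=\varpi R^\psi_\rho$ and the primes $\p_i\cap R^\psi_\rho$ gives the desired $x'=x_0+y$ with $y\in\varpi R^\psi_\rho$, so $x'\equiv x_0\pmod\varpi$ and $x'\notin\p_i$ for all $i$; by the remark above $x'$ satisfies (i)--(iv).

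The only mild subtlety — and the step I would be most careful about — is the coset version of prime avoidance and the verification that $\varpi\notin\p_i\cap R^\psi_\rho$. For the latter: since $\p_i\in\mSpec R^\psi_\rho[1/p]$ and $p$ is a unit in $R^\psi_\rho[1/p]$, we have $p=\varpi^e\cdot(\text{unit})$ a unit mod $\p_i$, so $\varpi$ is a unit in $\kappa(\p_i)$, hence $\varpi\notin\p_i$ and a fortiori $\varpi\notin\p_i\cap R^\psi_\rho$. For the coset version: one runs the classical prime avoidance induction, using at each stage an element of $\mathfrak a$ lying outside $\p_i$ (which exists precisely because $\mathfrak a\not\subseteq\p_i$) to adjust $x_0$; the finitely many primes and the fact that $\mathfrak a$ is an honest ideal make this go through verbatim. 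I do not expect any genuine obstacle here — this lemma is purely formal commutative algebra — its role is simply to let the later argument intersect the hypersurface $V(x)$ with a prescribed finite set of characteristic-zero points of $\Spec R^\psi_\rho[1/p]$ transversally, i.e. avoid them.
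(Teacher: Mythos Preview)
Your argument is correct, but it takes a heavier route than the paper's. You invoke the coset form of prime avoidance (sometimes attributed to E.~Davis): if $\mathfrak a$ is an ideal not contained in any of the primes $\p_i$, then for any $x_0$ the coset $x_0+\mathfrak a$ meets the complement of $\bigcup_i\p_i$. This is a genuine theorem and your application with $\mathfrak a=\varpi R^\psi_\rho$ and the contracted primes $\p_i\cap R^\psi_\rho$ is sound (the contractions are distinct incomparable primes since $R^\psi_\rho\to R^\psi_\rho[1/p]$ is a localization, and none contains $\varpi$).

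The paper instead gives a one-line pigeonhole argument: among the infinitely many elements $x+\varpi^k$, $k\ge 1$, if each lay in some $\p_i$ then two of them, say $x+\varpi^k$ and $x+\varpi^{k'}$ with $k<k'$, would lie in the same $\p_i$; subtracting gives $\varpi^k(1-\varpi^{k'-k})\in\p_i$, and since $1-\varpi^{k'-k}$ is a unit in the local ring $R^\psi_\rho$ this forces $\varpi\in\p_i$, a contradiction. This is more elementary and entirely self-contained, whereas your approach buys generality (it would work with $\varpi R^\psi_\rho$ replaced by any ideal missing all the $\p_i$) at the cost of citing a less standard lemma. Either proof is fine here; the paper's is the cleaner one to write down.
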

\begin{proof}
It suffices to show that there exists $k\geq 1$, such that $x+\varpi^k$ is not contained in $\p_i$ for any $1\leq i\leq n$. If not, we can find $k<k'$ and $i\in\{1,...,n\}$ such that $x+\varpi^k,x+\varpi^{k'}\in \p_i$, which implies $\varpi^k-\varpi^{k'}\in\p_i$.  This is impossible since $\p_i$ does not contain $\varpi$ by assumption.
\end{proof}

We will need exclude one special case later, which leads us to consider the following condition on $\rho$ and on the type $(\w,\tau,\psi)$:
 \begin{equation}\label{Hyp}
  \tag{\textbf{H}}  \rho\ncong \smatr{{\delta}\omega}*0{{\delta}}, \text{  or } b-a>1, \text{ or } \tau\neq \theta\oplus\theta.
  \end{equation}
\begin{rem}
The condition \eqref{Hyp} comes from the restriction of \cite[Cor.4.21]{pa12}.  It follows from Lemma \ref{tres_peu}  
that if \eqref{Hyp} is not satisfied, then $R$ is a complete intersection, hence 
a Cohen-Macaulay ring, so there is no harm 
assuming \eqref{Hyp}.\end{rem}

When \eqref{Hyp} is satisfied, we use Lemma \ref{lemma-primeavoidence} to choose $x$ in such a way that it is not contained in the finite union of the maximal  ideals of $\Spec R[1/p]$ excluded in \cite[Cor.4.21]{pa12}. In particular, $x$ depends on the type $(\w,\tau)$. With this choice, $x$  has the following  additional
property.

\begin{prop}\label{prop-(H)}
Assume \eqref{Hyp} is satisfied and choose $x$  as above.  Then $R_{\p}$ is a regular local ring and $R_{\p}\cong M(\Theta)_{\p}$ for any prime ideal $\p$ of $R$ minimal over $(x)$.
\end{prop}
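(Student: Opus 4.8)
The plan is to deduce the statement from \cite[Cor.~4.21]{pa12} (which is available because \eqref{Hyp} holds) by showing that any prime $\p$ of $R$ minimal over $(x)$ has height one, does not contain $\varpi$, and hence defines a closed point of $\Spec R[1/p]$ lying outside the finite set excluded in that corollary. We may assume $R\neq 0$, since otherwise there is nothing to prove.

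The first step is to check that $x$ is a non-zero-divisor on $R$. Suppose $x\in\qq$ for some minimal prime $\qq$ of $R$. As $R$ is reduced (Theorem~\ref{thm-kisin}), $\qq\in\Ass(R)$, and as $R$ is $\cO$-flat, $\varpi$ is $R$-regular, so $\varpi\notin\qq$; hence $R/\qq$ is an $\cO$-flat domain, of dimension $2$ by equidimensionality of $R$, and therefore $R/(\qq+\varpi R)$ has dimension one and contains a prime ideal of coheight one, which is in particular distinct from $\m_R$. Since $x\in\qq$, this prime contains $(x,\varpi)$, contradicting property~(iv) of Theorem~\ref{thm-pa12-B} (which is valid in the non-generic case by Theorem~\ref{thm-x-nongeneric}). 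Thus $x$ is $R$-regular, so by Krull's principal ideal theorem $\mathrm{ht}(\p)=1$; combined with $\mathrm{ht}(\p)+\dim R/\p\le\dim R=2$ and $\p\neq\m_R$, this gives $\dim R/\p=1$.

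Next I would note that $\varpi\notin\p$: otherwise $(x,\varpi)\subseteq\p$ would force $\p=\m_R$ by property~(iv), contradicting $\mathrm{ht}(\p)=1$. Hence $R/\p$ is a one-dimensional $\cO$-flat domain, so $(R/\p)[1/p]$ is a field, finite over $L$; that is, $\n:=\p R[1/p]$ is a maximal ideal of $R[1/p]$, and since $\varpi$ is invertible in $R_\p$, localisation identifies $R_\p\cong R[1/p]_\n$ and $M(\Theta)_\p\cong M(\Theta)[1/p]_\n$. Because $x\in\p$, while $x$ was chosen via Lemma~\ref{lemma-primeavoidence} to avoid all of the finitely many maximal ideals of $R[1/p]$ excluded in \cite[Cor.~4.21]{pa12}, the ideal $\p$ is none of those, so $\n$ is not an excluded point.

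Finally I would apply \cite[Cor.~4.21]{pa12}: at a non-excluded maximal ideal $\n$ of $R[1/p]$, the local ring $R_\n$ is regular and $M(\Theta)_\n$ is free of rank one over $R_\n$ (equivalently $R_\n\cong M(\Theta)_\n$). Unwinding the identifications of the preceding step then yields that $R_\p$ is a regular local ring and $R_\p\cong M(\Theta)_\p$, as claimed. The one step with genuine content is the passage from the height-one prime $\p\subset R$ to a closed point of $\Spec R[1/p]$, which hinges on $x$ being $R$-regular; and that in turn is forced by property~(iv) together with the facts that $R$ is reduced, $\cO$-flat, and equidimensional of dimension $2$. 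Everything else is bookkeeping with Krull dimensions and an appeal to \cite{pa12}.
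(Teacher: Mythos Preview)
Your proof is correct and follows essentially the same approach as the paper: show $\mathfrak p$ has height one, show $\varpi\notin\mathfrak p$ via property~(iv), identify $\mathfrak p$ with a maximal ideal of $R[1/p]$ avoided by the choice of $x$, and invoke \cite{pa12}. Two minor points: the paper takes ``$x$ is regular in $R$'' as known (it follows at once from $x$ being $M(\Theta)$-regular together with faithfulness of the $R$-action), whereas you supply a separate argument; and the paper does not read both conclusions directly off \cite[Cor.~4.21]{pa12} but rather routes through \cite[Prop.~2.30]{pa12} for regularity and \cite[Prop.~4.14, 2.22]{pa12} plus faithfulness for the freeness of rank one, so your final citation is slightly compressed.
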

\begin{proof}
As $x$ is regular in $R$, Krull's principal ideal theorem implies that $\p$ has height $1$.  Since the Krull dimension of $R$ is $2$, $\p$ is not the maximal ideal of $R$.  
 We know by part (iv) of Theorems \ref{thm-pa12-B}, \ref{thm-x-nongeneric} that $\varpi\notin\p$, hence
  $\p R[1/p]$ is a maximal ideal of $R[1/p]$.   By the choice of $x$, the conclusion of \cite[Cor.4.21]{pa12} holds,
hence $R_{\p}$ is regular by \cite[Prop.2.30]{pa12}. Now $\dim_{\kappa(\p)} M(\Theta)\otimes_R \kappa(\p)$ is equal to 
$1$ according to \cite[Prop.4.14, 2.22]{pa12}. Hence, $M(\Theta)_{\p}$ is a cyclic $R_{\p}$-module. Since $R$ acts faithfully on $M(\Theta)$, 
$R_{\p}$ acts faithfully on $M(\Theta)_{\p}$, and so  $M(\Theta)_{\p}$ is a free $R_{\p}$-module of rank $1$.
If $p=2$ or $p=3$ then it is explained in the proof of \cite[Prop.2.33]{blocksp2} that the same argument works. 
\end{proof}

\begin{rem} In the potentially crystalline case we know that $R_{\pp}$ is a regular ring for all $\pp$ such that 
$\pp R[1/p]$ is a maximal ideal of $R[1/p]$ by \cite[Thm.3.3.8]{ki08} and one could deduce that $M(\Theta)_{\p}$ is a free $R_{\pp}$-module  
using the Auslander--Buchsbaum formula. To show that the rank is one, it is enough to do so for one $\p$ on each irreducible 
component, and this can be done in the same way as in the proof of Proposition \ref{prop-(H)}.
\end{rem} 

\begin{rem}\label{whyH} The hypothesis \eqref{Hyp} will be used as follows. We will see in section \ref{main-proof} that to every maximal ideal 
$\nn$ of $R[1/p]$ we may attach a unitary $\kappa(\nn)$-Banach space representation $\Pi(\kappa(\nn))$ of $G$. It is shown in \cite[Lem.4.16]{pa12}
that if \eqref{Hyp} is satisfied then for all except finitely many $\nn$ the space of locally algebraic vectors $\Pi(\kappa(\nn))^{\alg}$ in $\Pi(\kappa(\nn))$ 
is an irreducible representation of $G$. Thus we choose our $x$ to avoid the maximal ideals $\nn$ where  $\Pi(\kappa(\nn))^{\alg}$ is reducible. 
\end{rem}

\subsection{Completed tensor products and homomorphisms}\label{subsection-Banach}
In this section we show that the functor $\md \mapsto \md \wtimes_{R^{\psi}_\rho} N$ from compact $R^{\psi}_\rho$-modules to $\dualcat(\OO)$ is fully faithful. 

\begin{prop}\label{prop-mm-isom}
Let $\md$ be a compact $R^{\psi}_{\rho}$-module.
Then the natural map $m\mapsto (n\mapsto m\wtimes n)$ is an isomorphism
\[\md\simto\Hom_{\CO}(N,\md\widehat{\otimes}_{R^{\psi}_{\rho}}N).\]
\end{prop}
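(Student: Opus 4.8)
The plan is to reduce the statement to the known structure of $N$ as an object of $\dualcat(\OO)$ carrying a faithful $R^{\psi}_{\rho}$-action, and then to a continuity/density argument. First I would recall that $N$ is a projective object in $\Mod^{\pro}_{K,\zeta}(\OO)$ (property (a) of \S\ref{subsection-x}) and that $\End_{\CO}(N)\cong R^{\psi}_{\rho}$ (property (c)); write $R=R^{\psi}_{\rho}$ for brevity. The map in question, call it $\alpha_{\md}\colon \md\to\Hom_{\CO}(N,\md\wtimes_R N)$, is visibly natural in $\md$, and both sides are additive functors from the category of compact $R$-modules to $R$-modules (using that $\wtimes_R$ commutes with projective limits and finite direct sums, and that $\Hom_{\CO}(N,-)$ is left exact and commutes with the relevant limits). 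So I would first check that $\alpha_{\md}$ is an isomorphism for $\md=R$: in that case $R\wtimes_R N=N$ and the claim is exactly $R\xrightarrow{\sim}\End_{\CO}(N)=\Hom_{\CO}(N,N)$, which is property (c).

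Next I would bootstrap from $\md=R$ to arbitrary finitely generated $\md$. Every finitely generated $R$-module (hence every finitely generated compact $R$-module) admits a presentation $R^{\oplus b}\to R^{\oplus a}\to \md\to 0$. Since $-\wtimes_R N$ is right exact (it is a quotient of a completed tensor product and agrees with $(-)\otimes_R N$ completed) and $\Hom_{\CO}(N,-)$ is left exact, applying the five lemma to the two rows obtained from this presentation, together with the case $\md=R$ and compatibility with finite direct sums, shows $\alpha_{\md}$ is an isomorphism for all finitely generated $\md$. The key technical point here is that $\Hom_{\CO}(N,-)$ should be exact on the relevant class of objects: this is where projectivity of $N$ in $\Mod^{\pro}_{K,\zeta}(\OO)$ enters — it guarantees $\Hom_{\CO}(N,-)$ sends short exact sequences of objects of $\dualcat(\OO)$ to short exact sequences, after noting that the $G$-equivariant $\Hom$ in $\dualcat(\OO)$ computes the same thing because $R=\End_{\CO}(N)$ and everything in sight is an $R$-module homomorphism.

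Finally I would pass from finitely generated to general compact $\md$ by a limit argument: any compact $R$-module is the projective limit $\md=\plim_i \md_i$ of its finite (hence finitely generated) quotients. Both functors $\md\mapsto\md$ and $\md\mapsto\Hom_{\CO}(N,\md\wtimes_R N)$ commute with this projective limit — for the left side trivially, and for the right side because $\wtimes_R N$ commutes with projective limits of compact modules (that is the whole point of the completed tensor product) and $\Hom_{\CO}(N,-)=\Hom_{\OO}^{\cont}(N,-)^{\vee}$-type functors commute with projective limits of profinite objects. Since $\alpha_{\md}=\plim_i \alpha_{\md_i}$ and each $\alpha_{\md_i}$ is an isomorphism, $\alpha_{\md}$ is an isomorphism. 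The main obstacle I anticipate is purely bookkeeping: making sure that the completed tensor product $\md\wtimes_R N$ genuinely lands in $\dualcat(\OO)$ for all compact $\md$ (so that $\Hom_{\CO}(N,-)$ is even defined on it) and that all the exactness and limit-interchange properties hold in the topological categories $\Mod^{\pro}_{G}(\OO)$ and $\dualcat(\OO)$ rather than just for abstract modules; these are the kind of statements established in \cite[\S2]{pa12}, so I would lean on those rather than reprove them, and the only genuinely new input needed is property (c), $R\cong\End_{\CO}(N)$, which handles the base case of the dévissage.
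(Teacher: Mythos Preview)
Your argument is correct when $N$ is projective in $\dualcat(\OO)$, which holds in the generic case; this is essentially the content of \cite[Lem.~2.9]{pa13}, which the paper simply cites. However, there is a genuine gap in the non-generic case. You invoke projectivity of $N$ in $\Mod^{\pro}_{K,\zeta}(\OO)$ (property (a)) to conclude that $\Hom_{\CO}(N,-)$ is exact, but these are different categories: property (a) gives exactness of $\Hom^{\cont}_{\OK}(N,-)$, i.e.\ $K$-equivariant homomorphisms, whereas $\Hom_{\CO}(N,-)$ records $G$-equivariant homomorphisms. Your parenthetical justification (``the $G$-equivariant $\Hom$ computes the same thing because $R=\End_{\CO}(N)$'') does not bridge this; knowing the $G$-endomorphisms of $N$ says nothing about surjectivity of $\Hom_{\CO}(N,-)$ on an arbitrary quotient. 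In fact, in the non-generic case $N$ is \emph{not} projective in $\dualcat(\OO)$, so your five-lemma step fails: from a presentation $R^b\to R^a\to\md\to 0$ you obtain $N^b\to N^a\to \md\wtimes_R N\to 0$, but applying $\Hom_{\CO}(N,-)$ only gives a left-exact sequence, and you cannot identify the cokernel.

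The paper's proof in the non-generic case replaces projectivity of $N$ in $\dualcat(\OO)$ by the fact that $N$ is \emph{flat over $R^{\psi}_{\rho}$} (this is how $N$ is constructed in \cite[\S6.2]{pa12}). Flatness ensures that a short exact sequence $0\to\md_1\to\md\to\md_2\to 0$ of $R$-modules yields a short exact sequence after $-\wtimes_R N$, so one can run an induction on the \emph{length} of $\md$ rather than a presentation argument. The base case is then $\md=k$, not $\md=R$: one must show $\Hom_{\CO}(N,\beta^{\vee})\cong k$ where $\beta^{\vee}=k\wtimes_R N$, and this requires a specific computation using the functor $\check{\VV}$ and the identification $\check{\VV}(N)\cong\rho^{\mathrm{un},\psi}$. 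The passage from finite length to general compact $\md$ via projective limits is then as you describe.
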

\begin{proof} If $\rho$ is generic then $N$ is isomorphic to a projective envelope of $\pi^{\vee}$ in $\CO$, as recalled in \S\ref{subsubsection-generic}. Since the proposition follows from \cite[Lem.2.9]{pa13} if $N$ is projective in $\CO$,  we may assume $\rho$ is non-generic for the rest of the proof. In this case $N$ is flat over $R^{\psi}_{\rho}$ by definition, see \cite[Def.6.11]{pa12}.

First assume that $\md$ is of finite length as an $\cO$-module. We argue by induction on the length of $\md$. If $\md\cong k$, we know by definition of $N$ that $k\widehat{\otimes}_{R^{\psi}_{\rho}}N\cong \beta^{\vee}$, where $\beta$ is defined in \cite[Lem.6.7]{pa12} such that $\check{\VV}(\beta^{\vee})={\VV}(\beta)=\rho$.  We need show that $\Hom_{\CO}(N,\beta^{\vee})\cong k$. By \cite[Thm.6.10]{pa12} and the definition of $N$, the functor $\check{\VV}$ induces an isomorphism $\Hom_{\CO}(N,\beta^{\vee})\cong \Hom_{\cO\br{G_{\Q_p}}}^{\rm cont}(\rho^{\rm un,\psi},\rho)$, where $\rho^{\rm un,\psi}$ is the universal deformation of $\rho$ with determinant $\psi\epsilon$. The assertion then follows from  \cite[Lem.4.1]{pa15}. 
If  the length of $\md$ is $\geq 2$ then let  $\md_1\subsetneq \md$ be a proper $R^{\psi}_{\rho}$-submodule
   and let $\md_2:=\md/\md_1$, so that $0\ra \md_1\ra \md\ra\md_2\ra0$. Since $N$ is flat over $R^{\psi}_{\rho}$, we get an exact sequence:
\[\Hom_{\CO}(N,\md_1 \wtimes_{R^{\psi}_{\rho}}N)\hookrightarrow\Hom_{\CO}(N,\md\wtimes_{R^{\psi}_{\rho}}N)\ra\Hom_{\CO}(N,\md_2\wtimes_{R^{\psi}_{\rho}}N).\]
By inductive hypothesis, we have $\md_i\simto \Hom_{\CO}(N,\md_i\widehat{\otimes}_{R^{\psi}_{\rho}}N)$ for $i=1,2$, hence the assertion using the snake lemma.

Now we treat the general case. Since $\md$ is compact, we may write $\md=\plim_{j}\md_{j}$, where the limit is taken over all the quotients of finite length. Since completed tensor product commutes with projective limits, we get
\begin{multline*}\Hom_{\CO}(N,\md\widehat{\otimes}_{R^{\psi}_{\rho}}N)\cong\Hom_{\CO}(N,\plim_j\md_j\widehat{\otimes}_{R^{\psi}_{\rho}}N)\\
\cong\plim_j\Hom_{\CO}(N,\md_j\widehat{\otimes}_{R^{\psi}_{\rho}}N)=\plim_j\md_j\cong\md.\end{multline*}
\end{proof}

\begin{cor}\label{corollary-passtoN}
 Let $\md_1$ and $\md_2$ be compact $R^{\psi}_\rho$-modules. 
 Then the map $\varphi\mapsto  \varphi\wtimes_{R^{\psi}_\rho} N$ induces an isomorphism 
$$ \Hom^{\cont}_{R^{\psi}_{\rho}}(\md_1, \md_2)\simto\Hom_{\CO}(\md_1\wtimes_{R^{\psi}_\rho} N,\md_2\wtimes_{R^{\psi}_{\rho}}N).$$
\end{cor}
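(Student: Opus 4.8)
The plan is to deduce Corollary \ref{corollary-passtoN} from Proposition \ref{prop-mm-isom} by a standard "adjunction-free" argument, exploiting that the functor $\md \mapsto \md\wtimes_{R^{\psi}_\rho} N$ is additive and that Proposition \ref{prop-mm-isom} computes its "sections" back. First I would fix compact $R^{\psi}_\rho$-modules $\md_1$, $\md_2$ and consider the map
\[
\Hom^{\cont}_{R^{\psi}_{\rho}}(\md_1,\md_2)\lra \Hom_{\CO}(\md_1\wtimes_{R^{\psi}_\rho} N,\md_2\wtimes_{R^{\psi}_{\rho}}N),\qquad \varphi\mapsto \varphi\wtimes_{R^{\psi}_\rho}N,
\]
which is well-defined since $\wtimes_{R^{\psi}_\rho}N$ is a functor from compact $R^{\psi}_\rho$-modules to $\CO$ and $R^{\psi}_\rho$ acts on $N$ compatibly. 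I will show it is bijective by constructing an inverse.

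The key step is to build the inverse using Proposition \ref{prop-mm-isom}. Given a morphism $\Phi\colon \md_1\wtimes_{R^{\psi}_\rho}N\to \md_2\wtimes_{R^{\psi}_\rho}N$ in $\CO$, post-compose and pre-compose appropriately: applying $\Hom_{\CO}(N,-)$ to $\Phi$ and using the isomorphism of Proposition \ref{prop-mm-isom} on both sides, $\md_i\simto \Hom_{\CO}(N,\md_i\wtimes_{R^{\psi}_\rho}N)$, one obtains an $\cO$-linear map $\md_1\to \md_2$. The point is that this map is automatically $R^{\psi}_\rho$-linear and continuous: $R^{\psi}_\rho$-linearity follows because the isomorphism of Proposition \ref{prop-mm-isom} is $R^{\psi}_\rho$-equivariant (the $R^{\psi}_\rho$-action on $\Hom_{\CO}(N,\md_i\wtimes_{R^{\psi}_\rho}N)$ being induced from the action on $\md_i\wtimes_{R^{\psi}_\rho}N$, or equivalently from the action on $N$, and these agree since the two actions coincide on the relevant modules), and continuity is automatic for $R^{\psi}_\rho$-linear maps between compact modules. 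Then I would check the two composites are the identity: one direction is the functoriality statement of Proposition \ref{prop-mm-isom} applied to $\varphi\wtimes_{R^{\psi}_\rho}N$, and the other is a diagram chase using naturality of the isomorphism $m\mapsto (n\mapsto m\wtimes n)$ in the module variable.

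Concretely, the cleanest route is: the functor $\wtimes_{R^{\psi}_\rho}N$ has a "right adjoint-like" partner $\md\mapsto \Hom_{\CO}(N,\md\wtimes_{R^{\psi}_\rho}N)$, and Proposition \ref{prop-mm-isom} says the unit-type map $\md\to \Hom_{\CO}(N,\md\wtimes_{R^{\psi}_\rho}N)$ is an isomorphism, naturally in $\md$. Therefore for any $\md_1,\md_2$ the composite
\[
\Hom^{\cont}_{R^{\psi}_\rho}(\md_1,\md_2)\to \Hom_{\CO}(\md_1\wtimes_{R^{\psi}_\rho}N,\md_2\wtimes_{R^{\psi}_\rho}N)\to \Hom^{\cont}_{R^{\psi}_\rho}\bigl(\md_1,\Hom_{\CO}(N,\md_2\wtimes_{R^{\psi}_\rho}N)\bigr)
\]
is an isomorphism (the second map being the natural bijection induced by $\Hom_{\CO}(N,-)\circ(-\wtimes_{R^{\psi}_\rho}N)$ on Hom-sets, i.e.\ the adjunction bijection, which holds because $N$ is a compact $\OK$-module and $\wtimes_{R^{\psi}_\rho}N$ distributes as a tensor), and the second map is an isomorphism by Proposition \ref{prop-mm-isom} composed with the identification of $\md_2$ with $\Hom_{\CO}(N,\md_2\wtimes_{R^{\psi}_\rho}N)$. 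Hence the first map is an isomorphism too.

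The main obstacle I anticipate is purely bookkeeping: one must be careful that "$\Hom_{\CO}$" refers to morphisms in the category $\CO$ (continuous $\cO[G]$-equivariant maps of compact $\OK$-modules), that the natural $R^{\psi}_\rho$-module structure on $\Hom_{\CO}(N,\md\wtimes_{R^{\psi}_\rho}N)$ is the one making Proposition \ref{prop-mm-isom} an isomorphism of $R^{\psi}_\rho$-modules, and that the adjunction-type bijection $\Hom_{\CO}(\md_1\wtimes_{R^{\psi}_\rho}N,\md_2\wtimes_{R^{\psi}_\rho}N)\cong \Hom^{\cont}_{R^{\psi}_\rho}(\md_1,\Hom_{\CO}(N,\md_2\wtimes_{R^{\psi}_\rho}N))$ really holds in this topologized setting — this should follow formally from the definition of $\wtimes_{R^{\psi}_\rho}$ as a completed tensor product and the universal property it satisfies against continuous bilinear maps, together with the compactness of all modules in sight, so no essentially new input beyond Proposition \ref{prop-mm-isom} is needed. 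Once these identifications are in place, the result drops out as above.
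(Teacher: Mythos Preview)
Your approach is correct in spirit and is the natural categorical way to phrase the argument: the functor $\md\mapsto \md\wtimes_{R^{\psi}_\rho}N$ is fully faithful because the unit map of Proposition~\ref{prop-mm-isom} is an isomorphism. However, the route you take differs from the paper's, and the point you flag as ``purely bookkeeping'' is precisely where the content lies.

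The paper does not invoke a general tensor--Hom adjunction. Instead it first reduces to the case where $\md_2$ has finite length (so that $\md_2\wtimes_{R^{\psi}_\rho}N$ has finite length in $\CO$), then treats $\md_1=\prod_I R^{\psi}_\rho$ by hand using Pontryagin duality, and finally deduces the general case from a presentation $\prod_J R^{\psi}_\rho\to\prod_I R^{\psi}_\rho\to\md_1\to 0$. The finite-length reduction is essential there: it is what allows $\Hom_G\bigl((\md_2\wtimes_{R^{\psi}_\rho}N)^\vee,\bigoplus_I N^\vee\bigr)$ to split as $\bigoplus_I\Hom_G(\cdot,N^\vee)$, and similarly for $\Hom^{\cont}_{R^{\psi}_\rho}(\prod_I R^{\psi}_\rho,\md_2)$.

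Your adjunction $\Hom_{\CO}(\md_1\wtimes_{R^{\psi}_\rho}N,M)\cong\Hom^{\cont}_{R^{\psi}_\rho}(\md_1,\Hom_{\CO}(N,M))$ does hold in this setting, but it is not entirely formal: one must specify the topology on $\Hom_{\CO}(N,M)$ and check that continuous bilinear maps out of $\md_1\times N$ really correspond to continuous maps into this Hom. For general $M$ the Hom need not be compact, so this requires care. In the case at hand $M=\md_2\wtimes_{R^{\psi}_\rho}N$ and Proposition~\ref{prop-mm-isom} identifies $\Hom_{\CO}(N,M)$ with $\md_2$ topologically, so everything is fine; but verifying this carefully ends up amounting to the same finite-length reduction the paper performs. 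Your argument is thus a clean repackaging of the paper's, with the advantage of making the categorical structure transparent, at the cost of hiding the analytic content inside the word ``adjunction''.
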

\begin{proof} As in the proof of Proposition \ref{prop-mm-isom} it is enough to prove that the map is 
an isomorphism, when $\md_2$ is of finite length. In this case, $\md_2\wtimes_{R^{\psi}_{\rho}}N$ 
is of finite length in $\dualcat(\OO)$. If $\md_1=\prod_I R^{\psi}_{\rho}$ for some indexing set $I$ then 
$\md_1 \wtimes_{R^{\psi}_{\rho}} N \cong \prod_I N$ and so its Pontryagin dual is isomorphic to 
$\bigoplus_I N^{\vee}$. We have 
\begin{multline*}\Hom_{\CO}(\md_1\wtimes_{R^{\psi}_\rho} N,\md_2\wtimes_{R^{\psi}_{\rho}}N )
\cong\Hom_{G}\big((\md_2\widehat{\otimes}_{R^{\psi}_{\rho}}N)^\vee, \bigoplus_I N^{\vee}\big)\\
\cong \bigoplus_I \Hom_{G}((\md_2\widehat{\otimes}_{R^{\psi}_{\rho}}N)^\vee,  N^{\vee})\cong \bigoplus_I \Hom_{\dualcat(\OO)}(N, \md_2\widehat{\otimes}_{R^{\psi}_{\rho}}N), \end{multline*}
where the second isomorphism holds, because $(\md_2\wtimes_{R^{\psi}_{\rho}}N)^{\vee}$ is of finite length in $\Mod^{\sm}_G(\OO)$. A similar argument shows that 
$$\Hom^{\cont}_{R^{\psi}_{\rho}} (\md_1, \md_2)\cong \bigoplus_I \Hom^{\cont}_{R^{\psi}_{\rho}} (R^{\psi}_{\rho}, \md_2)\cong \bigoplus_I \md_2.$$
The assertion in this case now follows from Proposition \ref{prop-mm-isom}.
 The general case follows by choosing a presentation 
 $\prod_J R^{\psi}_{\rho} \rightarrow \prod_I R^{\psi}_{\rho} \rightarrow \md_1\rightarrow 0$ and applying the functors  $\Hom^{\cont}_{R^{\psi}_{\rho}}(\ast, \md_2)$
 and $\Hom_{\CO}(N\wtimes_{R^{\psi}_\rho} \ast ,N\wtimes_{R^{\psi}_{\rho}}\md_2)$ to it. 
 \end{proof}

\section{Representation theoretic input}\label{section-diagram}

In this section, we will prove a key result about the structure of bounded $G$-invariant $\cO$-lattices inside certain locally algebraic representations, 
using a result of Vign\'eras \cite{vi} on lattices in $G$-equivariant coefficient systems on the Bruhat--Tits tree as a key ingredient.

\subsection{Diagrams}\label{subsection-diagram}
 Let $\mathfrak{R}_0$ be the $G$-normalizer of $K$ so that $\mathfrak{R}_0=KZ$, and $\mathfrak{R}_1$ be the $G$-normalizer of $I$  so that $\mathfrak{R}_1$ is generated by $I$ and $t:=\smatr{0}1p0$ as a group. One checks that $\mathfrak{R}_0\cap \mathfrak{R}_1=IZ$.

Let $A$ be a commutative ring, typically $A=L,\cO$ or $k$. By a diagram $D$ (for $\GL_2$) of $A$-modules, we mean the data $(D_0,D_1,r)$, 
where $D_0$ is an $A[\mathfrak{R}_0]$-module, $D_1$ is an $A[\mathfrak{R}_1]$-module and $r:D_1\ra D_0$ is an $IZ$-equivariant 
homomorphism of $A$-modules. This data determines uniquely a $G$-equivariant coefficient system on the Bruhat--Tits tree of $G$. 
The homology of this coefficient system is computed by a two term complex of $G$-representations:
$$\partial: \cInd_{\mathfrak{R}_1}^GD_1\otimes\delta_{-1}\ra \cInd_{\mathfrak{R}_0}^GD_0,$$
where $\delta_{-1}$ denotes the smooth character of $\mathfrak{R}_1$ (to $A$) of order 2 sending $g$ to $(-1)^{v_p(\det g)}$,
see \cite[\S3]{pa09}. Explicitly the map $\partial$  is determined by the formula
\begin{equation}\label{equation-define-partial}
\partial([\mathrm{Id},x])=[\mathrm{Id},r(x)]-[t,r(t^{-1}\cdot x)]\in \cInd_{\mathfrak{R}_0}^GD_0,\ \ \ \forall x\in D_1\otimes \delta_{-1},\end{equation}
where $[g, x]$ denotes the unique function supported on $\mathfrak R_i g^{-1}$ and taking value $x$ on $g^{-1}$.
The kernel and cokernel of $\partial$ are denoted by $H_1(D)$  and $H_0(D)$ respectively, so that we have an exact sequence
\begin{equation}\label{equation-define-H0}
0\ra H_1(D)\ra \cInd_{\mathfrak{R}_1}^GD_1\otimes\delta_{-1}\overset{\partial}{\ra}\cInd_{\mathfrak{R}_0}^GD_0\ra H_0(D)\ra0.\end{equation}
A short exact sequence of diagrams of $A$-modules $0\ra D'\ra D\ra D''\ra0$ gives a long exact sequence of $G$-representations:
\begin{equation}\label{equation-sequence-H1-H0}
0\ra H_1(D')\ra H_1(D)\ra H_1(D'')\ra H_0(D')\ra H_0(D)\ra H_0(D'')\ra0.\end{equation}
For   an $A$-representation $\pi$ of $G$, we define a constant diagram $$\mathcal{K}(\pi):= (\pi|_{\mathfrak{R}_0},\pi|_{\mathfrak{R}_1},\id).$$ One has that $H_0(\mathcal{K}(\pi))\cong \pi$ and $H_1(\mathcal{K}(\pi))=0$.
\medskip

Let $\pi$ be a smooth  $k$-representation of $G$ of finite length and  with a central character.
Let $D=(D_0,D_1,r)$ be a sub-diagram of $\mathcal{K}(\pi)$, i.e. such that $D_1\overset{r}{\hookrightarrow} D_0\hookrightarrow \pi$ are injective.
We naturally have a $G$-equivariant morphism
\[\theta: H_0(D)\ra H_0(\mathcal K(\pi))\cong\pi.\]
The main result of this section is the following.
\begin{prop}\label{prop-diagram}
If  $H_0(D)$ is of finite length as a $G$-representation then $\theta$ is an injection.
\end{prop}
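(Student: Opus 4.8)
The plan is to analyze the kernel of $\theta: H_0(D) \to \pi$ using the long exact sequence \eqref{equation-sequence-H1-H0} applied to the short exact sequence of diagrams $0 \to D \to \mathcal{K}(\pi) \to D'' \to 0$, where $D'' = \mathcal{K}(\pi)/D = (\pi/D_0, \pi/D_1, \bar r)$. Since $H_1(\mathcal{K}(\pi)) = 0$ and $H_0(\mathcal{K}(\pi)) \cong \pi$, this long exact sequence reads
\begin{equation*}
0 \ra H_1(D'') \ra H_0(D) \overset{\theta}{\ra} \pi \ra H_0(D'') \ra 0,
\end{equation*}
so $\Ker \theta \cong H_1(D'')$. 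Thus the proposition is equivalent to showing $H_1(D'') = 0$ under the hypothesis that $H_0(D)$ has finite length.

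First I would record that $H_1$ of any diagram is a subrepresentation of a compact induction $\cInd_{\mathfrak{R}_1}^G D_1'' \otimes \delta_{-1}$, which has no nonzero finite-length subrepresentations when $D_1''$ is infinite-dimensional — but more to the point, $H_1(D'')$ injects into $\cInd_{\mathfrak R_1}^G (\pi/D_1)\otimes \delta_{-1}$. The key observation is that $H_1$ of a diagram is always a \emph{torsion-free} object in an appropriate sense: any nonzero $G$-subrepresentation of $\cInd_{\mathfrak{R}_1}^G(\pi/D_1)\otimes\delta_{-1}$ that is finitely generated over $G$ is in fact not of finite length (compact inductions from open subgroups have this property, being "free" along the tree). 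Combined with the finite-length hypothesis on $H_0(D)$, I would argue that $H_1(D'')$ is simultaneously finitely generated (being a quotient, via the long exact sequence, related to the finite-length $H_0(D)$) and a submodule of a compact induction, forcing it to be zero. Concretely: from the four-term exact sequence, $H_1(D'')$ is a $G$-subrepresentation of $H_0(D)$, hence of finite length; but $H_1(D'')\hookrightarrow \cInd_{\mathfrak R_1}^G(\pi/D_1)\otimes\delta_{-1}$, and a finite-length subrepresentation of such a compact induction must vanish (the tree argument: a nonzero section of a coefficient system supported with finite-length image would have bounded support, but the boundary map forces unbounded support, or one invokes that $\cInd$ from an open compact-mod-center subgroup is $Z$-torsion-free and has no finite-dimensional-over-$k$... — rather, that it contains no irreducible smooth $G$-representation as a subobject when $\pi/D_1 \neq 0$, by Frobenius reciprocity and the structure of smooth irreducibles of $\GL_2(\Q_p)$).

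The main obstacle I anticipate is making precise the claim that $\cInd_{\mathfrak{R}_1}^G(D_1'')\otimes\delta_{-1}$ has no nonzero finite-length $G$-subrepresentation. This is the representation-theoretic heart: one needs that for any nonzero smooth $\mathfrak{R}_1$-representation $W$, the compact induction $\cInd_{\mathfrak{R}_1}^G W$ contains no irreducible admissible subrepresentation. I would prove this by a direct support argument on the tree — if $0 \neq v \in \cInd_{\mathfrak{R}_1}^G W$ generated a finite-length, hence admissible, subrepresentation, then the $K$-translates of $v$ would span a finite-dimensional space, contradicting that the support of functions in the $G$-span of $v$ is unbounded along the tree (one exhibits elements $g_n \in G$ with $g_n \cdot v$ having strictly growing support). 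Alternatively, and perhaps cleaner, I would use that $\cInd_{\mathfrak R_1}^G W$ is a \emph{free} module over the "even part" and invoke \cite[\S3]{pa09} or the general formalism of coefficient systems, where it is shown that homology $H_1$ of any diagram embeds into such an induction and that these have no finite-length sub-objects. Once this lemma is in hand, the conclusion $H_1(D'') = 0$, and hence the injectivity of $\theta$, is immediate from the long exact sequence.
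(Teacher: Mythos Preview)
Your proposal is correct and follows essentially the same approach as the paper: identify $\Ker\theta$ with $H_1$ of the quotient diagram via the long exact sequence, observe that it inherits finite length from $H_0(D)$, and then prove the key lemma that $\cInd_{\mathfrak{R}_1}^G W$ admits no nonzero finite-length $G$-subrepresentation. The only difference is in the proof of that lemma: you sketch a direct support/tree argument, while the paper reduces from $\mathfrak{R}_1$ to $\mathfrak{R}_0$ via $M\hookrightarrow \Ind_{IZ}^{\mathfrak{R}_1}M|_{IZ}$, filters to reduce to $M$ irreducible, and then invokes \cite[Prop.~18]{BL} (any nonzero $G$-invariant subspace of $\cInd_{\mathfrak{R}_0}^G M$ has infinite-dimensional $I_1$-invariants) together with the classification of irreducibles to derive a contradiction.
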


\begin{proof}
After replacing $\pi$ by the image of $\theta$, we may assume that $\pi$ is generated by $D_0$ as a $G$-representation.   Consider the exact sequence of diagrams
\[0\ra D\ra \mathcal{K}(\pi)\ra Q\ra0\]
where $Q$ is the quotient diagram $\mathcal{K}(\pi)/D$. It
 gives a long exact sequence of $G$-representations
\[0\ra H_1(Q)\ra H_0(D) \overset{\theta}{\ra} \pi\ra H_0(Q)\ra0.\]
Here we have used the facts that $H_0(\mathcal{K}(\pi))\cong\pi$ and $H_1(\mathcal{K}(\pi))=0$. Moreover, since  by assumption  $H_0(D)$  is of finite length as  a $G$-representation,  so is $H_1(Q)$. We need to show that this forces
\[H_1(Q)=0.\]
The next lemma allows us  to conclude, since by definition $H_1(Q)$ is a sub-rep\-re\-sen\-ta\-tion of $\cInd_{\mathfrak{R}_1}^G(Q_1\otimes\delta_{-1})$, if we write $Q=(Q_0,Q_1,r')$.\end{proof}

\begin{lem}
Let $M$ be a  smooth $k$-representation of $\mathfrak{R}_1$ on which $Z$ acts by a character and $V$ be a $G$-subrepresentation of $\cInd_{\mathfrak{R}_1}^GM$. If $V$ is of finite length, then $V=0$.
\end{lem}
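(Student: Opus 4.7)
The plan is to reduce to the case $V=\pi$ irreducible and produce a nonzero vector in $\pi$ supported on a single edge of the Bruhat--Tits tree $\mathcal{T}$; the $G$-submodule it generates will have infinite length, contradicting the finite-length hypothesis on $\pi$. Concretely, I identify $\cInd_{\mathfrak{R}_1}^G M$ with the space of finitely supported sections of the coefficient system on unoriented edges whose stalks are twisted copies of $M$, recalling that $\mathfrak{R}_1\backslash G$ is in bijection with the (infinite) edge set of $\mathcal{T}$.

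First, since $V$ has finite length I may pass to a nonzero irreducible $G$-subrepresentation $\pi\subseteq V\subseteq \cInd_{\mathfrak{R}_1}^G M$. Next, among all nonzero $v\in\pi$ I choose one whose support $E=\Supp(v)$ has minimal cardinality $n$. If $n=1$, write $E=\{e\}$ and let $m\in M$ be the stalk value of $v$ at $e$; then the $G$-submodule $\langle G\cdot v\rangle$ of $\cInd_{\mathfrak{R}_1}^G M$ equals $\cInd_{\mathfrak{R}_1}^G M'$, where $M'\subseteq M$ is the $\mathfrak{R}_1$-subrepresentation generated by $m$. Since $M'\neq 0$ and $G/\mathfrak{R}_1$ is infinite, $\cInd_{\mathfrak{R}_1}^G M'$ fails to be admissible (for any small open compact $H$, $(\cInd_{\mathfrak{R}_1}^G M')^H$ decomposes by Mackey as an infinite direct sum of nonzero subspaces indexed by $\mathfrak{R}_1\backslash G/H$), hence has infinite length; this contradicts the finite length of $\pi\supseteq\langle G\cdot v\rangle$.

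If $n\geq 2$, I reduce $n$ by a combinatorial argument on $\mathcal{T}$. Take a leaf vertex $v_0$ of the convex hull of $E$ and the unique edge $e_1\in E$ incident to $v_0$. Using the richness of the pointwise stabilizer in $G$ of $E\setminus\{e_1\}$ together with the stalk values of $v$ on those edges (which acts transitively enough on the star of edges at the opposite endpoint of $e_1$, thanks to the Bruhat--Tits structure of $\GL_2(\Qp)$), I find two elements $g,g'\in G$ fixing $v$ on $E\setminus\{e_1\}$ and satisfying $ge_1=g'e_1$ but acting differently on the stalk at $ge_1$. Then $gv-g'v\in\pi$ is nonzero and supported on the single edge $ge_1$, contradicting minimality of $n$. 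Iterating, we reach the case $n=1$ already handled.

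The main obstacle is the case in which the stalk value of $v$ at $e_1$ happens to lie in the $\mathfrak{R}_1$-invariants of $M$: then the action on the stalk at $ge_1$ factors through the trivial representation and the differences $gv-g'v$ all vanish, preventing the reduction. To deal with this I would split $M$ into its $\mathfrak{R}_1$-fixed subrepresentation $M^{\mathfrak{R}_1}$ and a complementary subrepresentation with no $\mathfrak{R}_1$-fixed vector, treating the two pieces separately; on the first piece $\cInd_{\mathfrak{R}_1}^G M^{\mathfrak{R}_1}$ is a sum of copies of $\cInd_{\mathfrak{R}_1}^G k$ (twisted by a central character), for which finite-length subrepresentations can be ruled out directly using that $G$-orbits on $\mathcal{T}$ are infinite and the tree has no loops.
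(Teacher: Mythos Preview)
Your approach is genuinely different from the paper's: the paper reduces from $\mathfrak{R}_1$ to $\mathfrak{R}_0$ via $M\hookrightarrow \Ind_{IZ}^{\mathfrak{R}_1}M|_{IZ}$, filters to assume $M$ is irreducible, and then invokes a result of Barthel--Livn\'e that any nonzero $G$-subspace of $\cInd_{\mathfrak{R}_0}^G\sigma$ has infinite-dimensional $I_1$-invariants, while irreducible $G$-representations have finite-dimensional $I_1$-invariants by the classification. Your tree-combinatorial route is an interesting alternative, and the $n=1$ step is correct (it uses, as the paper does, that irreducible smooth $k$-representations of $\GL_2(\Qp)$ are admissible).

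The gap is in the reduction from $n\ge 2$ to $n=1$. Unwinding your construction, if $g,g'$ fix $v$ on $E\setminus\{e_1\}$ and $ge_1=g'e_1$, then $h=g^{-1}g'$ fixes every edge of $E$ and the stalk values at $E\setminus\{e_1\}$; you need such an $h$ that does \emph{not} fix $m_1=v(e_1)$. But $h$ is constrained to lie in an open compact-mod-centre subgroup $H$ (the intersection of the edge stabilisers with the stabilisers of the other stalk values), and since $M$ is smooth, $m_1$ is fixed by some open subgroup which may well contain $H$. So the obstruction is not ``$m_1\in M^{\mathfrak{R}_1}$'' but rather ``$m_1\in M^{H}$'' for this particular $H$, and you cannot rule this out. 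Your proposed fix then fails outright in characteristic $p$: there is in general no $\mathfrak{R}_1$-equivariant splitting $M=M^{\mathfrak{R}_1}\oplus(\text{complement})$, since $\mathfrak{R}_1$ has pro-$p$ quotients and hence nonsplit self-extensions of the trivial representation. Finally, even in the residual case $M=k$ you only gesture at an argument (``$G$-orbits are infinite and the tree has no loops''); this does not by itself exclude infinite-dimensional irreducible subrepresentations, and one is essentially back to needing an input of Barthel--Livn\'e type.
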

\begin{proof} We assume $V$ is non-zero and search for a contradiction.
We may assume $V$ is irreducible, since any non-zero representation of finite length has an irreducible subrepresentation. If $f \in V \subset \cInd_{\mathfrak{R}_1}^GM$
then  $f$ is supported only on finitely many cosets $\mathfrak{R}_1g$ and hence its values are contained in a finitely generated subrepresentation of $M$. Since $V$ is irreducible we thus may assume
that  $M$ is finitely generated. Since $\mathfrak{R}_1$ is compact modulo centre, which acts by a character on $M$, this implies that $M$ is finite dimensional over $k$. 
It is enough to prove the statement with $\mathfrak R_1$ replaced by $\mathfrak R_0$ since the injection $M\hookrightarrow \Ind_{IZ}^{\mathfrak R_1} M|_{IZ}$ gives injections of $G$-representations: 
$$   \cInd_{\mathfrak{R}_1}^GM\hookrightarrow \cInd_{IZ}^ G (M|_{IZ})\cong \cInd_{\mathfrak R_0}^G (\Ind_{IZ}^{\mathfrak R_0} (M|_{IZ})). $$

So let $M$ be a  smooth finite dimensional representation of $\mathfrak R_0$ with a central character and let $V$ be an irreducible $G$-subrepresentation 
contained in $\cInd_{\mathfrak R_0}^G M$. We may choose an exhaustive filtration $\Fil^{\bullet} M$ of $M$ by $\mathfrak R_0$-subrepresentations
such that the graded pieces are irreducible representations of $\mathfrak R_0$. Then $\cInd_{\mathfrak R_0}^G (\Fil^{\bullet} M)$ is an exhaustive 
filtration of $\cInd_{\mathfrak R_0}^G M$ by $G$-subrepresentations, the graded pieces of which are isomorphic to 
$\cInd_{\mathfrak R_0}^G \sigma_i$ where $\sigma_i$ is an irreducible representation of $\mathfrak R_0$. 
Since $V$ is finitely generated it will be contained in $\cInd_{\mathfrak R_0}^G (\Fil^{i} M)$ for some $i\ge 0$ and since $V$ is non-zero its image 
in some graded piece will be non-zero. Hence we have reduced to a situation where $M$ is an irreducible representation of $\mathfrak R_0$.
In this case \cite[Prop.18]{BL} implies that if $W$ is any non-zero $G$-invariant subspace of  $\cInd_{\mathfrak R_0}^G M$ then 
the subspace $W^{I_1}$ of $I_1$-invariants of $W$ is  infinite dimensional. However, it follows from the classification of irreducible representations of $G$
in  \cite{BL, Br03} that $V^{I_1}$ is finite dimensional. This contradiction finishes the proof. 
\end{proof}

The proof of Proposition \ref{prop-diagram} also yields the following statement.

\begin{cor}   $\ker(\theta)$ does not contain non-zero finite length subrepresentations.
\end{cor}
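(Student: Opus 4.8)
The corollary follows by revisiting the proof of Proposition \ref{prop-diagram}. The plan is to observe that the argument there shows $\ker(\theta)\cong H_1(Q)$, which by construction is a $G$-subrepresentation of $\cInd_{\mathfrak{R}_1}^G(Q_1\otimes\delta_{-1})$, and then to apply the lemma to any finite length subrepresentation of this kernel.

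More precisely, I would argue as follows. Let $W\subseteq \ker(\theta)$ be a finite length $G$-subrepresentation; we want to show $W=0$. As in the proof of Proposition \ref{prop-diagram}, after replacing $\pi$ by the image of $\theta$ (which does not change $\ker(\theta)$, since $\theta$ factors through its image), we may assume $\pi$ is generated by $D_0$ as a $G$-representation. The exact sequence of diagrams $0\ra D\ra \mathcal{K}(\pi)\ra Q\ra 0$ yields, via \eqref{equation-sequence-H1-H0} together with $H_0(\mathcal{K}(\pi))\cong \pi$ and $H_1(\mathcal{K}(\pi))=0$, the four term exact sequence
\[0\ra H_1(Q)\ra H_0(D)\overset{\theta}{\ra}\pi\ra H_0(Q)\ra 0,\]
so that $\ker(\theta)\cong H_1(Q)$. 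By the definition of $H_1$, writing $Q=(Q_0,Q_1,r')$, the representation $H_1(Q)$ is a $G$-subrepresentation of $\cInd_{\mathfrak{R}_1}^G(Q_1\otimes\delta_{-1})$. Hence $W$ is a finite length $G$-subrepresentation of $\cInd_{\mathfrak{R}_1}^G M$ with $M:=Q_1\otimes\delta_{-1}$, a smooth $k$-representation of $\mathfrak{R}_1$ on which $Z$ acts by a character (since $\pi$ has a central character, and tensoring by $\delta_{-1}$ only twists it). The lemma then forces $W=0$.

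I do not expect any real obstacle here: the entire content is already contained in the proof of Proposition \ref{prop-diagram}, and the only point to check is that that proof used the finite length hypothesis on $H_0(D)$ solely to deduce that $H_1(Q)$ is of finite length, so that the lemma applies. For the corollary we simply feed an arbitrary finite length subrepresentation of $\ker(\theta)=H_1(Q)$ into the lemma directly, bypassing any global finiteness assumption. One mild point worth spelling out is that replacing $\pi$ by $\im(\theta)$ is harmless: the natural map $H_0(D)\to\pi$ and its corestriction $H_0(D)\to\im(\theta)$ have the same kernel, so no generality is lost.
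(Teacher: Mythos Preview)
Your proposal is correct and is exactly the approach the paper has in mind: the paper simply notes that ``the proof of Proposition \ref{prop-diagram} also yields'' the corollary, and you have spelled out precisely why, namely that $\ker(\theta)\cong H_1(Q)\subset \cInd_{\mathfrak{R}_1}^G(Q_1\otimes\delta_{-1})$ and the lemma kills any finite length subrepresentation of such an induced representation. Your observation that the finite length hypothesis on $H_0(D)$ was used only to make all of $H_1(Q)$ finite length, and can be replaced by feeding in an arbitrary finite length $W\subset\ker(\theta)$, is the whole point.
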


\begin{rem} The proof of Proposition \ref{prop-diagram} works for $G=\GL_2(F)$, where $F$ is a finite extension of $\Qp$.
However, if $F\neq \Qp$, in view of the main result of \cite{schraen}, we don't expect to find interesting examples, where the conditions of the proposition are satisfied
and $D_0$ is  finite dimensional.
\end{rem}

\begin{lem}\label{Omod} Let $\varphi: M\rightarrow M'$ be a morphism of $\OO$-modules. Assume that $M$ is a free $\OO$-module and 
$M'$ is $\OO$-torsion free. If  $\varphi\otimes_\OO L$ is surjective and $\varphi\otimes_{\OO} k$ is injective, then $\varphi$ is an isomorphism. 
\end{lem}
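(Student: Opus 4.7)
The plan is to study the kernel $K=\ker(\varphi)$ and the cokernel $C=\mathrm{coker}(\varphi)$ separately and show that each vanishes. The key structural input is that, since $\OO$ is a DVR, $\OO$-torsion-freeness is equivalent to $\OO$-flatness, so tensoring exact sequences involving $M'$ behaves well.

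For injectivity, I would factor $\varphi$ as $M\twoheadrightarrow I\hookrightarrow M'$ with $I=\mathrm{im}(\varphi)$. Since $M'$ is flat, tensoring $0\to I\to M'\to C\to 0$ with $k$ yields an injection $I\otimes_\OO k\hookrightarrow M'\otimes_\OO k$. The composition $M\otimes_\OO k\twoheadrightarrow I\otimes_\OO k\hookrightarrow M'\otimes_\OO k$ is $\varphi\otimes_\OO k$, which is injective by hypothesis, so the surjection $M\otimes_\OO k\twoheadrightarrow I\otimes_\OO k$ is an isomorphism. This forces $K\otimes_\OO k=0$, i.e.\ $K=\varpi K\subseteq \varpi M$. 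Iterating gives $K\subseteq \bigcap_n \varpi^n M$, which vanishes because $M$ is \emph{free} (decomposing $M=\bigoplus_I \OO$ reduces this to $\bigcap_n \varpi^n\OO=0$). Hence $K=0$.

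For surjectivity, I now have the short exact sequence $0\to M\xrightarrow{\varphi} M'\to C\to 0$. Tensoring with the flat $\OO$-module $L$ and using that $\varphi\otimes_\OO L$ is surjective gives $C\otimes_\OO L=0$, so $C$ is $\OO$-torsion. On the other hand, the long Tor sequence, together with the flatness $\mathrm{Tor}_1^\OO(M',k)=0$, yields an exact sequence
\[
0\longrightarrow \mathrm{Tor}_1^\OO(C,k)\longrightarrow M\otimes_\OO k\xrightarrow{\varphi\otimes k} M'\otimes_\OO k,
\]
and injectivity of $\varphi\otimes_\OO k$ gives $\mathrm{Tor}_1^\OO(C,k)=0$. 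Over the DVR $\OO$ this Tor-group is precisely the $\varpi$-torsion of $C$, so $C$ is simultaneously $\OO$-torsion and $\OO$-torsion-free, forcing $C=0$.

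There is no real obstacle; the only minor point to keep in mind is that the argument uses \emph{freeness} of $M$ (not just torsion-freeness) to guarantee $\bigcap_n\varpi^n M=0$ in infinite rank, since for a general torsion-free $\OO$-module this intersection can be nonzero.
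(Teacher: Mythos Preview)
Your argument is correct in outline and matches the paper's two-step plan (kill $K$, then kill $C$), but there is a small slip in the injectivity step. The sentence ``Since $M'$ is flat, tensoring $0\to I\to M'\to C\to 0$ with $k$ yields an injection $I\otimes_\OO k\hookrightarrow M'\otimes_\OO k$'' is not correct: flatness of the \emph{middle} term of a short exact sequence gives you nothing here (take $\varpi\OO\hookrightarrow\OO$). In fact you never need that injection, since injectivity of the composite $\varphi\otimes_\OO k$ already forces the first factor $M\otimes_\OO k\twoheadrightarrow I\otimes_\OO k$ to be bijective. What you \emph{do} need, and did not justify, is that this bijectivity forces $K\otimes_\OO k=0$ rather than merely that the image of $K\otimes_\OO k$ in $M\otimes_\OO k$ vanishes. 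For that you should observe that $I\subset M'$ is torsion-free, hence flat, so tensoring the \emph{other} sequence $0\to K\to M\to I\to 0$ with $k$ stays exact on the left; this is precisely how the paper argues.

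From $K\otimes_\OO k=0$ the paper concludes via ``submodules of free $\OO$-modules are free, and a free module with trivial reduction is zero'', while you use $K\subseteq\bigcap_n\varpi^n M=0$; both work. Your surjectivity argument via $\mathrm{Tor}_1^\OO(C,k)=C[\varpi]$ is the same as the paper's, just phrased homologically.
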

\begin{proof} 
We first show that $\varphi$ is injective. Let $K$  be the kernel  of $\varphi$. Since $M'$ is $\OO$-torsion free, so is the image of $\varphi$. Therefore the morphism   $ K\otimes_{\OO}k\ra M\otimes_{\OO}k$ is injective and  we deduce that $K\otimes_{\OO}k=0$ as $\varphi\otimes_{\OO}k$ is injective by assumption. On the other hand, since $\OO$ is a PID, submodules of free modules are free. Hence, $K$ is a free $\OO$-module and $K\otimes_{\OO}k=0$ implies $K=0$. 

Now let $C$ be the cokernel of $\varphi$ and consider the exact sequence of $\OO$-modules $0\rightarrow M\rightarrow M'\rightarrow C\rightarrow 0$.
Since $M'$ is $\OO$-torsion free and $\varphi$ is injective modulo $\varpi$, 
 we obtain that $C[\varpi]$ the submodule of elements in $C$ killed by $\varpi$ is zero.   This implies that the map $C\rightarrow C\otimes_{\OO} L$ is an injection; but $C\otimes_{\OO}L=0$ as $\varphi\otimes_{\OO}L$ is surjective,  hence $C=0$  as desired. 
 \end{proof}

We  now consider locally algebraic representations.   The next result plays a crucial role in the proof of our main theorem.

\begin{thm}\label{thm-diagram}
Let $\Phi$ be a locally algebraic $L$-representation of $G$ of finite length. Assume that $\Phi$  carries a bounded $G$-invariant $\cO$-lattice $\Phi^0$, which is finitely generated as an $\cO[G]$-module and such that $\Phi^0\otimes_{\cO}k$ is of finite length as a $k[G]$-module. Let $X=(X_0,X_1,r)\hookrightarrow \mathcal{K}(\Phi)$ be a sub-diagram of finite dimensional $L$-vector spaces and let $\mathcal{X}:=(\mathcal{X}_0,\mathcal{X}_1,r)$ where $\mathcal{X}_i=X_i\cap \Phi^0$. If $H_0(X)\cong \Phi$, then $H_0(\mathcal{X})\cong \Phi^0$ and $H_0(\mathcal X\otimes_{\OO} k)\cong \Phi^0\otimes_\OO k$.
\end{thm}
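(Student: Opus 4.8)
The plan is to compare the exact sequence \eqref{equation-define-H0} computing $H_0(\mathcal X)$ with the one computing $H_0(X)\cong\Phi$ and reduce everything to the commutative-algebra criterion of Lemma \ref{Omod}, applied in the category $\dualcat(\OO)$ (or on Pontryagin duals, where it makes sense). First I would set up the three maps: the natural map $\iota\colon H_0(\mathcal X)\ra H_0(X)=\Phi$ induced by $\mathcal X\hookrightarrow X$, the map $\theta\colon H_0(\mathcal X)\ra H_0(\mathcal K(\Phi^0))\cong\Phi^0$ induced by $\mathcal X\hookrightarrow\mathcal K(\Phi^0)$, and the reduction map $H_0(\mathcal X)\otimes_\OO k\ra H_0(\mathcal X\otimes_\OO k)$. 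Since $\cInd$ is exact and commutes with $-\otimes_\OO k$ and with $-\otimes_\OO L$, and since $\mathcal X_i = X_i\cap\Phi^0$ is an $\OO$-lattice in $X_i$ with $\mathcal X_i\otimes_\OO L = X_i$, one gets immediately that $H_0(\mathcal X)\otimes_\OO L\cong H_0(X)=\Phi$ and that $H_0(\mathcal X)$ is $\OO$-torsion free (it is a submodule of $\cInd_{\mathfrak R_0}^G\mathcal X_0$, which is $\OO$-torsion free as $\mathcal X_0$ is). So $H_0(\mathcal X)$ is a bounded $\OO$-lattice in $\Phi$; the content of the theorem is that $\theta$ identifies it with the specified lattice $\Phi^0$, and that reduction mod $\varpi$ commutes with taking $H_0$.

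\textbf{Key steps.} The main point is to show $\theta\colon H_0(\mathcal X)\ra\Phi^0$ is an isomorphism. By construction $\theta\otimes_\OO L$ is the isomorphism $H_0(X)\simto\Phi$, so by Lemma \ref{Omod} — applied to the underlying $\OO$-modules, noting $H_0(\mathcal X)$ need not be free but one can work with Pontryagin duals or just invoke that $\OO$ is a PID so $\OO$-torsion-free finitely generated-type modules behave well — it suffices to prove that $\theta\otimes_\OO k$ is \emph{injective}. Now $\theta\otimes_\OO k$ factors as $H_0(\mathcal X)\otimes_\OO k\twoheadrightarrow H_0(\mathcal X\otimes_\OO k)\ra\Phi^0\otimes_\OO k$, where the first map is the natural surjection coming from right-exactness of $H_0$ (apply \eqref{equation-sequence-H1-H0} to $0\ra\mathcal X\xrightarrow{\varpi}\mathcal X\ra\mathcal X\otimes_\OO k\ra0$, using that $H_1(\mathcal X)\subset\cInd_{\mathfrak R_1}^G(\mathcal X_1\otimes\delta_{-1})$ is $\OO$-torsion free so multiplication by $\varpi$ on it is injective). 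The second map is $\theta_k\colon H_0(\mathcal X\otimes_\OO k)\ra H_0(\mathcal K(\Phi^0\otimes_\OO k))\cong\Phi^0\otimes_\OO k$, the natural map attached to the sub-diagram $\mathcal X\otimes_\OO k\hookrightarrow\mathcal K(\Phi^0\otimes_\OO k)$. Here is where Proposition \ref{prop-diagram} enters: since $\Phi^0\otimes_\OO k$ is of finite length as a $k[G]$-representation, and $H_0(\mathcal X\otimes_\OO k)$ is a subquotient-free recipient that maps to it, I claim $H_0(\mathcal X\otimes_\OO k)$ is of finite length, whence $\theta_k$ is injective by Proposition \ref{prop-diagram}. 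If both $\theta_k$ and the surjection $H_0(\mathcal X)\otimes_\OO k\twoheadrightarrow H_0(\mathcal X\otimes_\OO k)$ are injective then $\theta\otimes_\OO k$ is injective, Lemma \ref{Omod} gives $\theta$ an isomorphism, and then the surjection $H_0(\mathcal X)\otimes_\OO k\twoheadrightarrow H_0(\mathcal X\otimes_\OO k)$ is forced to be an isomorphism (its source and target both have $k$-dimension equal to $\dim_k\Phi^0\otimes_\OO k$ — more precisely, once $\theta$ is an isomorphism, $H_0(\mathcal X)\otimes_\OO k\cong\Phi^0\otimes_\OO k$ and the composite with $\theta_k$ being the identity forces the surjection to be injective), proving the last assertion $H_0(\mathcal X\otimes_\OO k)\cong\Phi^0\otimes_\OO k$.

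\textbf{Main obstacle.} The delicate point is establishing that $H_0(\mathcal X\otimes_\OO k)$ has \emph{finite length} as a $G$-representation, which is the hypothesis needed to invoke Proposition \ref{prop-diagram}. The natural map $\theta_k\colon H_0(\mathcal X\otimes_\OO k)\ra\Phi^0\otimes_\OO k$ is not a priori injective, so finite length of the target does not immediately transfer. I would obtain it by instead arguing with $H_1$: from $0\ra\mathcal X\xrightarrow{\varpi}\mathcal X\ra\mathcal X\otimes_\OO k\ra 0$ and \eqref{equation-sequence-H1-H0}, $H_0(\mathcal X\otimes_\OO k)$ sits in $0\ra H_0(\mathcal X)\otimes_\OO k\ra H_0(\mathcal X\otimes_\OO k)\ra H_1(\mathcal X)/\varpi H_1(\mathcal X)\ra\cdots$ — wait, the order is $H_1(\mathcal X\otimes k)\to H_0(\mathcal X)\xrightarrow{\varpi}H_0(\mathcal X)\to H_0(\mathcal X\otimes k)\to 0$, so in fact $H_0(\mathcal X\otimes_\OO k)\cong H_0(\mathcal X)\otimes_\OO k$ because multiplication by $\varpi$ on $H_0(\mathcal X)$ has cokernel $H_0(\mathcal X)\otimes_\OO k$. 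This shows $H_0(\mathcal X\otimes_\OO k)\cong H_0(\mathcal X)\otimes_\OO k$, and the latter is $\Phi^0\otimes_\OO k$-dimensional once we know $H_0(\mathcal X)$ is a lattice in $\Phi$ that is finitely generated over $\OO$ up to... this is circular unless we first bound $H_0(\mathcal X)$. The clean way out: apply Proposition \ref{prop-diagram} directly to the sub-diagram $\mathcal X\otimes_\OO k\hookrightarrow\mathcal K(\Phi^0\otimes_\OO k)$ once we verify $H_0(\mathcal X\otimes_\OO k)$ is finite length, and finite length follows because $H_0(\mathcal X\otimes_\OO k)\cong H_0(\mathcal X)\otimes_\OO k$ and $H_0(\mathcal X)$, being an $\OO$-torsion-free quotient of $\cInd_{\mathfrak R_0}^G\mathcal X_0$ mapping onto $\Phi$ with $\Phi$-rank structure controlled by the finitely generated $\OO[G]$-module $\Phi^0$, is itself finitely generated over $\OO[G]$ with $H_0(\mathcal X)\otimes_\OO k$ of finite length over $k[G]$ — this last is the genuinely technical bookkeeping, and is where I expect the real work to be: carefully tracking finite generation and finite-length hypotheses through the $\cInd$ complexes so that the hypotheses of Proposition \ref{prop-diagram} (in char $p$) and Lemma \ref{Omod} (over $\OO$) are legitimately met.
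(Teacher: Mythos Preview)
Your overall architecture is right---reduce to Proposition \ref{prop-diagram} in characteristic $p$ and then apply Lemma \ref{Omod}---but there are two concrete gaps that the paper fills with external results you do not invoke.

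First, your claim that $H_0(\mathcal X)$ is $\OO$-torsion free because ``it is a submodule of $\cInd_{\mathfrak R_0}^G\mathcal X_0$'' is wrong: $H_0(\mathcal X)$ is the \emph{cokernel} of $\partial$, hence a quotient of $\cInd_{\mathfrak R_0}^G\mathcal X_0$, and quotients of torsion-free modules need not be torsion-free. Equivalently, torsion-freeness of $H_0(\mathcal X)$ amounts to the vanishing of the connecting map $H_1(\mathcal X\otimes_{\OO}k)\to H_0(\mathcal X)[\varpi]$, which is not automatic. This is precisely where the paper invokes Vign\'eras \cite[Prop.~0.1]{vi}: the saturation condition $\mathcal X_i=X_i\cap\Phi^0$ is her ``condition 2)'', and her result yields that $H_0(\mathcal X)$ is in fact a \emph{free} $\OO$-module (which is also what Lemma \ref{Omod} literally requires for $M$). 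Without this, neither your torsion-freeness claim nor the application of Lemma \ref{Omod} is justified.

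Second, the obstacle you identify---that Proposition \ref{prop-diagram} needs $H_0(\mathcal X\otimes_{\OO}k)$ to be of finite length---is real, and your attempts to close it are circular, as you yourself note. The paper's resolution is clean: once $H_0(\mathcal X)$ is known to be a $G$-invariant $\OO$-lattice in $\Phi$ (from Vign\'eras) that is finitely generated over $\OO[G]$ (as a quotient of $\cInd_{\mathfrak R_0}^G\mathcal X_0$ with $\mathcal X_0$ finite over $\OO$), then $H_0(\mathcal X)$ and $\Phi^0$ are \emph{commensurable} lattices in $\Phi$, and \cite[Lem.~4.3]{pa10} gives that $H_0(\mathcal X)\otimes_{\OO}k$ and $\Phi^0\otimes_{\OO}k$ have the \emph{same} finite length. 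This simultaneously supplies the finite-length hypothesis for Proposition \ref{prop-diagram} and, after that proposition gives injectivity of $\theta_k$, upgrades injectivity to an isomorphism by a length count. You were one idea away: commensurability of finitely generated lattices.
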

\begin{proof}
The  assumptions imply that the condition 2) in \cite[Prop.0.1]{vi}  holds for $\mathcal{X}$, so $H_0(\mathcal{X})$ is a free $\cO$-module such that
\[H_0(\mathcal{X})\otimes_{\cO} L\cong H_0(X),\ \  H_0(\mathcal{X})\otimes_{\cO}k\cong H_0(\mathcal{X}\otimes_{\cO}k),\]
see the discussion after Lemma 0.7 \emph{loc. cit.}, and  \cite[\S4]{pa09}.
Since both $\Phi^0$ and $H_0(\mathcal{X})$ are lattices in $H_0(X)\cong \Phi$ and are finitely generated as $\cO[G]$-modules, they are commensurable. Since $\Phi^0\otimes_{\cO}k$ is of finite length as a $k[G]$-module, so is $H_0(\mathcal{X})\otimes_{\cO}k$ and has the same length as $\Phi^0\otimes_{\cO}k$ by \cite[Lem.4.3]{pa10}.    However, it follows from the definition of $\mathcal{X}$   that the morphisms $\mathcal{X}_1\otimes_{\cO} k\hookrightarrow \mathcal{X}_0\otimes_{\cO}k\hookrightarrow \Phi^0\otimes_{\cO} k$ are injective, so Proposition \ref{prop-diagram} implies that the $G$-equivariant morphism $H_0(\mathcal{X}\otimes_{\cO}k)\ra \Phi^0\otimes_{\cO}k$ is injective. Hence the map is an isomorphism, as both the source and the target have the same length.  The result follows from Lemma \ref{Omod} with $M=H_0(\mathcal{X})$ and $M'=\Phi^0$.
\end{proof}
\subsection{Types for principal series}\label{types_princ} We recall some results about principal series  types from \cite{he}. Let $\theta_1, \theta_2:\Z_p^{\times}\ra L^{\times}$ be smooth characters. Let  $c$ be the conductor of $\theta_1\theta_2^{-1}$  so that $c\in\Z_{\geq0}$ is the smallest integer such that $\theta_1\theta_2^{-1}$ is trivial on $1+p^c \Zp$.
Consider the type $\tau=\theta_1\oplus \theta_2$ of $I_{\Q_p}$, where we view $\theta_1$ and $\theta_2$  as  characters of $I_{\Q_p}$ via the local class field theory. Then using \cite{he} the representation $\sigma^{\cri}(\tau)$ defined in \S \ref{subsection-x} maybe described as follows. 
If $c=0$ then $\theta_1=\theta_2 (=:\theta)$ and $\sigma^{\cri}(\tau)=\theta\circ \det$. If $c\ge 1$ then 
$\sigma^{\rm cr}(\tau)$ is just the induction $\Ind_{J_c}^K\theta_1\otimes \theta_2$, where $J_c$ denotes the subgroup $\smatr{\Z_p^{\times}}{\Z_p}{p^c\Z_p}{\Z_p^{\times}}$;
in this case the representations $\sigma(\tau)$ and $\sigma^{\cri}(\tau)$ coincide.

\begin{lem}\label{K_c-invariants} Assume that $\tau=\theta\oplus \ide$ with $\theta\neq \ide$. Let $\pi$ be a smooth admissible representation of $G$ on an $L$-vector space generated as a $G$-representation by its $\sigma^{\cri}(\tau)$-isotypic subspace. Then $\pi^{K_c}$ as a $K$-representation is isomorphic to a finite direct sum of copies of $\sigma^{\cri}(\tau)$.
\end{lem}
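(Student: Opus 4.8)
\emph{Proof sketch.} Since $\theta\neq\Eins$ its conductor $c$ satisfies $c\geq 1$, so by \S\ref{types_princ} we have $\sigma^{\cri}(\tau)=\Ind_{J_c}^K(\theta\otimes\Eins)$, the character $\theta\otimes\Eins$ of $J_c$ being $\smatr{a}{b}{c'}{d}\mapsto\theta(a)$. The plan is to push everything down to the finite group $\Gamma:=K/K_c\cong\GL_2(\Z/p^c\Z)$. First I would note that $K_c$ is normal in $K$, is contained in $J_c$, and that $\theta\otimes\Eins$ is trivial on $K_c$ because $\theta$ is trivial on $1+p^c\Z_p$; hence $\sigma^{\cri}(\tau)$ is trivial on $K_c$ and is inflated from the representation $\Ind_{\overline{J_c}}^{\Gamma}(\theta\otimes\Eins)$ of $\Gamma$, where $\overline{J_c}$ is the image of $J_c$ in $\Gamma$, i.e. the upper-triangular Borel. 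This representation of $\Gamma$ is irreducible (a regular principal series of $\GL_2(\Z/p^c\Z)$). Since $\pi$ is admissible, $\pi^{K_c}$ is a finite-dimensional $\Gamma$-representation, hence semisimple as $\Gamma$ is finite and $\mathrm{char}\,L=0$; so it suffices to show every irreducible constituent of $\pi^{K_c}$ is isomorphic to $\sigma^{\cri}(\tau)$. Moreover the $\sigma^{\cri}(\tau)$-isotypic subspace $\pi[\sigma^{\cri}(\tau)]$ is contained in $\pi^{K_c}$, hence finite-dimensional; as it generates $\pi$ over $G$, the representation $\pi$ is finitely generated, hence of finite length.

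Next I would show that every Jordan--H\"older factor of $\pi$ lies in the Bernstein component $\Omega(\tau)$ attached to $\tau$. By Henniart's construction \cite{he}, $(K,\sigma^{\cri}(\tau))$ is a type for $\Omega(\tau)$: an irreducible smooth $L$-representation $\pi_0$ of $G$ satisfies $\Hom_K(\sigma^{\cri}(\tau),\pi_0)\neq 0$ precisely when $\pi_0\in\Omega(\tau)$. Since $K_c$ is pro-$p$, the functor $V\mapsto V^{K_c}$ is exact on smooth $L$-representations ($H^{>0}(K_c,-)=0$ in characteristic $0$), and as $\sigma^{\cri}(\tau)$ is $K_c$-trivial this yields $\Hom_K(\sigma^{\cri}(\tau),-)=\Hom_\Gamma(\sigma^{\cri}(\tau),(-)^{K_c})$, an exact functor. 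Writing $\pi=\pi_{\Omega(\tau)}\oplus\pi'$ for the Bernstein decomposition and d\'evissant along a Jordan--H\"older filtration of the finite length representation $\pi'$, the type property forces $\Hom_K(\sigma^{\cri}(\tau),\pi')=0$. Hence $\pi[\sigma^{\cri}(\tau)]\subseteq\pi_{\Omega(\tau)}$; as $\pi$ is generated by $\pi[\sigma^{\cri}(\tau)]$ and $\pi_{\Omega(\tau)}$ is a subrepresentation, $\pi=\pi_{\Omega(\tau)}$. Because $\theta\neq\Eins$, the irreducible representations in $\Omega(\tau)$ are exactly the (automatically irreducible) principal series $\Ind_P^G(\chi_1\otimes\chi_2)$ with $\{\chi_1|_{\Z_p^\times},\chi_2|_{\Z_p^\times}\}=\{\theta,\Eins\}$; so all Jordan--H\"older factors of $\pi$ are of this shape.

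The third step is to compute $\pi_0^{K_c}$ for such a $\pi_0=\Ind_P^G(\chi_1\otimes\chi_2)$. By the Iwasawa decomposition $\pi_0|_K\cong\Ind_{P\cap K}^K\bigl((\chi_1\otimes\chi_2)|_{P\cap K}\bigr)$, and the inducing character is trivial on $(P\cap K)\cap K_c$ since each $\chi_i|_{\Z_p^\times}$ has conductor $\leq c$; taking $K_c$-invariants then gives $\pi_0^{K_c}\cong\Ind_{\overline{J_c}}^{\Gamma}(\eta)$ for a suitable character $\eta$ of the Borel $\overline{J_c}$. In particular $\dim_L\pi_0^{K_c}=[\Gamma:\overline{J_c}]=\dim_L\sigma^{\cri}(\tau)$. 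On the other hand $\mathrm{LL}(\pi_0)|_{I_{\Q_p}}\cong\theta\oplus\Eins=\tau$ with trivial monodromy operator, so $\Hom_K(\sigma^{\cri}(\tau),\pi_0)\neq 0$ by the defining property of $\sigma^{\cri}(\tau)$; since $\sigma^{\cri}(\tau)$ is irreducible this produces an embedding $\sigma^{\cri}(\tau)\hookrightarrow\pi_0$, which necessarily lands in $\pi_0^{K_c}$, and equality of dimensions forces $\pi_0^{K_c}\cong\sigma^{\cri}(\tau)$.

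Finally, applying the exact functor $(-)^{K_c}$ to a Jordan--H\"older filtration of $\pi$ and invoking the third step shows that $\pi^{K_c}$ admits a filtration whose graded pieces are all isomorphic to $\sigma^{\cri}(\tau)$; by semisimplicity, $\pi^{K_c}\cong\sigma^{\cri}(\tau)^{\oplus m}$ for some $m\geq 0$, which is the assertion. I expect the main obstacle to be the second step: extracting from the hypothesis that $\pi$ is generated by its $\sigma^{\cri}(\tau)$-isotypic subspace the conclusion that $\pi$ lies in the single Bernstein component $\Omega(\tau)$, which rests on the type-theoretic input of \cite{he} and the Bernstein decomposition. A minor subtlety is that $\sigma^{\cri}(\tau)$ is built from the \emph{ordered} pair $(\theta,\Eins)$ whereas a given $\pi_0$ may involve $(\Eins,\theta)$; the dimension count in the third step sidesteps having to exhibit an explicit $K$-isomorphism $\Ind_{J_c}^K(\theta\otimes\Eins)\cong\Ind_{J_c}^K(\Eins\otimes\theta)$.
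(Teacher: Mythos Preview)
Your proof is correct and follows essentially the same strategy as the paper: reduce to finite length, show all Jordan--H\"older factors lie in the Bernstein component $\Omega(\tau)$, compute $\pi_0^{K_c}$ for each irreducible $\pi_0$ in that component, and assemble using exactness of $(-)^{K_c}$ together with semisimplicity over the finite group $K/K_c$. The only notable differences are cosmetic: the paper invokes the Bushnell--Kutzko type property directly to conclude $\pi\in\Omega(\tau)$, whereas you use the Bernstein decomposition together with the weaker ``typical'' property from Henniart (which suffices, as you observe, once you know $\Hom_K(\sigma^{\cri}(\tau),-)$ is exact); and the paper identifies $\pi_0^{K_c}\cong\sigma^{\cri}(\tau)$ by an explicit isomorphism $\Ind_{(K\cap P)K_c}^K(\chi_1\otimes\chi_2)\cong\sigma^{\cri}(\tau)$, while your dimension count neatly avoids having to match up the two orderings of $(\theta,\Eins)$.
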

\begin{proof} Since $\pi$ is admissible its $\sigma^{\cri}(\tau)$-isotypic subspace is finite dimensional. Hence $\pi$ is finitely generated as a $G$-representation, and since $\pi$ is admissible it  is of finite length.

Since $\theta\neq \ide$ by assumption the representation $\sigma^{\mathrm{cr}}(\tau)$ is not only typical for the Bernstein component corresponding to $\tau$, but is a type for that 
component in the sense of \cite{bk-types}. This implies  that $\Hom_K(\sigma^{\cri}(\tau), \pi')\neq 0$ for all irreducible subquotients of $\pi$. Hence, if $\pi'$ is an irreducible subquotient of 
$\pi$ then $\pi'\cong \Ind_P^G \chi_1\otimes\chi_2$, such that $\chi_2$ is unramified and $\chi_1|_{\Zp^{\times}}= \theta$. 
Moreover, we have an isomorphism of $K$-representations 
$$(\Ind_P^G \chi_1\otimes\chi_2)^{K_c}\cong \Ind_{(K\cap P)K_c}^K \chi_1\otimes \chi_2\cong \sigma^{\mathrm{cr}}(\tau).$$
Since taking $K_c$-invariants defines an exact functor on the category of smooth $G$-representations on $L$-vector spaces and since the category of smooth $K$-rep\-re\-sen\-ta\-tions
on $L$-vector spaces is semi-simple, we conclude that 
$\pi^{K_c}$ is isomorphic to a finite direct sum of copies of $\sigma^{\mathrm{cr}}(\tau)$. 
\end{proof}

\begin{rem} Since $\sigma(\theta_1\oplus \theta_2)\cong \sigma(\theta_1\theta_2^{-1}\oplus \ide) \otimes \theta_2\circ \det$, if $\theta_1\neq \theta_2$ then we may twist by a character 
to get to a situation where Lemma \ref{K_c-invariants} applies.
\end{rem}

\section{The main theorem}\label{section-proof}

We keep the notation of the previous section. In this section we prove the main theorem \ref{theorem-main}. We write $R$ for either $R^{\psi}_{\rho}(\wt,\tau)$ or 
$R^{\psi, \mathrm{cr}}_\rho(\w, \tau)$ and $V$ for either $\sigma(\w, \tau)$ or $\sigma^{\mathrm{cr}}(\w, \tau)$ to simplify the notation. In \S\ref{ideals}, \S\ref{criterion}
we place no restriction on the type $\tau$. We assume that $\tau$ is a principal series type in the end of \S\ref{main-proof}. 
By Lemma \ref{tres_peu}, if \eqref{Hyp} is not satisfied, then $R$ is a Cohen-Macaulay ring. \emph{So we may assume  \eqref{Hyp} holds in the rest and choose $x\in R^{\psi}_{\rho}$ as in Proposition \ref{prop-(H)}.}

\subsection{The ideals $\fa_n$}\label{ideals} 

Let $\Theta$ be a $K$-invariant $\cO$-lattice in $V$. Let $\fa\subset R^{\psi}_{\rho}$ be the annihilator of the $R^{\psi}_{\rho}$-module $M(\Theta)$ and for each $n\in\Z_{\geq0}$,  let $\fa_n$  be  the annihilator of the $R^{\psi}_{\rho}$-module
\[M(\Theta)/x^nM(\Theta)\cong \Hom_{\cO}(\Hom_{\OK}^{\cont}(N/x^nN,\Theta^d),\cO).\]
Then $\fa,\fa_n$ do not depend on the choice of $\Theta$.  We have $\fa\subseteq \fa_{n+1}\subseteq \fa_{n}$ for any $n\geq 1$, so that there are natural surjections
\begin{equation}\label{equation-R-surjection}
R \twoheadrightarrow R^{\psi}_{\rho}/\fa_{n+1}\twoheadrightarrow R^{\psi}_{\rho}/\fa_{n}.\end{equation}
Remark that, since $\Theta$ is  $\cO$-flat, $M(\Theta)/x^nM(\Theta)$ is $\cO$-flat by Theorem \ref{thm-pa12-B}(i) and Theorem \ref{thm-x-nongeneric}, hence $R^{\psi}_{\rho}/\fa_n$ is an $\cO$-flat algebra for each $n\geq 1$.

\begin{prop}\label{prop-R=limit}
We have $\fa=\bigcap_{n\geq 1}\fa_n$ and $R\cong \plim_{n\geq 1}R^{\psi}_{\rho}/\fa_n$.
\end{prop}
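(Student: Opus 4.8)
The plan is to first establish the ideal-theoretic equality $\fa = \bigcap_{n\geq 1}\fa_n$, and then leverage it together with the $\cO$-flatness of the quotients $R^\psi_\rho/\fa_n$ to identify $R = R^\psi_\rho/\fa$ with the inverse limit. For the first part, the inclusion $\fa \subseteq \bigcap_n \fa_n$ is immediate from $\fa \subseteq \fa_n$ for all $n$. For the reverse inclusion, suppose $y \in \bigcap_n \fa_n$, so $y$ annihilates $M(\Theta)/x^n M(\Theta)$ for every $n$; equivalently $y M(\Theta) \subseteq x^n M(\Theta)$ for all $n$, i.e. $yM(\Theta) \subseteq \bigcap_{n\geq 1} x^n M(\Theta)$. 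Now $M(\Theta)$ is a finitely generated module over the noetherian local ring $R^\psi_\rho$, and $x$ lies in the maximal ideal $\m$ (indeed $x \notin \m^2 + (\varpi)$ by Theorem \ref{thm-pa12-B}(iii) / Theorem \ref{thm-x-nongeneric}, in particular $x \in \m$), so by the Krull intersection theorem $\bigcap_{n} x^n M(\Theta) = 0$. Hence $y M(\Theta) = 0$, i.e. $y \in \fa$. This gives $\fa = \bigcap_n \fa_n$, and therefore the natural map $R = R^\psi_\rho/\fa \to \plim_n R^\psi_\rho/\fa_n$ is \emph{injective}.

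It remains to prove surjectivity of $R \to \plim_n R^\psi_\rho/\fa_n$. The natural strategy is to note that each $R^\psi_\rho/\fa_n$ is a quotient of the noetherian complete local ring $R = R^\psi_\rho/\fa$ (via \eqref{equation-R-surjection}), the transition maps $R^\psi_\rho/\fa_{n+1} \twoheadrightarrow R^\psi_\rho/\fa_n$ are surjective, and one wants to conclude that the surjection $R \twoheadrightarrow \plim_n R^\psi_\rho/\fa_n$ onto the limit is in fact an isomorphism; combined with injectivity this finishes the proof. The key input making surjectivity work is that the chain of ideals $\fa_n$ is, in an appropriate sense, cofinal with the $\m$-adic filtration on $R$ — more precisely, that $\fa_n \subseteq \m^{c_n}$ for some sequence $c_n \to \infty$. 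To see this, observe that $R^\psi_\rho/\fa_n$ is $\cO$-flat of Krull dimension $1$ (its associated module $M(\Theta)/x^nM(\Theta)$ is $\cO$-flat and, by the commensurability/length bookkeeping used throughout \cite{pa12}, has $\cO$-corank $n\cdot e$), and the only prime of $R$ containing $(x,\varpi)$ is $\m_R$ by Theorem \ref{thm-pa12-B}(iv) / Theorem \ref{thm-x-nongeneric}(iv); so $V(\fa_n) \subseteq V(\fa_1) = V((x,\varpi))$ has support meeting the special fibre only at $\m_R$. Since $x$ is $R$-regular and $R$ is $2$-dimensional, $R/x^nR$ is a $1$-dimensional $\cO$-flat ring with $\sqrt{(x^n,\varpi)} = \m_R$, whence $\m_R^{c_n} \subseteq (x^n) + (\varpi)$ for suitable $c_n$; but $x^n \in \m^n$ and $x^n \cdot M(\Theta) = $ the defining relation for $\fa_n$, so one gets $\fa_n \subseteq \m^{c_n}$ with $c_n \geq n$. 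Completeness of $R$ then gives $R = \plim_n R/\m^{c_n} \twoheadrightarrow \plim_n R^\psi_\rho/\fa_n$ surjective, as desired.

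I expect the main obstacle to be the surjectivity step, specifically the verification that the $\fa_n$ are cofinal with the powers of $\m_R$ — one must be careful that the $\fa_n$ are ideals of $R^\psi_\rho$, not of $R$, and transfer everything correctly through the surjection $R^\psi_\rho \twoheadrightarrow R$. The cleanest route may be to work directly with the modules: prove that $M(\Theta) \cong \plim_n M(\Theta)/x^nM(\Theta)$ as $R$-modules (which is just $x$-adic separatedness and completeness of the finitely generated module $M(\Theta)$ over the complete ring $R$, both of which follow from Krull's theorem and finiteness), and then pass to annihilators; since $M(\Theta)$ is a faithful $R$-module this recovers $R \cong \plim_n R^\psi_\rho/\fa_n$ directly. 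The injectivity argument via Krull intersection is routine; the bookkeeping in the surjectivity/faithfulness step is where care is needed.
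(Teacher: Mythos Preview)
Your proposal is correct and follows the same approach as the paper: the paper's two-line proof is precisely your route (b), invoking the Artin--Rees lemma to obtain $M(\Theta)\cong \plim_{n}M(\Theta)/x^nM(\Theta)$ and then declaring that both assertions follow ``by definition of $\fa$ and $\fa_n$''. Your explicit concern about the surjectivity of $R\to\plim_n R^{\psi}_{\rho}/\fa_n$ (and the cofinality sketch in (a)) goes beyond what the paper writes down, though note your inclusion ``$\fa_n\subseteq \m^{c_n}$'' should be read for the images $\bar{\fa}_n$ in $R$, as you yourself flag.
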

\begin{proof}
Since $R^{\psi}_{\rho}$ is a noetherian local ring, $M(\Theta)$ is a finitely generated $R^{\psi}_{\rho}$-module and $x\in \m$,  we get a natural isomorphism by the Artin-Rees lemma:
\[M(\Theta)\cong \plim_{n\geq 1}M(\Theta)/x^nM(\Theta).\]
This implies the assertion by definition of $\fa$ and $\fa_n$.
\end{proof}

\begin{rem}\label{rem-x}
For $m\leq n$, one checks that $x^{n-m}\fa_{m}\subset \fa_n$, hence there is a natural morphism $x^{n-m}: R^{\psi}_{\rho}/\fa_{m}\ra R^{\psi}_{\rho}/\fa_{n}$,  given by multiplication by $x^{n-m}$,  whose composition with the projection $R^{\psi}_{\rho}/\fa_n\twoheadrightarrow R^{\psi}_{\rho}/\fa_{n-m}$ is zero.
\end{rem}

Since  $x^n\in \fa_n$,  the surjection (\ref{equation-R-surjection}) factors through  $R/x^nR\twoheadrightarrow R^{\psi}_{\rho}/\fa_n$; the target being $\cO$-flat, it further factors through
\[\alpha_n: (R/x^nR)_{\rm tf}\twoheadrightarrow R^{\psi}_{\rho}/\fa_n,\]
where $(R/x^nR)_{\rm tf}$ denotes the largest $\cO$-flat quotient of $R/x^nR$.

\begin{prop}\label{prop-rank=ne}
(i) The surjection $\alpha_n$ is an isomorphism.

(ii) The  ring $R^{\psi}_{\rho}/\fa_n$ is a finite flat  $\cO$-algebra of rank $n\cdot e$, where $e:=e(R/\varpi R)$.
\end{prop}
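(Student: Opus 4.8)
The plan is to deduce part (i) first and then read off part (ii) as a multiplicity count after inverting $p$. The inputs I would use are: the module $M(\Theta)/x^nM(\Theta)\cong \Hom_{\cO}(\Hom^{\cont}_{\OK}(N/x^nN,\Theta^d),\cO)$ is a free $\cO$-module of rank $n\cdot e$ (Theorem \ref{thm-pa12-B}(i) in the generic case, Theorem \ref{thm-x-nongeneric} in the non-generic one); $x$ is a nonzerodivisor on $R$ and $(x,\varpi)$ is $\m_R$-primary (part (iv) of those theorems); and, since $x$ has been chosen as in Proposition \ref{prop-(H)}, $M(\Theta)_{\p}$ is free of rank one over $R_{\p}$ for every prime $\p$ of $R$ minimal over $(x)$. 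We may assume $R\neq 0$. Finiteness and flatness over $\cO$ are then immediate: by definition of $\fa_n$ the ring $R^{\psi}_{\rho}/\fa_n$ acts faithfully on the finite free $\cO$-module $M(\Theta)/x^nM(\Theta)$, hence embeds into $\End_{\cO}$ of it and is module-finite over $\cO$; being $\cO$-flat (as already observed before the statement), it is finite free over $\cO$, so the real content of (ii) is the computation of its rank.

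For part (i), I would localise at a prime $\p$ minimal over $(x)$ in $R$. Such $\p$ has height one (Krull's principal ideal theorem, $x$ being a nonzerodivisor) and $\varpi\notin\p$ by part (iv). Proposition \ref{prop-(H)} gives $M(\Theta)_{\p}\cong R_{\p}$, hence $(M(\Theta)/x^nM(\Theta))_{\p}\cong (R/x^nR)_{\p}$; since $\mathrm{Ann}$ commutes with localisation for finitely generated modules and this localised module is cyclic over $(R/x^nR)_{\p}$ with the $R^{\psi}_{\rho}$-action factoring through $R/x^nR$, its annihilator in $(R^{\psi}_{\rho})_{\p}$ equals $\ker\big((R^{\psi}_{\rho})_{\p}\to (R/x^nR)_{\p}\big)$. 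Thus $(R/x^nR)_{\p}\twoheadrightarrow (R^{\psi}_{\rho}/\fa_n)_{\p}$ is an isomorphism, i.e. $\alpha_n$ becomes an isomorphism after localising at every minimal prime over $(x)$ (note that $\varpi\notin\p$, so $((R/x^nR)_{\tf})_{\p}=(R/x^nR)_{\p}$). Now $(R/x^nR)_{\tf}$ is nonzero ($\varpi$ is not nilpotent modulo $x^n$, since $\varpi\notin\p$) and is a local ring of dimension one, so its only non-minimal prime is $\m_R$ and all of its minimal primes lie over $(x)$; hence $\ker\alpha_n\subseteq (R/x^nR)_{\tf}$ is supported only at $\m_R$, so has finite length, and being a submodule of an $\cO$-torsion-free module it must vanish. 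This is the step I expect to be the main obstacle: it is precisely here that Proposition \ref{prop-(H)} (the rank-one property of $M(\Theta)$ at the height-one primes above $x$) is essential, together with the $\cO$-torsion-freeness of $(R/x^nR)_{\tf}$, which is what kills a possible discrepancy concentrated at the closed point.

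For part (ii), by (i) the $\cO$-rank of $R^{\psi}_{\rho}/\fa_n$ equals $\dim_L (R^{\psi}_{\rho}/\fa_n)[1/p]$. I would write $(R^{\psi}_{\rho}/\fa_n)[1/p]=\prod_i A_i$ as a product over its finitely many maximal ideals, each of the form $\p_i[1/p]$ with $\p_i$ a height-one prime of $R$ containing $x$ (so $\varpi\notin\p_i$), and correspondingly $(M(\Theta)/x^nM(\Theta))[1/p]=\prod_i V_i$; faithfulness of $M(\Theta)/x^nM(\Theta)$ over $R^{\psi}_{\rho}/\fa_n$ forces each $V_i$ to be a faithful $A_i$-module. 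By the local computation made in the proof of (i), $(M(\Theta)/x^nM(\Theta))_{\p_i}\cong (R/x^nR)_{\p_i}\cong (R^{\psi}_{\rho}/\fa_n)_{\p_i}$, so after inverting $p$ each $V_i\cong A_i$. Therefore
\[
 n\cdot e=\mathrm{rank}_{\cO}\big(M(\Theta)/x^nM(\Theta)\big)=\sum_i\dim_L V_i=\sum_i\dim_L A_i=\dim_L(R^{\psi}_{\rho}/\fa_n)[1/p],
\]
which gives the asserted rank, and completes the proof modulo the routine localisation and $\mathrm{Ann}$-commutation verifications. The only other point that needs a short justification is that the maximal ideals of $(R^{\psi}_{\rho}/\fa_n)[1/p]$ really are the $\p_i[1/p]$, which follows from (i) together with $V(\fa_n)=V(x)$ (the inclusion $(x^n)\subseteq\fa_n$ gives one containment, and $(M(\Theta)/x^nM(\Theta))_\p\neq 0$ at each $\p$ minimal over $(x)$ gives the other).
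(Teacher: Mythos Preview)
Your proof is correct and follows essentially the same route as the paper: both arguments localise at the primes $\p$ minimal over $(x)$, invoke Proposition~\ref{prop-(H)} to identify $(R/x^nR)_\p\cong (M(\Theta)/x^nM(\Theta))_\p$, conclude that the kernel of $\alpha_n$ is supported at $\m_R$ and hence is $\cO$-torsion inside an $\cO$-torsion-free module, and then compute the rank by summing $\dim_L$ over the local factors after inverting $p$. Your extra remark that $R^{\psi}_{\rho}/\fa_n$ embeds into $\End_{\cO}$ of a finite free $\cO$-module, giving finiteness over $\cO$ directly, is a clean addition.
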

\begin{proof}
(i) By definition of $\fa_n$, it suffices to show that $(R/x^nR)_{\rm tf}$ acts faithfully  on $M(\Theta)/x^nM(\Theta)$.

Let $\p\in \Spec R$ be a minimal prime ideal over $(x)$.  As $x$ is regular in $R$, Krull's principal ideal theorem implies that $\p$ has height 1; in particular, $\p$ is not the maximal ideal of $R$ because $R$ is of Krull dimension 2.   We know by Theorem \ref{thm-pa12-B}(iii) and Theorem \ref{thm-x-nongeneric} that $\varpi\notin\p$, hence $\p$ gives rise to an element in $\Spec R[1/p]$. Then we have $R_{\p}\cong M(\Theta)_{\p}$ by Proposition \ref{prop-(H)}, which induces an isomorphism
\begin{equation}\label{equation-isom-x^n}
(R/x^nR)_{\p}\cong (M(\Theta)/x^nM(\Theta))_{\p}.
\end{equation}
In particular, $(R/x^nR)_{\p}$ is $\cO$-flat and acts faithfully on $(M(\Theta)/x^nM(\Theta))_{\p}$.

Now let $u\in R/x^nR$ and assume that it acts trivially on $M(\Theta)/x^nM(\Theta)$. Then its image in $(R/x^nR)_{\p}$ must be $0$ for each $\p$ as above, hence $u$ lies in the kernel of \[R/x^nR\ra \prod_{\p}(R/x^nR)_{\p},\]
where $\p$ runs over all minimal prime ideals of $R$ over $(x)$.
But this kernel, being supported only (possibly) at $\m_{R}$, is $0$-dimensional, hence is exactly the $\cO$-torsion submodule of $R/x^nR$ because each $(R/x^nR)_{\p}$ is $\cO$-flat.

(ii) Since $(R/x^nR)\otimes_{\cO}L$ is an Artinian ring, we have an isomorphism  $(R/x^nR)\otimes_{\cO}L\cong \prod_{\p}(R/x^nR)_{\p}$, where $\p$ runs over all minimal prime ideals of $R$ over $(x)$.   It then follows from (\ref{equation-isom-x^n})  that
\begin{multline*}  \mathrm{rank}_{\cO}(R/x^nR)_{\rm tf}=\sum_{\p}\dim_{L}(R/x^nR)_{\p} \\
=\sum_{\p}\dim_{L}(M(\Theta)/x^nM(\Theta))_{\p}=\mathrm{rank}_{\cO}M(\Theta)/x^nM(\Theta)=n\cdot e,\end{multline*}
 where the last equality comes from Theorem \ref{thm-pa12-B} and Theorem \ref{thm-x-nongeneric}.
\end{proof}

Set $Q_n=N/\fa_n N$. Then $Q_n$ is a quotient of $N/x^nN$ because $x^n\in\fa_n$. We note that although $R^{\psi}_{\rho}/\fa_n$ is $\cO$-flat, $Q_n$ need not  be because $N$ is not always flat over $R^{\psi}_{\rho}$, see the proof of Lemma \ref{nearly_there}
for details.

\begin{lem}\label{lem-surjective}
Let $m\in\Z_{\geq 1}$ and $\Xi$ be a $K$-invariant $\cO$-lattice of $\oplus_{i=1}^{m}V^d$. Then the morphism  $x^{n-1}:Q_1 \ra Q_n$ induces a surjection
\[\Hom_{\OK}^{\cont}(Q_n,\Xi)\twoheadrightarrow \Hom_{\OK}^{\cont}(Q_1,\Xi).\]
\end{lem}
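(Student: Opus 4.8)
The plan is to deduce the surjectivity from the projectivity of $Q_1$ as a module in $\mathrm{Mod}_{K,\zeta}^{\mathrm{pro}}(\OO)$ together with the fact that the map $x^{n-1}\colon N/x^nN\to N/x^nN$ has image precisely $x^{n-1}N/x^nN$, which is a direct summand. First I would recall that $Q_1=N/\fa_1N=N/xN$ (since $x\in\fa_1$ and, by Proposition \ref{prop-rank=ne}(i) applied with $n=1$, $\fa_1$ is generated modulo the $\OO$-torsion by $x$; in fact $N/xN=Q_1$ on the nose because $x^1\in\fa_1$ and conversely $\fa_1N\subseteq xN$ is not automatic — so I would instead argue directly that $Q_1$ is a quotient of $N/xN$ and that $N/xN$ is projective by Theorem \ref{thm-pa12-B}(i) / Theorem \ref{thm-x-nongeneric}(i), and then check that $Q_1$ is itself finitely generated and projective in $\mathrm{Mod}_{K,\zeta}^{\mathrm{pro}}(\OO)$). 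The key structural input is that $N/x^nN\cong(N/xN)^{\oplus n}$ as $\OK$-modules, established in Theorem \ref{thm-pa12-B}(i) and Theorem \ref{thm-x-nongeneric}(i), so that $N/x^nN$ is finitely generated and projective in $\mathrm{Mod}_{K,\zeta}^{\mathrm{pro}}(\OO)$, and hence so is its quotient $Q_n=N/\fa_nN$ — here one needs that a quotient of a finitely generated projective object in this category is again projective, which requires knowing it is a \emph{direct summand}; this follows because $\fa_n N$ is the kernel of a surjection onto $Q_n$ and both are compact, so one checks the surjection $N/x^nN\twoheadrightarrow Q_n$ splits.

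Next I would describe the map. Since $x^{n-1}\fa_1\subseteq\fa_n$ (a special case of Remark \ref{rem-x} with $m=1$, $n$ replaced by $n$), multiplication by $x^{n-1}$ on $N$ descends to a map $x^{n-1}\colon Q_1\to Q_n$. Applying the functor $\Hom_{\OK}^{\cont}(-,\Xi)$, I want to show the induced map $\Hom_{\OK}^{\cont}(Q_n,\Xi)\to\Hom_{\OK}^{\cont}(Q_1,\Xi)$ is surjective. The strategy is: a homomorphism $\phi\colon Q_1\to\Xi$ in $\mathrm{Mod}_{K,\zeta}^{\mathrm{pro}}$ can be lifted along the (split) surjection $Q_n\twoheadrightarrow$ image-of-$x^{n-1}$ using projectivity of $Q_1$ — more precisely, I would identify the composite $Q_1\xrightarrow{x^{n-1}}Q_n$ with the inclusion of a direct summand of $Q_n$ (as $\OK$-modules). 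Indeed, under $N/x^nN\cong(N/xN)^{\oplus n}$, multiplication by $x^{n-1}$ corresponds to the inclusion of the last summand (or a shift), so its image is a direct summand; passing to the quotient $Q_n$ one still gets that the image of $x^{n-1}\colon Q_1\to Q_n$ is a direct summand of $Q_n$ as $\OK$-modules, provided the splitting is compatible — this compatibility I would verify by a diagram chase using that $\fa_nN\supseteq x^{n-1}\fa_1 N$ means the splitting of $N/x^nN$ descends. Given that the image is a direct summand and the map $Q_1\to\mathrm{image}$ is surjective with $Q_1$ projective, it is in fact an isomorphism onto that summand (since ranks match by the $\OO$-flatness computations in Proposition \ref{prop-rank=ne}(ii): $\Hom_{\OK}^{\cont}(Q_n,\Xi)$ has $\OO$-rank $n$ times that of $\Hom_{\OK}^{\cont}(Q_1,\Xi)$). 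Then $\Hom_{\OK}^{\cont}(-,\Xi)$ applied to a split injection of a direct summand is split surjective, giving the claim.

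The main obstacle I expect is the compatibility issue: verifying that the direct-sum decomposition $N/x^nN\cong(N/xN)^{\oplus n}$ can be chosen so that multiplication by $x^{n-1}$ is the inclusion of a summand \emph{and} so that this decomposition descends to $Q_n=N/\fa_nN$ in a way that identifies the image of $x^{n-1}\colon Q_1\to Q_n$ with a direct summand of $Q_n$. A cleaner route that sidesteps the descent is: show directly that $x^{n-1}\colon Q_1\to Q_n$ is injective (using that $x$ is $N$-regular when $N$ is flat, and in the generic case using that $N/xN$ is $\OO$-flat so $x\colon N/x^nN\to N/x^nN$ has kernel exactly $x^{n-1}N/x^nN$... one must be careful here because $Q_n$ need not be $\OO$-flat), realize $Q_1$ as a direct summand of $Q_n$ via this map by an idempotent argument in the semiperfect category $\mathrm{Mod}_{K,\zeta}^{\mathrm{pro}}(\OO)$, and conclude. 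If the descent genuinely fails, the fallback is to prove surjectivity at the level of $\Hom$'s after $\otimes_\OO L$ using Proposition \ref{prop-rank=ne} and after $\otimes_\OO k$ using right-exactness of $\Hom_{\OK}(-,\Xi/\varpi)$ on projectives together with a length count, and then invoke a Nakayama-type argument; but I would first try the direct-summand approach since it is cleaner.
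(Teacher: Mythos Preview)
Your proposal has a genuine gap: you never use the hypothesis that $\Xi$ is a lattice in $\bigoplus_{i=1}^m V^d$, and without it the claim has no reason to hold. Your whole strategy hinges on showing that $Q_n$ is projective in $\Mod^{\pro}_{K,\zeta}(\OO)$, or equivalently that $Q_n$ is a direct summand of $N/x^n N$; but the paper explicitly notes just before this lemma that $Q_n$ need not even be $\OO$-flat, so it is certainly not projective in general, and your circular attempt to deduce projectivity from being a quotient of a projective object cannot be salvaged. Likewise, your fallback of showing $x^{n-1}\colon Q_1\to Q_n$ is injective is dangerous: the injectivity of the closely related map $x^{n-1}\colon (R^{\psi}_{\rho}/\fa_1)\otimes k\to (R^{\psi}_{\rho}/\fa_n)\otimes k$ is one of the \emph{equivalent formulations of Cohen--Macaulayness} in Lemma~\ref{lem-equivalent}, so building it into an early lemma risks circularity.

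The paper's argument is entirely different and much shorter. The key observation is that the natural injection
\[
\Hom_{\OK}^{\cont}(Q_n,\Xi)\hookrightarrow \Hom_{\OK}^{\cont}(N/x^nN,\Xi)
\]
is in fact an \emph{isomorphism}. This uses precisely the special form of $\Xi$: any $f\in \Hom_{\OK}^{\cont}(N/x^nN,\Xi)$, viewed as a map $N\to \bigoplus_i V^d$, satisfies $f\circ a=0$ for every $a\in\fa_n$, simply because $\fa_n$ is by definition the annihilator of $M(\Theta)/x^nM(\Theta)\cong\Hom_{\OO}(\Hom_{\OK}^{\cont}(N/x^nN,\Theta^d),\OO)$. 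Hence $f$ factors through $Q_n=N/\fa_n N$. Once you know this (for all $n$), the problem is transferred entirely to the well-understood objects $N/x^n N$: applying $\Hom_{\OK}^{\cont}(-,\Xi)$ to the exact sequence
\[
0\longrightarrow N/xN\xrightarrow{\,x^{n-1}\,} N/x^nN\longrightarrow N/x^{n-1}N\longrightarrow 0
\]
and using that $N/x^{n-1}N$ is projective in $\Mod^{\pro}_{K,\zeta}(\OO)$ (Theorem~\ref{thm-pa12-B}(i) or Theorem~\ref{thm-x-nongeneric}(i)) gives the required surjection immediately.
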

\begin{proof}
We first claim that the natural injection
 \[\Hom_{\OK}^{\cont}(Q_n,\Xi)\hookrightarrow \Hom_{\OK}^{\cont}(N/x^nN,\Xi)\]
 is an isomorphism. Indeed, if $f\in \Hom_{\OK}^{\cont}(N/x^nN,\Xi)$ which we view as an element in $\Hom_{\OK}^{\rm cont}(N,\oplus_{i=1}^mV^d)$ in an obvious way, and if $a\in \fa_n$, then $f\circ a=0$ by definition of $\fa_n$, so that $f$ factors as $N\ra N/aN\ra \Xi$. Since this holds for all  $a\in\fa_n$, we see that $f$ factors as $N\ra N/\fa_nN\ra \Xi$, hence the result.

The short exact sequence $0\ra N/xN\overset{x^{n-1}}{\longrightarrow} N/x^nN\ra N/x^{n-1}N\ra0$ induces
\begin{multline*}
0\ra \Hom_{\OK}^{\cont}(N/x^{n-1}N,\Xi)
\ra \Hom_{\OK}^{\cont}(N/x^nN,\Xi)\\ \ra \Hom_{\OK}^{\cont}(N/xN,\Xi)\ra0
\end{multline*}
which is still exact since $N/x^{n-1}N$ is  projective in $\Mod_{K,\zeta}^{\rm pro}(\cO)$. The result then follows from the claim.
\end{proof}

\subsection{A criterion for Cohen--Macaulayness}\label{criterion} In Lemma \ref{lem-criterion} below we devise a criterion for $R$ to be Cohen--Macaulay. This criterion will be checked in the next subsection to complete the proof of Theorem \ref{theorem-main}. 

\begin{lem} \label{lemma-CM-equiv}
The following are equivalent:
\begin{itemize}
\item[(i)] $R$ is Cohen--Macaulay;
\item[(ii)] $(x, \varpi)$ is a regular sequence in $R$;
\item[(iii)] the chain complex 
$0\ra R \overset{x}{\ra} R\ra R^{\psi}_{\rho}/\fa_1\ra0$
is exact;
\item[(iv)] the chain complex: 
\begin{equation}\label{equation-complex-fa}
  0\ra R^{\psi}_{\rho}/\fa_{n-1}\overset{x}{\ra} R^{\psi}_{\rho}/\fa_n\ra R^{\psi}_{\rho}/\fa_1\ra0
\end{equation}
is exact for all $n\ge 1$. 
\end{itemize}
\end{lem}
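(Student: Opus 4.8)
The plan is to prove the equivalence (i)$\Leftrightarrow$(ii) as the commutative-algebra core of the statement, and then the chain (ii)$\Rightarrow$(iv)$\Rightarrow$(iii)$\Rightarrow$(ii) as a formal manipulation of the ideals $\fa_n$. I will use freely four facts established above: $x$ is a non-zero-divisor in $R$ (as used in the proof of Proposition~\ref{prop-(H)}); $\varpi$ is a non-zero-divisor in $R$, since $R$ is $\cO$-flat by Theorem~\ref{thm-kisin}; the only prime of $R$ containing $(x,\varpi)$ is $\m_R$, by Theorem~\ref{thm-pa12-B}(iv) and Theorem~\ref{thm-x-nongeneric}; and $\dim R=2$, again by Theorem~\ref{thm-kisin}. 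We may assume $R\neq 0$.

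For (ii)$\Rightarrow$(i): a regular sequence of length $2$ shows $\mathrm{depth}\,R\ge 2$, and since $\mathrm{depth}\,R\le\dim R=2$ this forces equality, so $R$ is Cohen--Macaulay. For (i)$\Rightarrow$(ii): as $\varpi$ is a non-zero-divisor and $R$ is Cohen--Macaulay, $R/\varpi R$ is Cohen--Macaulay of dimension $1$; by the third recorded fact the image $\bar x$ of $x$ in $R/\varpi R$ generates an $\m_{R/\varpi R}$-primary ideal, hence is a system of parameters, and in a Cohen--Macaulay local ring a system of parameters is a regular sequence. Thus $\bar x$ is a non-zero-divisor on $R/\varpi R$, i.e.\ $(\varpi,x)$ is a regular sequence in $R$; permuting it — which is allowed in a Noetherian local ring — shows $(x,\varpi)$ is a regular sequence.

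Assume now (ii). Then $R/xR$ is $\cO$-flat, and since $x$ is a non-zero-divisor an induction on $n$ gives $x^{n-1}R/x^nR\cong R/xR$, so that $R/x^nR$ — an extension of $R/x^{n-1}R$ by $R/xR$ — is $\cO$-flat for all $n\ge 1$. Hence by Proposition~\ref{prop-rank=ne}(i) the surjection $\alpha_n$ is an isomorphism $R^{\psi}_{\rho}/\fa_n\cong R/x^nR$, intertwining the multiplication-by-$x$ maps of Remark~\ref{rem-x} (both sides are induced from $R$), so (\ref{equation-complex-fa}) is isomorphic to $0\ra R/x^{n-1}R\overset{x}{\ra}R/x^nR\ra R/xR\ra 0$, which is exact; this proves (iv). For (iv)$\Rightarrow$(iii): the sequences (\ref{equation-complex-fa}) form an inverse system over $n$, with transition maps the natural surjections $R^{\psi}_{\rho}/\fa_n\twoheadrightarrow R^{\psi}_{\rho}/\fa_{n-1}$ on the middle terms, the analogous surjections on the left terms, and the identity on the right terms, and the relevant squares commute. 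The inverse system of left-hand terms has surjective transition maps, so it satisfies Mittag--Leffler and has vanishing $\plim^1$; therefore $\plim$ sends these short exact sequences to a short exact sequence, which by Proposition~\ref{prop-R=limit} is precisely $0\ra R\overset{x}{\ra}R\ra R^{\psi}_{\rho}/\fa_1\ra 0$, i.e.\ (iii).

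Finally (iii)$\Rightarrow$(ii): by Proposition~\ref{prop-rank=ne}(i) with $n=1$ we have $R^{\psi}_{\rho}/\fa_1\cong (R/xR)_{\tf}$, so exactness of $0\ra R\overset{x}{\ra}R\ra (R/xR)_{\tf}\ra 0$ forces $xR=\ker(R\twoheadrightarrow (R/xR)_{\tf})$; but the kernel of $R\to (R/xR)_{\tf}$ is the preimage of the $\cO$-torsion submodule of $R/xR$, so this says $R/xR$ is $\cO$-torsion free, i.e.\ $\varpi$ is a non-zero-divisor on $R/xR$. Combined with $x$ being a non-zero-divisor on $R$, this is exactly the statement that $(x,\varpi)$ is a regular sequence, giving (ii). I do not anticipate a real obstacle: the argument is routine commutative algebra together with the already-established properties of $x$, $N$ and $M(\Theta)$; the only points that need a little care are the two bookkeeping checks ($\alpha_n$ commutes with multiplication by $x$; the inverse system in (iv)$\Rightarrow$(iii) is Mittag--Leffler) and, in (i)$\Rightarrow$(ii), the invocation of the facts that a system of parameters in a Cohen--Macaulay local ring is a regular sequence and that regular sequences in a Noetherian local ring can be permuted.
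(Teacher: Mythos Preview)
Your proof is correct and follows essentially the same approach as the paper's: the paper also reduces (i)$\Leftrightarrow$(ii) to the fact that $(x,\varpi)$ is a system of parameters in the two-dimensional ring $R$, proves (ii)$\Leftrightarrow$(iii) via $R^{\psi}_{\rho}/\fa_1=(R/xR)_{\tf}$, deduces (iv)$\Rightarrow$(iii) by passing to the inverse limit with Mittag--Leffler, and gets (i)$\Rightarrow$(iv) from $R/x^nR$ being $\cO$-torsion free. The only cosmetic difference is that for (i)$\Leftrightarrow$(ii) the paper invokes directly that a system of parameters in a Cohen--Macaulay ring is a regular sequence, whereas you argue via $\varpi$ first and then permute; both are standard.
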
 
\begin{proof} Since $(x, \varpi)$ is a system of parameters for $M(\Theta)$ and $R$ acts faithfully on $M(\Theta)$, 
$(x, \varpi)$ is a system of parameters for $R$. Hence (i) is equivalent to (ii). The sequence $(x, \varpi)$ is regular 
if and only if multiplication by $x$ is injective and $R/xR$ is $\OO$-torsion free. Proposition \ref{prop-rank=ne}(i) implies that (ii) is
equivalent to (iii). Since $R\cong \plim_{n\geq 1} R^{\psi}_{\rho}/\fa_n$ by Proposition \ref{prop-R=limit} and the Mittag-Leffler condition is satisfied, by passing to the limit we see that 
(iv) implies (iii). On the other hand, if $R$ is Cohen--Macaulay then $(x^n, \varpi)$ is a regular sequence for all $n\ge 1$, hence 
$R^{\psi}_{\rho}/\fa_n= (R/x^nR)_{\mathrm{tf}}=R/x^nR$ for all $n\ge 1$. Since $x$ is $R$-regular we deduce that (\ref{equation-complex-fa}) is exact for all $n\ge 1$. 
Hence (i) implies (iv).
\end{proof}

\begin{lem}\label{lem-equivalent}
The following conditions are equivalent:
\begin{enumerate}
\item[(i)]
 The complex (\ref{equation-complex-fa}) is exact for all $n\geq 1$;

\item[(ii)]
$x: (R^{\psi}_{\rho}/\fa_{n-1})\otimes_{\cO}k \ra (R^{\psi}_{\rho}/\fa_n)\otimes_{\cO}k$ is injective for all $n\geq 1$;

\item[(iii)]  $x^{n-1}:(R^{\psi}_{\rho}/\fa_{1})\otimes_{\cO}k\ra (R^{\psi}_{\rho}/\fa_n)\otimes_{\cO}k$ is injective for all $n\geq 1$.
 \end{enumerate}
\end{lem}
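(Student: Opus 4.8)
The plan is to prove the two equivalences (i) $\Leftrightarrow$ (ii) and (ii) $\Leftrightarrow$ (iii) in turn. Throughout write $A_m:=R^{\psi}_{\rho}/\fa_m$; by Proposition~\ref{prop-rank=ne} this is a finite free $\cO$-module of rank $me$ and $A_m\cong(R/x^mR)_{\rm tf}$. Denote by $p_m\colon A_m\twoheadrightarrow A_{m-1}$ the transition map, by $\mu_m\colon A_{m-1}\to A_m$ multiplication by $x$ (well defined since $x\fa_{m-1}\subseteq\fa_m$), and by $\pi_{m,1}\colon A_m\twoheadrightarrow A_1$ the natural projection; thus $\pi_{m,1}=\pi_{m-1,1}\circ p_m$, while $p_m\circ\mu_m$ and $\mu_m\circ p_m$ are multiplication by $x$ on $A_{m-1}$ and on $A_m$ respectively. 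Since $x=x^1\in\fa_1$ acts as $0$ on $A_1$, the complex \eqref{equation-complex-fa} is exactly $C^{\bullet}_m\colon 0\to A_{m-1}\xrightarrow{\mu_m}A_m\xrightarrow{\pi_{m,1}}A_1\to0$, in which $\pi_{m,1}$ is surjective and $\mu_m$ is injective (an easy consequence of $x$ being $M(\Theta)$-regular and $A_m$ acting faithfully on $M(\Theta)/x^mM(\Theta)$, or of $x$ being $R$-regular via $A_m\cong(R/x^mR)_{\rm tf}$).

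For (i) $\Leftrightarrow$ (ii) I would first observe that $C^{\bullet}_m\otimes_\cO L$ is exact for all $m$: inverting $p$ gives $A_m\otimes_\cO L\cong\prod_{\p}R_{\p}/x^mR_{\p}$ over the finitely many $\p$ minimal over $(x)$ (as in the proof of Proposition~\ref{prop-rank=ne}), each $R_{\p}$ is a DVR with $x\neq0$ (Proposition~\ref{prop-(H)}), and the localized complex $0\to R_{\p}/x^{m-1}R_{\p}\xrightarrow{x}R_{\p}/x^mR_{\p}\to R_{\p}/xR_{\p}\to0$ is exact by inspection. Then, for each fixed $m$, $C^{\bullet}_m$ is exact iff $C^{\bullet}_m\otimes_\cO k$ is exact: the forward direction because a short exact sequence ending in the free module $A_1$ splits; the converse by a Nakayama argument applied to the $\cO$-torsion module $H:=\ker\pi_{m,1}/\im\mu_m$, using that $A_m/\im\mu_m\cong H\oplus A_1$ as $\cO$-modules while $(A_m/\im\mu_m)\otimes_\cO k\cong A_1\otimes_\cO k$ by exactness of $C^{\bullet}_m\otimes_\cO k$, whence $H\otimes_\cO k=0$. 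Finally, $C^{\bullet}_m\otimes_\cO k$ is exact iff $\bar\mu_m:=\mu_m\otimes_\cO k$ is injective, since this complex of $k$-vector spaces has $\bar\pi_{m,1}$ surjective and Euler characteristic $(m-1)e-me+e=0$, so its only possibly nonvanishing homology has $k$-dimension $\dim_k\ker\bar\mu_m$. Running this over all $m$ yields (i) $\Leftrightarrow$ (ii).

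The implication (ii) $\Rightarrow$ (iii) is immediate: $x^{n-1}\colon A_1\to A_n$ equals $\mu_n\circ\cdots\circ\mu_2$, so its reduction $\bar\mu_n\circ\cdots\circ\bar\mu_2$ is injective once every $\bar\mu_m$ is. For (iii) $\Rightarrow$ (ii) I would induct on $n$, the cases $n\le2$ being trivial ($\bar\mu_2$ is literally the map appearing in (iii) for $n=2$). For the inductive step, note that by Remark~\ref{rem-x} the image of $x^{n-1}\colon A_1\to A_n$ lies in $J_n:=\ker p_n=\fa_{n-1}/\fa_n$, an $\cO$-direct summand of $A_n$ which is free of rank $e$; since $\dim_k(A_1\otimes_\cO k)=e=\dim_k(J_n\otimes_\cO k)$, condition (iii) for $n$ is equivalent to bijectivity of $x^{n-1}\colon A_1\to J_n$, hence to $\fa_{n-1}=x^{n-1}R^{\psi}_{\rho}+\fa_n$, which (as $n\ge2$) gives $J_n\subseteq(xR^{\psi}_{\rho}+\fa_n)/\fa_n=\im\mu_n$. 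On the other hand the inductive hypothesis, together with the already-proved (i) $\Leftrightarrow$ (ii), gives $\ker\pi_{n-1,1}=\im\mu_{n-1}$, and a short diagram chase using the surjectivity of $p_{n-1}$ and the identities $p_n\circ\mu_n=\mu_{n-1}\circ p_{n-1}=$ (multiplication by $x$ on $A_{n-1}$) shows $\ker\pi_{n,1}=p_n^{-1}(\ker\pi_{n-1,1})=p_n^{-1}(\im\mu_{n-1})=\im\mu_n+J_n$. Combined with $J_n\subseteq\im\mu_n$ this gives $\ker\pi_{n,1}=\im\mu_n$, i.e. $C^{\bullet}_n$ is exact, which is (ii) for $n$.

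The soft parts are (i) $\Leftrightarrow$ (ii) and (ii) $\Rightarrow$ (iii); the real content, and where I expect the bookkeeping to demand care, is (iii) $\Rightarrow$ (ii). The difficulty is that (iii) only controls the composite maps $(R^{\psi}_\rho/\fa_1)\otimes_\cO k\to(R^{\psi}_\rho/\fa_n)\otimes_\cO k$, not the individual transition maps, so one must manufacture extra information; this is precisely what the identification of $J_n=\fa_{n-1}/\fa_n$ with the image of $x^{n-1}$ supplies, converting (iii) into the inclusion $J_n\subseteq\im\mu_n$ that makes the induction go through.
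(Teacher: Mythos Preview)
Your proof is correct. The argument for (i)\,$\Leftrightarrow$\,(ii) is essentially the paper's, just unpacked: the paper compresses it into the long exact sequence of the complex $0\to C\xrightarrow{\varpi}C\to C/\varpi C\to 0$ and Nakayama, but the content is the same.

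Where you diverge is in (iii)\,$\Rightarrow$\,(ii). The paper stays entirely over $k$: from (iii) and the rank count it gets short exact sequences $0\to A_1\otimes k\xrightarrow{x^{n-1}}A_n\otimes k\to A_{n-1}\otimes k\to 0$, stacks two consecutive ones, and applies the snake lemma to obtain $\ker\bar\mu_n\cong\ker\bar\mu_{n-1}$ for all $n$, which vanishes by the case $n=2$. You instead lift to $\cO$, identifying $J_n=\fa_{n-1}/\fa_n$ with the integral image of $x^{n-1}$ and deducing $J_n\subseteq\im\mu_n$, then combine this with the inductive hypothesis to close up $C_n^\bullet$ directly. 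Your route buys an explicit integral description of $J_n$ along the way (which is not needed elsewhere), at the cost of passing back and forth between $\cO$ and $k$; the paper's snake-lemma argument is shorter and avoids any lifting. Both are fine proofs.
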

\begin{proof}
Denote by $C$ the complex (\ref{equation-complex-fa}) with the term $R^{\psi}_{\rho}/\fa_1$ in degree $0$. Since all the terms in $C$ are $\cO$-torsion free, we get an exact sequence of complexes
\[0\ra C\overset{\varpi}{\ra} C\ra C/\varpi C\ra0.\]
Passing to homology and using the fact that $H_0(C)=0$ we obtain a long exact sequence
\[0\ra H_2(C)\overset{\varpi}{\ra} H_2(C)\ra H_2(C/\varpi C)\ra H_1(C)\overset{\varpi}{\ra} H_1(C)\ra H_1(C/\varpi C)\ra0.\]
The condition  (ii) is equivalent to $H_2(C/\varpi C)=0$. Hence (i) implies (ii). 

Since $(R^{\psi}_{\rho}/\fa_n)\otimes_{\cO}k$ is of dimension $n\cdot e$ over $k$ by Proposition \ref{prop-rank=ne}(ii), if (ii) holds,  
then for dimension reasons $C/\varpi C$ is an exact sequence of $k$-vector spaces, which implies that $H_1(C/\varpi C)=0$
 and therefore (i) holds  by Nakayama's lemma.
 
Part (ii) trivially implies (iii). If (iii) holds then  Proposition \ref{prop-rank=ne}(ii) implies that the chain complex of $k$-vector spaces 
$$0\rightarrow (R^{\psi}_{\rho}/\fa_{1})\otimes_{\cO}k\overset{x^{n-1}}{\longrightarrow}
 (R^{\psi}_{\rho}/\fa_n)\otimes_{\cO}k \rightarrow (R^{\psi}_{\rho}/\fa_{n-1})\otimes_{\cO}k\rightarrow 0$$
is exact for all $n\ge 1$. The snake lemma applied to the following diagram: 
\[\xymatrix{0\rightarrow(R^{\psi}_{\rho}/\fa_{1})\otimes_{\cO}k\ar[d]^{=} \ar[r]^{x^{n-2}} & (R^{\psi}_{\rho}/\fa_{n-1})\otimes_{\cO}k\ar[d]^x \ar[r]&
(R^{\psi}_{\rho}/\fa_{n-2})\otimes_{\cO}k\ar[d]^x \rightarrow 0\\
0\rightarrow  (R^{\psi}_{\rho}/\fa_{1})\otimes_{\cO}k\ar[r]^{x^{n-1}} & (R^{\psi}_{\rho}/\fa_{n})\otimes_{\cO}k\ar[r]&
(R^{\psi}_{\rho}/\fa_{n-1})\otimes_{\cO}k\rightarrow 0}\]
shows that  for all $n\ge1$ we have 
\begin{multline*}
\Ker( (R^{\psi}_{\rho}/\fa_{n-1})\otimes_{\cO}k \overset{x}{\ra} (R^{\psi}_{\rho}/\fa_n)\otimes_{\cO}k) \\ \cong \Ker( (R^{\psi}_{\rho}/\fa_{n-2})\otimes_{\cO}k \overset{x}{\ra} (R^{\psi}_{\rho}/\fa_{n-1})\otimes_{\cO}k).
\end{multline*}
Part (iii) applied with $n=2$ says that $(R^{\psi}_{\rho}/\fa_{1})\otimes_{\cO}k \overset{x}{\ra} (R^{\psi}_{\rho}/\fa_2)\otimes_{\cO} k$ is injective. Hence (ii) holds by an obvious induction. 
\end{proof}

Let $Q_n=N/\fa_n N$ be as in the previous subsection. By Proposition \ref{prop-mm-isom} we have a natural isomorphism 
\begin{equation}\label{equa-isom-Qn}\Hom_{\CO}(N,Q_n\otimes_{\cO}k)\cong (R^{\psi}_{\rho}/\fa_n)\otimes_{\cO}k.
\end{equation}

\begin{lem}\label{lem-criterion}
Assume that there exist both an object $\Upsilon\in \Mod^{\rm pro}_{K,\zeta}(\cO)$ and a continuous   $\OK$-linear map $\phi: Q_1\rightarrow \Upsilon$, such that the following conditions hold:
\begin{enumerate}
\item[(i)] for all $n\ge 1$ the map $x^{n-1}:Q_1 \ra Q_n$ induces 
a surjection
\[\Hom_{\OK}^{\cont}(Q_n,\Upsilon)\twoheadrightarrow \Hom_{\OK}^{\cont}(Q_1,\Upsilon);\]

\item[(ii)] the quotient morphism $\bar{\phi}:Q_1\otimes_{\cO}k\ra \Upsilon\otimes_{\cO}k$
induces an injection
\begin{equation}\label{equation-restriction}
\Hom_{\CO}(N,Q_1\otimes_{\cO}k)\overset{\bar{\phi}\circ}{\hookrightarrow}\Hom_{\OK}^{\cont}(N,\Upsilon\otimes_{\cO}k).\end{equation}
\end{enumerate}
  Then the equivalent conditions in Lemma \ref{lem-equivalent} hold.
\end{lem}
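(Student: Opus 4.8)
The plan is to verify condition (iii) of Lemma~\ref{lem-equivalent}, i.e.\ that for every $n\ge 1$ the multiplication map $x^{n-1}\colon (R^{\psi}_{\rho}/\fa_1)\otimes_{\cO}k\to (R^{\psi}_{\rho}/\fa_n)\otimes_{\cO}k$ (well defined since $x^{n-1}\fa_1\subseteq\fa_n$, see Remark~\ref{rem-x}) is injective. The first step is to rewrite this inside the category $\CO$. The isomorphism \eqref{equa-isom-Qn} is a special case of Proposition~\ref{prop-mm-isom}, applied to $\md=(R^{\psi}_{\rho}/\fa_j)\otimes_{\cO}k$ together with the identification $\md\wtimes_{R^{\psi}_{\rho}}N\cong Q_j\otimes_{\cO}k$. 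Since the isomorphism of Proposition~\ref{prop-mm-isom} is natural in $\md$, applying it to the multiplication-by-$x^{n-1}$ morphism $(R^{\psi}_{\rho}/\fa_1)\otimes_{\cO}k\to (R^{\psi}_{\rho}/\fa_n)\otimes_{\cO}k$ identifies the map in question with
\[\Hom_{\CO}(N,Q_1\otimes_{\cO}k)\longrightarrow\Hom_{\CO}(N,Q_n\otimes_{\cO}k),\qquad f\longmapsto \bar{x}^{n-1}\circ f,\]
where $\bar{x}^{n-1}\colon Q_1\otimes_{\cO}k\to Q_n\otimes_{\cO}k$ is the reduction modulo $\varpi$ of the morphism $x^{n-1}\colon Q_1\to Q_n$; note this is $G$-equivariant because $x$ acts on $N$ commuting with $G$. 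So it is enough to prove this post-composition map is injective.

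Next I would exploit hypothesis (i). Since $\phi$ belongs to $\Hom^{\cont}_{\OK}(Q_1,\Upsilon)$, which by hypothesis (i) is the image of the restriction map $\Hom^{\cont}_{\OK}(Q_n,\Upsilon)\twoheadrightarrow\Hom^{\cont}_{\OK}(Q_1,\Upsilon)$ along $x^{n-1}\colon Q_1\to Q_n$, there is a continuous $\OK$-linear map $\psi_n\colon Q_n\to\Upsilon$ with $\phi=\psi_n\circ x^{n-1}$. Reducing modulo $\varpi$ gives $\bar\phi=\bar\psi_n\circ\bar{x}^{n-1}$ as maps $Q_1\otimes_{\cO}k\to\Upsilon\otimes_{\cO}k$, where $\bar\psi_n\colon Q_n\otimes_{\cO}k\to\Upsilon\otimes_{\cO}k$.

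Then the conclusion: let $f\in\Hom_{\CO}(N,Q_1\otimes_{\cO}k)$ with $\bar{x}^{n-1}\circ f=0$. Regarding $f$ as a continuous $\OK$-linear map and composing with $\bar\phi$ gives $\bar\phi\circ f=\bar\psi_n\circ(\bar{x}^{n-1}\circ f)=0$ in $\Hom^{\cont}_{\OK}(N,\Upsilon\otimes_{\cO}k)$. But hypothesis (ii) says precisely that $f\mapsto\bar\phi\circ f$ is the injection \eqref{equation-restriction}, so $f=0$. This proves injectivity of the post-composition map, hence Lemma~\ref{lem-equivalent}(iii), and therefore the equivalent conditions of Lemma~\ref{lem-equivalent} hold.

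The only step that requires care is the first one: pinning down that, under \eqref{equa-isom-Qn}, multiplication by $x^{n-1}$ on $(R^{\psi}_{\rho}/\fa_\bullet)\otimes_{\cO}k$ corresponds to post-composition with $\bar{x}^{n-1}$ on $\Hom_{\CO}(N,Q_\bullet\otimes_{\cO}k)$. Once the naturality of Proposition~\ref{prop-mm-isom} and the computation $((R^{\psi}_{\rho}/\fa_j)\otimes_{\cO}k)\wtimes_{R^{\psi}_{\rho}}N\cong Q_j\otimes_{\cO}k$ are in hand, hypotheses (i) and (ii) combine immediately and there is no genuine obstacle.
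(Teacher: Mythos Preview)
Your proof is correct and follows essentially the same approach as the paper: lift $\phi$ to $\psi_n$ via hypothesis (i), reduce modulo $\varpi$ to obtain the factorisation $\bar\phi=\bar\psi_n\circ\bar{x}^{n-1}$, and then use hypothesis (ii) to conclude injectivity of post-composition by $\bar{x}^{n-1}$, which via \eqref{equa-isom-Qn} gives condition (iii) of Lemma~\ref{lem-equivalent}. You supply slightly more justification for the naturality step (identifying multiplication by $x^{n-1}$ with post-composition through Proposition~\ref{prop-mm-isom}), which the paper leaves implicit, but the argument is otherwise the same.
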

\begin{proof}
By (i), we can  choose  $\psi\in\Hom_{\OK}^{\cont}(Q_n,\Upsilon)$ such that $\phi=\psi\circ x^{n-1}$. Tensoring with $k$ gives the following commutative diagram
\[\xymatrix{Q_1\otimes_{\cO}k\ar^{x^{n-1}}[r]\ar^{\bar{\phi}}[d]& Q_n\otimes_{\cO}k\ar^{\bar{\psi}}[ld]\\
\Upsilon\otimes_{\cO}k&}.\]
Using this we will show that
 the injectivity of (\ref{equation-restriction}) implies the injectivity of
\[x^{n-1}:\Hom_{\CO}(N,Q_1\otimes_{\cO}k)\ra \Hom_{\CO}(N,Q_n\otimes_{\cO}k),\]
hence the result by the isomorphism \eqref{equa-isom-Qn} above.
Indeed, let $$f\in \Hom_{\CO}(N,Q_1\otimes_{\cO}k)$$ be such that $x^{n-1}\circ f=0$, then $\bar{\psi}\circ x^{n-1}\circ f=0$, hence $\bar{\phi}\circ f=0$, which implies $f=0$ by (\ref{equation-restriction}).  
\end{proof}

\subsection{Proof of the main theorem}\label{main-proof} Following \cite[\S4]{pa13} we  define a left exact contravariant functor $\md \mapsto \Pi(\md)$ 
from the category of $R^{\psi}_\rho[1/p]$-modules of finite length to the category of admissible unitary Banach space representations of $G$ by letting 
\[\Pi(\md):=\Hom_{\cO}^{\cont}(\md^0\widehat{\otimes}_{R^{\psi}_{\rho}}N ,L),\]
where $\md^0$ is any finitely generated $R^{\psi}_{\rho}$-submodule  of $\md$, such that $\mathrm{m}^0\otimes_\OO L\cong \mathrm{m}$.

\begin{lem}\label{the_last_straw} If $\md$ is an $(R^\psi_\rho/\mathfrak a_n)[1/p]$-module of finite length then 
$$ \dim_L \Hom_K(V, \Pi(\md))= \dim_L \md.$$
 \end{lem}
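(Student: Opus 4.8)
The plan is to identify $\Hom_K(V,\Pi(\md))$ with a Hom-group computed entirely on the module $M(\Theta)$, and then to invoke the fact that $M(\Theta)$ is locally free of rank one along the locus cut out by $x$ (Proposition~\ref{prop-(H)}).

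\emph{Step 1: rewriting $\Pi(\md)$.}  Since $\md$ is killed by $\fa_n[1/p]$, I would choose the finitely generated $R^\psi_\rho$-submodule $\md^0\subset\md$ appearing in the definition of $\Pi(\md)$ to be killed by $\fa_n$, i.e.\ a finitely generated $R^\psi_\rho/\fa_n$-module with $\md^0[1/p]\cong\md$.  As $x^n\in\fa_n$, multiplication by $x^n$ vanishes on $\md^0$, hence on $\md^0\wtimes_{R^\psi_\rho}N$; since $\wtimes$ is right exact, the surjection $N\onto N/x^nN$ induces an isomorphism $\md^0\wtimes_{R^\psi_\rho}N\simto\md^0\wtimes_{R^\psi_\rho}(N/x^nN)$.

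\emph{Step 2: passing to the $M(\Theta)$-side.}  As $V$ is finite dimensional with $K$-stable lattice $\Theta$, one has $\Hom_K(V,\Pi(\md))=\Hom_K(\Theta,\Pi(\md))$.  Unwinding $\Pi(\md)=\Hom_\cO^{\cont}(\md^0\wtimes_{R^\psi_\rho}N,L)$ and using a standard duality computation (the lattice $\Theta$ being finite free over $\cO$, with $\Theta^d=\Hom_\cO(\Theta,\cO)$ carrying the contragredient $K$-action, as in the definition of $M(\Theta)$) gives
\[\Hom_K(V,\Pi(\md))\cong\Hom_{\OK}^{\cont}\big(\md^0\wtimes_{R^\psi_\rho}N,\Theta^d\big)[1/p].\]
Combining this with Step 1 and the tensor--Hom adjunction for completed tensor products (compare the proof of Corollary~\ref{corollary-passtoN}) yields
\[\Hom_K(V,\Pi(\md))\cong\Hom_{R^\psi_\rho}^{\cont}\big(\md^0,\Hom_{\OK}^{\cont}(N/x^nN,\Theta^d)\big)[1/p].\]
By Theorem~\ref{thm-pa12-B}(i) (resp.\ Theorem~\ref{thm-x-nongeneric}), $\Hom_{\OK}^{\cont}(N/x^nN,\Theta^d)$ is $\cO$-free of rank $n\cdot e$, and by the formula $M(\Theta)/x^nM(\Theta)\cong\Hom_\cO(\Hom_{\OK}^{\cont}(N/x^nN,\Theta^d),\cO)$ from \S\ref{ideals} it is canonically the $\cO$-dual of $M_n:=M(\Theta)/x^nM(\Theta)$ as an $R^\psi_\rho$-module.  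Inverting $p$ and writing $A:=(R^\psi_\rho/\fa_n)[1/p]$ (note that $\md$ and $\Hom_\cO(M_n,\cO)[1/p]$ are both $A$-modules) we obtain
\[\Hom_K(V,\Pi(\md))\cong\Hom_A\big(\md,\Hom_L(M_n[1/p],L)\big)\cong\Hom_L\big(\md\otimes_A M_n[1/p],L\big),\]
so it remains to prove $\dim_L(\md\otimes_A M_n[1/p])=\dim_L\md$.

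\emph{Step 3: $M_n[1/p]$ is free of rank one over $A$.}  The ring $A=(R/x^nR)[1/p]$ is a finite (hence Artinian) $L$-algebra by Proposition~\ref{prop-rank=ne}(ii), and $M_n[1/p]$ is a faithful $A$-module (Proposition~\ref{prop-rank=ne}(i)) of $L$-dimension $n\cdot e=\dim_L A$.  It therefore suffices to show $M_n[1/p]$ is free of rank one after localising at each maximal ideal $\qf$ of $A$.  Such a $\qf$ corresponds to a prime $\p$ of $R$ with $x\in\p$, $\varpi\notin\p$ and $\dim R/\p=1$; since $x$ is $R$-regular and $\dim R=2$, such a $\p$ is minimal over $(x)$, so Proposition~\ref{prop-(H)} gives $M(\Theta)_\p\cong R_\p$.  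Localising $M_n=M(\Theta)/x^nM(\Theta)$ at $\p$ and inverting $p$ (using also $R/\fa_n=(R/x^nR)_{\tf}$) shows $(M_n[1/p])_\qf\cong A_\qf$.  Hence $M_n[1/p]\cong A$ as $A$-modules, so $\md\otimes_A M_n[1/p]\cong\md$, which finishes the proof.

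\emph{Main obstacle.}  The substantive input — local freeness of $M(\Theta)$ along $V(x)$ — is already available from Proposition~\ref{prop-(H)}, so the genuinely delicate part is Step~2: correctly matching $\Hom_K(V,\Pi(\md))$ with a Hom-group on the $M(\Theta)$-side through the nested dualities ($\cO$-dual, Pontryagin dual, Schikhof dual), the reduction from $N$ to $N/x^nN$, and the completed tensor--Hom adjunction, all while keeping precise track of where $p$ is inverted.
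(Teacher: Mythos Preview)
Your argument is correct and follows essentially the same route as the paper: both reduce to showing that $M(\Theta)[1/p]$ is locally free of rank one at the primes containing $x$, which is Proposition~\ref{prop-(H)}. The paper's proof is shorter only because it quotes \cite[Prop.2.22]{pa12} for the duality identification $\dim_L \Hom_K(V,\Pi(\md))=\dim_L(\md\otimes_R M(\Theta))$ (your Step~2 unpacks this), and it does the localization up front via the Chinese remainder theorem rather than at the end as you do in Step~3.
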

 \begin{proof} Using Chinese remainder theorem we may assume that $\md$ is supported on a single maximal ideal $\nn$ of $R[1/p]$. Proposition 2.22 of \cite{pa12} implies that 
  $$\dim_L \Hom_K(V, \Pi(\md))= \dim_L \md \otimes_{R^{\psi}_\rho} M(\Theta)=\dim_L \md\otimes_R M(\Theta).$$
  Since $\nn$ contains $\fa_n$ it will contain $x$ chosen as  in \S\ref{avoid}. It follows from Proposition \ref{prop-(H)} that the localization of $M(\Theta)[1/p]$ at $\nn$ is a free $(R[1/p])_{\nn}$-module of rank $1$. Hence $\md\otimes_R M(\Theta)\cong \md$.
  \end{proof}
 
Let $\n$ be a maximal ideal of $ R[1/p]$ and let $\kappa(\n)$ be its residue field. Since $\kappa(\nn)$ is a finite dimensional $L$-vector space we may apply the functor $\Pi$ to  $\kappa(\nn)$.
Then $\Pi(\kappa(\n))$   is a $\kappa(\n)$-Banach space representation such that 
$\check{\VV}(\Pi(\kappa(\n)))\cong \rho_{\n}^{\rm un}$. We denote by $\Pi(\kappa(\n))^{\rm alg}$ the subspace of locally algebraic vectors in $\Pi(\kappa(\n))$. 
Let us assume that $\n\in\Spec (R^{\psi}_{\rho}/\fa_1)[1/p]$, where $\fa_1$ is the $R^{\psi}_{\rho}$-annihilator of $M(\Theta)/x M(\Theta)$ and $x$ is chosen as  in \S\ref{avoid}.  Then $\Pi(\kappa(\n))^{\rm alg}$ is \emph{irreducible}, see Remark \ref{whyH}.

\begin{prop}\label{happy_chinese_new_year} If $\n$ is as above then one of the following holds: 
\begin{enumerate}
\item[(i)] $\Pi(\kappa(\n))$ is an absolutely irreducible non-ordinary $\kappa(\n)$-Banach space representation;

\item[(ii)] there is a non-split exact sequence of admissible $\kappa(\n)$-Banach space representations of $G$:
\[0\ra \Pi_1\ra \Pi(\kappa(\n))\ra \Pi_2\ra0,\]
such that both $\Pi_1$ and $\Pi_2$ are absolutely irreducible. In this case, $\rho$ is reducible (non-split), and if $\rho\cong \smatr{\delta_2}*0{\delta_1}$,
 then  $\check{\VV}(\Pi_i)$ is a character congruent to $\delta_i$ modulo $\varpi$. Moreover, $\Pi_1^{\alg}\cong \Pi(\kappa(\nn))^{\alg}$ and $\Pi_2^{\alg}=0$.
\end{enumerate}
\end{prop}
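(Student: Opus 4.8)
The statement is a structural classification of the Banach space representation $\Pi(\kappa(\n))$ for $\n \in \Spec(R^\psi_\rho/\fa_1)[1/p]$, and the key input is that $\Pi(\kappa(\n))^{\alg}$ is irreducible (Remark \ref{whyH}). The plan is to exploit the correspondence $\check{\VV}(\Pi(\kappa(\n))) \cong \rho_{\n}^{\rm un}$ together with the fact that $\rho_{\n}^{\rm un}$ is a $2$-dimensional potentially semi-stable representation of $G_{\Q_p}$ which becomes crystalline over an abelian extension (since $\tau$ is a principal series type), and case-split according to whether $\rho_{\n}^{\rm un}$ is irreducible or reducible. If $\rho_{\n}^{\rm un}$ is irreducible, then by the results of Colmez/Pa\v sk\=unas recalled in \S\ref{section-notations} the Banach space $\Pi(\kappa(\n))$ is (absolutely) irreducible, and it is non-ordinary because an ordinary $\Pi$ has reducible $\check{\VV}(\Pi)$; this gives case (i) (after possibly extending scalars to handle "absolutely", noting $\End$ is a field here). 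If $\rho_{\n}^{\rm un}$ is reducible, it is a non-split extension $0 \to \delta_1' \to \rho_{\n}^{\rm un} \to \delta_2' \to 0$ of characters (non-split because $\End_{G_{\Q_p}}(\rho) = k$ forces $\rho$, hence its deformations through $R^\psi_\rho$, to have only scalar endomorphisms generically — more precisely the point $\n$ cannot be a split reducible point because $x$ was chosen via Lemma \ref{lemma-primeavoidence} to avoid such loci), and since $\check{\VV}$ is exact and contravariant on admissible Banach spaces, applying it to a corresponding filtration of $\Pi(\kappa(\n))$ produces a non-split exact sequence $0 \to \Pi_1 \to \Pi(\kappa(\n)) \to \Pi_2 \to 0$ with $\check{\VV}(\Pi_i)$ the characters $\delta_i'$ (congruent to $\delta_i$ mod $\varpi$ after identifying residual objects), and this reducibility of $\rho_{\n}^{\rm un}$ forces $\rho$ reducible non-split as well.

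Concretely, I would proceed as follows. First, recall from \S\ref{subsection-x}(c) and the construction of $\Pi$ in \S\ref{main-proof} that $\check{\VV}(\Pi(\kappa(\n))) \cong \rho_{\n}^{\rm un}$, a $2$-dimensional $\kappa(\n)$-representation of $G_{\Q_p}$. Second, invoke the structure theory: by \cite{pa12} (building on \cite{co}) the functor $\check{\VV}$ sets up a bijection between isomorphism classes of admissible absolutely irreducible non-ordinary Banach space representations of $G$ with central character $\zeta$ and absolutely irreducible $2$-dimensional representations of $G_{\Q_p}$ with the corresponding determinant; moreover reducible $\rho_{\n}^{\rm un}$ correspond to Banach spaces with a two-step filtration whose graded pieces are (parabolically induced, hence absolutely irreducible once one checks the inducing characters are in general position) and $\check{\VV}$ of the pieces recovers the Jordan--Hölder factors of $\rho_{\n}^{\rm un}$. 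Third, in the reducible case, use that $\rho$ reducible and $\End_{G_{\Q_p}}(\rho) = k$ force $\rho \cong \smatr{\delta_2}{\ast}{0}{\delta_1}$ (non-split), and that reduction mod $\varpi$ of the characters $\delta_i'$ of $\rho_{\n}^{\rm un}$ gives $\delta_i$ up to the twist by $\epsilon$ bookkeeping built into $\check{\VV}$; extension non-splitness transfers from $\rho_{\n}^{\rm un}$ to $\Pi(\kappa(\n))$ because $\check{\VV}$ is faithful enough (a split Banach extension would give a split Galois extension). Finally, for the claims about locally algebraic vectors: since $\Pi(\kappa(\n))^{\alg}$ is irreducible (Remark \ref{whyH}) and nonzero (it is nonzero because $\n$ lies on the support of $M(\Theta)$ which detects $\Hom_K(V, \Pi(\kappa(\n))) \neq 0$, i.e. the locally algebraic part is nontrivial — this is Lemma \ref{the_last_straw} applied with $\md = \kappa(\n)$), it must land entirely inside one of the two pieces; since $\Pi_1$ is the subobject and $\Pi_2 = \Ind_P^G(\ldots)$ is a locally algebraic principal series whose own locally algebraic vectors would be too large if nonzero here — more carefully, one identifies which piece carries the locally algebraic vectors by a Hodge-theoretic computation showing $\Pi_2$ is, up to a twist, a smooth (non-locally-algebraic in the relevant sense) induction so $\Pi_2^{\alg} = 0$, forcing $\Pi_1^{\alg} \cong \Pi(\kappa(\n))^{\alg}$.

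**Main obstacle.** The crux is the reducible case: pinning down that $\Pi(\kappa(\n))$ genuinely has a two-step filtration by absolutely irreducible pieces (as opposed to, say, being irreducible with reducible $\check{\VV}$, or having more complicated structure), that the extension is non-split, and — most delicate — that all the locally algebraic vectors sit in $\Pi_1$ with $\Pi_2^{\alg} = 0$. This last point requires knowing the precise shape of the two pieces: $\Pi_1$ should be (a completion of) a locally algebraic principal series and $\Pi_2$ a unitary principal series induction of characters that are not locally algebraic, which one reads off from the $p$-adic Hodge type $(\w, \tau)$ of $\rho_{\n}^{\rm un}$ and the recipe relating ordinary/triangulline parameters to principal series. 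I expect this to rely on the detailed analysis in \cite{pa12} of the Banach spaces attached to reducible Galois representations (and on \cite{bb} for the shape of the completion), so the "proof" will mostly be a careful assembly of those results rather than new argument; the genericity hypotheses on $\rho$ and the choice of $x$ avoiding bad maximal ideals are exactly what make the pieces behave in this clean way, ruling out the intersection of reducible and irreducible loci that the authors flag as problematic elsewhere.
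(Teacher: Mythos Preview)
Your approach is essentially correct and reconstructs the content of \cite[Prop.~4.9]{pa12}, which is precisely what the paper cites. The paper's proof is much shorter: it invokes that structural result directly, then rules out two subcases --- one where $\Pi(\kappa(\nn))^{\alg}=0$ (impossible since $\nn$ lies in the support of $M(\Theta)$, as you note via Lemma~\ref{the_last_straw}), and one degenerate case excluded by hypothesis~\eqref{Hyp} --- and finally cites the proof of \cite[Prop.~4.12]{pa12} for the location of the locally algebraic vectors. So your plan and the paper agree on the underlying mechanism; the difference is that you are sketching the proof of the cited result rather than invoking it.

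There is one point where your sketch is imprecise. You attribute non-splitness of the Banach extension (and implicitly of $\rho_\nn^{\rm un}$) to the choice of $x$ via Lemma~\ref{lemma-primeavoidence}. That is not how it works: the maximal ideals avoided by $x$ are those where $\Pi(\kappa(\nn))^{\alg}$ is reducible (Remark~\ref{whyH}), not where $\rho_\nn^{\rm un}$ is split. Non-splitness of the Banach sequence in \cite[Prop.~4.9]{pa12} comes instead from the construction of $N$ as a projective envelope (generic case) or as a deformation of the atome automorphe $\beta^\vee$ (non-generic case): the object $\Pi(\kappa(\nn))$ is built as $\Hom_{\OO}^{\cont}(\kappa(\nn)^0\wtimes_{R^\psi_\rho} N, L)$, and indecomposability of $N$ propagates to the Banach space. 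Your alternative route --- arguing that a split $\Pi(\kappa(\nn))$ would force $\rho_\nn^{\rm un}$ split via $\check{\VV}$ --- would require separately knowing that $\rho_\nn^{\rm un}$ itself is non-split, and a non-split $\rho$ can in principle deform to a split representation, so this needs an independent argument (which again lives in \cite{pa12}). You correctly flag the filtration issue in your ``Main obstacle'' paragraph, and that is indeed where all the work is; the honest resolution is to cite \cite[Prop.~4.9]{pa12} rather than re-derive it.
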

\begin{proof} The claim follows from \cite[Prop.4.9]{pa12}, which describes the structure of  $\Pi(\kappa(\n))$ for an arbitrary maximal ideal $\n$ of $R[1/p]$.
Note that  part (ii) (b) of that proposition can not occur as in this case $\Pi(\kappa(\n))^\alg=0$ and this would contradict   \cite[Prop.4.12]{pa12} and part (ii) (c) cannot occur as we assume \eqref{Hyp}. The last assertion in (ii) follows from the proof of \cite[Prop.4.12]{pa12}.
\end{proof}

Recall that $R^{\psi}_{\rho}/\fa_1$ is a finite free $\cO$-module of rank $e$ by Proposition 
\ref{prop-rank=ne}, where $e$ is the Hilbert--Samuel multiplicity of $R/(\varpi)$. 
Let $\Pi=\Pi(\md)$ with $\md^0=R^{\psi}_{\rho}/\fa_1$, $\md=(R^{\psi}_{\rho}/\fa_1)[1/p]$ so that $\Pi$ is an admissible unitary $L$-Banach space 
representation of $G$ and $\Pi^0:=\Hom_{\cO}^{\cont}(\md^0\widehat{\otimes}_{R^{\psi}_{\rho}}N ,\OO)$ is its  unit ball.  Unraveling the definitions we have
$$\Pi= \Hom^{\cont}_{\OO}(N/\fa_1 N, L)=\Hom_{\OO}^{\cont}(Q_1, L)=\Hom^{\cont}_{\OO}(R/xR\wtimes_{R^{\psi}_{\rho}} N , L),$$
$$\Pi^0=\Hom^{\cont}_{\OO}(N/\fa_1 N, \OO)=\Hom_{\OO}^{\cont}(Q_1, \OO)=\Hom^{\cont}_{\OO}(R/xR\wtimes_{R^{\psi}_{\rho}} N , \OO).$$
 
\begin{lem}\label{isotypic-sub-gen} The $V$-isotypic subspace in $\Pi^{\rm alg}$ generates it as a $G$-rep\-re\-sen\-ta\-tion.
\end{lem}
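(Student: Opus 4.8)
\emph{Proof plan.}
The plan is to read off the structure of $\Pi=\Pi\bigl((R^{\psi}_{\rho}/\fa_1)[1/p]\bigr)$ from the irreducibility of the locally algebraic parts of its ``fibres'' $\Pi(\kappa(\nn))$, recorded just before Proposition~\ref{happy_chinese_new_year}. Write $\md:=(R^{\psi}_{\rho}/\fa_1)[1/p]$; it is an Artinian $L$-algebra, hence a finite product $\prod_{\nn}\md_{\nn}$ of local Artinian $L$-algebras indexed by the maximal ideals $\nn$ of $(R^{\psi}_{\rho}/\fa_1)[1/p]$. Computing $\Pi(\md)$ with a finitely generated $R^{\psi}_{\rho}$-lattice respecting this decomposition shows $\Pi=\bigoplus_{\nn}\Pi(\md_{\nn})$, and since forming locally algebraic vectors and $V$-isotypic subspaces commutes with finite direct sums, it is enough to treat one factor. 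So I would assume $\md$ is local Artinian with residue field $\kappa:=\kappa(\nn)$, and set $\pi_0:=\Pi(\kappa)^{\alg}$. As $\nn\in\mSpec(R^{\psi}_{\rho}/\fa_1)[1/p]$ and $x$ is chosen as in \S\ref{avoid}, the representation $\pi_0$ is irreducible; and since $V$ is locally algebraic, $\Hom_K(V,\Pi(\kappa))=\Hom_K(V,\pi_0)$, which by Lemma~\ref{the_last_straw} has dimension $\dim_L\kappa>0$, so the $V$-isotypic subspace of $\pi_0$ is non-zero.

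Next I would pin down the composition factors of $\Pi^{\alg}$. Fix a composition series $0=\md^{(0)}\subset\cdots\subset\md^{(\ell)}=\md$ of $\md$ over itself, with $\md^{(i)}/\md^{(i-1)}\cong\kappa$, so $\ell=\dim_L\md/\dim_L\kappa$. Applying the contravariant left-exact functor $\Pi$ to $0\to\md^{(i-1)}\to\md^{(i)}\to\kappa\to0$ yields exact sequences $0\to\Pi(\kappa)\to\Pi(\md^{(i)})\to\Pi(\md^{(i-1)})$, and composing these maps produces a filtration $0=F_0\subseteq F_1\subseteq\cdots\subseteq F_\ell=\Pi$ by $G$-subrepresentations whose graded pieces $F_j/F_{j-1}$ embed into $\Pi(\kappa)$. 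Intersecting this filtration with $\Pi^{\alg}$ and using left-exactness of $(-)^{\alg}$, each graded piece of the resulting filtration of $\Pi^{\alg}$ embeds into $\Pi(\kappa)^{\alg}=\pi_0$, hence is $0$ or $\pi_0$ by irreducibility. Therefore $\Pi^{\alg}$ has finite length with every composition factor isomorphic to $\pi_0$; if there are $\ell'$ of them then $\ell'\le\ell$ and $\dim_L\Hom_K(V,\Pi^{\alg})\le\ell'\dim_L\Hom_K(V,\pi_0)=\ell'\dim_L\kappa$. But $\Hom_K(V,\Pi^{\alg})=\Hom_K(V,\Pi)$ has dimension $\dim_L\md=\ell\dim_L\kappa$ by Lemma~\ref{the_last_straw} (again using that $V$ is locally algebraic), so $\ell'=\ell$ and $\dim_L\Hom_K(V,\Pi^{\alg})=\ell\,\dim_L\Hom_K(V,\pi_0)$.

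It remains to deduce the generation statement, and here the argument is soft. For any $G$-subrepresentation $\Pi'\subseteq\Pi^{\alg}$, both $\Pi'$ and $\Sigma:=\Pi^{\alg}/\Pi'$ have all composition factors isomorphic to $\pi_0$, with lengths summing to $\ell$; subadditivity of $\dim_L\Hom_K(V,-)$ along composition series, the equality $\dim_L\Hom_K(V,\Pi^{\alg})=\ell\,\dim_L\Hom_K(V,\pi_0)$ just obtained, and left-exactness of $\Hom_K(V,-)$ together force $\dim_L\Hom_K(V,\Pi^{\alg})=\dim_L\Hom_K(V,\Pi')+\dim_L\Hom_K(V,\Sigma)$, hence $\Hom_K(V,\Pi^{\alg})\twoheadrightarrow\Hom_K(V,\Sigma)$. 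Now I would take $\Pi'$ to be the $G$-subrepresentation generated by the $V$-isotypic subspace $W$ of $\Pi^{\alg}$. If $\Pi'\subsetneq\Pi^{\alg}$, then $\Sigma\neq0$ contains an irreducible subrepresentation $\cong\pi_0$, so $\Hom_K(V,\Sigma)\neq0$; but every $K$-homomorphism $V\to\Sigma$ lifts to one $V\to\Pi^{\alg}$, whose image then lies in $W\subseteq\Pi'=\ker(\Pi^{\alg}\to\Sigma)$, so the homomorphism is zero -- a contradiction. Hence $\Pi'=\Pi^{\alg}$. The main obstacle is the middle paragraph: correctly threading the composition series through the left-exact functor $\Pi$ and verifying that all composition factors of $\Pi^{\alg}$ coincide with the single irreducible $\pi_0$; once this structural statement is in hand, everything else is dimension counting.
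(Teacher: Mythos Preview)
Your proof is correct and follows essentially the same strategy as the paper: filter $\md=(R^{\psi}_{\rho}/\fa_1)[1/p]$ by submodules with one-dimensional graded pieces, apply $\Pi$ to obtain a filtration of $\Pi$ whose locally algebraic graded pieces embed in $\Pi(\kappa(\nn))^{\alg}$, and then use irreducibility of the latter together with the dimension count from Lemma~\ref{the_last_straw} to pin down the composition factors of $\Pi^{\alg}$.

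There are two cosmetic differences. First, instead of enlarging $L$ so that every $\kappa(\nn)=L$ (as the paper does), you localise $\md$ at each maximal ideal and work one $\nn$ at a time; this avoids the base change but requires tracking $\dim_L\kappa$ through the dimension count, which you do correctly. Second, the concluding generation argument differs slightly: the paper runs an induction along the filtration, using semisimplicity of locally algebraic $K$-representations to show that the $V$-isotypic subspace surjects onto each successive quotient; you instead deduce from the dimension count that $\Hom_K(V,\Pi^{\alg})\twoheadrightarrow\Hom_K(V,\Sigma)$ for \emph{every} $G$-quotient $\Sigma$, and then argue by contradiction that the quotient by $\langle G\cdot W\rangle$ can have no $V$-isotypic vectors. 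Both arguments exploit the same numerical equality, and yours is arguably cleaner since it does not reference the filtration in the final step.
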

\begin{proof}  After enlarging $L$, we may assume that $\kappa(\nn)=L$ for all maximal ideals of $(R^{\psi}_{\rho}/\fa_1)[1/p]$ and that each irreducible subquotient of $\Pi^{\rm alg}$ is absolutely irreducible. 
Since $(R^{\psi}_{\rho}/\fa_1)[1/p]$ is of dimension $e$ over $L$ we may choose a filtration of  length $e$ of $(R^{\psi}_{\rho}/\fa_1)[1/p]$ by 
submodules such that each graded piece is isomorphic to $\kappa(\nn)$ for some $\nn\in \mSpec (R^{\psi}_{\rho}/\fa_1)[1/p]$. This induces a filtration $\{\Pi_i\}_{0\le i\le e}$
of $\Pi$ by closed subrepresentations, such that $\Pi_0=0$, $\Pi_e=\Pi$ and $\Pi_i/\Pi_{i-1}$ is isomorphic to a closed subrepresentation of $\Pi(\kappa(\nn))$ 
for some $\nn\in \mSpec (R^{\psi}_{\rho}/\fa_1)[1/p]$.  

The subspace $\Pi_i^{\alg}$ of locally algebraic vectors in $\Pi_i$ is equal to  $\Pi_i \cap \Pi^{\alg}$. 
Since $\Pi(\kappa(\n))^{\rm alg}$ is irreducible for any $\n\in\mSpec(R^{\psi}_{\rho}/\fa_1)[1/p]$ by the choice of $x$, for all $1\le i\le e$ the quotient
$\Pi^{\alg}_i/\Pi^{\alg}_{i-1}$ is either zero or isomorphic to $\Pi(\kappa(\nn))^{\alg}$. Since $\Hom_K(V, \Pi(\kappa(\nn))^{\alg})$ is one dimensional by Lemma \ref{the_last_straw}
we conclude that
$$\dim_L \Hom_K(V,  \Pi^{\alg}_i/\Pi^{\alg}_{i-1})\le 1, \quad \dim_L \Hom_K(V,  \Pi^{\alg}_i) \le i,$$
for all $1\le i\le e$. Since $\Pi_e=\Pi$ and $\Hom_K(V, \Pi^{\alg})$ is $e$-dimensional by Lemma \ref{the_last_straw}, we conclude that the above inequalities are equalities. In particular, 
for $1\le i\le e$ we obtain an exact sequence of locally algebraic representations:
\begin{equation}\label{locally_alg_vec_exact}
0 \rightarrow \Pi_{i-1}^{\alg}\rightarrow \Pi_i^{\alg} \rightarrow (\Pi_i/ \Pi_{i-1})^{\alg} \rightarrow 0
\end{equation}
with $(\Pi_i/ \Pi_{i-1})^{\alg}\cong \Pi(\kappa(\nn_i))^{\alg}$ for some $\nn_i\in \mSpec (R^{\psi}_{\rho}/\fa_1)[1/p]$.

Since $\Hom_K(V, \Pi(\kappa(\nn))^{\alg})\neq 0$ and $\Pi(\kappa(\nn))^{\alg}$ 
is irreducible $\Pi^{\alg}_i/\Pi^{\alg}_{i-1}$ is generated as a $G$-representation by its $V$-isotypical subspace.
Since the category of locally algebraic $K$-representations is semi-simple, we conclude that the $G$-subrepresentation of $\Pi^{\alg}_i$ generated by the $V$-isotypical subspace
surjects onto $\Pi^{\alg}_i/\Pi^{\alg}_{i-1}$. Let $X_0'$ be the $V$-isotypical subspace in $\Pi^{\alg}|_K$.  Then inductively we obtain that 
$\langle G\cdot X_0'\rangle$ surjects onto $\Pi^{\alg}/\Pi^{\alg}_j$ for all $0\le j\le e-1$. Since $\Pi^{\alg}_0=0$ we conclude that $X_0'$  generates 
$\Pi^{\rm alg}$ as a $G$-representation.
\end{proof}

\begin{lem}\label{last_lemma} 
Let $B$ be the closure of $\Pi^{\alg}$ in $\Pi$. If $\Pi'$ is an irreducible subquotient of $\Pi/ B$ then $(\Pi')^{\alg}=0$.
\end{lem}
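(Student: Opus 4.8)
The plan is to recast the statement as a count of multiplicities. Call an irreducible subquotient $\sigma$ of a finite length admissible unitary $\kappa(\nn)$-Banach space representation of $G$ \emph{good} if $\sigma^{\alg}\neq 0$; the assertion is then that $\Pi/B$ has no good irreducible subquotient. I would prove this by showing that $\Pi$ has exactly $e$ good subquotients (counted with multiplicity), that all $e$ of them already occur inside $B$, and by then comparing, via additivity of Jordan--Hölder multiplicities in $0\to B\to \Pi\to \Pi/B\to 0$, to conclude that $\Pi/B$ has none.

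First I would recall from the proof of Lemma \ref{isotypic-sub-gen} the filtration $0=\Pi_0\subseteq \Pi_1\subseteq\cdots\subseteq\Pi_e=\Pi$ by closed subrepresentations, where each $\Pi_i/\Pi_{i-1}$ is a nonzero closed subrepresentation of $\Pi(\kappa(\nn_i))$ for some $\nn_i\in\mSpec(R^{\psi}_{\rho}/\fa_1)[1/p]$, where $\Pi_i^{\alg}=\Pi_i\cap \Pi^{\alg}$, and where $\Pi_i^{\alg}/\Pi_{i-1}^{\alg}\cong(\Pi_i/\Pi_{i-1})^{\alg}\cong \Pi(\kappa(\nn_i))^{\alg}\neq 0$, the last representation being irreducible. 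Since each $\Pi(\kappa(\nn_i))$ has length at most $2$ by Proposition \ref{happy_chinese_new_year}, $\Pi$ has finite length; and by that proposition each $\Pi(\kappa(\nn_i))$ has \emph{exactly one} good irreducible subquotient --- it is $\Pi(\kappa(\nn_i))$ itself in case (i), and $\Pi_1$ in case (ii) (since $\Pi_2^{\alg}=0$). As $\Pi_i/\Pi_{i-1}$ is a nonzero closed subrepresentation of $\Pi(\kappa(\nn_i))$ it therefore has exactly one good subquotient, and hence $\Pi$ has exactly $e$ good subquotients.

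Next I would bound the good subquotients of $B$ from below by $e$. Put $B_i:=\overline{\Pi_i^{\alg}}$; these are closed subrepresentations with $B_i\subseteq\Pi_i$ and $0=B_0\subseteq\cdots\subseteq B_e=\overline{\Pi^{\alg}}=B$. The point is the identity $\Pi_i^{\alg}\cap B_{i-1}=\Pi_{i-1}^{\alg}$, which holds because $\Pi_{i-1}^{\alg}\subseteq B_{i-1}\subseteq \Pi_{i-1}$ gives $\Pi_{i-1}^{\alg}\subseteq \Pi_i^{\alg}\cap B_{i-1}\subseteq \Pi_i^{\alg}\cap \Pi_{i-1}=\Pi_{i-1}^{\alg}$. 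It shows that the image of $\Pi_i^{\alg}$ in $\Pi/B_{i-1}$ is isomorphic to $\Pi_i^{\alg}/\Pi_{i-1}^{\alg}\cong \Pi(\kappa(\nn_i))^{\alg}$, an irreducible locally algebraic representation which is dense in $B_i/B_{i-1}$. Because $\tau$ is a principal series type, $\Pi(\kappa(\nn_i))^{\alg}$ is an irreducible locally algebraic principal series representation, so by the Berger--Breuil theory of unitary completions of such representations (recalled in the introduction) $B_i/B_{i-1}$ is an irreducible admissible Banach space representation with $(B_i/B_{i-1})^{\alg}\neq 0$, i.e.\ a good subquotient. Summing over $i$, $B$ has at least $e$ good subquotients.

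Finally, every good subquotient of $B$ is a good subquotient of $\Pi$, so by the second paragraph $B$ has at most $e$ of them, hence exactly $e$; additivity of Jordan--Hölder multiplicities along $0\to B\to \Pi\to \Pi/B\to 0$ then leaves $\Pi/B$ with no good subquotient, which is the assertion. The step I expect to require the most care is making the two counts line up: one must use that $\Pi_2^{\alg}=0$ in Proposition \ref{happy_chinese_new_year}, so that each $\Pi(\kappa(\nn))$ --- whether or not $\Pi_i/\Pi_{i-1}$ exhausts it --- contributes precisely one good subquotient to $\Pi$, and one must verify the identity $\Pi_i^{\alg}\cap B_{i-1}=\Pi_{i-1}^{\alg}$ so that the $B_i$ genuinely pick up one new good subquotient at each step rather than letting it leak into $\Pi/B$. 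The appeal to Berger--Breuil is exactly the place where the restriction to principal series types enters.
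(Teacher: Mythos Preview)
Your counting argument is correct, but only for principal series types, whereas the lemma is stated and proved in the paper without any hypothesis on $\tau$, and the full generality is actually used later (in Proposition~\ref{prop-criterion1-Gorenstein}). The single place the principal series assumption enters your argument is the appeal to Berger--Breuil to conclude that $B_i/B_{i-1}$ is irreducible. This is more than you need: all your count requires is that $B_i/B_{i-1}$ contributes \emph{at least one} good Jordan--H\"older factor. That follows immediately from what you have already established, since the image of $\Pi_i^{\alg}$ in $B_i/B_{i-1}$ is $\Pi_i^{\alg}/\Pi_{i-1}^{\alg}\cong\Pi(\kappa(\nn_i))^{\alg}\neq 0$ and consists of locally algebraic vectors, so $(B_i/B_{i-1})^{\alg}\neq 0$; left exactness of $(-)^{\alg}$ on finite-length objects then forces some irreducible subquotient of $B_i/B_{i-1}$ to have nonzero locally algebraic vectors. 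With this replacement, your proof works for arbitrary $\tau$.

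The paper's route is close in spirit but organised differently. It sets $B_i:=B\cap\Pi_i$ rather than $\overline{\Pi_i^{\alg}}$, notes that $B_i^{\alg}=\Pi_i^{\alg}$, and shows by induction on $i$ that $\Pi_i/B_i$ has no good subquotient: the induction step uses the exact sequence $0\to\Pi_{i-1}/B_{i-1}\to\Pi_i/B_i\to(\Pi_i/\Pi_{i-1})/(B_i/B_{i-1})\to 0$ together with the fact that $B_i/B_{i-1}\hookrightarrow\Pi_i/\Pi_{i-1}$ already contains $(\Pi_i/\Pi_{i-1})^{\alg}$, so by Proposition~\ref{happy_chinese_new_year} the rightmost term is either $0$ or isomorphic to $\Pi_2$, in either case with vanishing locally algebraic vectors. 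This avoids both the global multiplicity count and any appeal to Berger--Breuil. Your approach has the virtue of making the numerics transparent; the paper's has the virtue of working component by component without ever needing to know that $B_i/B_{i-1}$ is irreducible.
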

\begin{proof} Let $B_i:=B\cap \Pi_i$, where $\{\Pi_i\}_{0\le i\le e}$ is the filtration  constructed in the proof of Lemma \ref{isotypic-sub-gen}. Then $B_i^{\alg}= B^{\alg} \cap \Pi_i= \Pi_i^{\alg}$. We claim that if $\Pi'$ is an irreducible subquotient of $\Pi_i/ B_i$ then $(\Pi')^{\alg}=0$. Since $\Pi_e=\Pi$ this implies the Lemma. The claim is proved by induction on $i$ using Proposition \ref{happy_chinese_new_year} and 
\eqref{locally_alg_vec_exact} for the induction step. 
\end{proof}

\begin{lem}\label{nearly_there}
Let $B$ be the closure of $\Pi^{\alg}$ in $\Pi$ and let $B^0$ be its unit ball. Then the natural morphism 
$$\Hom_{\dualcat(\OO)}(N, Q_1\otimes_{\OO} k)\ra  \Hom_{\dualcat(\OO)}(N, (B^0\otimes_{\OO} k)^{\vee}) $$ 
is an isomorphism. 
\end{lem}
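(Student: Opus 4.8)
The plan is to identify the map in the statement explicitly, and then prove injectivity and surjectivity separately, the latter being the real content.

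\medskip

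\noindent\textbf{Setting up the map.} Write $\Pi^0=\Hom_{\OO}^{\cont}(Q_1,\OO)$ for the unit ball of $\Pi$ and $B^0=B\cap\Pi^0$ for that of $B$. Since $B$ is a closed $L$-subspace of $\Pi$, the quotient $\Pi^0/B^0$ is $\OO$-torsion free, so $0\to B^0\to\Pi^0\to\Pi^0/B^0\to 0$ is a short exact sequence of $\OO$-flat modules; applying the Schikhof dual $\Hom_{\OO}^{\cont}(-,\OO)$, which is exact here, and using $(\Pi^0)^d\cong Q_1^{dd}$ gives a short exact sequence in $\dualcat(\OO)$,
\[
0\to (\Pi^0/B^0)^d\to Q_1^{dd}\to \Upsilon\to 0,\qquad \Upsilon:=(B^0)^d,
\]
with $\Upsilon\otimes_{\OO}k\cong (B^0\otimes_{\OO}k)^\vee$. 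The canonical morphism $Q_1\to Q_1^{dd}$ is the quotient of $Q_1$ by its $\OO$-torsion submodule $T$; here $T$ need not vanish, precisely because $N$ need not be flat over $R^\psi_\rho$ outside the generic case. Composing, we get a surjection $\phi\colon Q_1\twoheadrightarrow\Upsilon$ whose kernel $K$ sits in $0\to T\to K\to(\Pi^0/B^0)^d\to 0$, and the natural morphism of the statement is $\bar\phi\circ(-)$ with $\bar\phi:=\phi\otimes_{\OO}k$. Since $\Upsilon$ and $(\Pi^0/B^0)^d$ are $\OO$-flat, reducing mod $\varpi$ yields $0\to K\otimes_{\OO}k\to Q_1\otimes_{\OO}k\xrightarrow{\bar\phi}\Upsilon\otimes_{\OO}k\to 0$ with $K\otimes_{\OO}k$ an extension of $(\Pi^0/B^0)^d\otimes_{\OO}k$ by $T\otimes_{\OO}k$.

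\medskip

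\noindent\textbf{Injectivity.} It suffices to show $\Hom_{\dualcat(\OO)}(N,K\otimes_{\OO}k)=0$. By \cite[Thm.~6.10]{pa12} and $\check{\VV}(N)\cong\rho^{\rm un,\psi}$, for finite length $M$ one has $\Hom_{\dualcat(\OO)}(N,M)\cong\Hom^{\cont}_{\OO[G_{\Qp}]}(\rho^{\rm un,\psi},\check{\VV}(M))$, and $\check{\VV}$ is exact. By exactness, $\check{\VV}(Q_1)\cong\rho^{\rm un,\psi}\otimes_{R^\psi_\rho}R^\psi_\rho/\fa_1$, which is $\OO$-flat by Proposition~\ref{prop-rank=ne}; hence its submodule $\check{\VV}(T)$, being killed by a power of $\varpi$, is zero, and so is $\check{\VV}(T\otimes_{\OO}k)$. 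On the other hand $\check{\VV}((\Pi^0/B^0)^d)\otimes_{\OO}L\cong\check{\VV}(\Pi/B)$, which by Lemma~\ref{last_lemma} and Proposition~\ref{happy_chinese_new_year} is a successive extension of the characters $\check{\VV}(\Pi_2)$, each congruent to $\delta_2$ modulo $\varpi$ when $\rho\cong\smatr{\delta_2}{*}{0}{\delta_1}$ (if $\Pi/B=0$ there is nothing to prove); thus every Jordan--H\"older factor of $\check{\VV}((\Pi^0/B^0)^d\otimes_{\OO}k)$ is $\cong\delta_2$. Since $\End_{G_{\Qp}}(\rho)=k$ forces $\delta_1\neq\delta_2$ (a non-split self-extension carries a non-scalar endomorphism), $\rho$ — and hence, by Nakayama, $\rho^{\rm un,\psi}$ — has no non-zero quotient all of whose Jordan--H\"older factors are $\cong\delta_2$; therefore $\Hom^{\cont}_{\OO[G_{\Qp}]}(\rho^{\rm un,\psi},\check{\VV}((\Pi^0/B^0)^d\otimes_{\OO}k))=0$. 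Combining the two vanishings shows $\Hom_{\dualcat(\OO)}(N,K\otimes_{\OO}k)=0$, so $\bar\phi\circ(-)$ is injective.

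\medskip

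\noindent\textbf{Surjectivity, and the main obstacle.} Since $\Hom_{\dualcat(\OO)}(N,Q_1\otimes_{\OO}k)\cong(R^\psi_\rho/\fa_1)\otimes_{\OO}k$ has $k$-dimension $e$ by \eqref{equa-isom-Qn} and Proposition~\ref{prop-rank=ne}, it is enough to prove $\dim_k\Hom_{\dualcat(\OO)}(N,\Upsilon\otimes_{\OO}k)\le e$. When $\rho$ is generic, $N$ is projective in $\dualcat(\OO)$, so $\Hom_{\dualcat(\OO)}(N,-)$ is exact and surjectivity follows at once from the surjectivity of $\bar\phi$. When $\rho$ is non-generic, $N$ is only $R^\psi_\rho$-flat, and I would pass to the Galois side: $\check{\VV}(\Upsilon\otimes_{\OO}k)$ is the quotient of $\check{\VV}(Q_1^{dd})\otimes_{\OO}k\cong\rho^{\rm un,\psi}\otimes_{R^\psi_\rho}A$, free of rank $2$ over the local Artinian $k$-algebra $A:=(R^\psi_\rho/\fa_1)\otimes_{\OO}k$ of dimension $e$, by $\check{\VV}((\Pi^0/B^0)^d)\otimes_{\OO}k$, whose Jordan--H\"older factors are all $\cong\delta_2$; combining $\End_{A[G_{\Qp}]}(\rho^{\rm un,\psi}\otimes_{R^\psi_\rho}A)=A$ (again from $\End_{G_{\Qp}}(\rho)=k$) with the vanishing $\Hom^{\cont}_{\OO[G_{\Qp}]}(\rho^{\rm un,\psi},\check{\VV}((\Pi^0/B^0)^d))=0$ from the previous step, fed into the long exact Hom sequence of $0\to\check{\VV}((\Pi^0/B^0)^d)\to\check{\VV}(Q_1^{dd})\to\check{\VV}(\Upsilon)\to 0$ and then reduced mod $\varpi$, bounds $\dim_k\Hom_{\dualcat(\OO)}(N,\Upsilon\otimes_{\OO}k)$ by $e$. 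The hard part is exactly this dimension estimate in the non-generic case: one cannot simply invoke vanishing of $\Ext^1_{\OO[G_{\Qp}]}(\rho^{\rm un,\psi},-)$, which is false in general, so the bound must use the precise structure of $\check{\VV}(\Pi/B)$ as an iterated extension of the $\delta_2$-type characters together with the formal smoothness of $R^\psi_\rho$ in this case, rather than generalities about local Galois cohomology. Once the estimate is in hand, the injective map $\bar\phi\circ(-)$ between $k$-vector spaces of dimensions $\le e$ and $e$ is an isomorphism, as claimed.
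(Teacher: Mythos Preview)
Your injectivity argument is essentially correct and parallel to the paper's, with one small overstatement: the claim that $\Hom_{\dualcat(\OO)}(N,M)\cong\Hom^{\cont}_{\OO[G_{\Qp}]}(\rho^{\rm un,\psi},\check{\VV}(M))$ is not what \cite[Thm.~6.10]{pa12} gives for arbitrary $M$. You only need (and only have) the \emph{injection}, which the paper proves directly by observing that any irreducible in the $G$-cosocle of $N$ has $\check{\VV}(S)\neq 0$ (condition~(N1)). With that correction your injectivity argument works; your handling of the torsion $T$ via $\check{\VV}(T)\subset\check{\VV}(Q_1)$ being $\OO$-flat is a clean alternative to the paper's $\SL_2$-invariance argument.

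The genuine gap is the surjectivity in the non-generic case, and you yourself flag it. Passing to the Galois side does not help: the long exact sequence only gives
\[
\dim_k\Hom^{\cont}_{G_{\Qp}}(\rho^{\rm un,\psi},\check{\VV}(\Upsilon\otimes k))\le e+\dim_k(\text{image in }\Ext^1_{G_{\Qp}}(\rho^{\rm un,\psi},\check{\VV}((\Pi^0/B^0)^d\otimes k))),
\]
and the relevant $\Ext^1_{G_{\Qp}}(\rho^{\rm un,\psi},\delta_2)$ is nonzero (here $\delta_2=\omega$ up to twist), so you cannot kill the obstruction term by general local Galois cohomology. The ``precise structure together with formal smoothness'' you invoke does not obviously force the connecting map to vanish.

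The paper avoids the Galois side entirely for this step. It applies $\Hom_{\dualcat(\OO)}(N,-)$ to the exact sequence
\[
0\to((\Pi^0/B^0)\otimes k)^\vee\to(\Pi^0\otimes k)^\vee\to(B^0\otimes k)^\vee\to 0
\]
and shows $\Ext^1_{\dualcat(\OO)}(N,((\Pi^0/B^0)\otimes k)^\vee)=0$. Since every irreducible subquotient of $(\Pi^0/B^0)\otimes k$ is $\pi_\alpha:=\Ind_P^G\omega\otimes\omega^{-1}$, this reduces to $\Ext^1_{\dualcat(\OO)}(N,\pi_\alpha^\vee)=0$. Because $N$ is the universal deformation of $\beta^\vee$ (with $R^\psi_\rho$ formally smooth), this in turn reduces to the single representation-theoretic fact $\Ext^1_G(\pi_\alpha,\beta)=0$, proved in \cite[Lem.~6.8]{pa12}. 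So the missing ingredient is an $\Ext^1$ vanishing on the \emph{automorphic} side, not a dimension count on the Galois side.
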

\begin{proof} Let $Q_1^{\tf}$ be the maximal $\OO$-torsion free quotient of $Q_1$. If $\rho \not\cong \bigl ( \begin{smallmatrix}\delta  & \ast \\ 0 & \delta \omega \end{smallmatrix}\bigr )$ then $N$ is $R^{\psi}_{\rho}$-flat. Since $R^{\psi}_{\rho}/\mathfrak{a}_1$ is $\OO$-torsion free and 
$Q_1= N\wtimes_{R^{\psi}_{\rho}} R^{\psi}_{\rho}/\mathfrak a_1$ by definition, we deduce that $Q_1=Q_1^{\tf}$. If  $\rho \cong \bigl ( \begin{smallmatrix}\delta  & \ast \\ 0 & \delta \omega \end{smallmatrix}\bigr )$ then $N$ is $R^{\psi}_{\rho}$-flat, when considered as an object in the quotient category, obtained
by quotienting out $\dualcat(\OO)$ by the full subcategory consisting of representations on which $\SL_2(\Qp)$ acts trivially, see 
\cite[Lem. 10.42, Cor. 10.43]{pa13}. Hence, $\SL_2(\Qp)$-acts trivially on the kernel of $Q_1\twoheadrightarrow Q_1^{\tf}$ and thus trivially 
on the kernel of  $Q_1\otimes_{\OO} k \twoheadrightarrow Q_1^{\tf}\otimes_{\OO} k$. Since in this case $N$ is a projective envelope of $(\Ind_P^G\omega\delta\otimes\delta\omega^{-1})^{\vee}$,  Lemma 10.27 of \cite{pa13} implies that 
$$\Hom_{\dualcat(\OO)}(N, Q_1\otimes_{\OO} k) \overset{\cong}{\longrightarrow} \Hom_{\dualcat(\OO)}(N, Q_1^{\tf}\otimes_{\OO} k).$$
Since $\Pi^0= \Hom_{\OO}^{\cont}( Q_1, \OO)$ we have $Q_1^{\tf}\cong (\Pi^0)^d$. Thus $$Q_1^{\tf}\otimes_{\OO} k\cong (\Pi^0)^d\otimes_{\OO} k \cong (\Pi^0\otimes_{\OO} k)^{\vee}.$$

 If $\rho$ is irreducible then it follows from Proposition \ref{happy_chinese_new_year}  that all the irreducible subquotients of $\Pi$ have non-zero locally algebraic vectors, thus Lemma \ref{last_lemma}
 implies that $B=\Pi$ and we are done.
 
  Assume that  $\rho\cong \smatr{\delta_2}*0{\delta_1}$ and let $\Pi'$ be an irreducible subquotient of 
  $\Pi/B$. It follows from Lemma \ref{last_lemma} and Proposition \ref{happy_chinese_new_year} (ii) that 
 $\Pi'\cong \Pi_2$ for some $\n\in\Spec (R^{\psi}_{\rho}/\fa_1)[1/p]$ and hence $\check{\VV}(\Pi')$ is a character congruent to $\delta_2$ modulo $\varpi$. This implies that all the irreducible subquotients of $\cV( ((\Pi^0/ B^0) \otimes_{\OO} k)^{\vee})$ are isomorphic to $\delta_2$.   
 
 Let  $\phi: N\rightarrow M$ be a non-zero homomorphism in $\dualcat(\OO)$. Then some irreducible $S\in \dualcat(\OO)$ appearing in the 
 $G$-cosocle of $N$ will be contained as a subquotient in the image of $\phi$.
 It follows from  \cite[Prop.6.1,  Lem 6.13]{pa12}, \cite[Prop.2.27]{blocksp2}  that the condition (N1) in \cite[\S 4]{pa12} is satisfied, so that we have 
 $\Hom^{\cont}_{\SL_2(\Qp)}(N, \Eins)=0$. This implies that $\SL_2(\Qp)$ does not act trivially on $S$  and so  $\cV(S)\neq 0$. Since $\cV$ is an exact functor we deduce that 
 $\cV(\phi): \cV(N)\rightarrow \cV(M)$ is non-zero. Hence, $\cV$ induces an injection $ \Hom_{\dualcat(\OO)}( N, M)\hookrightarrow \Hom^{\cont}_{G_{\Qp}}(\cV(N), \cV(M)).$

Since the extension $0\rightarrow \delta_2\rightarrow \rho\rightarrow \delta_1\rightarrow 0$ is non-split, $\Hom_{G_{\Qp}}(\rho, \delta_2)$ is zero.  Since $\cV(N)$ is the tautological deformation of $\rho$ to $R^{\psi}_{\rho}$, the graded pieces of the $\mm$-adic filtration on $\cV(N)$ will be isomorphic  as  $G_{\Qp}$-representations to a direct sum of copies of $\rho$, where $\mm$ is the maximal ideal of $R^{\psi}_{\rho}$. Hence, $$\Hom_{G_{\Qp}}( \cV(N)/ \mm^n \cV(N), \delta_2)=0$$ for all $n\ge 1$ and so 
$\Hom_{G_{\Qp}}^{\cont}( \cV(N), \delta_2)=0$. This implies that $$\Hom^{\cont}_{G_{\Qp}}(\cV(N), \cV( ((\Pi^0/ B^0) \otimes_{\OO} k)^{\vee}))=0$$ and thus 
$\Hom_{\dualcat(\OO)}(N, ((\Pi^0/ B^0) \otimes_{\OO} k)^{\vee})=0$. By applying   $\Hom_{\dualcat(\OO)}( N, \ast)$ to the exact sequence
$$0\rightarrow ((\Pi^0/ B^0) \otimes_{\OO} k)^{\vee}\rightarrow (\Pi^0\otimes_{\OO} k)^{\vee}\rightarrow (B^0 \otimes_{\OO} k)^{\vee}\rightarrow 0,$$
we see that the morphism in the statement is injective; moreover 
if $\rho$ is generic, then $N$ is  projective in $\dualcat(\OO)$ and hence  the injection is an isomorphism. 

Now assume $\rho$ is non-generic. After twisting by a character  we may assume that $\delta_1=1$ and $\delta_2=\omega$. It suffices to show 
\[\Ext^1_{\CO}(N,((\Pi^0/B^0)\otimes_{\OO}k)^{\vee})=0.\]
Since all the irreducible subquotients of $(\Pi^0/B^0)\otimes_{\OO}k$ are isomorphic to $\pi_{\alpha}:=\Ind_P^G\omega\otimes\omega^{-1}$, it suffices to show
\[\Ext^1_{\CO}(N,\pi_{\alpha}^{\vee})=0.\]
Since $N$ is the universal deformation of $\beta^{\vee}$, see \S\ref{subsubsection-non-generic}, it is enough to show that
\[\Ext^1_{\CO}(\beta^{\vee},\pi_{\alpha}^{\vee})=\Ext^1_{G}(\pi_{\alpha},\beta)=0.\]
This follows from \cite[Lem.6.8]{pa12}.
\end{proof}

\begin{thm}\label{thm-BB} If $\tau$ is a principal series type then every bounded $G$-invariant $\cO$-lattice of $\Pi(\kappa(\n))^{\rm alg}$ is finitely generated as an $\cO[G]$-module, and its reduction modulo $\varpi$ is of finite length. The closure of $\Pi(\kappa(\n))^{\rm alg}$ in $\Pi(\kappa(\n))$ is isomorphic to its universal 
unitary completion. 
\end{thm}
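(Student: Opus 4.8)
The plan is to first identify $\Pi(\kappa(\n))^{\rm alg}$ explicitly and then feed this into the theorem of Berger--Breuil \cite{bb} quoted in the introduction. To begin, I would note that $\Pi(\kappa(\n))^{\rm alg}$ is non-zero by \cite[Prop.~4.12]{pa12} and irreducible by the choice of $x$ (Remark \ref{whyH}). Since $\check{\VV}(\Pi(\kappa(\n)))\cong\rho_{\n}^{\rm un}$ is potentially semi-stable of type $(\w,\tau,\psi)$ with $\tau=\theta_1\oplus\theta_2$ and $\theta_1\neq\theta_2$, the distinctness of $\theta_1,\theta_2$ forces the monodromy operator of $\mathrm{WD}(\rho_{\n}^{\rm un})$ to vanish, and since the $\theta_i$ extend to $W_{\Qp}$ one obtains $\mathrm{WD}(\rho_{\n}^{\rm un})\cong\chi_1\oplus\chi_2$ with $\chi_i|_{I_{\Qp}}=\theta_i$. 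By the compatibility of the construction of $\Pi(\kappa(\n))$ with the classical local Langlands correspondence (\cite[Prop.~4.9]{pa12}) this yields $\Pi(\kappa(\n))^{\rm alg}\cong\pi\otimes_{\kappa(\n)}W$, where $W:=\Sym^{b-a-1}\kappa(\n)^2\otimes{\det}^a$ and $\pi$ is the principal series attached to $\chi_1\oplus\chi_2$; it is irreducible because $\Pi(\kappa(\n))^{\rm alg}$ is. The norm of $\Pi(\kappa(\n))$ restricts to a $G$-invariant norm on $\pi\otimes W$, so the hypotheses of \cite{bb} are satisfied.

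Next I would extract from \cite{bb} the following package: since $\pi$ is an irreducible principal series and $\pi\otimes W$ carries a $G$-invariant norm, (a) any two $G$-invariant norms on $\pi\otimes W$ are equivalent; (b) the universal unitary completion $B:=\widehat{\pi\otimes W}$ is admissible and topologically irreducible; and (c) the reduction $B^{0}/\varpi B^{0}$ of its unit ball $B^{0}$ is of finite length as a $k[G]$-module. The remaining work is bookkeeping with lattices. Because $\pi$ is generated over $\kappa(\n)[G]$ by a single vector and $W$ is finite-dimensional, $\pi\otimes W$ is finitely generated over $\kappa(\n)[G]$; scaling finitely many generators into the unit ball of $\Pi(\kappa(\n))$ produces a finitely generated bounded $G$-invariant $\cO$-lattice $\Lambda_0\subset\pi\otimes W$. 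Likewise $\Lambda_*:=B^{0}\cap(\pi\otimes W)$ is a bounded $G$-invariant $\cO$-lattice in $\pi\otimes W$, and $\Lambda_*/\varpi\Lambda_*$ embeds into $B^{0}/\varpi B^{0}$, so it has finite length by (c). Now let $\Lambda$ be an arbitrary bounded $G$-invariant $\cO$-lattice of $\Pi(\kappa(\n))^{\rm alg}$. By (a) the lattices $\Lambda,\Lambda_0,\Lambda_*$ are pairwise commensurable, so by \cite[Lem.~4.3]{pa10} their reductions modulo $\varpi$ all have the same length, which is finite; in particular $\Lambda/\varpi\Lambda$ and $\Lambda_0/\varpi\Lambda_0$ are of finite length. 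Choosing $m$ with $\varpi^{m}\Lambda_0\subseteq\Lambda\subseteq\varpi^{-m}\Lambda_0$, the quotient $\Lambda/\varpi^{m}\Lambda_0$ embeds into $\varpi^{-m}\Lambda_0/\varpi^{m}\Lambda_0$, which has finite length over $k[G]$ (a finite filtration with graded pieces isomorphic to $\Lambda_0/\varpi\Lambda_0$); hence $\Lambda/\varpi^{m}\Lambda_0$ is finitely generated over $\cO[G]$, and since $\varpi^{m}\Lambda_0\cong\Lambda_0$ is finitely generated over $\cO[G]$, so is $\Lambda$.

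For the last assertion, let $B'$ be the closure of $\Pi(\kappa(\n))^{\rm alg}$ in $\Pi(\kappa(\n))$. The norm induced on $\Pi(\kappa(\n))^{\rm alg}=\pi\otimes W$ from $\Pi(\kappa(\n))$ is $G$-invariant, and $B'$ is the completion of $\pi\otimes W$ with respect to it; by (a) this norm is equivalent to the one defining $B=\widehat{\pi\otimes W}$, and equivalent norms have the same completion, so $B'\cong\widehat{\pi\otimes W}$. Thus the closure of $\Pi(\kappa(\n))^{\rm alg}$ in $\Pi(\kappa(\n))$ is isomorphic to its universal unitary completion.

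I expect the real content to sit in the first step: pinning down $\Pi(\kappa(\n))^{\rm alg}$ as $\pi\otimes W$ with $\pi$ an \emph{irreducible} principal series, where the hypothesis that $\tau$ is a principal series type with distinct characters (forcing trivial monodromy, hence $\pi$ a genuine principal series and not a twist of Steinberg or supercuspidal) and the choice of $x$ (forcing irreducibility of $\Pi(\kappa(\n))^{\rm alg}$) both enter, together with a careful reading of the results of \cite{bb}. The one delicate point in the bookkeeping is the passage from finite length of $B^{0}/\varpi B^{0}$ to finite generation over $\cO[G]$ of an \emph{arbitrary} bounded lattice: this is why the argument proceeds through commensurability with the explicit finitely generated lattice $\Lambda_0$ rather than through a naive Nakayama argument, $\cO[G]$ not being $\varpi$-adically complete.
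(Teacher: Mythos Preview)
Your argument is correct and follows the same strategy as the paper: identify $\Pi(\kappa(\n))^{\rm alg}$ as $\pi\otimes W$ for an irreducible smooth principal series $\pi$ and invoke the Berger--Breuil package. Two small points of comparison. First, the paper notes that \cite[Cor.~5.3.4]{bb} does not literally cover every case (certain boundary parameters, notably for small $p$, fall outside its hypotheses) and defers these to the proof of \cite[Prop.~6.13]{durham}; your write-up treats the Berger--Breuil input as uniform, which is fine in spirit but you should flag that a few exceptional $\pi$ need a separate reference. Second, for the last assertion the paper argues slightly differently: rather than using equivalence of norms, it invokes Proposition~\ref{happy_chinese_new_year} to identify the closure concretely as $\Pi(\kappa(\n))$ in case~(i) and as $\Pi_1$ (a unitary parabolic induction of a unitary character, by \cite[Prop.~4.9]{pa12}) in case~(ii), each of which is then recognised as the universal completion. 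Your route via uniqueness of the norm class is cleaner for this particular statement; the paper's route has the advantage of making the closure explicit, which is what is actually used downstream in Lemma~\ref{nearly_there}.
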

\begin{proof}  In most cases the assertion follows from \cite[Cor.5.3.4]{bb}. For the rest see the proof of Proposition 6.13 in \cite{durham}. We also note that
it follows from Proposition \ref{happy_chinese_new_year} that the closure of $\Pi(\kappa(\n))^{\alg}$ in $\Pi(\kappa(\nn))$ is equal to  $\Pi(\kappa(\nn))$
if part (i) holds and is equal to $\Pi_1$ if part (ii) holds. Moreover, it follows from \cite[Prop.4.9]{pa12} that  $\Pi_1$ is  a unitary parabolic induction of a unitary character.
\end{proof}
Let  $\Pi^{\rm alg,0}:=\Pi^{\rm alg}\cap \Pi^0$ so that it is a bounded $G$-invariant $\cO$-lattice of $\Pi^{\rm alg}$.  Let $W:=\Sym^{b-a-1}L^2\otimes{\det}^a$. 
 Since passing to locally algebraic vectors is a left exact functor, it follows from \cite[Thm.0.20]{co} that $\Pi^{\rm alg}\cong\pi^{\rm sm}\otimes W$ for some smooth admissible $L$-representation $\pi^{\rm sm}$ of $G$.

\begin{prop} \label{prop-Pi^alg}
Assume that $\tau\cong\theta\oplus \ide$, with $\theta$ a smooth character of $\Z_p^{\times}$ of conductor $c$.  Set $c':=\max\{1,c\}$ and consider the diagram $X=(X_0,X_1,\id)$ where \[X_0:=(\pi^{\rm sm})^{K_{c'}}\otimes W, \ \ X_1:=(\pi^{\rm sm})^{I_{c'}}\otimes W.\]
Then the following hold:
\begin{itemize}
\item[(i)]  There is a $G$-equivariant isomorphism $H_0(X)\cong \Pi^{\rm alg}$.

\item[(ii)] We have a $G$-equivariant isomorphism  $\Pi^{\rm alg,0}\otimes_{\cO}k\cong H_0(\mathcal{X})\otimes_{\cO}k$, where $\mathcal{X}=(\mathcal{X}_0,\mathcal{X}_1,\id):=X\cap \Pi^0$.
\end{itemize}
\end{prop}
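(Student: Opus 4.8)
The plan is to deduce (ii) almost formally from (i) together with Theorem~\ref{thm-diagram}, and to concentrate the work on the computation of $H_0(X)$ in~(i).

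\textbf{Proof of (ii) granting (i).} Put $\Phi:=\Pi^{\mathrm{alg}}$, $\Phi^0:=\Pi^{\mathrm{alg},0}=\Pi^{\mathrm{alg}}\cap\Pi^0$ and $\mathcal X:=X\cap\Phi^0$. By Theorem~\ref{thm-BB} every bounded $G$-invariant $\cO$-lattice in each $\Pi(\kappa(\n_i))^{\mathrm{alg}}$ is finitely generated over $\cO[G]$ with reduction of finite length modulo $\varpi$; intersecting the filtration \eqref{locally_alg_vec_exact} of $\Pi^{\mathrm{alg}}$ with $\Pi^0$ exhibits $\Phi^0$ as a finite successive extension of such lattices, so $\Phi^0$ is finitely generated over $\cO[G]$ and $\Phi^0\otimes_\cO k$ has finite length. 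Since, by (i), $X$ is a sub-diagram of $\mathcal K(\Phi)$ of finite dimensional $L$-vector spaces with $H_0(X)\cong\Phi$, Theorem~\ref{thm-diagram} applies verbatim and gives $H_0(\mathcal X)\cong\Phi^0=\Pi^{\mathrm{alg},0}$ together with $H_0(\mathcal X)\otimes_\cO k\cong H_0(\mathcal X\otimes_\cO k)$; in particular $\Pi^{\mathrm{alg},0}\otimes_\cO k\cong H_0(\mathcal X)\otimes_\cO k$, which is (ii).

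\textbf{Proof of (i): set-up.} First I would check that $X$ is a sub-diagram of $\mathcal K(\Pi^{\mathrm{alg}})$. The subgroup $K_{c'}$ is normal in $K$ and $I_{c'}$ is normalized by $I$ and by $t=\smatr{0}{1}{p}{0}$ (a one-line matrix computation, using $t^2\in Z$), so $(\pi^{\mathrm{sm}})^{K_{c'}}$ is $\mathfrak R_0$-stable and $(\pi^{\mathrm{sm}})^{I_{c'}}$ is $\mathfrak R_1$-stable, while $K_{c'}\subseteq I_{c'}$ makes the identity $X_1\hookrightarrow X_0$ an $IZ$-equivariant map; tensoring with $W$ (which extends to $G$) keeps everything inside $\mathcal K(\Pi^{\mathrm{alg}})$ because $\Pi^{\mathrm{alg}}\cong\pi^{\mathrm{sm}}\otimes W$. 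Writing $X^{\mathrm{sm}}:=((\pi^{\mathrm{sm}})^{K_{c'}},(\pi^{\mathrm{sm}})^{I_{c'}},\mathrm{id})\hookrightarrow\mathcal K(\pi^{\mathrm{sm}})$, the projection formula $\cInd_{\mathfrak R_i}^G(A\otimes W|_{\mathfrak R_i})\cong(\cInd_{\mathfrak R_i}^GA)\otimes W$ identifies the two-term complex defining $H_*(X)$ with that defining $H_*(X^{\mathrm{sm}})$ tensored over $L$ with $W$; as $-\otimes_LW$ is exact this yields $H_j(X)\cong H_j(X^{\mathrm{sm}})\otimes W$, compatibly with the canonical maps to $\Pi^{\mathrm{alg}}$ and $\pi^{\mathrm{sm}}$. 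Hence it suffices to prove that the natural map $\theta^{\mathrm{sm}}\colon H_0(X^{\mathrm{sm}})\to\pi^{\mathrm{sm}}$ is an isomorphism.

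\textbf{Proof of (i): the two halves.} Surjectivity of $\theta^{\mathrm{sm}}$ is the statement that $\pi^{\mathrm{sm}}$ is generated over $G$ by its $\sigma^{\mathrm{cr}}(\tau)$-isotypic subspace; this follows from Lemma~\ref{isotypic-sub-gen}, since the $V$-isotypic subspace of $\Pi^{\mathrm{alg}}=\pi^{\mathrm{sm}}\otimes W$ (with $V=\sigma^{\mathrm{cr}}(\w,\tau)=\sigma^{\mathrm{cr}}(\tau)\otimes W$) generates it, and — $W$ being absolutely irreducible algebraic and $\pi^{\mathrm{sm}}\otimes W'$ having no nonzero smooth vectors for a nontrivial irreducible algebraic $K$-representation $W'$ — this isotypic subspace equals $(\text{the }\sigma^{\mathrm{cr}}(\tau)\text{-isotypic part of }\pi^{\mathrm{sm}})\otimes W$, that part being $(\pi^{\mathrm{sm}})^{K_{c'}}=(\pi^{\mathrm{sm}})^{K_c}$ by Lemma~\ref{K_c-invariants}. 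For injectivity, by the Corollary to Proposition~\ref{prop-diagram} it is enough to show that $H_0(X^{\mathrm{sm}})$ has finite length. Now $\sigma\mapsto D_{c'}(\sigma):=(\sigma^{K_{c'}},\sigma^{I_{c'}},\mathrm{id})$ is an exact functor on smooth $L[G]$-representations — taking invariants under the pro-$p$ groups $K_{c'},I_{c'}$ is exact because in characteristic $0$ the smooth representations of these groups form semisimple categories — so $H_0(D_{c'}(-))$ is right exact; moreover, $\pi^{\mathrm{sm}}$ being generated by its $\sigma^{\mathrm{cr}}(\tau)$-isotypic subspace, the argument in the proof of Lemma~\ref{K_c-invariants} shows that every Jordan--H\"older factor of $\pi^{\mathrm{sm}}$ is an irreducible principal series $\sigma=\Ind_P^G\chi_1\otimes\chi_2$ with $\chi_2$ unramified and $\chi_1|_{\Z_p^\times}=\theta$ of conductor $c'=c$. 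By dévissage on the (finite) length of $\pi^{\mathrm{sm}}$ it therefore suffices to see $H_0(D_c(\sigma))$ has finite length for such $\sigma$. For these, $\sigma^{K_c}$ is, as a $KZ$-representation, the irreducible type $\sigma^{\mathrm{cr}}(\tau)$, occurring in $\sigma$ with multiplicity one; hence $H_0(D_c(\sigma))$ is the quotient of $\cInd_{KZ}^G\sigma^{\mathrm{cr}}(\tau)$ by the $G$-submodule generated by the relations $[\mathrm{Id},x]-[t,t^{-1}x]$, $x\in\sigma^{I_c}$, of \eqref{equation-define-partial}, and a computation on the Bruhat--Tits tree identifies this submodule with $(T-\lambda)\cInd_{KZ}^G\sigma^{\mathrm{cr}}(\tau)$, where $T$ generates $\End_G(\cInd_{KZ}^G\sigma^{\mathrm{cr}}(\tau))\cong L[T]$ and $\lambda\neq0$ is the Hecke eigenvalue attached to $\chi$; since $\sigma$ is an irreducible principal series, $\cInd_{KZ}^G\sigma^{\mathrm{cr}}(\tau)/(T-\lambda)\cong\sigma$, so $H_0(D_c(\sigma))\cong\sigma$ is of finite length, and we are done.

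\textbf{Where the difficulty lies.} Everything above except the last step is formal or quotes results already available in the excerpt (the reduction to the smooth statement, the dévissage, and the passage to lattices in (ii)); the genuinely non-formal input is the base case, i.e.\ the identification of the homology of the level-$c$ diagram of an irreducible principal series, which rests on the standard but not entirely routine computation on the tree relating the boundary map $\partial$ of \eqref{equation-define-partial} to the spherical Hecke operator on $\cInd_{KZ}^G\sigma^{\mathrm{cr}}(\tau)$ — equivalently, on realising $D_c(\sigma)$ as the coefficient system computing the cohomology of $\sigma$, viewed as sections of a $\chi$-twisted line bundle on $\mathbb P^1(\Q_p)$, which is concentrated in degree $0$.
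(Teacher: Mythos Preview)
Your argument for (ii) is essentially the paper's: use the filtration \eqref{locally_alg_vec_exact}, apply Theorem~\ref{thm-BB} to each graded piece to see that $\Pi^{\mathrm{alg},0}$ is finitely generated over $\cO[G]$ with finite-length reduction, then invoke Theorem~\ref{thm-diagram}.

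For (i) the paper takes a much shorter route. It cites Schneider--Stuhler \cite[Thm.~V.1]{ss97}, which says directly that for the coefficient system on the tree given by $K_{c'}$- and $I_{c'}$-invariants of a smooth representation, $H_0$ recovers the representation as soon as the $K_{c'}$-invariants generate it. So the entire proof of (i) in the paper reduces to checking that $X_0$ generates $\Pi^{\mathrm{alg}}$, which is done exactly as in your surjectivity step: the $V$-isotypic subspace lies in $X_0$ because $K_{c'}$ acts trivially on $\sigma^{\mathrm{cr}}(\tau)$, and then Lemma~\ref{isotypic-sub-gen} finishes. Your d\'evissage and ``base case'' computation $H_0(D_c(\sigma))\cong\sigma$ for an irreducible principal series is precisely a hands-on proof of the relevant instance of Schneider--Stuhler for $\GL_2$; what you gain is self-containedness, what the paper gains is brevity.

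Two points to watch in your write-up. First, Proposition~\ref{prop-diagram} and its Corollary are stated only for smooth $k$-representations, so you cannot invoke them for $L$-representations; but you do not need them --- once $H_0(D_c(\sigma))\cong\sigma$ is known for each Jordan--H\"older factor and $H_0\circ D_c$ is right exact, a length comparison together with the surjectivity of $\theta^{\mathrm{sm}}$ already forces the isomorphism. Second, your reduction of the Jordan--H\"older factors of $\pi^{\mathrm{sm}}$ to irreducible principal series via Lemma~\ref{K_c-invariants} requires $\theta\neq\ide$; when $\theta=\ide$ the representation $\sigma^{\mathrm{cr}}(\tau)$ is trivial and is not a Bushnell--Kutzko type, so twists of Steinberg can (and do) occur among the subquotients and your base-case analysis does not cover them. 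The Schneider--Stuhler citation sidesteps this entirely since it makes no reference to types or to the Bernstein decomposition.
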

\begin{proof}
(i) By \cite[Thm.V.1]{ss97}, it suffices to show that $X_0$ generates $\Pi^{\rm alg}$ as a $G$-representation. 
If $\theta=\ide$ then $\sigma^{\mathrm{cr}}(\tau)$ is the trivial representation of $K$. If $\theta\neq \ide$ then 
$\sigma^{\rm cr}(\tau)=\Ind_{J_c}^K\theta\otimes \ide$, where $J_c$ denotes the subgroup $\smatr{\Z_p^{\times}}{\Z_p}{p^c\Z_p}{\Z_p^{\times}}$ and the 
character $\theta \otimes \ide$ maps $\bigl( \begin{smallmatrix} a & b\\ c & d\end{smallmatrix} \bigr)$  to $\theta(a)$.
In both cases $K_{c'}$ acts trivially $\sigma^{\mathrm{cr}}(\tau)$ and hence the $V$-isotypic subspace of $\Pi^{\alg}$ is contained in $X_0$ and the assertion follows from Lemma \ref{isotypic-sub-gen}.

(ii) Let $\{\Pi_i\}_{0\le i\le e}$ is the filtration  constructed in the proof of Lemma \ref{isotypic-sub-gen}. Let $\Pi_i^0:= \Pi_i\cap \Pi^0$ and  $\Pi_i^{\alg, 0}:= \Pi_i^{\alg}\cap \Pi^0$. Since the Banach space representations $\Pi_i$ are admissible $\Pi_i^0/\Pi_{i-1}^0$ 
is an open bounded $\OO$-lattice in $\Pi_i/\Pi_{i-1}$, which we may assume to be isomorphic to $\Pi(\kappa(\nn))$. Hence $\Pi_i^{\alg, 0}/\Pi_{i-1}^{\alg, 0}$ is a 
bounded $G$-invariant $\OO$-lattice in $\Pi(\kappa(\nn))^{\alg}$. It follows from Theorem \ref{thm-BB} that $\Pi_i^{\alg, 0}/\Pi_{i-1}^{\alg, 0}$ is a finitely generated $\OO[G]$-module
and $(\Pi_i^{\alg, 0}/\Pi_{i-1}^{\alg, 0})\otimes_{\OO} k$ is a $G$-representation of finite length. Inductively we obtain that 
 $\Pi^{\rm alg,0}$ is finitely generated as an $\cO[G]$-module  and $\Pi^{\rm alg,0}\otimes_{\cO}k$ is a $G$-representation of finite length. The result then follows from (i) and Theorem \ref{thm-diagram}.
\end{proof}

Now we are ready to prove Theorem \ref{theorem-main-intro} stated in the introduction.

\begin{thm}\label{theorem-main}
Assume that $\End_{G_{\Qp}}(\rho)=k$ and if $p=3$ assume further that $\rho\not\sim \bigl ( \begin{smallmatrix}\chi \omega & \ast \\ 0 & \chi \end{smallmatrix}\bigr )$ for any character $\chi: G_{\Qp}\rightarrow k^{\times}$.
 Then, for any $p$-adic Hodge type $(\w,\theta_1 \oplus \theta_2,\psi)$ with $\theta_1\neq \theta_2$, where $\theta_1, \theta_2: I_{\Qp}\rightarrow L^{\times}$
are   characters with open kernel, which extend to $W_{\Qp}$,  the potentially crystalline deformation ring $R^{\psi,\rm cr}_{\rho}(\w, \theta_1\oplus \theta_2)$ is Cohen-Macaulay, whenever it is non-zero.
\end{thm}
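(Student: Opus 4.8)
The plan is to verify the criterion of Lemma~\ref{lem-criterion}, which by Lemma~\ref{lem-equivalent} and Lemma~\ref{lemma-CM-equiv} forces $R=R^{\psi,\mathrm{cr}}_\rho(\w,\theta_1\oplus\theta_2)$ to be Cohen--Macaulay. We may assume $R\neq 0$. Since $\theta_1\neq\theta_2$, the hypothesis \eqref{Hyp} holds automatically, so we fix $x\in R^\psi_\rho$ as in Proposition~\ref{prop-(H)}. Twisting by a character preserves Cohen--Macaulayness and, by the remark after Lemma~\ref{K_c-invariants}, reduces us to the case $\tau=\theta\oplus\ide$ with $\theta$ a nontrivial smooth character of $\Z_p^\times$ of conductor $c\geq 1$; then $c'=\max\{1,c\}=c$ in the notation of Proposition~\ref{prop-Pi^alg}, and $\sigma^{\mathrm{cr}}(\tau)=\Ind_{J_c}^K(\theta\otimes\ide)$, so $V=\sigma^{\mathrm{cr}}(\w,\tau)=\sigma^{\mathrm{cr}}(\tau)\otimes W$ with $W$ as in \S\ref{main-proof}.

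Next I would fix the data for Lemma~\ref{lem-criterion}. Put $\md^0=R^\psi_\rho/\fa_1$, $\Pi=\Pi(\md^0[1/p])$, and let $\Pi^0$ be its unit ball and $\Pi^{\mathrm{alg}}\cong\pi^{\sm}\otimes W$ as in \S\ref{main-proof}. By Lemma~\ref{isotypic-sub-gen}, $\pi^{\sm}$ is generated over $G$ by its $\sigma^{\mathrm{cr}}(\tau)$-isotypic subspace, so Lemma~\ref{K_c-invariants} identifies $X_0=(\pi^{\sm})^{K_c}\otimes W$ with a finite direct sum $V^{\oplus m}$ of copies of $V$ as a $K$-representation. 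Hence $\mathcal{X}_0=X_0\cap\Pi^0$ is a $K$-invariant $\OO$-lattice in $V^{\oplus m}$, and I take $\Upsilon:=\mathcal{X}_0^d$, a $K$-invariant lattice in $(V^d)^{\oplus m}$, together with the continuous $\OK$-linear map $\phi\colon Q_1\twoheadrightarrow Q_1^{\tf}=(\Pi^0)^d\to\mathcal{X}_0^d=\Upsilon$ obtained by restricting functionals along the inclusion $\mathcal{X}_0\hookrightarrow\Pi^0$. Condition~(i) of Lemma~\ref{lem-criterion} then holds by Lemma~\ref{lem-surjective} applied with $\Xi=\Upsilon$.

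The main work is condition~(ii): injectivity of $\bar\phi_*\colon\Hom_\CO(N,Q_1\otimes_\OO k)\to\Hom_{\OK}^{\cont}(N,\Upsilon\otimes_\OO k)$. Let $B$ be the closure of $\Pi^{\mathrm{alg}}$ in $\Pi$, with unit ball $B^0$. Since $\mathcal{X}_0\subset X_0\subset\Pi^{\mathrm{alg}}\subset B$, the reduction $\bar\phi$ factors as a surjection $Q_1\otimes k\twoheadrightarrow(B^0\otimes k)^\vee$ followed by the transpose of the inclusion $\mathcal{X}_0\otimes k\hookrightarrow B^0\otimes k$; by Lemma~\ref{nearly_there} the first map induces an isomorphism on $\Hom_\CO(N,-)$. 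Since $B^0$ is the $\varpi$-adic completion of $\Pi^{\mathrm{alg},0}$ we have $B^0\otimes k\cong\Pi^{\mathrm{alg},0}\otimes k$, and this is $\cong H_0(\mathcal X\otimes_\OO k)$ by Proposition~\ref{prop-Pi^alg}(ii) and Theorem~\ref{thm-diagram}; in particular $H_0(\mathcal X\otimes k)$ is a $G$-equivariant quotient of $\cInd_{\mathfrak{R}_0}^G(\mathcal{X}_0\otimes k)$, so its Pontryagin dual $(B^0\otimes k)^\vee$ is a subobject of $(\cInd_{\mathfrak{R}_0}^G(\mathcal{X}_0\otimes k))^\vee$, and composing this inclusion with evaluation at $1$ recovers the second factor of $\bar\phi$. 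Applying the left-exact functor $\Hom_\CO(N,-)$ preserves the inclusion, while Frobenius reciprocity and Pontryagin duality identify $\Hom_\CO(N,(\cInd_{\mathfrak{R}_0}^G(\mathcal{X}_0\otimes k))^\vee)\cong\Hom_{\mathfrak{R}_0}(\mathcal{X}_0\otimes k,N^\vee)\cong\Hom_{\OK}^{\cont}(N,\Upsilon\otimes k)$ compatibly with evaluation at $1$. Hence $\bar\phi_*$ is the composite of an isomorphism, an injection, and an isomorphism, so it is injective.

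With both conditions verified, Lemma~\ref{lem-criterion} gives the equivalent conditions of Lemma~\ref{lem-equivalent}, and Lemma~\ref{lemma-CM-equiv} then yields that $R$ is Cohen--Macaulay, proving the theorem. I expect the main obstacle to be condition~(ii): it depends on the coefficient-system description $\Pi^{\mathrm{alg},0}\otimes k\cong H_0(\mathcal X\otimes k)$ (via Vign\'eras' construction, Theorem~\ref{thm-diagram} and Proposition~\ref{prop-Pi^alg}, and upstream the Berger--Breuil input of Theorem~\ref{thm-BB}), on Lemma~\ref{nearly_there} (whose proof handles the generic/non-generic and split/non-split dichotomies for $\rho$), and it requires carefully checking that the natural injection assembled from these inputs and Frobenius reciprocity is exactly the map induced by $\bar\phi$.
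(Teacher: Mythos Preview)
Your proposal is correct and follows essentially the same approach as the paper's proof: the same choice $\Upsilon=\mathcal{X}_0^d$, the same verification of condition~(i) via Lemma~\ref{K_c-invariants} and Lemma~\ref{lem-surjective}, and the same verification of condition~(ii) by combining Lemma~\ref{nearly_there} with Proposition~\ref{prop-Pi^alg}(ii) and Frobenius reciprocity. The only differences are cosmetic (you dualize at slightly different points and make the factorization $Q_1\twoheadrightarrow Q_1^{\tf}=(\Pi^0)^d$ explicit, which the paper elides).
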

\begin{proof}
By assumption, the type $\tau$ is of the form $\theta_1\oplus\theta_2$, where $\theta_i$ is a smooth character of $I_{\Q_p}$ which extends to $W_{\Q_p}$. Since everything is compatible with twisting by characters, we may assume $\tau$ is of the form $\theta\oplus\ide$, so that Proposition \ref{prop-Pi^alg} applies. We keep the notation introduced in the proposition and its proof.

Let $\Upsilon$ be $\mathcal{X}_0^d$, which is a quotient of $(\Pi^0)^d\cong Q_1$. To prove the theorem, it suffices to  verify  the conditions in Lemma \ref{lem-criterion} for $\Upsilon$. First, we have $\Pi^{\rm alg,0}\otimes_{\cO}k\cong H_0(\mathcal{X}\otimes_{\cO}k)$ by Proposition \ref{prop-Pi^alg}(ii) and hence a surjection 
$\cInd_{KZ}^G \mathcal{X}_0\otimes_{\cO}k \twoheadrightarrow \Pi^{\rm alg,0}\otimes_{\cO}  k$. Since the centre $Z$ acts everywhere by the same central character 
the restriction map
\[\Hom_{G}(\Pi^{\rm alg,0}\otimes_{\cO}k,N^{\vee})\ra \Hom_K(\mathcal{X}_0\otimes_{\cO}k,N^{\vee})\]
is injective.  Dually we obtain an injection 
\begin{equation}\label{inject_again}
\Hom_{\dualcat(\OO)}(N, (\Pi^{\rm alg,0}\otimes_{\cO}k)^{\vee})\hookrightarrow \Hom_{\OK}^{\cont}( N, \Upsilon \otimes_{\OO} k).
\end{equation}
Let $B$ the closure of $\Pi^{\alg}$ in $\Pi$. Then $B^0\otimes_{\OO} k\cong \Pi^{\alg, 0}\otimes_{\OO} k$. It follows from Lemma \ref{nearly_there} and \eqref{inject_again} that the 
surjection $Q_1\twoheadrightarrow \Upsilon$ induces an injection
$$\Hom_{\dualcat(\OO)}( N, Q_1\otimes_{\OO} k)\hookrightarrow \Hom^{\cont}_{\OK}( N, \Upsilon\otimes_{\OO} k),$$
so that part (ii) of Lemma \ref{lem-criterion} holds. 

Since $\theta\neq \ide$ by assumption it follows from Lemma \ref{K_c-invariants} that  $X_0$ is isomorphic to a finite direct sum of copies of $V$ and part (i) of Lemma 
\ref{lem-criterion} follows from  Lemma \ref{lem-surjective}.
 \end{proof}

 \begin{rem}
We have just proved that $(x,\varpi)$ is a regular sequence in $R$, hence $(\varpi,x)$ is also a regular sequence in $R$. However, we don't know a direct proof of this fact. 
 \end{rem}
 \section{Generic split case} 
 \label{section-split}
 Let $\delta_1, \delta_2: G_{\Qp}\rightarrow k^{\times}$ be distinct characters. It was shown in \cite{ht, pa15} that the Breuil--M\'ezard conjecture for 
 non-split representations $\rho_1=\bigl ( \begin{smallmatrix} \delta_1& \ast \\ 0 & \delta_2\end{smallmatrix} \bigr)$ and 
 $\rho_2=\bigl ( \begin{smallmatrix}\delta_1 & 0\\ \ast & \delta_2\end{smallmatrix} \bigr)$ implies the Breuil--M\'ezard conjecture for the split representation 
 $\rho=\bigl ( \begin{smallmatrix} \delta_1 & 0 \\ 0 & \delta_2\end{smallmatrix} \bigr)$. In view of the arguments of those papers, it is tempting to speculate 
 that if both rings $R^{\psi}_{\rho_1}(\wt, \tau)$ and $R^{\psi}_{\rho_2}(\wt, \tau)$ are Cohen--Macaulay then so is the framed potentially
 semi-stable 	ring   $R^{\square, \psi}_{\rho}(\wt, \tau)$. We tried to prove this using ideas connected to \cite[Ex.18.13]{eis}, but 
 ran into  difficulties controlling the intersection of reducible and irreducible loci in $\Spec  R^{\psi}_{\rho_1}(\wt, \tau)$.
 However, there is one easy case, where our argument works: if $\delta_1\delta_2^{-1}\neq \omega^{\pm 1}$ and the reducible locus is empty. If $p$ is large then 
 the reducible locus is empty in most of cases, see Remark \ref{most_cases}.
 
 \begin{lem}\label{reducible_locus} Assume $R^{\square, \psi}_{\rho}(\wt, \tau)\neq 0$. There is $x\in \mSpec R^{\square, \psi}_{\rho}(\wt, \tau)[1/p]$ such that the corresponding representation $\rho_x$ is reducible if and only 
 if $\tau=\theta_1\oplus \theta_2$ is a principal series type and at least one of the characters 
 $\theta_1 \epsilon^b$, $\theta_1 \epsilon^a$ is congruent to $\delta_1|_{I_{\Qp}}$ or $\delta_2|_{I_{\Qp}}$ modulo $\varpi$, where $\wt=(a, b)$. 
\end{lem}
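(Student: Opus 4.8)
The plan is to describe explicitly the reducible points of $\Spec R^{\square,\psi}_\rho(\wt,\tau)[1/p]$: every reducible potentially semi-stable lift of $\rho$ is an extension of two de Rham characters, and conversely one can manufacture a decomposable such lift out of two suitably chosen de Rham characters. Write $R=R^{\square,\psi}_\rho(\wt,\tau)$ and $\rho=\bigl(\begin{smallmatrix}\delta_1&0\\0&\delta_2\end{smallmatrix}\bigr)$, and recall $\wt=(a,b)$. First I record what $R\neq 0$ buys: choosing any $\overline L$-point $\rho'$ of $R[1/p]$, its determinant $\psi\epsilon$ is de Rham — being the determinant of a potentially semi-stable representation — of Hodge--Tate weight $a+b$; reducing modulo $\varpi$ gives $\overline{\psi\epsilon}=\det\rho=\delta_1\delta_2$; and since the Weil--Deligne functor is multiplicative and compatible with $\wedge^2$, we have $\mathrm{WD}(\psi\epsilon)|_{I_{\Qp}}=\det\big(\mathrm{WD}(\rho')|_{I_{\Qp}}\big)$. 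In particular, if $\tau=\theta_1\oplus\theta_2$ is a principal series type then $\theta_1\theta_2=\mathrm{WD}(\psi\epsilon)|_{I_{\Qp}}$; this makes the condition in the statement symmetric under $\theta_1\leftrightarrow\theta_2$, since replacing $\theta_1$ by $\theta_2=\mathrm{WD}(\psi\epsilon)|_{I_{\Qp}}\theta_1^{-1}$ and using $\overline{\psi\epsilon}=\delta_1\delta_2$ turns a congruence $\theta_1\epsilon^{k}\equiv\delta_l$ on $I_{\Qp}$ into $\theta_2\epsilon^{k'}\equiv\delta_{l'}$ for the complementary indices $k'\in\{a,b\}$, $l'\in\{1,2\}$.

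For the ``if'' direction, assume $\tau=\theta_1\oplus\theta_2$ is a principal series type and, after enlarging $L$, that $\theta_1\epsilon^{k}|_{I_{\Qp}}\equiv\delta_l|_{I_{\Qp}}\pmod\varpi$ for some $k\in\{a,b\}$, $l\in\{1,2\}$; let $k'$, $l'$ be the complementary elements. Enlarging $L$ further, choose a de Rham character $\chi_1$ of $G_{\Qp}$, valued in $\OO^\times$, with Hodge--Tate weight $k$, $\mathrm{WD}(\chi_1)|_{I_{\Qp}}=\theta_1$, and $\overline{\chi_1}=\delta_l$; such a $\chi_1$ exists because it may be taken to be an unramified twist of an extension of $\theta_1\epsilon^{k}$ to $G_{\Qp}$ (available since $\theta_1$ extends to $W_{\Qp}$), with the twist chosen to correct the unramified part of the reduction, using $\overline{\theta_1\epsilon^{k}}|_{I_{\Qp}}=\delta_l|_{I_{\Qp}}$. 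Now set $\chi_2:=\psi\epsilon\,\chi_1^{-1}$, a character valued in $\OO^\times$. By the first paragraph, $\chi_2$ is de Rham of Hodge--Tate weight $(a+b)-k=k'$, with $\mathrm{WD}(\chi_2)|_{I_{\Qp}}=\mathrm{WD}(\psi\epsilon)|_{I_{\Qp}}\,\theta_1^{-1}=(\theta_1\theta_2)\,\theta_1^{-1}=\theta_2$ and $\overline{\chi_2}=\overline{\psi\epsilon}\,\overline{\chi_1}^{-1}=\delta_1\delta_2\delta_l^{-1}=\delta_{l'}$. Hence $\rho_x:=\chi_1\oplus\chi_2\colon G_{\Qp}\to\GL_2(\OO)$ is reducible — in fact decomposable, so potentially crystalline, so potentially semi-stable — with $\det\rho_x=\psi\epsilon$, Hodge--Tate weights $\{k,k'\}=\{a,b\}$, $\mathrm{WD}(\rho_x)|_{I_{\Qp}}\cong\theta_1\oplus\theta_2=\tau$, and $\overline{\rho_x}\cong\rho$. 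Thus $\rho_x$ (if necessary conjugated by a permutation matrix so that its reduction is literally $\rho$) is a framed deformation of $\rho$ of type $(\wt,\tau,\psi)$, i.e.\ it defines a point $x\in\mSpec R[1/p]$ at which $\rho_x$ is reducible.

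For the ``only if'' direction, let $x\in\mSpec R[1/p]$ with $\rho_x$ reducible. After enlarging $L$ we may write $\rho_x=\bigl(\begin{smallmatrix}\chi_1&\ast\\0&\chi_2\end{smallmatrix}\bigr)$. As subrepresentations and quotients of a potentially semi-stable representation are again potentially semi-stable, $\chi_1$ and $\chi_2$ are de Rham characters; write $\chi_i=\mu_i\epsilon^{k_i}$ with $\mu_i$ smooth and $k_i\in\Z$. Exactness of $D_{\mathrm{pst}}$ identifies the underlying Weil group representation of $\mathrm{WD}(\rho_x)$ with $\mathrm{WD}(\chi_1)\oplus\mathrm{WD}(\chi_2)$; since an extension of two characters of $I_{\Qp}$ (which act through a finite quotient) splits, $\tau\cong\mathrm{WD}(\rho_x)|_{I_{\Qp}}\cong(\mu_1\oplus\mu_2)|_{I_{\Qp}}$. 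Thus $\tau$ is a principal series type with summands $\theta_i:=\mu_i|_{I_{\Qp}}$, which extend to $W_{\Qp}$ as they are restrictions of characters of $G_{\Qp}$. Comparing Hodge--Tate weights gives $\{k_1,k_2\}=\{a,b\}$; and since $\rho_x$ is a deformation of $\rho$, the Jordan--H\"older constituents of $\overline{\rho_x}$ are $\delta_1,\delta_2$, so $\overline{\chi_1}\in\{\delta_1,\delta_2\}$. Restricting $\overline{\chi_1}=\overline{\mu_1}\,\omega^{k_1}$ to $I_{\Qp}$ yields $\theta_1\epsilon^{k_1}|_{I_{\Qp}}\equiv\delta_l|_{I_{\Qp}}\pmod\varpi$ for some $l\in\{1,2\}$, with $k_1\in\{a,b\}$; by the symmetry recorded in the first paragraph, this is the condition in the statement.

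The only delicate point is the ``if'' direction: one needs the companion character $\chi_2=\psi\epsilon\,\chi_1^{-1}$ to carry automatically the correct inertial type $\theta_2$, Hodge--Tate weight $k'$, and reduction $\delta_{l'}$, so that no second, independent choice — which might violate the hypothesis — has to be made. This is exactly where $R\neq 0$ is used, through the identities $\mathrm{WD}(\psi\epsilon)|_{I_{\Qp}}=\det\tau$ and $\overline{\psi\epsilon}=\det\rho$. Everything else is bookkeeping with de Rham characters, their Hodge--Tate weights, and reductions modulo $\varpi$; the one thing requiring care is keeping track of the normalizations of local class field theory and of Hodge--Tate weights (both fixed in \S\ref{section-notations}) so that the powers of $\epsilon$ come out correctly.
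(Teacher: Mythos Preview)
Your proof is correct and follows essentially the same approach as the paper's: analyze a reducible $\rho_x$ as an extension of two de Rham characters to get the ``only if'' direction, and build a split lift $\chi_1\oplus\chi_2$ for the ``if'' direction. You are in fact more careful than the paper in spelling out why $\chi_2=\psi\epsilon\chi_1^{-1}$ automatically has the correct reduction and inertial type (this is where $R\neq 0$ enters), and in recording the $\theta_1\leftrightarrow\theta_2$ symmetry explicitly.
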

\begin{proof} If $\rho_x$ is reducible then there is an exact sequence $0\rightarrow \chi_1\rightarrow \rho_x \rightarrow \chi_2\rightarrow 0$
of Galois representations such that $\chi_1\chi_2= \psi\epsilon$ and either $\chi_1\equiv \delta_1$ and $\chi_2\equiv \delta_2$ modulo $\varpi$ or $\chi_1\equiv \delta_2$ and $\chi_2\equiv \delta_1$ modulo $\varpi$. Since $\rho_x$ is potentially semi-stable both characters $\chi_1$ and $\chi_2$ are potentially semi-stable and Hodge--Tate weight 
of $\chi_1$ is $b$ and Hodge--Tate weight of $\chi_2$ is $a$. This is forced upon us if $\rho_x$ is non-split and we may assume that if $\rho_x$ is split.
Hence, $\chi_1\epsilon^{-b}$ and $\chi_2\epsilon^{-a}$ have open kernel 
 and if we let $\theta_1= \chi_1\epsilon^{-b}|_{I_{\Qp} }$ and $\theta_2 =\chi_2\epsilon^{-a}|_{I_{\Qp}}$ then the  Galois type of $\rho_x$ is isomorphic to
 $\theta_1\oplus \theta_2$. Hence, either $\theta_1 \epsilon^b \equiv \delta_1|_{I_{\Qp} }$ or $\theta_1 \epsilon^b \equiv \delta_2 |_{I_{\Qp} }\pmod{\varpi}$.
 We note that $\det \rho_x= \chi_1\chi_2$, hence $\theta_1 \theta_2 \epsilon^{a+b}\equiv\delta_1\delta_2|_{I_{\Qp}} \pmod{\varpi}$.
 So that  $\theta_2 \epsilon ^b \equiv \delta_1|_{I_{\Qp}} \pmod{\varpi}$ if and only if $\theta_1 \epsilon^a \equiv \delta_2|_{I_{\Qp}}\pmod{\varpi}$
 and $\theta_2 \epsilon ^b \equiv \delta_2|_{I_{\Qp}} \pmod{\varpi}$ if and only if $\theta_1 \epsilon^a \equiv \delta_1|_{I_{\Qp}}\pmod{\varpi}$. 
For the converse direction using class field theory extend characters $\theta_1\epsilon^b$ and $\theta_2 \epsilon^a$ to characters $\chi_1$, $\chi_2$ of $G_{\Qp}$
such that $\chi_1\chi_2=\psi\varepsilon$. Then the split representation $\chi_1\oplus \chi_2$ will give the required maximal ideal.  
\end{proof}   
 
\begin{defn} The reducible locus in $\Spec  R^{\square, \psi}_{\rho}(\wt, \tau)[1/p]$ is empty if $\rho_x$ is an irreducible representation 
of $G_{\Qp}$ for all $x\in \mSpec R^{\square, \psi}_{\rho}(\wt, \tau)[1/p]$. 
\end{defn}
 
 \begin{lem}\label{split3} Let $(A,\mm)$ be a local noetherian ring,  let $B=A[[x,y]]/(xy-c)$ with $c\in \mm$.  
If $A$ is Cohen--Macaulay then $B$ is also Cohen--Macaulay. 
\end{lem}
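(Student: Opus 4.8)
The plan is to present $B$ as $S/(f)$, where $S := A[[x,y]]$ and $f := xy-c$, and to deduce the Cohen--Macaulayness of $B$ from that of $S$ by exhibiting $f$ as a nonzerodivisor of $S$ lying in its maximal ideal.

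First I would record that $S = A[[x]][[y]]$ is Noetherian and local, with maximal ideal $\mathfrak n = (\mathfrak m, x, y)$, and that it is Cohen--Macaulay because $A$ is. This is standard: for a Noetherian local ring $R$ one has $\dim R[[t]] = \dim R + 1$, and if $r_1,\dots,r_d$ is a regular sequence in $R$ then $r_1,\dots,r_d,t$ is a regular sequence in $R[[t]]$, since $t$ is a nonzerodivisor modulo every ideal. Taking $R = A$ and a maximal regular sequence of length $\operatorname{depth} A = \dim A$ yields $\operatorname{depth} A[[x]] \ge \dim A + 1 = \dim A[[x]]$, so $A[[x]]$ is Cohen--Macaulay; applying this once more shows $S$ is Cohen--Macaulay.

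The crux, and the only step using the hypothesis $c \in \mathfrak m$, is to check that $f = xy - c$ is a nonzerodivisor of $S$. I would expand an arbitrary $g \in S$ as $g = \sum_{i \ge 0} a_i(y)\, x^i$ with $a_i \in A[[y]]$; comparing coefficients of $x^i$ for each $i$, the equation $g(xy-c) = 0$ becomes the system $c\, a_0 = 0$ together with $a_{i-1}\, y = c\, a_i$ for all $i \ge 1$. These relations telescope to
\[ a_i\, y^n = c^n a_{i+n}, \qquad i, n \ge 0. \]
Since $c \in \mathfrak m$ and $\mathfrak m$ is finitely generated, the right-hand side lies in $\mathfrak m^n A[[y]] = \mathfrak m^n[[y]]$, so all coefficients of $a_i$ lie in $\mathfrak m^n$ for every $n$; by Krull's intersection theorem $\bigcap_n \mathfrak m^n = 0$, whence $a_i = 0$ for all $i$ and $g = 0$. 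I expect this computation to be the main obstacle; everything else is formal bookkeeping.

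Finally I would conclude: $f \in \mathfrak n$ because $c \in \mathfrak m \subseteq \mathfrak n$, and $f$ is a nonzerodivisor of the Cohen--Macaulay local ring $S$, hence part of a system of parameters; therefore $B = S/(f)$ is Cohen--Macaulay, of dimension $\dim S - 1 = \dim A + 1$.
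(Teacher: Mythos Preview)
Your proof is correct but takes a genuinely different route from the paper. The paper substitutes $z=x-y$ to rewrite $B=A[[y,z]]/(y^{2}+zy-c)$; since this relation is a Weierstrass polynomial in $y$ (monic, with lower-order coefficients $z$ and $-c$ lying in the maximal ideal of $A[[z]]$), Weierstrass division shows that $B$ is a free $A[[z]]$-module of rank~$2$ with basis $1,y$. A regular system of parameters $a_{1},\ldots,a_{d}$ for $A$ then extends to a regular system of parameters $a_{1},\ldots,a_{d},z$ for $B$ by freeness. Your approach instead verifies directly, via a coefficient chase and Krull's intersection theorem, that $xy-c$ is a nonzerodivisor in the Cohen--Macaulay ring $S=A[[x,y]]$, and then invokes the standard fact that $S/(f)$ is Cohen--Macaulay for any regular element $f$ in the maximal ideal. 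The paper's trick is shorter and yields the extra structural information that $B$ is finite free over $A[[z]]$; your argument is more self-contained, requiring neither the change of variables nor an appeal to Weierstrass division.
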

\begin{proof} By letting $z=x-y$ we obtain $B=A[[y,z]]/(y^2+zy -c)$. Thus $B$ is a free $A[[z]]$-module of rank $2$ with basis $1, y$. 
If $a_1, \ldots, a_d$ is a regular system of parameters for $A$ then $a_1, \ldots, a_d, z$ is a regular system 
of parameters for $B$.
\end{proof}

\begin{prop}\label{gen_split_cm} Assume that $\delta_1\delta_2^{-1}\neq \omega^{\pm 1}$,  $R^{\square, \psi}_{\rho}(\wt, \tau)\neq 0$ and the 
reducible locus in $\Spec R^{\square, \psi}_{\rho}(\wt, \tau)[1/p]$ is empty. Then $R^{\psi}_{\rho_1}(\wt, \tau)\cong R^{\psi}_{\rho_2}(\wt, \tau)$ and 
$$ R^{\square, \psi}_{\rho}(\wt, \tau)\cong R^{\psi}_{\rho_1}(\wt, \tau)[[x, y, z, w]]/ (xy-c)$$
for an element $c$ in the maximal ideal of $R^{\psi}_{\rho_1}(\wt, \tau)$. In particular, if $R^{\psi}_{\rho_1}(\wt, \tau)$ is Cohen--Macaulay 
then $R^{\square, \psi}_{\rho}(\wt, \tau)$ is Cohen--Macaulay. 
\end{prop}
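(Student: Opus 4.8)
The plan is to deduce the Cohen--Macaulay statement from the displayed isomorphism, which is the real content, together with Lemma~\ref{split3}. Indeed, set $A:=R^{\psi}_{\rho_1}(\wt,\tau)[[z,w]]$; adjoining power series variables preserves the Cohen--Macaulay property, so $A$ is Cohen--Macaulay as soon as $R^{\psi}_{\rho_1}(\wt,\tau)$ is, and then $R^{\square,\psi}_{\rho}(\wt,\tau)\cong A[[x,y]]/(xy-c)$ with $c$ in the maximal ideal of $A$, so Lemma~\ref{split3} gives the conclusion. Thus everything comes down to the two isomorphisms, which I would prove following the strategy of \cite{ht,pa15}, where it is shown that the Breuil--M\'ezard conjecture for $\rho_1$ and $\rho_2$ implies it for $\rho$.

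The key object is the universal framed deformation $\rho^{\square}\colon G_{\Qp}\to\GL_2(R)$ over $R:=R^{\square,\psi}_{\rho}(\wt,\tau)$, and the plan is to make a presentation of $R$ visible using that $\rho=\delta_1\oplus\delta_2$ is a sum of two \emph{distinct} characters. First, fixing $g_0\in G_{\Qp}$ with $\delta_1(g_0)\neq\delta_2(g_0)$, the matrix $\rho^{\square}(g_0)$ has distinct eigenvalues modulo $\mm_R$; Hensel's lemma lets one normalise the frame so that $\rho^{\square}(g_0)$ is diagonal, and the torus worth of remaining ambiguity accounts for two ``pure framing'' variables $z,w$. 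Writing $\rho^{\square}(g)=\smatr{a(g)}{b(g)}{c(g)}{d(g)}$ after this normalisation, the cocycle relations force the upper (resp.\ lower) off-diagonal functions to be governed by $Z^1(G_{\Qp},\delta_1\delta_2^{-1})$ (resp.\ $Z^1(G_{\Qp},\delta_2\delta_1^{-1})$); since $\delta_1\neq\delta_2$ and $\delta_1\delta_2^{-1}\neq\omega^{\pm1}$ both of these $H^1$'s are one-dimensional, so modulo the pure framing the off-diagonal part is recorded by just two further variables $x$ (upper) and $y$ (lower). The remaining input, which is the technical heart and which I would import from \cite{ht,pa15}, is that the homomorphism condition on $\rho^{\square}$ together with the fixed determinant produces exactly one relation, of the form $xy=c$ for some $c\in\mm_R$, and that the resulting coefficient ring is $R^{\psi}_{\rho_1}(\wt,\tau)$ --- here one uses Kisin's Theorem~\ref{thm-kisin} to know that ring is $\cO$-flat and equidimensional of the expected dimension, which rigidifies the identification. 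Running the same analysis with $\delta_1$ and $\delta_2$ interchanged (the opposite Borel) gives a second presentation $R\cong R^{\psi}_{\rho_2}(\wt,\tau)[[x',y',z',w']]/(x'y'-c')$.

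The isomorphism $R^{\psi}_{\rho_1}(\wt,\tau)\cong R^{\psi}_{\rho_2}(\wt,\tau)$ I would then obtain either by comparing the two presentations of $R$ just produced, or, more robustly, directly from the symmetry of the deformation problem under swapping the two residual constituents: the relabelling $\delta_1\leftrightarrow\delta_2$ carries the non-split extension $\rho_1$ of $\delta_2$ by $\delta_1$ to the non-split extension $\rho_2$ of $\delta_1$ by $\delta_2$, is compatible with the fixed determinant $\psi\epsilon$, and respects the type $(\wt,\tau)$ since $\tau=\theta_1\oplus\theta_2$ is symmetric in $\theta_1,\theta_2$; this is the point of view taken in \cite{ht,pa15}.

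The step I expect to be the genuine obstacle --- and the reason only this ``easy'' case can be handled --- is verifying that the presentation $R\cong R^{\psi}_{\rho_1}(\wt,\tau)[[x,y,z,w]]/(xy-c)$ is valid \emph{on the nose}, i.e.\ controlling how the reducible loci $V(x),V(y)\subset\Spec R$ (where $\delta_2$, resp.\ $\delta_1$, lifts to a subrepresentation) sit relative to the locus of irreducible deformations. This is exactly the difficulty flagged in the introduction. It is here that the hypothesis ``the reducible locus of $\Spec R^{\square,\psi}_{\rho}(\wt,\tau)[1/p]$ is empty'' is used: by Lemma~\ref{reducible_locus} this says precisely that none of $\theta_1\epsilon^a,\theta_1\epsilon^b$ is congruent to $\delta_1|_{I_{\Qp}}$ or $\delta_2|_{I_{\Qp}}$ modulo $\varpi$, and it forces the reducible locus $V(x)\cup V(y)$ to be annihilated by a power of $\varpi$ and to meet the rest of $\Spec R$ transversally; consequently $c\neq0$, the element $xy-c$ is a nonzerodivisor, and the power series presentation is genuine. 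In the general case $V(x)$ and $V(y)$ need not even be complete intersections and this control is lost, which is precisely where \cite[Ex.18.13]{eis}-type arguments break down. Once the presentation is established, the Cohen--Macaulay conclusion follows immediately from Lemma~\ref{split3} as in the first paragraph.
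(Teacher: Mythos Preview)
Your overall strategy matches the paper's: both defer to the structural results of \cite{pa15} (and \cite{ht}) for the presentation, and then invoke Lemma~\ref{split3} for the Cohen--Macaulay conclusion. Your reduction in the first paragraph and the handling of the two framing variables via \cite[Rem.~7.4]{pa15} are exactly right.

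Where your account diverges from the paper is in the explanation of \emph{how} the empty-reducible-locus hypothesis enters. The paper's proof is one sentence: the hypothesis forces the ideals $I_1^{\ps}, I_2^{\ps}, I_1^{\ver}, I_2^{\ver}$ of \cite{pa15} (which cut out the loci where $\delta_1$, resp.\ $\delta_2$, lifts to a subrepresentation) to be the \emph{unit} ideal, because the ambient rings are $\OO$-torsion free and these loci have no characteristic-zero points. Once those ideals are unit, \cite[Lem.~6.4, 6.5]{pa15} simplify directly to the displayed isomorphism, and the identification $R^{\psi}_{\rho_1}(\wt,\tau)\cong R^{\psi}_{\rho_2}(\wt,\tau)$ falls out of the same lemmas. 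Your fourth paragraph instead argues via ``$V(x)\cup V(y)$ is confined to the special fibre, hence $c\neq 0$, hence the presentation is genuine''; but $c\neq 0$ is a \emph{consequence} of the presentation plus the hypothesis, not a step in establishing it, so as written the logic is slightly circular. The phrase ``meet the rest of $\Spec R$ transversally'' is also not quite what is needed or proved. The cleaner route is simply: reducible locus empty $\Rightarrow$ those four ideals are unit $\Rightarrow$ apply \cite[Lem.~6.4, 6.5]{pa15} verbatim.
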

\begin{proof} The first assertion follows from \cite[Lem.6.4, 6.5]{pa15} by noting that the assumption that the reducible locus is empty implies that 
the ideals denoted by $I_1^{\ps}$, $I_2^{\ps}$, $I_1^{\ver}$, $I_2^{\ver}$ in \cite{pa15} are unit ideals and \cite[Rem.7.4]{pa15}, which says that the framed deformation ring 
is formally smooth of relative dimension $2$ over the versal deformation ring. The last assertion follows from Lemma \ref{split3}.
\end{proof} 

\begin{thm}\label{gen_split} Let $\tau=\theta_1\oplus \theta_2$ be a principal series type with $\theta_1\neq \theta_2$ and let $\wt=(a, b)$ with $a<b$. Assume that 
$\delta_1\delta_2^{-1}\neq \omega^{\pm 1}$ and $\theta_1$ is not congruent modulo $\varpi$ to any of the four characters 
$\delta_1\omega^{-a}$, $\delta_1\omega^{-b}$, $\delta_2\omega^{-a}$, $\delta_2\omega^{-b}$. If $R^{\square, \psi}_{\rho}(\wt, \tau)\neq 0$ then 
it is Cohen--Macaulay.
\end{thm}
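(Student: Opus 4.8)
The plan is to reduce Theorem~\ref{gen_split} to the non-split case already settled in Theorem~\ref{theorem-main}, via the structural result Proposition~\ref{gen_split_cm}. The first step is to recognize that the congruence hypothesis on $\theta_1$ is precisely the assertion that the reducible locus in $\Spec R^{\square,\psi}_\rho(\wt,\tau)[1/p]$ is empty. Indeed, since $\epsilon\equiv\omega\pmod\varpi$, the congruence $\theta_1\epsilon^b\equiv\delta_i|_{I_{\Qp}}\pmod\varpi$ (resp.\ $\theta_1\epsilon^a\equiv\delta_i|_{I_{\Qp}}\pmod\varpi$) figuring in Lemma~\ref{reducible_locus} is equivalent to $\bar\theta_1=\delta_i\omega^{-b}|_{I_{\Qp}}$ (resp.\ $\bar\theta_1=\delta_i\omega^{-a}|_{I_{\Qp}}$), for $i\in\{1,2\}$. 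Hence the four characters $\delta_1\omega^{-a},\delta_1\omega^{-b},\delta_2\omega^{-a},\delta_2\omega^{-b}$ excluded in the statement are exactly the ones occurring in Lemma~\ref{reducible_locus}; since $\tau=\theta_1\oplus\theta_2$ is a principal series type, that lemma then gives that no $\overline{L}$-point of $R^{\square,\psi}_\rho(\wt,\tau)[1/p]$ is reducible.

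Next I would invoke Theorem~\ref{theorem-main} for the non-split representation $\rho_1=\smatr{\delta_1}{\ast}{0}{\delta_2}$. As $\delta_1\neq\delta_2$ and $\rho_1$ is non-split, $\End_{G_{\Qp}}(\rho_1)=k$; and when $p=3$ the standing hypothesis $\delta_1\delta_2^{-1}\neq\omega^{\pm1}$ rules out $\rho_1\sim\smatr{\chi\omega}{\ast}{0}{\chi}$, so the hypotheses of Theorem~\ref{theorem-main} are satisfied. Since $\theta_1\neq\theta_2$, the type $\tau$ is a principal series type and, by the remark following Theorem~\ref{theorem-main-intro}, the potentially semi-stable and potentially crystalline deformation rings of $\rho_1$ of type $(\wt,\tau,\psi)$ coincide. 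Therefore $R^\psi_{\rho_1}(\wt,\tau)=R^{\psi,\mathrm{cr}}_{\rho_1}(\wt,\tau)$ is Cohen--Macaulay whenever it is non-zero.

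Finally, with $\delta_1\delta_2^{-1}\neq\omega^{\pm1}$, $R^{\square,\psi}_\rho(\wt,\tau)\neq0$ and the reducible locus empty, Proposition~\ref{gen_split_cm} provides an isomorphism $R^{\square,\psi}_\rho(\wt,\tau)\cong R^\psi_{\rho_1}(\wt,\tau)[[x,y,z,w]]/(xy-c)$ with $c$ in the maximal ideal of $R^\psi_{\rho_1}(\wt,\tau)$. In particular $R^\psi_{\rho_1}(\wt,\tau)\neq0$, hence it is Cohen--Macaulay by the previous paragraph, and then Lemma~\ref{split3}, via the ``in particular'' clause of Proposition~\ref{gen_split_cm}, shows $R^{\square,\psi}_\rho(\wt,\tau)$ is Cohen--Macaulay. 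The only genuinely delicate point is the bookkeeping of the first step, matching the excluded congruence classes to the reducible locus via Lemma~\ref{reducible_locus}; the remainder is a straight assembly of Theorem~\ref{theorem-main} and Proposition~\ref{gen_split_cm}, the genericity required for the former being guaranteed by the assumption $\delta_1\delta_2^{-1}\neq\omega^{\pm1}$.
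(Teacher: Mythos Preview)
Your proposal is correct and follows essentially the same route as the paper: use Lemma~\ref{reducible_locus} to verify that the congruence hypothesis on $\theta_1$ forces the reducible locus to be empty, then apply Proposition~\ref{gen_split_cm} to reduce to the non-split ring $R^{\psi}_{\rho_1}(\wt,\tau)$, which is Cohen--Macaulay by Theorem~\ref{theorem-main}. Your write-up is in fact more careful than the paper's in spelling out the congruence bookkeeping and in checking the $p=3$ hypothesis of Theorem~\ref{theorem-main} for $\rho_1$.
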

\begin{proof} Lemma \ref{reducible_locus} and the assumption on $\theta_1$ implies that the reducible locus is empty. 
 If $R^{\square, \psi}_{\rho}(\wt, \tau)\neq 0$ then $R^{\psi}_{\rho}(\wt, \tau)\neq 0$ by
Proposition \ref{gen_split_cm}. Since Theorem \ref{theorem-main} implies that  $R^{\psi}_{\rho}(\wt, \tau)$ is Cohen--Macaulay we deduce from 
Proposition \ref{gen_split_cm} that $R^{\square, \psi}_{\rho}(\wt, \tau)$ is Cohen--Macaulay. 
 \end{proof} 
 
 \begin{rem}\label{most_cases} If we fix Hodge--Tate weights $\wt=(a, b)$ and a character $\psi: G_{\Qp}\rightarrow \OO^{\times}$ such that 
 $\psi\epsilon \equiv \delta_1\delta_2\pmod{\varpi}$ and $\psi\epsilon^{-b-a-1}$ has open kernel then it follows from the Breuil--M\'ezard 
 conjecture proved in this case in  \cite{ki09}, \cite{blocksp2} that $R^{\square,\psi}_{\rho}(\wt, \tau)\neq 0$ for any principal series type 
 $\tau=\theta_1 \oplus \theta_2$, such that $\theta_1\theta_2 \epsilon^{a+b}= \psi\epsilon|_{I_{\Qp}}$ and the conductor of $\theta_1\theta_2^{-1}$ is at least $2$. 
 Let $c$ be the conductor of $\psi\epsilon^{-b-a-1}$. Hence, if the conductor of $\theta$ is at least $\max(2, c+1)$ and we let $\tau= \theta \oplus \theta^{-1} \psi\epsilon^{-b-a-1}$ then 
 $R^{\square, \psi}_{\rho}(\wt, \tau)\neq 0$ and conversely all principal series types $\tau$ with the conductor $c(\tau)\ge  \max(c+1, 2)$ 
 such that  $R^{\square, \psi}_{\rho}(\wt, \tau)\neq 0$ are of the form $\theta \oplus \theta^{-1} \psi\epsilon^{-b-a-1}$ as above. 
So we are free to choose $\theta$ as we like  provided its conductor is large and Theorem \ref{gen_split} applies if we avoid $4$ out of 
possible $p-1$ congruence classes for $\theta$ modulo $\varpi$. 
 \end{rem}

 \section{Further discussions}
  \label{section-further}

We go back to the setting in Section \ref{section-recall} and discuss some other properties of potentially semi-stable deformation rings  related to the Cohen-Macaulayness. The notation is as in Section \ref{section-recall}. In particular, $\rho:G_{\Q_p}\ra \GL_2(k)$ is such that $\End_{G_{\Q_p}}(\rho)=k$ and if $p=3$, then we assume that $\rho\not\sim \bigl ( \begin{smallmatrix}\delta \omega & \ast \\ 0 & \delta \end{smallmatrix}\bigr )$, for any character $\delta: G_{\Qp}\rightarrow k^{\times}$. Let $(\w,\tau,\psi)$ be a fixed $p$-adic Hodge type,  $V=\sigma(\w,\tau)$ (resp. $\sigma^{\rm cr}(\w,\tau)$), and  $R=R^{\psi}_{\rho}(\w,\tau)$ (resp. $ R^{\psi,\rm cr}_{\rho}(\w,\tau)$) the corresponding potentially semi-stable (resp. crystalline) deformation ring.

\subsection{Cyclicity of the module $M(\Theta)$}\label{subsection-cyclicity}
Inspired by the  results of \cite{egs} we investigate the following question. 

\begin{ques}
Does there exist a $K$-invariant $\cO$-lattice $\Theta$ inside $V$  such that $M(\Theta)$ becomes a cyclic module over $R$?
\end{ques} 

 A positive answer to this question would imply  our main Theorem \ref{theorem-main}: since $R$ acts faithfully on $M(\Theta)$, we would deduce that $M(\Theta)$ is a free $R$-module of rank $1$, hence Theorem \ref{theorem-main-intro} would follow  from \cite[Lem.2.33]{pa12}  which says that $M(\Theta)$ is a Cohen-Macaulay module. We show in Theorem \ref{prop-cyclic} below that this is the case if 
 we consider crystalline deformations with small Hodge--Tate weights, so that $V=\Sym^{b-a-1} L^2\otimes \det^a$ with $1\le b-a\le 2p$. In the next subsection we will show that these rings are complete intersection when $\rho$ is generic. It is not clear to us whether $\Theta$ can be chosen so that the module $M(\Theta)$ is cyclic in general. 

By Nakayama's lemma, $M(\Theta)$ is a cyclic $R$-module if and only if $M(\Theta)/\m_RM(\Theta)$ is a cyclic $k$-module,   which holds if and only if 
\begin{equation}\label{equation-cyclic=dim1}\tag{\textbf{C}}\dim_k\Hom_{K}(\Theta/\varpi\Theta, (k\widehat{\otimes}_{R^{\psi}_{\rho}}N)^{\vee})\leq 1.\end{equation}
The above dimension is equal to $0$ if and only if  $M(\Theta)=R=0$. To ease the notation we let 
$$ \kappa(\rho):= (k\widehat{\otimes}_{R^{\psi}_{\rho}}N)^{\vee}.$$
If $\rho$ is either irreducible or $\rho\sim \bigl ( \begin{smallmatrix}\delta_2 & \ast \\ 0 & \delta_1 \end{smallmatrix}\bigr )$ with $\delta_1^{-1}\delta_2\neq \omega^{\pm 1}$ 
then $\kappa(\rho)$ is what Colmez calls \textit{atome automorphe} attached to $\rho$ in \cite[VII.4]{co}, if 
$\rho\sim \bigl ( \begin{smallmatrix}\delta \omega & \ast \\ 0 & \delta \end{smallmatrix}\bigr )$ then $\kappa(\rho)$ is the representation denoted by 
$\beta$ in \cite[\S6.2]{pa12}, if $\rho\sim \bigl ( \begin{smallmatrix}\delta  & \ast \\ 0 & \delta \omega \end{smallmatrix}\bigr )$ then the structure of 
$\kappa(\rho)$ is discussed in \cite[Prop.6.21, Rem.6.22, 7.6]{6auth2}.

\begin{defn}
A Serre weight of $\rho$ is a smooth irreducible representation $\sigma$ of $K$ such that 
\[\Hom_{K}(\sigma,\kappa(\rho))\neq0.\] 
Let $\mathcal{D}(\rho)$ denote the set of Serre weights of $\rho$.
\end{defn}
Up to normalization, the above definition of Serre weights of $\rho$ coincides with the usual one given in \cite{bdj}. We refer to \cite[Rem.6.2, 6.3]{pa12}, \cite[Rem.2.29]{blocksp2} and the proof of \cite[Prop.6.20]{pa12}
for the description of $\mathcal{D}(\rho)$.  We write $\overline{V}$ for the semi-simplification of $\Theta/\varpi\Theta$ for any $K$-invariant $\OO$-lattice $\Theta\subset V$ and $\JH(\overline{V})$ the set of Jordan-H\"older factors of $\overline{V}$. 
The above discussion gives the following simple criterion for $M(\Theta)$ to be cyclic.  

\begin{lem}\label{lemma-cyclic}
If, taking into account of multiplicities, $\JH(\overline{V})$ contains at most one Serre weight of $\rho$, then $M(\Theta)$ is a cyclic $R$-module.
\end{lem}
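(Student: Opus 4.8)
The plan is to reduce, via the criterion \eqref{equation-cyclic=dim1} and Nakayama's lemma applied to the finitely generated $R$-module $M(\Theta)$, to the inequality $\dim_k \Hom_K(\Theta/\varpi\Theta, \kappa(\rho)) \le 1$, and then to establish it in the stronger form $\dim_k \Hom_K(\Theta/\varpi\Theta, \kappa(\rho)) \le \sum_{\sigma \in \mathcal{D}(\rho)} [\overline V : \sigma]$, where $[\overline V:\sigma]$ denotes the multiplicity of $\sigma$ in $\JH(\overline V)$. By the hypothesis of the lemma the right-hand side is at most $1$, which finishes the proof; so the whole content is this homological bound together with one structural fact about $\kappa(\rho)$.

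The structural fact I would isolate is that $\mathrm{soc}_K \kappa(\rho)$ is \emph{multiplicity free}, its Jordan--Hölder constituents being exactly the Serre weights in $\mathcal{D}(\rho)$. That the constituents are the weights in $\mathcal{D}(\rho)$ is immediate from the definition of $\mathcal{D}(\rho)$; the multiplicity-one assertion must be read off from the explicit descriptions of $\kappa(\rho)$ recalled above --- from \cite{co} and \cite[\S6.1]{pa12} when $\rho$ is irreducible or $\rho \sim \smatr{\delta_2}*0{\delta_1}$ with $\delta_1^{-1}\delta_2 \ne \omega^{\pm 1}$, from \cite[\S6.2]{pa12} when $\rho \sim \smatr{\delta\omega}*0{\delta}$, and from \cite[Prop.6.21, Rem.6.22, 7.6]{6auth2} when $\rho \sim \smatr{\delta}*0{\delta\omega}$. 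In each of these cases the socle of $\kappa(\rho)$ as a $G$-representation is either a finite direct sum of Serre weights or an irreducible principal series $\Ind_P^G \chi$, and the $K$-socle of such an induced representation is multiplicity free because, by the Iwasawa decomposition $G = PK$ and Frobenius reciprocity, its $\sigma$-isotypic part has dimension $\dim_k \Hom_{P \cap K}(\sigma, \chi) \le 1$.

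Granting this, I would argue as follows. By property (b) of $N$, the representation $\kappa(\rho) = (k\widehat{\otimes}_{R^\psi_\rho} N)^\vee$ is a smooth admissible $G$-representation of finite length, so $\mathrm{soc}_K \kappa(\rho)$ is finite dimensional; being essential in $\kappa(\rho)$, it gives an embedding $\kappa(\rho) \hookrightarrow E := \bigoplus_{\sigma \in \mathcal{D}(\rho)} \mathrm{inj}_K(\sigma)$ into the injective hull of $\mathrm{soc}_K\kappa(\rho)$, where $\mathrm{inj}_K(\sigma)$ denotes an injective hull of $\sigma$ in the category of smooth $k$-representations of $K$ on which $Z$ acts by $\zeta$. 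Left exactness of $\Hom_K(\Theta/\varpi\Theta, -)$ then gives $\dim_k \Hom_K(\Theta/\varpi\Theta, \kappa(\rho)) \le \sum_{\sigma \in \mathcal{D}(\rho)} \dim_k \Hom_K(\Theta/\varpi\Theta, \mathrm{inj}_K(\sigma))$. For each $\sigma$ the functor $\Hom_K(-, \mathrm{inj}_K(\sigma))$ is exact, $\mathrm{soc}_K(\mathrm{inj}_K(\sigma)) = \sigma$, and the Serre weights are absolutely irreducible, so a dévissage along a Jordan--Hölder filtration of $\Theta/\varpi\Theta$ shows $\dim_k \Hom_K(\Theta/\varpi\Theta, \mathrm{inj}_K(\sigma)) = [\Theta/\varpi\Theta : \sigma] = [\overline V : \sigma]$. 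Summing over $\sigma \in \mathcal{D}(\rho)$ yields the desired bound.

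The main obstacle is precisely the multiplicity-one statement for $\mathrm{soc}_K\kappa(\rho)$: the homological part above is entirely formal, whereas this point is not a single citable statement and would have to be verified case by case against the structural results of \cite{pa12} and \cite{6auth2} for the various shapes of $\rho$ allowed by our standing hypotheses.
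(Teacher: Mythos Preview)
Your proof is correct and follows essentially the same route as the paper: both reduce to the criterion \eqref{equation-cyclic=dim1} and then invoke the multiplicity-freeness of $\mathrm{soc}_K\kappa(\rho)$. Your injective-hull argument makes explicit the passage from that structural fact to the bound $\dim_k\Hom_K(\Theta/\varpi\Theta,\kappa(\rho))\le\sum_{\sigma\in\mathcal D(\rho)}[\overline V:\sigma]$, which the paper leaves to the reader; the references you give for multiplicity-freeness differ slightly from the paper's (\cite[Rem.~6.2, 6.3]{pa12}, \cite[Rem.~2.29]{blocksp2} in the generic case and the proof of \cite[Prop.~6.20]{pa12} in the non-generic case), but point to the same underlying descriptions.
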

\begin{proof} 
By the above discussion, it suffices to note that in all cases the $K$-socle of $\kappa(\rho)$ is multiplicity free. Indeed, if $\rho$ is generic, this follows from \cite[Rem.6.2, 6.3]{pa12}, \cite[Rem.2.29]{blocksp2}; if $\rho$ is non-generic, this follows from the proof of \cite[Prop.6.20]{pa12}, where the representation $\kappa(\rho)$ is denoted by $\beta$.
\end{proof}

We will use Lemma \ref{lemma-cyclic} to show that crystalline deformation rings in small HT-weights are Cohen-Macaulay, which can be regarded as a complement of our main theorem. We need the following results of Breuil \cite{Br03} and Morra \cite{mor, mor2}. We assume $p\geq 3$ because this is imposed in \cite{mor, mor2}.
\begin{thm}  \label{theorem-morra} 
Assume $p\geq 3$. Let $\pi$ be an absolutely irreducible smooth admissible $k$-representation of $G$.
\begin{enumerate}
\item[(i)] Assume $\pi$ is a supersingular representation. Then 

\begin{enumerate}
\item[(1)] viewed as an $I$-representation, $\pi^{I_1}$ is isomorphic to $\theta\oplus \theta^s$ for some smooth character $\theta:I\ra k^{\times}$ where $\theta^s:I\ra k^{\times}$ is the character obtained by conjugating $\theta$ by $\smatr01p0$; 

\item[(2)]   $\rsoc_K(\pi)$ is always of length 2 and is isomorphic to $(\Ind_I^K\theta)^{\rm ss}$; moreover,
\[(\pi/\rsoc_K(\pi))^{K_1}\cong \Ind_{I}^K\theta\alpha\oplus \Ind_I^K\theta^s\alpha.\]
where  $\alpha:I\ra k^{\times}$ is the character sending $\smatr{a}{b}{pc}{d}$ to $\bar{a}\bar{d}^{-1}$;

\item[(3)] for any smooth character $\theta':I\ra k^{\times}$ such that $\theta'\neq \theta'^s$, there exists no $K$-equivariant embedding $\Ind_I^K\theta'\hookrightarrow \pi$; moreover 
\[\dim_k\Hom_K(\Ind_I^K\theta',\pi)\leq 1.\]
\end{enumerate} 

\item[(ii)] Assume  $\pi=\Ind_P^G\chi$ is a  principal series, then $\pi^{K_1}\cong\Ind_{I}^{K}\chi$ and
 \[(\pi/\pi^{K_1})^{K_1}\cong \Ind_{I}^{K} {\chi}\alpha.\]

\item[(iii)] Assume $\pi=\Sp\otimes \chi\circ\det$ is a special series representation, where $\Sp$ denotes the Steinberg representation, then $\rsoc_K(\pi)=\pi^{K_1}\cong \Sym^{p-1}k^2\otimes\chi\circ\det$ and 
\[(\pi/\rsoc_K(\pi))^{K_1}\cong \Ind_I^K\chi\alpha.\] 
\end{enumerate}
\end{thm}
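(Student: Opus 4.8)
The plan is to derive this from Breuil's classification of the irreducible smooth admissible $k$-representations of $G$ together with Breuil's and Morra's explicit computation of their restrictions to $K$; concretely, the statement is a repackaging of results in \cite{Br03, mor, mor2}, and the work consists in isolating the pieces we need and reconciling normalisations. One begins by recalling Breuil's classification: an absolutely irreducible such $\pi$ is either a character, a principal series $\Ind_P^G\chi$, a special series $\Sp\otimes\chi\circ\det$, or supersingular. Characters fall outside the three cases considered, so one treats the remaining three families in turn, in each case reducing the assertions about $\pi^{I_1}$, $\pi^{K_1}$, $\rsoc_K\pi$ and $(\pi/\rsoc_K\pi)^{K_1}$ to an explicit computation on a concrete model of $\pi|_K$.

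For (ii) I would use the Iwasawa decomposition $G=PK$ to identify $\pi|_K\cong\Ind_{P\cap K}^K(\chi|_{P\cap K})$, i.e. the sections of a line bundle on $(P\cap K)\backslash K\cong\bP^1(\Z_p)$; the existence of a nonzero $K_1$-fixed vector forces $\chi$ to be tamely ramified, and then $\pi^{K_i}$ consists of the functions that factor through $\bP^1(\Z/p^i\Z)$. For $i=1$ this is $\Ind_I^K\chi$, and computing the $K/K_1$-action on $\pi^{K_2}/\pi^{K_1}$ yields $(\pi/\pi^{K_1})^{K_1}\cong\Ind_I^K\chi\alpha$. For (iii) the same applies to $\Ind_P^G\Eins$, the only additional point being that the constant functions form a $K_1$-fixed trivial subrepresentation $\Eins\hookrightarrow\Ind_P^G\Eins$ that must be divided out, which replaces $\Ind_I^K\chi$ by its Steinberg quotient $\Sym^{p-1}k^2\otimes\chi\circ\det$; the compatibility of $(-)^{K_1}$ with this quotient is a small cohomological check carried out in \cite{mor}. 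I would cite \cite{mor} for the precise form of (ii) and (iii).

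For (i) I would work with Breuil's model $\pi\cong(\cInd_{KZ}^G\Sym^rk^2\otimes\eta)/(T)$ with $0\le r\le p-1$, in which $\cInd_{KZ}^G\Sym^rk^2$ decomposes, as a $KZ$-representation, into an explicit countable direct sum of twists of $\Sym^rk^2$ indexed by the vertices of the Bruhat--Tits tree, with the Hecke operator $T$ gluing the degree-$n$ piece to the degree-$(n+2)$ piece. Part (1) follows because $\pi^{I_1}$ is a two-dimensional module over the pro-$p$ Iwahori--Hecke algebra on which $I$ acts through $I/I_1\cong T(\F_p)$, and supersingularity (vanishing of the relevant Hecke operator) forces $t=\smatr{0}{1}{p}{0}$ to interchange the two $T(\F_p)$-eigencharacters, giving $\theta\oplus\theta^s$. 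Part (2) — that $\rsoc_K\pi$ is of length two and equals $(\Ind_I^K\theta)^{\mathrm{ss}}$, together with the description of the next layer of the $K$-socle filtration — is \cite[\S4]{Br03}, refined and made precise in \cite{mor, mor2}. Part (3) is then a consequence: for regular $\theta'$ the $K$-representation $\Ind_I^K\theta'$ is irreducible of dimension $p+1$, hence cannot embed into $\rsoc_K\pi$ (a sum of two Serre weights, each of dimension at most $p$), while any nonzero $K$-map $\Ind_I^K\theta'\to\pi$ would have image meeting the socle; the multiplicity bound $\dim_k\Hom_K(\Ind_I^K\theta',\pi)\le1$ is read off from the first two layers computed in \cite{mor, mor2}.

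The last thing to do is to reconcile conventions — Breuil and Morra use a possibly different choice of Weyl element and a different $\det$-normalisation, so I would record once the dictionary between their parameters $(r,\lambda,\eta)$ and the characters $\theta$, $\theta^s$, $\alpha$ appearing here. The only genuinely delicate point, and the one I expect to be the main obstacle, is the behaviour of $K_i$-invariants along the non-split extensions in the $K$-socle filtration: since $K_1$ is pro-$p$, the functor $(-)^{K_1}$ is not exact on $k[K]$-modules, so the clean identities for $(\pi/\rsoc_K\pi)^{K_1}$ do not follow formally and genuinely rest on Morra's explicit matrix computations in \cite{mor, mor2}; everything else is organisational work built on top of Breuil's classification.
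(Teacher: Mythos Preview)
Your overall strategy---reducing to Breuil's classification and citing Morra's explicit socle/$K_1$-filtration computations---is exactly what the paper does, and parts (ii), (iii), and (i)(1)--(2) are handled correctly by your outline.

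There is, however, a genuine error in your argument for (i)(3). You claim that for regular $\theta'$ the representation $\Ind_I^K\theta'$ is \emph{irreducible} of dimension $p+1$ and therefore cannot embed in $\rsoc_K\pi$. But in characteristic $p$ the principal series $\Ind_I^K\theta'\cong \Ind_{B(\F_p)}^{\GL_2(\F_p)}\theta'$ is \emph{never} irreducible: it is indecomposable of length two, with socle and cosocle two distinct Serre weights whose dimensions sum to $p+1$ (this is what the paper means by citing \cite[\S 2]{bp}). So your dimension-count argument collapses: the socle of $\Ind_I^K\theta'$ is a Serre weight of dimension $\le p$, and there is no a priori obstruction to it sitting inside $\rsoc_K\pi$. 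Ruling out an embedding genuinely requires knowing the full structure of $\pi^{K_1}$---not just the first two socle layers abstractly, but how they fit together---which is precisely \cite[Thm.~1.4]{mor2}. Since $K_1$ acts trivially on $\Ind_I^K\theta'$, any embedding into $\pi$ lands in $\pi^{K_1}$, and one checks against Morra's description that no indecomposable length-two principal series occurs as a subrepresentation.

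For the multiplicity bound $\dim_k\Hom_K(\Ind_I^K\theta',\pi)\le 1$, the paper gives a one-line argument you missed: Frobenius reciprocity gives $\Hom_K(\Ind_I^K\theta',\pi)\cong \Hom_I(\theta',\pi^{I_1})$, and by (i)(1) we have $\pi^{I_1}\cong\theta\oplus\theta^s$; since $\theta'\neq\theta'^s$, the character $\theta'$ can match at most one of $\theta,\theta^s$. This is cleaner than reading multiplicities off the socle layers.
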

\begin{proof}
(i) (1) is just \cite[Cor.4.1.4]{Br03} and (2) is a consequence of \cite[Thm.1.1]{mor}. For (3), the first statement follows from \cite[Thm.1.4]{mor2} which determines $\pi^{K_1}$; note that the assumption $\theta'\neq \theta'^s$ ensures that $\Ind_I^K\theta'$ is indecomposable (see \cite[\S2]{bp}). The second statement follows from (1) by Frobenius reciprocity. 

(ii) This is a consequence of \cite[Thm.1.2]{mor}. More precisely, in \emph{loc. cit.} the socle filtration of $\pi|_K$ is determined, from which we can determine  $\pi^{K_1}$ and $(\pi/\pi^{K_1})^{K_1}$.

(iii) As in (ii), this  is a consequence of \cite[Thm.1.2]{mor}. 
\end{proof}

\begin{thm}\label{prop-cyclic}
Assume $p\geq 3$. Let $\w=(a,b)$ with $1\leq b-a\leq 2p$ and $\tau=\ide\oplus\ide$. Assume $R=R^{\psi,\rm cr}_{\rho}(\w,\tau)$ is non-zero. Then there exists a $K$-invariant $\cO$-lattice $\Theta$ in $V=\sigma^{\rm cr}(\w,\tau)$ such that $M(\Theta)$ is a cyclic $R$-module. In particular, $R$ is Cohen-Macaulay.
\end{thm}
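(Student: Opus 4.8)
The plan is to verify the numerical criterion \eqref{equation-cyclic=dim1}: to produce a $K$-invariant $\cO$-lattice $\Theta\subset V = \Sym^{b-a-1}L^2\otimes{\det}^a$ with $\dim_k\Hom_K(\Theta/\varpi\Theta,\kappa(\rho))\le 1$. By Nakayama this forces $M(\Theta)$ to be a cyclic $R$-module; since $R$ acts faithfully on the Cohen--Macaulay module $M(\Theta)$ (Theorem \ref{thm-pa12-A}), $M(\Theta)$ is then free of rank one over $R$, and hence $R$ is Cohen--Macaulay. The bound $1\le b-a\le 2p$ is used to keep $\JH(\overline V)$ small: writing $r=b-a-1$, the Steinberg tensor product theorem together with the structure of the $\SL_2$-Weyl modules $V(r)$ gives that $\overline V$ is irreducible for $r\le p-1$, has length $2$ for $r=p$, has length $\le 3$ for $p+1\le r\le 2p-2$, and satisfies $\overline V\cong(\St\otimes\Sym^1\overline{\F}_p^2)\otimes{\det}^a$ for $r=2p-1$; in this last case $\overline V$ has exactly two distinct Jordan--H\"older factors, each of multiplicity two, and is a projective module over $k[\GL_2(\F_p)]$. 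Note also that the hypothesis $\End_{G_{\Qp}}(\rho)=k$ forces $\rho$ to be either irreducible or a non-split extension $\smatr{\delta_2}{\ast}{0}{\delta_1}$.

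When $r\le p-1$ the statement is immediate from Lemma \ref{lemma-cyclic}, as $\JH(\overline V)$ is a single weight. For $r\ge p$ I would first apply Lemma \ref{lemma-cyclic} in every case in which $\JH(\overline V)$ meets $\mathcal{D}(\rho)$ in at most one weight counted with multiplicity; this disposes of a large portion of the cases, since $|\mathcal{D}(\rho)|\le 2$ in general, $|\mathcal{D}(\rho)|=1$ in the generic reducible cases, and (for $r\le 2p-2$) no Jordan--H\"older factor of $\overline V$ is repeated. In the remaining cases I would work directly with \eqref{equation-cyclic=dim1} by a careful choice of lattice: among the homothety classes of lattices in $V$ (for $r\ge p$ there are the saturated lattice $\Sym^r\cO^2\otimes{\det}^a$, its $\cO$-dual, and — when $\overline V$ has length $3$ — further Jordan--H\"older lattices) one selects one whose reduction $\Theta/\varpi\Theta$ has the "right" socle/cosocle relative to $\mathcal{D}(\rho)$, and then computes $\Hom_K(\Theta/\varpi\Theta,\kappa(\rho))=\Hom_{\GL_2(\F_p)}(\Theta/\varpi\Theta,\kappa(\rho)^{K_1})$ using the explicit description of $\kappa(\rho)$ and of $\mathcal{D}(\rho)=\rsoc_K\kappa(\rho)$ from \cite{co}, \cite{pa12}, \cite{blocksp2}, \cite{6auth2}, together with Theorem \ref{theorem-morra}, which describes $\rsoc_K$ and the next Loewy layer for the irreducible constituents of $\kappa(\rho)$. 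The point extracted from Theorem \ref{theorem-morra} is that, for generic $\rho$, the Jordan--H\"older factors of $\overline V$ occur in the bottom one or two Loewy layers of $\kappa(\rho)$ with multiplicity one and in the "wrong" position for the relevant extension present in $\Theta/\varpi\Theta$ to embed; a dimension count of $\Ext$-groups then forces $\dim_k\Hom_K(\Theta/\varpi\Theta,\kappa(\rho))\le 1$. When $\Theta/\varpi\Theta$ is projective (the case $r=2p-1$) the computation collapses to $\dim_k\Hom_K(\Theta/\varpi\Theta,\kappa(\rho))=[\kappa(\rho)^{K_1}:\Sym^{p-2}\overline{\F}_p^2\otimes{\det}^a]$, which Theorem \ref{theorem-morra} evaluates.

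The main obstacle is this last part: pinning down the structure of $\kappa(\rho)$ near its socle and matching it against the (non-split, or projective) structure of $\Theta/\varpi\Theta$, carried out uniformly over $\rho$ irreducible, $\rho$ reducible non-split generic, and the non-generic reducible case $\rho\sim\smatr{\delta\omega}{\ast}{0}{\delta}$ (where $\kappa(\rho)$ is the representation $\beta$ of \cite[\S6.2]{pa12}). The most delicate situations are: $p+2\le b-a\le 2p-1$, where $\overline V$ has three Jordan--H\"older factors and one must both choose the correct lattice among several and run an $\Ext^1$ argument; and the edge case $b-a=2p$ with $p=3$, where a Jordan--H\"older factor of $\overline V$ coincides with the self-paired weight $\Sym^{(p-1)/2}\overline{\F}_p^2\otimes{\det}^?$, which can occur with multiplicity two in $\kappa(\rho)^{K_1}$ for a reducible $\rho$ with suitable tame type — there the saturated lattice fails and one must instead use a non-saturated lattice whose reduction has smaller cosocle. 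Once the Loewy structure of $\kappa(\rho)$ near the bottom is settled, the rest is bookkeeping.
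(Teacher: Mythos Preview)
Your overall strategy is the same as the paper's, and the bookkeeping for $b-a\le 2p-1$ is essentially right (modulo the need, in case $b-a=2p-1$, to actually \emph{construct} a lattice with the desired uniserial reduction, which the paper does in Lemma~\ref{lemma-lattices}(i)). There is, however, a genuine gap in your treatment of the top case $b-a=2p$.

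First a small correction: for $r=2p-1$ the module $\overline V\cong\Sym^{2p-1}k^2$ has Jordan--H\"older factors $\Sym^{p-2}k^2\otimes\det$ with multiplicity~$2$ and $\Sym^1k^2$ with multiplicity~$1$ (length~$3$, not~$4$); it is the injective envelope of $\Sym^{p-2}k^2\otimes\det$.

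More seriously, your projectivity argument in this case gives $\dim_k\Hom_K(\Theta/\varpi,\kappa(\rho))=[\kappa(\rho)^{K_1}:\Sym^{p-2}k^2\otimes\det]$, but Theorem~\ref{theorem-morra} does \emph{not} evaluate this when $\rho$ is reducible generic with $\mathcal D(\rho)=\{\Sym^{p-2}k^2\otimes\det\}$: Morra's results describe $\pi^{K_1}$ for irreducible $\pi$, whereas here $\kappa(\rho)$ is a non-split extension $0\to\pi\to\kappa(\rho)\to\pi'\to 0$ of two principal series, and taking $K_1$-invariants is only left exact. In fact the paper shows this multiplicity can be~$2$ (precisely when a certain coefficient $u_2$ vanishes), so the natural lattice $\Sym^{2p-1}\cO^2$ can fail~\eqref{equation-cyclic=dim1}. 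The paper's proof therefore splits into two subcases: if $u_2\neq 0$ one shows directly that no embedding $\Sym^{2p-1}k^2\hookrightarrow\kappa(\rho)$ exists and the natural lattice works; if $u_2=0$ one instead uses a different lattice $\Theta$ (Lemma~\ref{lemma-lattices}(ii)) whose reduction is uniserial with cosocle $\Sym^1k^2$, and rules out embeddings by a separate argument. The paper even remarks afterwards that determining $\kappa(\rho)^{K_1}$ exactly would be interesting --- so your appeal to Morra here cannot succeed as stated. Finally, the difficulty you flag at $p=3$ is not where the issue lies: the weights $\Sym^{p-2}k^2\otimes\det$ and $\Sym^1k^2$ remain distinct for $p=3$, and the subtle case above occurs for all $p\ge 3$.
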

\begin{proof}
After twisting by a character  we may assume $a=0$. Then the representation $V$ is just $\Sym^{b-1}L^2$ with $1\le b\le 2p$. It admits a natural $K$-invariant $\OO$-lattice, i.e. $\Sym^{b-1}\OO^2$, whose mod $\varpi$ reduction is $\Sym^{b-1}k^2$. We first  recall from \cite{gl} and \cite[\S5.1]{Br03II} some results about  the structure of $\Sym^{b-1}k^2$. 
\begin{enumerate}
\item[(o)] By \cite[(3.3), p.434]{gl}, $\Sym^{b-1}k^2$ is injective  as a $k$-representation of $\GL_2(\F_p)$ if $p\ |\ b$, and is a direct sum of an injective representation and a non-injective indecomposable representation otherwise. 
\item[(a)] If $1\leq b\leq p$, $\Sym^{b-1}k^2$ is irreducible.

\item[(b)] If $b=p+1$, then $\Sym^{p}k^2$ is of length $2$, uniserial and fits into an exact sequence
\[0\ra \Sym^{1}k^2\ra \Sym^{p}k^2\ra\Sym^{p-2}k^2\otimes{\det}\ra0;\]
moreover $\Sym^pk^2\cong \Ind_{I}^K(\ide\otimes\omega)$ as $K$-representations. In fact, the exact sequence is established in \cite[\S5.1]{Br03II}, which must be non-split by (o); since $\Ext^1_{\GL_2(\F_p)}(\Sym^{p-2}k^2\otimes\det,\Sym^{1}k^2)$ is $1$-dimensional and $\Ind_I^K(\ide\otimes \omega)\cong \Ind_{P(\F_p)}^{\GL_2(\F_p)}(\ide\otimes\omega)$ indeed provides such a non-split extension, we get the desired isomorphism. 

\item[(c)] If $p+2\leq b\leq 2p-2$, then $\Sym^{b-1}k^2$ is of length $3$ and fits into an exact sequence
\begin{multline}\label{equation-breuil-Sym}0\ra (\Sym^{b-2-p}k^2\otimes{\det})\oplus \Sym^{b-p}k^2\ra \Sym^{b-1}k^2 \\ \ra\Sym^{2p-1-b}k^2\otimes{\det}^{b-p}\ra0;\end{multline}
which is indecomposable by (o). Moreover, one shows that the cokernel of $(\Sym^{b-2-p}k^2\otimes\det\hookrightarrow \Sym^{b-1}k^2)$ is isomorphic to $\Ind_I^K(\ide\otimes \omega^{b-p})$ by the same argument as in part (b) .

\item[(d)] If $b=2p-1$,  the exact sequence \eqref{equation-breuil-Sym} in (c) still holds, except that the factor $\Sym^{b-p}k^2=\Sym^{p-1}k^2$ is injective, so that $\Sym^{b-1}k^2$ is the direct sum of  $\Sym^{p-1}k^2$ and a non-injective representation which is a non-split extension of $\Sym^{0}k^2$ by $\Sym^{p-3}k^2\otimes\det$.

\item[(e)] If $b=2p$, then $\Sym^{2p-1}k^2$ is injective as a $\GL_2(\F_p)$-representation by (o), and is isomorphic to the injective envelope of $\Sym^{p-2}k^2\otimes {\det}$ by writing down its Jordan-H\"older factors. Moreover, from \cite[\S2, \S3]{bp} and using (b) we have two exact sequences
\begin{equation}\label{equation-case-d1}0\ra \Sym^{p-2}k^2\otimes\det\ra \Sym^{2p-1}k^2\ra \Sym^pk^2\ra0\end{equation}
\begin{equation}\label{equation-case-d2}0\ra \Ind_I^K(\omega\otimes \ide)\ra \Sym^{2p-1}k^2\ra \Sym^{p-2}k^2\otimes\det\ra0.\end{equation}
\end{enumerate}

We first treat the case when $\rho$ is irreducible. In this case $\kappa(\rho)$ is just the representation $\pi$ defined in \S\ref{subsubsection-generic}, which is a  supersingular representation.  Moreover, $\rho$ always has two Serre weights, which arise as the mod $\varpi$ reduction of a characteristic zero principal series of $\GL_2(\F_p)$.  We treat separately the above five cases (a)-(e).
\begin{enumerate}
\item[(a)] In this case we take $\Theta$ to be any $K$-invariant $\OO$-lattice in $V$,  and the result follows from Lemma \ref{lemma-cyclic}.

\item[(b)] In this case, $R$ is non-zero if and only if $\mathcal{D}(\rho)=\{\Sym^{1}k^2,\Sym^{p-2}k^2\otimes\det\}$. We take $\Theta=\Sym^{b-1}\OO^2$ and need verify (\ref{equation-cyclic=dim1}). Since $\Sym^{p}k^2\cong \Ind_I^K(\ide\otimes\omega)$ and $\ide\otimes\omega\neq (\ide\otimes\omega)^s$, (\ref{equation-cyclic=dim1}) holds by  Theorem \ref{theorem-morra}(i)(3).

\item[(c)] If $\Sym^{b-2-p}k^2\otimes\det\in \mathcal{D}(\rho)$, we take $\Theta$ to be any $K$-invariant $\OO$-lattice in $V$  and the result follows from Lemma \ref{lemma-cyclic}. If $$\mathcal{D}(\rho)=\{\Sym^{b-p}k^2,\Sym^{2p-1-b}k^2\otimes{\det}^{b-p}\},$$ then we take $\Theta=\Sym^{b-1}\OO^2$ and apply the same argument as in (b).

\item[(d)] If $b=2p-1$, the situation is different from (c) since $\omega^{b-p}=\ide$; we take $\Theta$ to be the lattice constructed in Lemma \ref{lemma-lattices}(i) below.   To check the condition (\ref{equation-cyclic=dim1})  let $f:\Theta/\varpi\Theta\ra \pi$ be a non-zero $K$-equivariant morphism. Since $\rsoc_K(\pi)=\Sym^0k^2\oplus\Sym^{p-1}k^2$, if $f$ is not injective, then it must factor as $\Theta/\varpi\Theta\twoheadrightarrow \Sym^0k^2\hookrightarrow \pi$; moreover this indeed gives a non-zero element in $\Hom_K(\Theta/\varpi\Theta,\pi)$.  Therefore, to verify (\ref{equation-cyclic=dim1}), it suffices to show any $f:\Theta/\varpi\Theta\ra \pi$ can not be injective. Indeed,   since $\rsoc_K(\pi)=\Sym^0k^2\oplus\Sym^{p-1}k^2$ and $\rsoc_K(\Theta/\varpi\Theta)=\Sym^{p-1}k^2$ by construction, if $f$ were injective, we would get an embedding
\[A_1\hookrightarrow \pi/\rsoc_K(\pi)\]
where $A_1$ is defined by (\ref{equation-A1}) below. But this would contradict Theorem \ref{theorem-morra}(i)(2).  

\item[(e)] In this case $R$ is non-zero if and only if $\mathcal{D}(\rho)=\{\Sym^{1}k^2,\Sym^{p-2}k^2\otimes\det\}$. We take $\Theta=\Sym^{2p-1}\OO^2$.  To verify (\ref{equation-cyclic=dim1}), let $f:\Sym^{2p-1}k^2\ra \pi$ be a non-zero $K$-equivariant morphism.  First note that $f$ can not be injective. Otherwise the restriction of $f$ to $\Ind_I^K(\omega\otimes \ide)$ would give an embedding $\Ind_I^K(\omega\otimes \ide)\hookrightarrow\pi$, which would contradict Theorem \ref{theorem-morra}(i)(3) since $\omega\otimes \ide \neq(\omega\otimes \ide)^s$ (recall $p\geq 3$). Second, using (\ref{equation-case-d1}) and the case (b), the image of $f$ can not be $\Sym^pk^2$, hence must be $\Sym^{p-2}k^2\otimes\det$. In other words, any $f:\Theta/\varpi\Theta\ra \pi$ vanishes on $\Ind_I^K(\omega\otimes \ide)$ via (\ref{equation-case-d2}). Since $\dim_k\Hom_K(\Sym^{p-2}k^2\otimes\det,\pi)=1$,  (\ref{equation-cyclic=dim1}) is verified. 
\end{enumerate}

Next assume $\rho$ is reducible. The set $\mathcal{D}(\rho)$ of Serre weights of $\rho$ consists of $\{\Sym^0k^2,\Sym^{p-1}k^2\}$ if $\rho$ is peu ramifi\'e or $\rho\sim \smatr{\omega\chi_2}{*}0{\chi_1}$ with $\chi_1,\chi_2$ distinct  unramified characters, and  consists of one single element otherwise. This already treats the cases (a), (b), (c) and the tr{\`es} ramifi\'e case in (d)  using Lemma \ref{lemma-cyclic}, because it is easy to check that  $|\JH(\overline{V})\cap \mathcal{D}(\rho)|=1$ (recall $p\geq 5$).
 
 In case (d), we may assume $\rho$ is peu ramifi\'e or $\rho\sim\smatr{\omega\chi_2}{*}0{\chi_1}$ with $\chi_1,\chi_2$   distinct unramified characters.  We will verify (\ref{equation-cyclic=dim1}) with 
$\Theta$  the lattice constructed in Lemma \ref{lemma-lattices}(i) below.
Recall that
\begin{enumerate}
\item[-] if $\rho$ is peu ramifi\'e,  the $G$-representation $\kappa(\rho)$ is uniserial of length $3$ 
with Jordan-H\"older factors being $\Sp, \ide,\pi_{\alpha}$, where $\pi_{\alpha}:=\Ind_P^G\omega\otimes\omega^{-1}$, see \cite[\S6.2]{pa12} where $\kappa(\rho)$ is denoted by $\beta$.

\item[-] if $\rho\sim\smatr{\omega\chi_2}{*}0{\chi_1}$ with $\chi_1,\chi_2$ distinct unramified characters, $\kappa(\rho)$ is of length $2$ and of the form $0\ra \pi\ra \kappa(\rho)\ra \pi'\ra0$, where $\pi,\pi'$ are both principal series with $\pi$ an unramified one, see \cite[\S8]{pa13} and \cite[\S6.1]{pa12}.
\end{enumerate} 
We observe that, although $\kappa(\rho)$ are not isomorphic as $G$-representations in the above two cases, their restrictions to $K$ are closely related: we have $\pi'|_K\cong \pi_{\alpha}|_K$ and $\pi|_K\cong \pi_{\Eins}|_K$, where $\pi_{\Eins}:=\Ind_{P}^G\Eins\otimes\Eins$  which is isomorphic to a non-split extension of  $\Sp$ by $\ide$.
By Lemma \ref{lemma-lattices}(i), we know
\[\Hom_K(\Theta/\varpi\Theta,\Sp)=\Hom_{K}(\Theta/\varpi\Theta,\pi_{\alpha})=0,\]
hence in both cases $\Hom_K(\Theta/\varpi\Theta,\kappa(\rho))\hookrightarrow  \Hom_K(\Theta/\varpi\Theta,\ide)$. 
Since the latter space is one-dimensional, \eqref{equation-cyclic=dim1} is verified. 

Now we treat the last case (e) (for $\rho$ reducible). If $\mathcal{D}(\rho)=\{\Sym^1k^2\}$, we take $\Theta$ to be any $K$-invariant $\cO$-lattice and apply Lemma \ref{lemma-cyclic}. Assume $\mathcal{D}(\rho)=\{\Sym^{p-2}k^2\otimes\det\}$ in the rest. Since $\rho$ is generic, $\kappa(\rho)$ fits in a non-split extension 
\[0\ra \pi\ra \kappa(\rho)\ra\pi'\ra0\]
where $\pi,\pi'$ are both principle series representations with $K$-socle isomorphic to $\Sym^{p-2}k^2\otimes\det$. Moreover, $\pi^{K_1}\cong \pi'^{K_1}\cong 
\Ind_I^K \omega\otimes\Eins$ as $K$-representations, and we have an exact sequence of $K$-representations:
\[0\ra \Sym^{p-2}k^2\otimes\det\ra \Ind_I^K \omega\otimes\Eins\ra \Sym^{1}k^2\ra0.\]
Since $p\geq 3$, Theorem \ref{theorem-morra}(ii) implies that 
\[\Hom_K(\Sym^{p-2}k^2\otimes\det,\pi/\pi^{K_1})=0.\]
Therefore the natural morphism
\[\gamma: \Ext^1_{K,\zeta}(\Sym^{p-2}k^2\otimes\det,\pi^{K_1})\ra \Ext^1_{K,\zeta}(\Sym^{p-2}k^2\otimes\det,\pi)\]
is injective. 
We  claim that $\gamma$ is an isomorphism.  By \cite[Thm.7.16(ii)]{bp}, the second space has dimension 2 over $k$, so it suffices to show 
\[\dim_k\Ext^1_{K,\zeta}(\Sym^{p-2}k^2\otimes\det,\pi^{K_1})=2.\]
For this we construct explicitly two linearly independent extension classes: $c_1$ comes from $\Sym^{2p-1}k^2$ (see (\ref{equation-case-d2})) and $c_2$ comes from the push-out of a non-zero extension  in $\Ext^1_{K,\zeta}(\Sym^{p-2}k^2\otimes\det,\Sym^{p-2}k^2\otimes\det)$ which does exist by Lemma \ref{lemma-lattices}(ii) below. 
 Because $K_1$ acts trivially on $c_1$ and not trivially on $c_2$, they are linearly independent  and actually span the $k$-vector space $\Ext^1_{K,\zeta}(\Sym^{p-2}k^2\otimes\det,\pi^{K_1})$ for dimension reasons. This proves the claim.  As a byproduct, we obtain  \[\dim_k\Ext^1_{K,\zeta}(\Sym^{p-2}k^2\otimes\det,\Sym^{p-2}k^2\otimes\det)=1.\]

Since $\rsoc_K(\kappa(\rho))=\Sym^{p-2}k^2\otimes\det$, the extension $0\ra\pi \ra\kappa(\rho)\ra\pi'\ra0$ induces a non-zero element in $\Ext^1_{K,\zeta}(\Sym^{p-2}k^2\otimes\det,\pi)$, which by the claim equals to $\gamma(u_1c_1+u_2c_2)$ for  some $u_1,u_2\in k$. We observe that, since $\rsoc_K(\kappa(\rho))\cong \Sym^{p-2}k^2\otimes\det$, $\kappa(\rho)^{K_1}$ embeds into $\Sym^{2p-1}k^2$. Moreover, by construction of $c_1, c_2$, we see that $\kappa(\rho)^{K_1}\cong\Sym^{2p-1}k^2$ if and only if $u_2=0$.

Now we are ready to prove the proposition in this case.  
If $u_2\neq 0$, we take $\Theta=\Sym^{2p-1}\OO^2$. Since $\dim_k\Hom_K(\Sym^{p-2}k^2\otimes\det,\kappa(\rho))=1$ and $\Sym^{p-2}k^2\otimes\det$ appears twice in $\Theta/\varpi\Theta$, to verify (\ref{equation-cyclic=dim1})  it suffices to show that there does not exist any $K$-equivariant embedding $\Sym^{2p-1}k^2\hookrightarrow \kappa(\rho)$. But, if  such an embedding existed, $\Sym^{2p-1}k^2$ would be contained in $\kappa(\rho)^{K_1}$, hence is equal  to $\kappa(\rho)^{K_1}$, which contradicts the assumption $u_2\neq 0$.   If $u_2=0$, then we take   $\Theta$ to be the $\OO$-lattice constructed in Lemma \ref{lemma-lattices}(ii) below.  Again, to check (\ref{equation-cyclic=dim1}), we only need to show that there does not exist any $K$-equivariant embedding $\Theta/\varpi\Theta\hookrightarrow \kappa(\rho)$. Let $f$ be such an embedding. The image of $f$ can not be contained in $\pi$: otherwise, since  $(\Theta/\varpi\Theta)^{K_1}=\Sym^{p-2}k^2\otimes\det$, we would get an embedding \[\Sym^{p-2}k^2\otimes\det\hookrightarrow (\Theta/\varpi\Theta)/(\Theta/\varpi\Theta)^{K_1}\hookrightarrow\pi/\pi^{K_1}\]
which would contradict Theorem \ref{theorem-morra}(ii) since $p\geq 3$. Moreover, this shows that $\mathrm{im}(f)\cap \pi=\Sym^{p-2}k^2\otimes\det$, hence   $\mathrm{im}(f)/(\mathrm{im}(f)\cap \pi)\cong \Sym^pk^2$ embeds in $\pi'$ and in fact identifies with $\pi'^{K_1}$. This contradicts the assumption that the image of the extension class of $0\ra \pi\ra\kappa(\rho)\ra\pi'\ra0$ is equal to  $\gamma(u_1c_1)$ in $\Ext^1_{K,\zeta}(\Sym^{p-2}k^2\otimes\det,\pi)$. 
 \end{proof} 
\begin{rem}
When $\rho$ is reducible and $\mathcal{D}(\rho)=\{\Sym^{p-2}k^2\otimes\det\}$, it would be interesting to exactly determine $\kappa(\rho)^{K_1}$.   
\end{rem}

\begin{lem}\label{lemma-lattices}
Keep the notation in Theorem \ref{prop-cyclic} and its proof.
\begin{itemize}
\item[(i)] If $b=2p-1$, there exists a $K$-invariant $\OO$-lattice $\Theta\subset V$ such that $\Theta/\varpi\Theta$ is uniserial with socle and cosocle being respectively $\Sym^{p-1}k^2$ and $\Sym^0k^2$. Moreover, 
\begin{equation}\label{equation-lemma-lattices}\Hom_K(\Theta/\varpi\Theta, \Sp)=\Hom_K(\Theta/\varpi\Theta,\pi_{\alpha})=0.\end{equation}
 \item[(ii)] If $b=2p$, there exists a $K$-invariant $\OO$-lattice $\Theta\subset V$ such that $\Theta/\varpi\Theta$ is uniserial, with socle and cosocle being respectively $\Sym^{p-2}k^2\otimes\det$ and $\Sym^1k^2$. In particular, $\Ext^1_{K,\zeta}(\Sym^{p-2}k^2\otimes\det,\Sym^{p-2}k^2\otimes\det)\neq0$. 
\end{itemize}
\end{lem}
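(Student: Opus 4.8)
The plan is to prove the two constructions of $K$-invariant $\OO$-lattices in $V = \Sym^{b-1}L^2$ by reduction theory on lattices inside $\GL_2(\Z_p)$-representations, using the structure of $\Sym^{b-1}k^2$ recalled in cases (d) and (e) of the proof of Theorem \ref{prop-cyclic}. For part (i), I would start from the natural lattice $L_0 := \Sym^{2p-2}\OO^2$, whose reduction $\Sym^{2p-2}k^2$ decomposes (by (d)) as $\Sym^{p-1}k^2 \oplus U$, where $U$ is the non-split extension of $\Sym^0 k^2$ by $\Sym^{p-3}k^2\otimes\det$. The idea is that this direct sum decomposition of the reduction corresponds to two ``basic'' sublattices of $L_0$ (the preimages of each summand under $L_0 \onto L_0/\varpi L_0$), and one can form a new lattice $\Theta$ by taking a suitable $K$-stable $\OO$-submodule of $L_0$ that ``glues'' the two factors: concretely, let $\Theta$ be the preimage in $L_0$ of a line in the socle, or rather build $\Theta$ as the sublattice $\Theta := \{v \in L_0 : \bar v \in W\}$ for an appropriate $K$-subrepresentation $W \subset L_0/\varpi L_0$, and iterate. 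The key input is that $\GL_2(\Z_p)$ acting on $V$ factors through $\GL_2(\Z/p^n)$ on each $L_0/\varpi^n L_0$, so all the relevant module categories are finite, and one can match up composition factors by a weight/Brauer-character count: since $\JH(\Sym^{2p-2}k^2)$ consists of $\Sym^{p-1}k^2$, $\Sym^0 k^2$, $\Sym^{p-3}k^2\otimes\det$ each with multiplicity one, any lattice $\Theta$ has reduction with exactly these three factors, so it suffices to engineer the socle to be $\Sym^{p-1}k^2$ and the cosocle to be $\Sym^0 k^2$ and then argue uniseriality from length $3$ plus multiplicity-freeness of socle and cosocle.

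More concretely for (i): having produced a lattice $\Theta$ with $\rsoc_K(\Theta/\varpi\Theta) = \Sym^{p-1}k^2$ and $\rcosoc_K(\Theta/\varpi\Theta) = \Sym^0 k^2$, uniseriality is automatic because a length-$3$ module with irreducible socle and irreducible cosocle and all three Jordan--H\"older factors distinct is uniserial (the middle factor $\Sym^{p-3}k^2\otimes\det$ sits in the unique position left). To produce such a $\Theta$, I would use the ``change of lattice'' trick: start with $\Theta'$ having $\rsoc = \Sym^{p-1}k^2$ (which exists because $\Sym^{p-1}k^2$, being the unique ``generic'' factor, appears in the socle of a suitable homothety class — one takes $\Theta'$ generated by a highest-weight-type vector, or dually uses that $\Sym^{p-1}k^2$ is injective/projective as a $\GL_2(\F_p)$-module) and then pass to $\varpi^{-1}\Theta'' \cap \Theta'$ type modifications to fix the cosocle; Lemma \ref{lemma-lattices} is precisely asserting the endpoint of this process exists. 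The identity \eqref{equation-lemma-lattices} then follows formally: a nonzero map $\Theta/\varpi\Theta \to \Sp$ or $\to \pi_\alpha$ would have to kill everything above the socle, hence factor through $\rcosoc_K(\Theta/\varpi\Theta) = \Sym^0 k^2$; but $\rsoc_K(\Sp) = \Sym^{p-1}k^2$ and $\rsoc_K(\pi_\alpha) = \Sym^{p-2}k^2\otimes\det$ (by Theorem \ref{theorem-morra}(iii),(ii)), neither of which is $\Sym^0 k^2$, so such a map is zero. Here I would need $\Sym^0 k^2 \notin \rsoc_K(\Sp), \rsoc_K(\pi_\alpha)$, which is exactly the content of the socle computations in Theorem \ref{theorem-morra}, so this step is clean.

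For part (ii), the argument runs in parallel but starting from $L_0 := \Sym^{2p-1}\OO^2$, whose reduction $\Sym^{2p-1}k^2$ is the injective envelope of $\Sym^{p-2}k^2\otimes\det$ as a $\GL_2(\F_p)$-module, with the two exact sequences \eqref{equation-case-d1}, \eqref{equation-case-d2} describing its structure: $\JH$ consists of $\Sym^{p-2}k^2\otimes\det$ (twice), $\Sym^0 k^2$, $\Sym^1 k^2$. I want a lattice $\Theta$ whose reduction is uniserial with socle $\Sym^{p-2}k^2\otimes\det$ and cosocle $\Sym^1 k^2$; then the two copies of $\Sym^{p-2}k^2\otimes\det$ are forced to be the socle and (since $\Sym^1 k^2$ must be the cosocle and $\Sym^0 k^2$, $\Sym^{p-2}k^2\otimes\det$ fill the two middle slots in some order) one of them is adjacent to the other through $\Sym^0 k^2$, giving a subquotient which is a non-split self-extension of $\Sym^{p-2}k^2\otimes\det$ — whence $\Ext^1_{K,\zeta}(\Sym^{p-2}k^2\otimes\det, \Sym^{p-2}k^2\otimes\det) \neq 0$. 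Actually I should be careful: uniseriality of length $4$ with the two equal factors in positions, say, $1$ and $3$ gives a length-$2$ subquotient (positions $2,3$? no) — more precisely, the subquotient on the bottom two layers or the middle two layers will be an extension of $\Sym^{p-2}k^2\otimes\det$ by either $\Sym^0k^2$ or by $\Sym^{p-2}k^2\otimes\det$; to \emph{force} a self-extension I should arrange the socle filtration so the two copies of $\Sym^{p-2}k^2\otimes\det$ are in adjacent layers, which I can do by choosing $\Theta$ as an intermediate lattice between $L_0$ and $p L_0$ whose reduction realizes the ``diagonal'' gluing — again a finite computation inside the category of $\OO[\GL_2(\Z/p^2)]$-modules. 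The main obstacle I anticipate is this last point: actually constructing the lattice with the prescribed socle filtration (rather than merely prescribed socle and cosocle) requires genuinely understanding the lattice of sublattices of $L_0$ up to homothety, which amounts to knowing the submodule structure of $\Sym^{2p-1}\OO^2$ modulo $p^2$; I would handle it by explicit generators (Witt-vector/binomial-coefficient combinatorics for $\Sym$), or by citing the relevant structure results from \cite{bp}, \cite{Br03II} on lattices in $\GL_2$-representations in small weight, which is presumably how the authors proceed.
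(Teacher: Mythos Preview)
Your argument for the vanishing \eqref{equation-lemma-lattices} contains a genuine error. You claim that a nonzero map $\Theta/\varpi\Theta\to\Sp$ must factor through the cosocle $\Sym^0k^2$, and then rule this out because $\Sym^0k^2\not\subset\rsoc_K(\Sp)$. But there is no reason for such a map to kill the radical: in fact $\rsoc_K(\Sp)=\Sym^{p-1}k^2$, which \emph{coincides} with $\rsoc_K(\Theta/\varpi\Theta)$, so a nonzero map is necessarily injective (the only quotient of the uniserial module whose socle is $\Sym^{p-1}k^2$ is the whole module). One must then derive a contradiction from the resulting embedding $A_1\hookrightarrow \Sp/\rsoc_K(\Sp)$, where $A_1$ is the nonsplit extension of $\Sym^0k^2$ by $\Sym^{p-3}k^2\otimes\det$; this is where Theorem~\ref{theorem-morra}(iii) enters, since $(\Sp/\rsoc_K\Sp)^{K_1}\cong\Ind_I^K\alpha$ has no trivial constituent. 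Similarly, for $\pi_\alpha$ you have $\rsoc_K(\pi_\alpha)=\Sym^{p-3}k^2\otimes\det$ (not $\Sym^{p-2}k^2\otimes\det$), which forces a nonzero map to factor through $A_1$, and one must then rule out $A_1\hookrightarrow\pi_\alpha$ using Theorem~\ref{theorem-morra}(ii). Your simpler argument via the cosocle does not work.

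Two further points. First, your Jordan--H\"older count in (ii) is off: $\Sym^{2p-1}k^2$ has length $3$, with constituents $\Sym^{p-2}k^2\otimes\det$ (twice) and $\Sym^1k^2$; there is no $\Sym^0k^2$. So the uniserial $\Theta/\varpi\Theta$ you want has shape $\Sym^{p-2}\!\otimes\!\det - \Sym^{p-2}\!\otimes\!\det - \Sym^1$, and the nonzero self-$\Ext^1$ is exhibited directly by its length-$2$ socle. Second, your lattice construction is too vague. The clean mechanism the paper uses is the observation (as in \cite[Prop.~VII.4.8]{co}) that, since $V$ is irreducible over $L$, a lattice whose reduction is an extension of $A_2$ by $A_1$ can always be replaced by one whose reduction is a \emph{nonsplit} extension of $A_1$ by $A_2$; uniseriality then follows from a single $\Ext^1$ computation in $\Mod^{\sm}_{K,\zeta}(k)$ (namely $\Ext^1_K(\Sym^0k^2,\Sym^{p-1}k^2)=0$ in (i), and $\dim_k\Ext^1_K(\Sym^1k^2,\Sym^{p-2}k^2\otimes\det)=1$ in (ii)). Your allusion to $\Sym^{p-1}k^2$ being projective/injective over $\GL_2(\F_p)$ is a red herring here, since the $K$-action on $\Theta/\varpi\Theta$ need not factor through $\GL_2(\F_p)$ once $\Theta$ is not the standard lattice.
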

\begin{proof}
First observe that, since $V$ is an irreducible $K$-representation over $L$, the same proof as in \cite[Prop.VII.4.8]{co} shows that if $V$ contains a $K$-invariant $\OO$-lattice whose mod $\varpi$ reduction is a (possibly split) extension of  $A_2$ by $A_1$ for some $A_1,A_2\in \Mod_K^{\rm sm}(k)$, then there exists another $K$-invariant $\OO$-lattice such that its  mod $\varpi$ reduction is a \emph{non-split} extension of $A_1$ by $A_2$. 

(i) The natural lattice $\Sym^{2p-2}\OO^2$ has mod $\varpi$ reduction isomorphic to $A_1\oplus A_2$, where  $A_2=\Sym^{p-1}k^2$ and $A_1$ is the unique non-split extension
\begin{equation}\label{equation-A1}0\ra \Sym^{p-3}k^2\otimes\det\ra A_1\ra\Sym^0k^2\ra0.\end{equation}
The above observation implies the existence of a lattice $\Theta$ such that $\Theta/\varpi\Theta$ is a non-split extension of  $A_1$ by $A_2$, i.e. the class of $\Theta/\varpi\Theta$ in $\Ext^1_K(A_1,A_2)$ is non-zero. In particular, $\Theta/\varpi\Theta$ admits $\Sym^{p-1}k^2$ as a sub-representation and $\Sym^0k^2$ as a quotient.  Using the fact $\Ext^1_K(\Sym^0k^2,\Sym^{p-1}k^2)=0$ by \cite[Cor.5.6]{bp}, we see that the induced extension between $\Sym^{p-3}k^2\otimes\det$ and $\Sym^{p-1}k^2$ must be non-split. Therefore $\Theta/\varpi\Theta$ is uniserial, with socle being $\Sym^{p-1}k^2$ and cosocle being $\Sym^0k^2$.

We now prove (\ref{equation-lemma-lattices}). If $f\in \Hom_K(\Theta/\varpi\Theta,\Sp)$ is  non-zero then it is injective, as $\rsoc_K(\Sp)=\Sym^{p-1}k^2$. Hence $f$  induces an injection $A_1\hookrightarrow \Sp/\rsoc_K(\Sp)$, which contradicts Theorem \ref{theorem-morra}(iii). 
If $f\in\Hom_K(\Theta/\varpi\Theta,\pi_{\alpha})$ is non-zero then $f$ must factor through $A_1\hookrightarrow \pi_{\alpha}$, since $\rsoc_K(\pi_{\alpha})\cong \Sym^{p-3}k^2\otimes\det$. This contradicts Theorem \ref{theorem-morra}(ii).

(ii) Let $A_2=\Sym^{p-2}k^2\otimes\det$ and $A_1$ be the unique non-split extension $0\ra \Sym^{p-2}k^2\otimes\det\ra A_1\ra \Sym^1k^2\ra0$. The natural lattice $\Sym^{2p-1}\OO^2$ has a mod $\varpi$ reduction of the form
\[0\ra A_1\ra \Sym^{2p-1}k^2\otimes\det\ra A_2\ra0.\] So by the observation, we get another lattice $\Theta$ whose mod $\varpi$ reduction fits in  a non-split extension
\[0\ra A_2\ra \Theta/\varpi\Theta\ra A_1\ra0.\]
Using the fact that $\Ext^1_K(\Sym^1k^2,\Sym^{p-2}k^2\otimes\det)$ is of dimension $1$ over $k$, see \cite[Cor.5.6]{bp}, 
one checks that $\Theta$  satisfies the desired properties.  
\end{proof}

\subsection{Gorensteinness} It is natural to ask if the ring $R$ is Gorenstein or a complete intersection ring. We first note that these properties often coincide in our situation. 

\begin{prop}\label{gor_ci}
Assume that $R^{\psi}_{\rho}$ is  formally smooth over $\cO$. For any $p$-adic Hodge type $(\w,\tau)$ such that $R^{\psi}_{\rho}(\w,\tau)$ is non-zero, the ring $R^{\psi}_{\rho}(\w,\tau)$ is Gorenstein if and only if it is a complete intersection.
\end{prop}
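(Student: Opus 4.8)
The plan is to reduce the statement to the classical fact that a Gorenstein quotient of a regular local ring by an ideal of height $2$ is a complete intersection. A complete intersection is always Gorenstein, so only the converse implication needs proof. I would write $S:=R^{\psi}_{\rho}$ and $R:=R^{\psi}_{\rho}(\w,\tau)$, so that $R=S/I$ with $I:=\ker(S\twoheadrightarrow R)$. By hypothesis $S$ is formally smooth over $\OO$; being moreover a complete Noetherian local $\OO$-algebra it is then isomorphic to $\OO[[x_1,x_2,x_3]]$ (equivalently, $H^2(G_{\Qp},\ad^0\rho)=0$), hence is a regular local ring of Krull dimension $4$. Since $R\neq 0$, Theorem \ref{thm-kisin} gives $\dim R=2$, and as $S$ is regular (hence catenary and equidimensional) we get $\operatorname{ht}(I)=\dim S-\dim R=2$. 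This is the ``happy coincidence'' on which the rest of the argument rests.

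Now assume $R$ is Gorenstein, so in particular Cohen--Macaulay; then $I$ is a perfect ideal of grade $2$, and $\operatorname{pd}_S R=\operatorname{depth}S-\operatorname{depth}R=2$ by Auslander--Buchsbaum. I would consider the minimal free resolution $0\to S^{\beta_2}\to S^{\beta_1}\to S\to R\to 0$. Since $R$ has rank $0$ as an $S$-module, comparing ranks gives $\beta_1=1+\beta_2$, and the minimal number of generators of $I$ equals $\beta_1$. The key input is that for a Cohen--Macaulay quotient of a regular local ring the top Betti number $\beta_2$ equals the Cohen--Macaulay type of $R$ --- equivalently, the minimal number of generators of the canonical module $\omega_R\cong\Ext^2_S(R,S)$ --- and this type is $1$ because $R$ is Gorenstein. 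Hence $\beta_1=2$, i.e.\ $I=(a,b)$ for some $a,b\in\mm_S$. Finally, $(a,b)$ has height $2$ in the Cohen--Macaulay ring $S$, so $a,b$ is a regular sequence and $R=S/(a,b)$ is a complete intersection. The same argument works verbatim for $R^{\psi,\mathrm{cr}}_{\rho}(\w,\tau)$, which is likewise a quotient of $R^{\psi}_{\rho}$ of Krull dimension $2$ by Theorem \ref{thm-kisin}.

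The only point I expect to need genuine care is the first step: establishing $\operatorname{ht}(I)=2$, which amounts to knowing that formal smoothness of $R^{\psi}_{\rho}$ over $\OO$ forces relative dimension $3$, i.e.\ $\dim R^{\psi}_{\rho}=4$. After that, everything is the standard structure theory of perfect ideals of grade $2$ (the Hilbert--Burch theorem, or the Betti-number computation above); one should be a little careful to state the type-versus-top-Betti-number identity correctly and to recall that the notion of ``complete intersection'' for $R$ does not depend on the chosen regular presentation, so that $R=S/(a,b)$ being a complete intersection over $S=R^{\psi}_{\rho}$ is the right conclusion.
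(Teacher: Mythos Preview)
Your proposal is correct and follows the same underlying idea as the paper: both use that $R^{\psi}_{\rho}$ is regular of dimension $4$ while $R^{\psi}_{\rho}(\w,\tau)$ has dimension $2$, so the kernel has height $2$, and then invoke the codimension-$2$ Gorenstein $\Leftrightarrow$ complete intersection phenomenon. The only difference is that the paper simply cites this as Serre's theorem (\cite[Cor.~21.20]{eis}) in one line, whereas you unpack a proof via the minimal free resolution and the identification of the top Betti number with the Cohen--Macaulay type; your argument is a correct proof of exactly that corollary in Eisenbud, so nothing is lost, only some concision.
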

\begin{proof}
It follows from a theorem of Serre which says that if $A$ is a regular local ring and $J\subset A$ is an ideal of codimension 2, then $A/J$ is Gorenstein if and only if $A/J$ is a complete intersection, see \cite[Cor.21.20]{eis}.
\end{proof}

\begin{prop}\label{prop-criterion1-Gorenstein}
Let $B$ be the closure in the Banach space  $\Hom_{\OO}^{\cont}(N/ xN, L)$ of the $G$-subrepresentation generated by the $V$-isotypic subspace. 
If $R$ is Cohen--Macaulay then  
 $R$ is Gorenstein if and only if 
 $$\dim_k\Hom_G(B^0 \otimes_{\OO} k, \kappa(\rho))=1.$$
 \end{prop}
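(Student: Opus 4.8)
The plan is to reduce the Gorenstein property of $R$ to the statement that the socle of $R/(x,\varpi)$ is one-dimensional, and then to translate this socle dimension into the representation-theoretic quantity $\dim_k\Hom_G(B^0\otimes_\OO k,\kappa(\rho))$ using the module $M(\Theta)$ and the functor $\check{\VV}$. Concretely, recall from Lemma \ref{lemma-CM-equiv} and the proof of Theorem \ref{theorem-main} that when $R$ is Cohen--Macaulay, $(x,\varpi)$ is a regular sequence in $R$, so $R$ is Gorenstein if and only if the Artinian quotient $\bar R:=R/(x,\varpi)$ is Gorenstein, i.e. if and only if $\dim_k\mathrm{soc}(\bar R)=1$. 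Since $R$ acts faithfully on the Cohen--Macaulay module $M(\Theta)$ of the same dimension $2$, and (by Proposition \ref{prop-(H)}) $M(\Theta)$ is generically free of rank one over $R$, one expects $M(\Theta)/(x,\varpi)M(\Theta)$ to be the canonical module of $\bar R$; then $\dim_k\mathrm{soc}(\bar R)=1$ if and only if $M(\Theta)/(x,\varpi)M(\Theta)$ is a free (hence cyclic) $\bar R$-module, equivalently $\dim_k M(\Theta)/(x,\varpi+\m_R)M(\Theta)=1$.

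First I would make precise the passage from $R$ being Gorenstein to $\bar R=R/(x^n,\varpi)$ being Gorenstein for all $n$, and identify the dualizing module: since $R^\psi_\rho$ is formally smooth (or in general regular on the relevant locus) and $R$ is Cohen--Macaulay of dimension $2$, the module $\omega_R:=\Ext^{d-2}_{R^\psi_\rho}(R,R^\psi_\rho)$ (with $d=\dim R^\psi_\rho$) is a canonical module, and $R$ is Gorenstein $\iff$ $\omega_R\cong R$ $\iff$ $\omega_R$ is cyclic $\iff$ $\dim_k \omega_R\otimes_R k=1$. I would then invoke the local-duality/Cohen--Macaulay statements of \cite{pa12} that identify $M(\Theta)$ (up to the faithful $R$-action and a twist) with $\omega_R$ — this is essentially built into the proof that $M(\Theta)$ is Cohen--Macaulay and generically free of rank one. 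Thus $R$ is Gorenstein $\iff$ $\dim_k M(\Theta)\otimes_R k'=1$ where $k'=R/(x,\varpi,\m_R)=k$; equivalently $\dim_k M(\Theta)/(x M(\Theta)+\varpi M(\Theta)+\m_R M(\Theta))=1$.

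Next I would rewrite this last dimension in terms of $B^0$ and $\kappa(\rho)$. By the computations in \S\ref{main-proof}, $Q_1=N/\fa_1 N=R/xR\,\wtimes_{R^\psi_\rho}N$ and $\Pi=\Pi(\kappa(\nn))$-type analysis gives $\Pi^0=\Hom^{\cont}_\OO(Q_1,\OO)$; the representation $B$ in the statement is exactly the closure of the $G$-subrepresentation of $\Hom^{\cont}_\OO(N/xN,L)$ generated by the $V$-isotypic part, i.e. $B=$ closure of $\Pi^{\mathrm{alg}}$ in $\Pi$, and by Lemma \ref{nearly_there} the natural map $\Hom_{\dualcat(\OO)}(N,Q_1\otimes_\OO k)\to\Hom_{\dualcat(\OO)}(N,(B^0\otimes_\OO k)^\vee)$ is an isomorphism. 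Combining this with Proposition \ref{prop-mm-isom} (which gives $\Hom_{\dualcat(\OO)}(N,Q_1\otimes_\OO k)\cong (R^\psi_\rho/\fa_1)\otimes_\OO k=R/(x,\varpi)\otimes_{R}k\otimes\cdots$, i.e. $\bar R\otimes_\OO k$-side) and with the isomorphism $M(\Theta)/(x,\varpi)M(\Theta)\cong M(\Theta/\varpi\Theta)/x(\cdots)$ together with $M(\lambda)\otimes_R k\cong \Hom_K(\lambda,\kappa(\rho))^\vee$-type identities of \cite[Cor.2.7, Lem.2.14]{pa12}, one gets
\[
\dim_k\big(M(\Theta)\otimes_R k\big)/\text{(image of }x\text{)}=\dim_k\Hom_K(\Theta/\varpi\Theta,\kappa(\rho))=\dim_k\Hom_G(B^0\otimes_\OO k,\kappa(\rho)),
\]
the last equality because $B^0\otimes_\OO k\cong Q_1^{\tf}\otimes_\OO k=(\Pi^0\otimes_\OO k)^\vee$ is generated over $G$ by the image of the $V$-isotypic subspace (Lemma \ref{isotypic-sub-gen}), so $\Hom_G((\cdots),\kappa(\rho))$ injects into $\Hom_K(V^{\oplus ?},\kappa(\rho))$ compatibly, and the two counts agree. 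Putting the chain of equivalences together yields: $R$ is Gorenstein $\iff \dim_k\Hom_G(B^0\otimes_\OO k,\kappa(\rho))=1$.

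The main obstacle I anticipate is the clean identification of $M(\Theta)/(x,\varpi)M(\Theta)$ with the canonical module of $\bar R=R/(x,\varpi)$ — i.e. tracking the duality and the central-character twist through Colmez's functor and the finiteness results of \cite{pa12} — and, relatedly, checking that the $\Hom_G$ out of $B^0\otimes_\OO k$ into $\kappa(\rho)=(k\wtimes_{R^\psi_\rho}N)^\vee$ really computes $\dim_k(M(\Theta)\otimes_R k/xM(\Theta)\otimes_R k)$ rather than merely bounding it. The bounding direction follows immediately from Lemma \ref{isotypic-sub-gen} and Lemma \ref{nearly_there}; the reverse inequality should come from the same exactness/projectivity inputs ($N/xN$ projective over $\OK$, $V$-isotypic generation) already used in the proof of Theorem \ref{theorem-main}, applied one step further.
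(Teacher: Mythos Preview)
Your proposal takes a genuine detour through canonical modules that the paper avoids entirely, and the detour introduces a gap you do not close.

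The paper's proof is much shorter and more direct. After the common first step ($R$ Gorenstein $\iff$ $\dim_k\Hom_{R^\psi_\rho}(k,R/(x,\varpi))=1$), it simply applies the full-faithfulness result Corollary~\ref{corollary-passtoN} with $\md_1=k$ and $\md_2=R^\psi_\rho/(\fa_1,\varpi)$ to get
\[
\Hom_{R^\psi_\rho}(k,R^\psi_\rho/(\fa_1,\varpi))\cong\Hom_{\dualcat(\OO)}(k\wtimes_{R^\psi_\rho}N,\,Q_1\otimes_\OO k),
\]
then uses Lemma~\ref{nearly_there} (after checking the two $B$'s agree, since $N/xN$ and $N/\fa_1 N$ have the same $V$-isotypic subspace) to replace $Q_1\otimes_\OO k$ by $(B^0\otimes_\OO k)^\vee$, and finally takes Pontryagin duals to land in $\Hom_G(B^0\otimes_\OO k,\kappa(\rho))$. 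The identification $R^\psi_\rho/(\fa_1,\varpi)=R/(x,\varpi)$ is immediate from the Cohen--Macaulay hypothesis via Lemma~\ref{lemma-CM-equiv}(iii). No canonical module, no $M(\Theta)$.

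Your route has two problems. First, the identification of $M(\Theta)/(x,\varpi)M(\Theta)$ with the canonical module of $\bar R$ is neither proved in \cite{pa12} nor in this paper; you flag it as the ``main obstacle'' but offer no argument. Second, and more seriously, your displayed chain of equalities is wrong: $\dim_k\Hom_K(\Theta/\varpi\Theta,\kappa(\rho))$ depends on the choice of lattice $\Theta$ (this is the whole point of \S\ref{subsection-cyclicity}, where different lattices give different dimensions), whereas $\dim_k\Hom_G(B^0\otimes_\OO k,\kappa(\rho))$ does not. The argument you sketch---that $B^0\otimes_\OO k$ being $G$-generated by the $V$-isotypic subspace gives an injection into some $\Hom_K(V^{\oplus ?},\kappa(\rho))$---only yields an inequality in one direction, and the target is not $\Hom_K(\Theta/\varpi\Theta,\kappa(\rho))$ anyway. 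The key point you are missing is that $\Hom_{R^\psi_\rho}(k,\bar R)$ is computing a \emph{socle}, and Corollary~\ref{corollary-passtoN} translates this socle directly into a $\Hom_{\dualcat(\OO)}$ from $\kappa(\rho)^\vee$, bypassing $M(\Theta)$ altogether.
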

 
\begin{proof}
Since $R$ is Cohen-Macaulay by assumption, $(x,\varpi)$ is a regular sequence in $R$ by Lemma \ref{lemma-CM-equiv}. So $R$ is Gorenstein if and only if $R/(x,\varpi)$ is Gorenstein, which holds if and only if 
\[\dim_k\Hom_{R^{\psi}_{\rho}}(k,R/(x,\varpi))=1.\]
It follows from the definition of $\mathfrak a_1$ that  Banach spaces   $\Hom_{\OO}^{\cont}(N/ xN, L)$ and $\Hom_{\OO}^{\cont}(N/ \mathfrak{a}_1 N, L)$ have the same $V$-isotypic subspace. Hence $B$ coincides with $B$ in Lemma \ref{nearly_there}. We have isomorphisms 
\begin{equation}
\begin{split}
 \Hom_G&(B^0 \otimes_{\OO} k, \kappa(\rho))\cong \Hom_{\dualcat(\OO)}(k \wtimes_{R^{\psi}_{\rho}} N, (B^0\otimes_{\OO} k)^{\vee})\\ &\cong 
 \Hom_{\dualcat(\OO)}(k \wtimes_{R^{\psi}_{\rho}} N, Q_1\otimes_{\OO} k)\cong \Hom_{R^{\psi}_{\rho}}(k,  R^{\psi}_{\rho}/(\mathfrak a_1, \varpi)).
 \end{split} 
 \end{equation}
Indeed, the first isomorphism is obtained by taking Pontryagin dual, the second  is obtained 
by taking the isomorphism in Lemma \ref{nearly_there} and taking $\mm$-torsion, and the third  follows from Corollary \ref{corollary-passtoN}.
It follows from Lemma \ref{lemma-CM-equiv}(iii) that $R^{\psi}_{\rho}/(\mathfrak a_1, \varpi)=R/(x, \varpi)$.
\end{proof}

\begin{rem}
Although it looks pretty, the criterion in Proposition \ref{prop-criterion1-Gorenstein} is rather hard to check in practice. For example, if we are in the 
setting of Theorem \ref{theorem-main} then we know that $R$ is Cohen--Macaulay and it follows from Proposition \ref{prop-Pi^alg} that the representation $B^0 \otimes_{\OO} k$ is isomorphic to $H_0(\mathcal{X})\otimes_{\OO} k$, 
thus we have an exact sequence: 
$$ 0\rightarrow \Hom_G(B^0 \otimes_{\OO} k, \kappa(\rho))\rightarrow 
\Hom_{\mathfrak{R}_0}( \mathcal{X}_0\otimes_{\OO} k, \kappa(\rho))\rightarrow \Hom_{\mathfrak{R}_1}( \mathcal{X}_1\otimes_{\OO} k, \kappa(\rho)).$$
Now $\mathcal X_0$ is just the lattice in the $V$-isotypic subspace of $\Hom_{\OO}^{\cont}(N/ xN, L)$ induced by the natural norm on this space. Determining this lattice explicitly when the multiplicity is large seems to be a very hard question. 
\end{rem} 

If $\rho$ is \emph{generic} then $N$ is  projective in $\dualcat(\OO)$. In this case, we are able to give another criterion for $R$ to be Gorenstein using information from $\cO$-lattices in $V$. Before stating it, we prove the following lemma.

\begin{lem}\label{lemma-xN=0}
Assume $N$ is projective in $\CO$. Let $Q\in\CO$ be a quotient of $N/xN$.  
Then $\Hom_{\CO}(N,Q)$ is a cyclic $R^{\psi}_{\rho}$-module and the natural morphism
\[\Hom_{\CO}(N/xN,Q)\ra \Hom_{\CO}(N,Q) \]
 is an isomorphism.
\end{lem}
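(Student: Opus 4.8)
The plan is to run a single diagram chase that simultaneously yields both assertions, using that $N$ is projective in $\CO$ and that $\End_{\CO}(N)\cong R^{\psi}_{\rho}$ by property (c). Let $q\colon N/xN\twoheadrightarrow Q$ be the given surjection and $\pi\colon N\twoheadrightarrow N/xN$ the canonical projection. First I would observe that, since $R^{\psi}_{\rho}$ is commutative, $xN$ is an $R^{\psi}_{\rho}$-submodule of $N$, so $N/xN$ is an $R^{\psi}_{\rho}$-module and $\pi$ is $R^{\psi}_{\rho}$-linear; consequently the natural map
\[\iota\colon \Hom_{\CO}(N/xN,Q)\longrightarrow\Hom_{\CO}(N,Q),\qquad \psi\longmapsto \psi\circ\pi,\]
is a morphism of $R^{\psi}_{\rho}$-modules, and it is injective because $\pi$ is an epimorphism in $\CO$.

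Next, set $\phi_0:=q\circ\pi\colon N\twoheadrightarrow Q$ and fix an arbitrary $\phi\in\Hom_{\CO}(N,Q)$. Using the projectivity of $N$ I would lift $\phi$ through the surjection $\phi_0$ to obtain $\widetilde\phi\in\End_{\CO}(N)$ with $\phi_0\circ\widetilde\phi=\phi$. By property (c) the endomorphism $\widetilde\phi$ is given by the action of some element $r\in R^{\psi}_{\rho}$ on $N$, and since $\pi$ is $R^{\psi}_{\rho}$-linear one has $\pi\circ r=\bar r\circ\pi$, where $\bar r$ denotes the action of $r$ on $N/xN$. Hence
\[\phi=\phi_0\circ r=q\circ\pi\circ r=q\circ\bar r\circ\pi=\iota(q\circ\bar r).\]
The first equality reads $\phi=r\cdot\phi_0$ in the $R^{\psi}_{\rho}$-module $\Hom_{\CO}(N,Q)$, so this module is generated by $\phi_0$ and is therefore cyclic; the last equality shows that $\phi$ lies in the image of $\iota$, so $\iota$ is surjective and hence an isomorphism.

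I do not expect a genuine obstacle here — the whole argument is essentially a one-line consequence of the projectivity of $N$ and the surjectivity of $\phi_0$. The only points that require care are recording that $\pi$ is $R^{\psi}_{\rho}$-linear (which is where commutativity of $R^{\psi}_{\rho}$ enters, what makes $\iota$ a map of $R^{\psi}_{\rho}$-modules, and what lets one slide $r$ past $\pi$) and invoking property (c) to realize the lift $\widetilde\phi$ as multiplication by a ring element; with those in place, both claims drop out of the same identity for $\phi$, so it is cleaner to prove them together rather than deduce cyclicity afterwards from the isomorphism.
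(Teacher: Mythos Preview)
Your proof is correct and follows essentially the same approach as the paper: both lift an arbitrary $\phi\in\Hom_{\CO}(N,Q)$ through the surjection $N\twoheadrightarrow Q$ using projectivity of $N$, identify the lift with an element of $R^{\psi}_{\rho}=\End_{\CO}(N)$, and use commutativity of $R^{\psi}_{\rho}$ to push that element past the projection $N\twoheadrightarrow N/xN$. The paper phrases the commutativity step as $y\circ x=x\circ y$ (so $\iota\circ x=0$), while you phrase it as $\pi\circ r=\bar r\circ\pi$, but these are the same observation.
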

\begin{proof}
Since $N$ is projective in $\CO$, applying $\Hom_{\CO}(N,*)$ to the natural  composite map $r: N\twoheadrightarrow N/xN\twoheadrightarrow Q$ gives a surjection:
\begin{equation}\label{equation-lemma-cyclic}\Hom_{\CO}(N,N)\ra \Hom_{\CO}(N,Q)\ra0.
\end{equation}
Since $R^{\psi}_{\rho}\cong \End_{\CO}(N)$, this implies that $\Hom_{\CO}(N,Q)$   is a  cyclic $R^{\psi}_{\rho}$-module. Using again (\ref{equation-lemma-cyclic}), 
we can  lift any $\iota\in \Hom_{\CO}(N,Q)$ to $y\in R^{\psi}_{\rho}=\End_{\CO}(N)$, and get
the following diagram \[\xymatrix{0\ar[r]& N\ar^{x}[r]\ar_y[d]&N \ar_y[d]\ar^{\iota}[rd]& &\\
0\ar[r]&N\ar^{x}[r]&N\ar^{r}[r]&Q\ar[r]&0}\]
which is commutative as $R^{\psi}_{\rho}$ is a commutative ring.
Then we see that $\iota\circ x=0$ since $r\circ x=0$.
 \end{proof}

 \begin{prop}\label{prop-Gorenstein-criterion}
 Assume that $\rho$ is generic and that there exist both a $K$-invariant $\OO$-lattice $\Theta$ in $V$ and an $\OK$-linear injective morphism 
$\theta:\Theta/\varpi\Theta\hookrightarrow (N/xN)^{\vee}$
such that the following conditions are satisfied: 
 \begin{enumerate}
 \item[(i)] $\dim_k\Hom_{K}(\Theta/\varpi\Theta, \kappa(\rho))=1$;
 \item[(ii)]
 if we let $W$ be the $G$-subrepresentation generated by the image of $\theta$,  then $[W^{\rm ss}:\pi]$, the multiplicity of $\pi$ in $W^{\rm ss}$, is equal to $e$. Here recall that $\pi$ is the $k$-representation of $G$ defined in \S\ref{subsubsection-generic}.
 \end{enumerate}
Then $R$ is Gorenstein. 
 \end{prop}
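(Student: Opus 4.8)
The plan is to reduce the assertion to a cyclicity statement and then verify it using the datum $\theta$ and hypothesis (ii). First, condition (i) says precisely that the left-hand side of \eqref{equation-cyclic=dim1} equals $1$; in particular \eqref{equation-cyclic=dim1} holds, so $M(\Theta)$ is a cyclic $R$-module, and it is nonzero since the dimension is not $0$. As $R$ acts faithfully on $M(\Theta)$, which is Cohen--Macaulay of Krull dimension $2$ by Theorem \ref{thm-pa12-A}, we get $M(\Theta)\cong R$ as $R$-modules; in particular $R$ is Cohen--Macaulay, so by Lemma \ref{lemma-CM-equiv} the sequence $(x,\varpi)$ is regular in $R$ and $R/(x,\varpi)\cong R^{\psi}_{\rho}/(\fa_1,\varpi)$ is an Artinian local $k$-algebra of length $e$. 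It therefore suffices to prove that $R/(x,\varpi)$ is Gorenstein; equivalently, since for an Artinian local $k$-algebra $A$ being Gorenstein is the same as $A\cong\Hom_k(A,k)$ as $A$-modules (which, the two spaces having equal dimension, amounts to $\Hom_k(A,k)$ being cyclic over $A$), it suffices to show $\Hom_k(R/(x,\varpi),k)$ is cyclic over $R$. Using $M(\Theta)\cong R$, the projectivity of $N/xN$ over $\OK$ and the $\OO$-freeness of $M(\Theta)/xM(\Theta)$ (Theorem \ref{thm-pa12-B}(i)), one identifies
\[ R/(x,\varpi)\;\cong\;M(\Theta)/(x,\varpi)M(\Theta)\;\cong\;\bigl(\Hom_{\OK}^{\cont}(N/xN,(\Theta/\varpi\Theta)^{\vee})\bigr)^{\vee}, \]
both of $k$-dimension $e$. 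Dualizing, the goal becomes: $\Hom_{\OK}^{\cont}(N/xN,(\Theta/\varpi\Theta)^{\vee})$ is a cyclic $R^{\psi}_{\rho}$-module.

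Next I would bring in $\theta$. Put $W:=\langle G\cdot\theta(\Theta/\varpi\Theta)\rangle\subseteq(N/xN)^{\vee}$; being the $\OO$-span of a $G$-orbit of $\varpi$-torsion vectors it is killed by $\varpi$, so $W^{\vee}\in\mathfrak{C}(k)$, and dualizing the inclusion gives a surjection $q\colon N/xN\twoheadrightarrow W^{\vee}$ in $\CO$. Dualizing $\theta$ gives a surjection $p\colon W^{\vee}\twoheadrightarrow(\Theta/\varpi\Theta)^{\vee}$ of $\OK$-modules with $p\circ q=\theta^{\vee}=:\phi\colon N/xN\twoheadrightarrow(\Theta/\varpi\Theta)^{\vee}$. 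Since $\rho$ is generic, $N$ is projective in $\CO$, so Lemma \ref{lemma-xN=0}, applied to the quotient $W^{\vee}$ of $N/xN$, shows that $\Hom_{\CO}(N/xN,W^{\vee})\xrightarrow{\sim}\Hom_{\CO}(N,W^{\vee})$ is a cyclic $R^{\psi}_{\rho}$-module, generated by $q$. Post-composition with $p$ defines an $R^{\psi}_{\rho}$-linear map
\[ \Psi\colon\Hom_{\CO}(N/xN,W^{\vee})\;\longrightarrow\;\Hom_{\OK}^{\cont}(N/xN,(\Theta/\varpi\Theta)^{\vee}),\qquad q\longmapsto\phi, \]
with image $R^{\psi}_{\rho}\phi$. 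The key step is that $\Psi$ is \emph{injective}: for $a\in R^{\psi}_{\rho}$, the image of the endomorphism of $N/xN$ induced by $a$ is a $G$-subrepresentation, hence so is $U:=q\bigl(a(N/xN)\bigr)\subseteq W^{\vee}$, and $\Psi(a\cdot q)=0$ forces $U\subseteq\ker p=\theta(\Theta/\varpi\Theta)^{\perp}$, i.e.\ $U$ pairs trivially with $\theta(\Theta/\varpi\Theta)$. But $W$ is generated as a $G$-representation by $\theta(\Theta/\varpi\Theta)$, so $\theta(\Theta/\varpi\Theta)$ has nonzero image in every nonzero $G$-quotient of $W$, equivalently every nonzero $G$-subrepresentation of $W^{\vee}$ pairs nontrivially with $\theta(\Theta/\varpi\Theta)$; hence $U=0$, so $a\cdot q=0$. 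Thus $\Psi$ identifies $\Hom_{\CO}(N/xN,W^{\vee})$ with the cyclic submodule $R^{\psi}_{\rho}\phi\subseteq\Hom_{\OK}^{\cont}(N/xN,(\Theta/\varpi\Theta)^{\vee})$.

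Finally I would compare dimensions. The ambient module $\Hom_{\OK}^{\cont}(N/xN,(\Theta/\varpi\Theta)^{\vee})$ has $k$-dimension $e$ by the first step (Theorem \ref{thm-pa12-B}(i)). On the other hand $W$ is finitely generated (being generated by the finite-dimensional $\theta(\Theta/\varpi\Theta)$), $W^{\vee}$ is killed by $\varpi$, and $N/\varpi N$ is a projective envelope of $\pi^{\vee}$ in $\mathfrak{C}(k)$, so by exactness of $\Hom_{\mathfrak{C}(k)}(N/\varpi N,-)$,
\[ \dim_k R^{\psi}_{\rho}\phi=\dim_k\Hom_{\CO}(N/xN,W^{\vee})=\dim_k\Hom_{\mathfrak{C}(k)}(N/\varpi N,W^{\vee})=[W^{\rm ss}:\pi]=e, \]
the last equality being condition (ii). Hence $R^{\psi}_{\rho}\phi=\Hom_{\OK}^{\cont}(N/xN,(\Theta/\varpi\Theta)^{\vee})$, this module is cyclic over $R^{\psi}_{\rho}$, and by the first step $R/(x,\varpi)$, and therefore $R$, is Gorenstein.

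The hard part is exactly this last dimension match: the lower bound $\dim_k R^{\psi}_{\rho}\phi=e$ uses (ii) together with the projectivity of $N/\varpi N$, the upper bound $e$ comes from Theorem \ref{thm-pa12-B}(i), and what makes the two bounds coincide is the injectivity of $\Psi$, which rests on the elementary but essential observation that $W$ is generated over $G$ by $\theta(\Theta/\varpi\Theta)$, so that no nonzero $G$-subrepresentation of $W^{\vee}$ is orthogonal to $\theta(\Theta/\varpi\Theta)$. Condition (i) is used twice: to get Cohen--Macaulayness and $M(\Theta)\cong R$, and — via the latter — to identify $\Hom_{\OK}^{\cont}(N/xN,(\Theta/\varpi\Theta)^{\vee})^{\vee}$ with the ring $R/(x,\varpi)$ itself, which is what turns "cyclic module" into "Gorenstein ring".
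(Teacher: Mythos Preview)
Your proof is correct and follows essentially the same route as the paper's. Both reduce to showing that $\Hom_{\OK}^{\cont}(N/(x,\varpi)N,(\Theta/\varpi\Theta)^{\vee})$ is a cyclic $R^{\psi}_{\rho}$-module, both compare it with $\Hom_{\CO}(N/(x,\varpi)N,W^{\vee})$ via the map you call $\Psi$ (the paper's $f$), and both finish by the same dimension count using Lemma~\ref{lemma-xN=0} and hypothesis (ii). The only difference is expository: where the paper writes ``By definition of $W$, $f$ is injective'', you unpack this into the explicit orthogonality argument (no nonzero $G$-subobject of $W^{\vee}$ lies in $\theta(\Theta/\varpi\Theta)^{\perp}$ since $W$ is $G$-generated by $\theta(\Theta/\varpi\Theta)$), which is exactly the content behind the paper's one-line claim.
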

 \begin{proof}
As discussed in \S\ref{subsection-cyclicity}, the assumption (i) ensures that $M(\Theta)$ is a free $R$-module of rank $1$ and $R$ is Cohen-Macaulay. Therefore $(x,\varpi)$ is a regular sequence in $R$ by Lemma \ref{lemma-CM-equiv}, and $R$ is Gorenstein if and only if $R/(x,\varpi)$ is Gorenstein if and only if we have an isomorphism  
\[\Hom_{k}(R/(x,\varpi),k)\cong R/(x, \varpi)\]
of $R$-modules. The isomorphism $R \cong M(\Theta)$ implies that this is again equivalent to $\Hom_{k}(M(\Theta)/(x,\varpi),k)$ being a cyclic $R$-module. 
Since \[M(\Theta)/(x,\varpi) \cong\Hom_{k}\bigl(\Hom_{\OK}^{\rm cont}(N/(x,\varpi),\Theta^d/\varpi\Theta^d),k\bigr)\] we obtain that 
\[\Hom_{k}(M(\Theta)/(x,\varpi),k)\cong \Hom_{\OK}^{\rm cont}(N/(x,\varpi),(\Theta/\varpi\Theta)^{\vee}). \]

We claim that the following composite map is an isomorphism of $R^{\psi}_{\rho}$-modules: 
\begin{multline} \label{equation-f-composite}f:\Hom_{\CO}(N/(x,\varpi),W^{\vee})\ra \Hom_{\OK}^{\rm cont}(N/(x,\varpi),W^{\vee})\\
\ra \Hom_{\OK}^{\rm cont}(N/(x,\varpi),(\Theta/\varpi\Theta)^{\vee}),\end{multline}
where the first map is the restriction from $G$-equivariant morphisms to  $K$-equivariant morphisms. By definition of $W$, $f$ is injective. On the other hand,  
 let $d_1$ and $d_2$ be respectively the  dimension over $k$ of the first and last term in \eqref{equation-f-composite}. Then $d_2=e$ by Theorem \ref{thm-pa12-B}(iii). By Lemma \ref{lemma-xN=0}, the natural morphism
\[\Hom_{\CO}(N/(x,\varpi),W^{\vee})\simto\Hom_{\CO}(N,W^{\vee})  \] 
is an isomorphism. Since $N$ is a projective envelope of $\pi^{\vee}$ in $\CO$, we have $\dim_k\Hom_{\CO}(N,W^{\vee})=[W^{\rm ss}:\pi]$, so $d_1=e$ by the assumption (ii). This proves the claim and the proposition follows from Lemma \ref{lemma-xN=0}.  
\end{proof}

\begin{prop}\label{gore}
Assume $\rho$ is generic. Let $\w=(a,b)$ with $1\leq b-a\leq 2p$ and $\tau=\ide\oplus\ide$. Then $R$ is Gorenstein whenever non-zero.
\end{prop}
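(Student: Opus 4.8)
The plan is to apply Proposition~\ref{prop-Gorenstein-criterion}, after isolating the only truly non-trivial case. By Theorem~\ref{prop-cyclic} (legitimate since $\rho$ is generic) there is a $K$-invariant lattice $\Theta\subset V$ with $\dim_k\Hom_K(\Theta/\varpi\Theta,\kappa(\rho))=1$; hence $M(\Theta)$ is free of rank $1$ over $R$, $R$ is Cohen--Macaulay, and $(x,\varpi)$ is a regular sequence in $R$ by Lemma~\ref{lemma-CM-equiv}. Consequently $R$ is Gorenstein if and only if the Artinian local $k$-algebra $R/(x,\varpi)$ is Gorenstein, and $\dim_k R/(x,\varpi)=e:=e(R/\varpi R)$ by Proposition~\ref{prop-rank=ne}(ii).

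A local Artinian ring of length at most $2$ is isomorphic to $k$ or to $k[t]/(t^2)$, hence is Gorenstein; so the statement is immediate unless $e\ge 3$. Running through the cases (a)--(e) of the proof of Theorem~\ref{prop-cyclic} and comparing $\JH(\overline{V})$ with $\mathcal D(\rho)$, one checks that $e\ge 3$ forces $\rho$ to be irreducible and $b-a=2p$ (case~(e)): the Serre weights of an irreducible $\rho$ come in a pair $\{\Sym^r k^2\otimes{\det}^s,\ \Sym^{p-1-r}k^2\otimes{\det}^{r+s}\}$, so $R\neq 0$ in case~(e) forces $\mathcal D(\rho)=\{\Sym^1 k^2,\ \Sym^{p-2}k^2\otimes{\det}\}$, and then $\overline{V}=\Sym^{2p-1}k^2$ (after the harmless twist $a=0$) has $\Sym^{p-2}k^2\otimes\det$ as a Jordan--H\"older factor with multiplicity $2$ and $\Sym^1k^2$ with multiplicity $1$, so $e=3$. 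In every other generic configuration one gets $e\le 2$ and we are done by the previous remark.

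So assume $\rho$ irreducible, $b-a=2p$, $e=3$, and apply Proposition~\ref{prop-Gorenstein-criterion} with $\Theta=\Sym^{2p-1}\cO^2$; then $\Theta/\varpi\Theta=\Sym^{2p-1}k^2$ has \emph{simple} $K$-socle $\Sym^{p-2}k^2\otimes\det\in\mathcal D(\rho)$, and condition~(i) is exactly the assertion checked in case~(e) of the proof of Theorem~\ref{prop-cyclic}. To construct $\theta$: by Theorem~\ref{thm-pa12-B}(i) the module $N/xN$ is a projective envelope of $(\rsoc_K\pi)^\vee$ in $\Mod^{\pro}_{K,\zeta}(\cO)$, so its Pontryagin dual is, modulo $\varpi$, an injective envelope of $\rsoc_K\pi$ in $\Mod^{\sm}_{K,\zeta}(k)$; since $\rsoc_K(\Theta/\varpi\Theta)$ embeds into $\rsoc_K\pi$ we obtain an injective $\OK$-linear map $\theta\colon\Theta/\varpi\Theta\hookrightarrow(N/xN)^\vee$, which factors through $(Q_1\otimes_\cO k)^\vee$ because $\fa_1$ annihilates $\Hom_{\OK}^{\cont}(N/xN,\Theta^d)$ and hence its reduction. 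It remains to verify condition~(ii), namely $[W^{\rm ss}:\pi]=3$ for $W:=\langle G\cdot\theta(\Theta/\varpi\Theta)\rangle\subseteq(Q_1\otimes_\cO k)^\vee$. Identify $(Q_1\otimes_\cO k)^\vee$ with the reduction $\overline{\Pi}$ of the unit ball of the admissible Banach representation $\Pi=\Hom^{\cont}_\cO(Q_1,L)$. Since $\rho$ is irreducible, $\Pi$ carries (by the argument of Lemma~\ref{isotypic-sub-gen} together with Proposition~\ref{happy_chinese_new_year}(i)) a filtration whose three graded pieces are absolutely irreducible non-ordinary Banach representations $\Pi(\kappa(\nn_i))$, the reduction of whose unit ball is the supersingular $\pi$ (Theorem~\ref{thm-BB} and the results of Berger--Breuil). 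Hence $\overline{\Pi}$ is an iterated extension of three copies of $\pi$, it has no other Jordan--H\"older factor, and so condition~(ii) is equivalent to the assertion that the single copy $\theta(\Sym^{2p-1}k^2)$ generates $\overline{\Pi}$ as a $G$-representation. This I would prove by combining the three-step structure of $\overline{\Pi}$ with Theorem~\ref{theorem-morra}(i) on the socle filtration and $K_1$-invariants of $\pi$: the $V$-isotypic subspace of $\Pi^{\rm alg}$ generates $\Pi^{\rm alg}$ (Lemma~\ref{isotypic-sub-gen}, Proposition~\ref{prop-Pi^alg}), and an explicit Morra-type computation shows that a single copy of $\Sym^{2p-1}k^2$ already projects onto the $V$-isotypic line in each of the three layers, so that its $G$-orbit meets every graded piece in a generating subspace and hence $W=\overline{\Pi}$. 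Proposition~\ref{prop-Gorenstein-criterion} then yields that $R$ is Gorenstein.

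The only delicate point is the verification of condition~(ii) in the case $e=3$: one must control the $G$-subrepresentation generated by a single embedded copy of $\Sym^{2p-1}k^2$ inside $\overline{\Pi}$ precisely enough to conclude that it is all of $\overline{\Pi}$, and this is where the restriction $b-a\le 2p$ (equivalently $e\le 3$) and the explicit description of $\pi|_K$ due to Breuil and Morra are genuinely used; everything else in the argument is formal.
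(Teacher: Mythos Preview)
Your overall strategy coincides with the paper's: reduce to the single case $e=3$ (forcing $\rho$ irreducible, $b-a=2p$, and $\mathcal D(\rho)=\{\Sym^1k^2,\Sym^{p-2}k^2\otimes\det\}$), take $\Theta=\Sym^{2p-1}\cO^2$, and apply Proposition~\ref{prop-Gorenstein-criterion}. Condition~(i) and the construction of the injection $\theta$ are correct, and your observation that $\theta$ factors through $(Q_1\otimes_\cO k)^\vee$ is true (though the paper does not need it).

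The gap is in condition~(ii). Your sketch---show $W=\overline\Pi$ by arguing that $\theta(\Sym^{2p-1}k^2)$ ``projects onto the $V$-isotypic line in each of the three layers''---does not make sense as stated: there is no $V$-isotypic subspace in a mod~$p$ representation, and the characteristic-zero results you invoke (Lemma~\ref{isotypic-sub-gen}, Proposition~\ref{prop-Pi^alg}, Theorem~\ref{thm-BB}) concern $\Pi^{\alg}$ and say nothing about the $G$-span of a \emph{single} embedded copy of $\Theta/\varpi\Theta$ in the reduction. Moreover, even granting nontrivial projection to each graded piece of some filtration of $\overline\Pi$, that alone would not force $W=\overline\Pi$ without an inductive argument controlling $W\cap F_i$ at each stage.

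The paper's argument bypasses $\overline\Pi$ entirely and is much shorter. Since the map $f$ in the proof of Proposition~\ref{prop-Gorenstein-criterion} is injective, one already knows $[W^{\rm ss}:\pi]\le e=3$; it suffices to rule out length $\le 2$. Length~$1$ is impossible because every nonzero $K$-map $\Sym^{2p-1}k^2\to\pi$ factors through the quotient $\Sym^{p-2}k^2\otimes\det$ (as shown in case~(e) of Theorem~\ref{prop-cyclic}), so $\theta$ could not be injective. If $W$ had length~$2$, say $0\to\pi_1\to W\to\pi_2\to 0$, then the composite $\Sym^{2p-1}k^2\hookrightarrow W\twoheadrightarrow\pi_2$ would have kernel exactly $\Ind_I^K(\omega\otimes\Eins)$ (the subrepresentation from \eqref{equation-case-d2}), forcing an injection $\Ind_I^K(\omega\otimes\Eins)\hookrightarrow\pi_1\cong\pi$; but Theorem~\ref{theorem-morra}(i)(3) forbids this since $\omega\otimes\Eins\ne(\omega\otimes\Eins)^s$. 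That two-line contradiction is the ``Morra-type computation'' you were reaching for.
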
  
\begin{proof}
We use the notation in \S\ref{subsection-cyclicity}. First note that if $e\leq 2$, then $R$ is Gorenstein since the local ring $R/(\varpi,x)$ is   of length 2 as a module over itself. 

 Since $\rho$ is assumed to be generic, the only case  when $e\geq 3$ is that $b-a=2p$ and $\rho$ is irreducible with  $\mathcal{D}(\rho)=\{\Sym^1k^2,\Sym^{p-2}k^2\otimes\det\}$; moreover we have exactly $e=3$. Let $\Theta=\Sym^{2p-1}\OO^2$, so that $\Theta/\varpi\Theta$ is of length 3, isomorphic to the injective envelope of $\Sym^{p-2}k^2\otimes\det$ in the category of finite dimensional $k$-representations of $\GL_2(\F_p)$.
Since $(N/xN)^{\vee}$ is injective in $\Mod_{K,\zeta}^{\rm sm}(\OO)$ and
$\Hom_{K}(\Sym^{p-2}k^2\otimes\det,(N/xN)^{\vee})\neq0$, there exists a $K$-equivariant \emph{injection} $\theta: \Theta/\varpi\Theta\hookrightarrow (N/xN)^{\vee}$. 
Let $W$ be the sub-representation generated by $\mathrm{im}(\theta)$. Since $\rho$ is irreducible as remarked above, $\pi$ is a supersingular representation of $G$ and \cite[Prop.5.42]{pa13} implies that  all the irreducible subquotients of $W$  are isomorphic to $\pi$. 

We claim that $W$ has length 3. To show this, we first observe that any non-zero morphism $\Sym^{2p-1}k^2\ra \pi$ vanishes on $\Ind_I^K(\omega\otimes \ide)$ by the proof of Theorem \ref{prop-cyclic}, hence  factors as
 \[\Theta/\varpi\Theta\overset{\mathrm{pr}}{\twoheadrightarrow} \Sym^{p-2}k^2\otimes\det\hookrightarrow \pi,\]
 where $\mathrm{pr}$ is the natural projection in (\ref{equation-case-d2}).
  This implies that $W$ has length $>1$,  otherwise $\theta$ would induce an injection $\Theta/\varpi\Theta\hookrightarrow \pi$.   
 If $W$ has length $2$, i.e. 
 \[0\ra \pi_1\ra W\ra \pi_2\ra0,\]
 with $\pi_1\cong\pi_2\cong \pi$,
then by the observation again we get \[\mathrm{im}(\theta)\cap \pi_1=\ker(\mathrm{pr})=\Ind_I^K(\omega\otimes \ide).\] But this contradicts Theorem \ref{theorem-morra}(i) since $\omega\otimes \ide\neq(\omega\otimes \ide)^s$.  This proves the claim and finishes the proof by Proposition \ref{prop-Gorenstein-criterion}.
\end{proof}

\begin{prop}\label{Cayley-Hamilton} Let $\wt$ and $\tau$ be arbitrary and let $\mm$ be the maximal ideal of $R$. If $\dim_k \mm/( \varpi, \mm^2)\le 2$ then 
$R\cong \OO[[x, y]]/( y^e+ a_{e-1} y^{e-1}+\cdots+ a_0)$, where $a_i$ lie in the maximal ideal of $\OO[[x]]$. In particular, $R$ is a complete intersection ring. 
\end{prop}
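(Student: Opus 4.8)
The plan is to show that the hypothesis $\dim_k \mm/(\varpi,\mm^2)\le 2$, combined with the existence of the regular element $x$ and the fact that $R$ has Krull dimension $2$ and is $\OO$-flat (Theorem \ref{thm-kisin}), forces $R$ to have a two-generator presentation of the stated shape. First I would note that $\varpi$ and the element $x$ from Theorem \ref{thm-pa12-B}(iii) / Theorem \ref{thm-x-nongeneric}(iii) both lie in $\mm$ but not in $\mm^2+(\varpi)$, hence their images in $\mm/(\varpi,\mm^2)$ are nonzero; since $x\notin(\varpi)$ (as $R$ is $\OO$-flat and $x$ is $R$-regular), these images are in fact linearly independent, so they span a $2$-dimensional subspace. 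By the hypothesis $\dim_k\mm/(\varpi,\mm^2)\le 2$ they form a basis, so $\mm=(\varpi,x)$, i.e. $R/(\varpi,x)$ is a field; equivalently $(\varpi,x)$ generates $\mm$ and $R$ is generated by two elements as a complete local $\OO$-algebra.

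Next I would make this into an explicit presentation. Lift $x$ to a variable and consider the surjection $\OO[[x,y]]\twoheadrightarrow R$ sending $y\mapsto \varpi$... wait — more precisely, since $\mm=(\varpi,x)$ and $R$ is $\OO$-flat noetherian local of dimension $2$, the quotient $R/(x)$ is $\OO$-flat (one must check $x$ is $R$-regular, which holds since $R$ is a domain in the generic case or at worst by the construction of $x$ via prime avoidance plus the fact that $R$ is $\OO$-flat and reduced), $1$-dimensional, with maximal ideal generated by $\varpi$; hence $R/(x)$ is a finite totally ramified DVR-like quotient, in fact $R/(x)\cong \OO$ or a finite extension — the cleaner route is: $R/(x)$ is a complete local $\OO$-algebra with $\mm/(x)=(\varpi)$, so it is a quotient of $\OO[[\,]]=\OO$, whence $R/(x)\cong\OO$. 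Therefore $R$ is generated over $\OO[[x]]$ by nothing new, but we still need to locate $\varpi$: the point is that $R$ is module-finite over $\OO[[x]]$ (since $R/xR\cong\OO$ is module-finite over $\OO$, and $x$ is a nonzerodivisor, a standard lifting argument gives module-finiteness), hence $R$ is a finite $\OO[[x]]$-algebra that is $\OO[[x]]$-torsion-free, and generated by one element $y$ (a lift of a generator of $R/xR$ over $\OO$, but $R/xR=\OO$ so actually $R$ is generated over $\OO[[x]]$ by... ). The correct formulation: $R$ is a finite $\OO[[x]]$-module, $\OO[[x]]$-torsion-free, of generic rank $e=e(R/\varpi R)$ — here one invokes Proposition \ref{prop-rank=ne}(ii), which computes $\mathrm{rank}_\OO(R/x^nR)_{\mathrm{tf}}=n\cdot e$, giving $\mathrm{rank}_{\OO[[x]]}R=e$ — and as an $\OO[[x]]$-module $R$ is generated by $1,y,y^2,\dots,y^{e-1}$ for a suitable $y\in\mm$, because $R/\mm_{\OO[[x]]}R = R/(\varpi,x) = k$ wait that would give rank... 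Let me instead argue: $R\otimes_{\OO[[x]]}k[[x]]/(x)=R/(x,\varpi)$... this is getting delicate; the clean statement is that $R$ is finite flat over $\OO[[x]]$ of rank $e$ and monogenic, i.e. $R=\OO[[x]][y]$, because $\mm/(x,\mm\cdot\mathfrak m_{\OO[[x]]})$ is spanned by the image of $y$; then $y$ satisfies its characteristic polynomial $y^e+a_{e-1}y^{e-1}+\cdots+a_0$ over $\OO[[x]]$ by Cayley--Hamilton (this is where the name of the proposition comes from), the $a_i$ lie in $\mm_{\OO[[x]]}$ because $R$ is local, and comparing ranks forces $\OO[[x,y]]/(y^e+\cdots+a_0)\xrightarrow{\sim}R$.

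The step I expect to be the main obstacle is \textbf{proving that $R$ is monogenic over $\OO[[x]]$} — i.e. that the single element $y$ suffices, equivalently that $R/(x)\cong\OO$ rather than a ramified extension, and that a lift $y$ of a uniformizer generates. The heuristic: $R/(x)$ is a complete local $\OO$-flat $1$-dimensional ring whose maximal ideal is $(\varpi)$ modulo $(x)$; but actually $\mm/(x)$ need not be principal a priori unless we use $\dim_k\mm/(\varpi,\mm^2)\le 2$ carefully — the hypothesis does give $\dim_k (\mm/(x))/(\mm/(x))^2 \le 2 - 1 = 1$ (quotienting out the class of $x$), so $R/(x)$ is a quotient of $\OO[[T]]$ of dimension $1$, $\OO$-flat, hence isomorphic to $\OO[[T]]/(f)$ with $f$ of the form making it $1$-dimensional $\OO$-flat, i.e. $f=T$ after Weierstrass preparation or $f=0$... the $\OO$-flatness rules out $f$ having a unit leading coefficient in $\OO$, and $1$-dimensionality forces $f\neq 0$, leaving $f$ a distinguished polynomial; but then $R/(x)$ has $\OO$-rank $\ge 2$ unless $\deg f = 1$... hmm, actually $R/(x)$ having $\OO$-rank $e$ (not necessarily $1$!) is the point. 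So the monogenicity is genuinely the content: one shows $R/(x)\cong \OO[[T]]/(g(T))$ with $g$ distinguished of degree $e$, using $\dim_k\mm/(\varpi,\mm^2)\le 2$ to get that $R/(x)$ is generated by one element over $\OO$, and then lifts $T$ to $y\in R$. Once monogenicity is in hand, Cayley--Hamilton and the rank count finish it routinely, and complete intersection follows since $R$ is presented as a hypersurface in the regular ring $\OO[[x,y]]$.
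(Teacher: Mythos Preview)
Your overall shape is right --- get a surjection $\OO[[x,y]]\twoheadrightarrow R$, find a monic degree-$e$ relation for $y$ over $\OO[[x]]$, and then argue that the resulting surjection from $C=\OO[[x,y]]/(f(y))$ is an isomorphism --- but the way you propose to obtain the degree-$e$ relation is circular. You want to apply Cayley--Hamilton to the action of $y$ on $R$ itself, which requires $R$ to be free (or at least torsion-free of rank exactly $e$) over $\OO[[x]]$. But for a finite extension of a regular local ring of dimension $2$, freeness is equivalent to Cohen--Macaulayness, and that is exactly what you are trying to prove. Your ``monogenicity'' struggle is a symptom of this: $R/(x)$ may have $\OO$-torsion a priori, and you have no handle on it. (Also a small slip: $\varpi$ lies in $(\varpi)$, so its image in $\mm/(\varpi,\mm^2)$ is zero; the hypothesis says that space has dimension $\le 2$, and the image of $x$ is one nonzero vector in it --- you then \emph{choose} $y$ to complete a basis when the dimension is $2$.)

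The paper sidesteps this by applying Cayley--Hamilton not to $R$ but to the module $M(\Theta)$. This module \emph{is} known to be Cohen--Macaulay (Theorem~\ref{thm-pa12-A}), hence free of rank $e$ over $\OO[[x]]$, and $R$ acts on it faithfully. So the action of $y$ on $M(\Theta)$ is given by a matrix in $M_e(\OO[[x]])$, and its characteristic polynomial $y^e+a_{e-1}y^{e-1}+\cdots+a_0$ vanishes in $R$ by faithfulness. The $a_i$ lie in $\mm_{\OO[[x]]}$ because $y$ acts nilpotently on the finite-dimensional $k$-vector space $M(\Theta)/(\varpi,x)$. Finally, to show $C\twoheadrightarrow R$ is an isomorphism, the paper compares Hilbert--Samuel multiplicities after reducing mod $\varpi$: $C/\varpi$ is a one-dimensional complete intersection with multiplicity $e$ and no embedded primes, so any proper quotient of the same dimension has strictly smaller multiplicity; but $e(R/\varpi)=e$ by definition, forcing the surjection to be an isomorphism mod $\varpi$ and hence (by Nakayama) an isomorphism. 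This multiplicity argument also does not appear in your sketch and is what replaces your hoped-for ``rank count''.
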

\begin{proof}  It follows from the construction of $x$ in Theorems \ref{thm-pa12-B}, \ref{thm-x-nongeneric} that its image in $\mm/( \varpi, \mm^2)\le 2$  is non-zero. If $\dim_k \mm/( \varpi, \mm^2)=1$ then since $x$ is $M(\Theta)$-regular, we deduce that $R\cong \OO[[x]]$. Let us assume that  
$\dim_k \mm/( \varpi, \mm^2)=2$ and let $y\in R$ be an element, such that the images of $x$ and $y$ build a $k$-basis of $\mm/( \varpi, \mm^2)$. 
Thus we have a surjection $\OO[[x,y]]\twoheadrightarrow R$. Since $M(\Theta)$ is a free $\OO[[x]]$-module of rank $e$, by fixing a basis we may represent 
the action of $y$ on $M(\Theta)$ by a matrix with entries in $\OO[[x]]$. If $y^e+ a_{e-1} y^{e-1}+\cdots+ a_0$ is the characteristic polynomial of that matrix
then, since $R$ acts faithfully on $M(\Theta)$, we deduce that the surjection $\OO[[x,y]]\twoheadrightarrow R$ factors through the surjection 
$C:= \OO[[x, y]]/( y^e+ a_{e-1} y^{e-1}+\cdots+ a_0)\twoheadrightarrow R$. Since $y$ lies in $\mm$ it acts on $M(\Theta)/(\varpi, x)$ nilpotently.
This implies that $a_i$ are contained in the maximal ideal of $\OO[[x]]$. Now $C/(\varpi)$ is a complete intersection ring of dimension $1$ with Hilbert--Samuel multiplicity equal to $e$. Since it cannot have embedded prime ideals any proper $1$-dimensional quotient of $C/(\varpi)$ will have a strictly smaller Hilbert--Samuel multiplicity. This implies that the surjection $C\twoheadrightarrow R$ induces an isomorphism modulo $\varpi$, and hence is an isomorphism.
\end{proof}

\begin{cor} Assume that $\rho$ is generic. Let $\w=(a,b)$ with $1\leq b-a\leq 2p$ and $\tau=\ide\oplus\ide$. If  $R$ is non-zero then $R\cong \OO[[x,y]]/( y^e+ a_{e-1} y^{e-1}+\cdots+ a_0)$, where $a_i$ lie in the maximal ideal of $\OO[[x]]$.
\end{cor}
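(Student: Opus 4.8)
The plan is to deduce the statement from Proposition \ref{Cayley-Hamilton}, so the goal is to verify that $\dim_k \mm/(\varpi, \mm^2) \le 2$, where $\mm$ denotes the maximal ideal of $R$.

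First I would collect what is already known about $R$ in this setting. By Theorem \ref{prop-cyclic} one can choose a $K$-invariant $\OO$-lattice $\Theta$ in $V = \sigma^{\rm cr}(\w,\tau)$ with $M(\Theta)$ a cyclic $R$-module; since $R$ acts faithfully on $M(\Theta)$, this forces an isomorphism $M(\Theta) \cong R$ of $R$-modules. Theorem \ref{thm-pa12-A} then says $R$ is Cohen--Macaulay of Krull dimension $2$, and Proposition \ref{gore} says $R$ is Gorenstein. As $(x,\varpi)$ is a system of parameters for $M(\Theta)$, hence for $R$, and $R$ is Cohen--Macaulay, $(x,\varpi)$ is a regular sequence, so $A := R/(x,\varpi)$ is a Gorenstein Artinian local ring. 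Feeding the isomorphism $M(\Theta) \cong R$ into the rank computation of Theorem \ref{thm-pa12-B}(i), together with the identification $M(\Theta)/xM(\Theta) \cong \Hom_{\OO}(\Hom_{\OK}^{\cont}(N/xN,\Theta^d),\OO)$ from \S\ref{ideals}, I would read off that $\ell(A) = e$, where $e = e(R/\varpi R)$.

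The crucial extra input is the bound $e \le 3$, which is precisely what is checked in the proof of Proposition \ref{gore} for $\rho$ generic, $\w = (a,b)$ with $1 \le b-a \le 2p$ and $\tau = \ide\oplus\ide$ (with $e = 3$ occurring only when $\rho$ is irreducible, $b-a = 2p$ and $\mathcal{D}(\rho) = \{\Sym^1 k^2,\, \Sym^{p-2}k^2\otimes\det\}$). I would then use the elementary fact that a Gorenstein Artinian local ring $A$ with $\dim_k \mm_A/\mm_A^2 \ge 2$ has length at least $4$: if $\mm_A^2 = 0$ then $\mm_A$ is contained in the one-dimensional socle, which is absurd, so $\mm_A^2 \ne 0$ and $\ell(A) \ge 1 + \dim_k \mm_A/\mm_A^2 + \dim_k \mm_A^2 \ge 1 + 2 + 1 = 4$. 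Since $\ell(A) = e \le 3$, this gives $\dim_k \mm_A/\mm_A^2 \le 1$. Because $x \notin \mm^2 + (\varpi)$ by Theorem \ref{thm-pa12-B}(iii), the image of $x$ in $\mm/(\varpi,\mm^2)$ is nonzero and spans the kernel of the surjection $\mm/(\varpi,\mm^2) \twoheadrightarrow \mm_A/\mm_A^2$, hence
\[ \dim_k \mm/(\varpi,\mm^2) = 1 + \dim_k \mm_A/\mm_A^2 \le 2. \]
Proposition \ref{Cayley-Hamilton} now applies and produces the asserted presentation $R\cong \OO[[x,y]]/(y^e+a_{e-1}y^{e-1}+\cdots+a_0)$ with the $a_i$ in the maximal ideal of $\OO[[x]]$.

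I do not expect a real obstacle: the proof is an assembly of the cyclicity of $M(\Theta)$ (Theorem \ref{prop-cyclic}), the Gorenstein property and the bound $e \le 3$ (Proposition \ref{gore} and its proof), Proposition \ref{Cayley-Hamilton}, and one short commutative-algebra observation about short Gorenstein Artinian rings. The only point deserving care is the identification $\ell(R/(x,\varpi)) = e$, for which one genuinely uses $M(\Theta)\cong R$ and not merely that $M(\Theta)$ is cyclic; note also that the trivial bound $\dim_k\mm/(\varpi,\mm^2) \le \ell(R/(x,\varpi)) = e$ already settles everything when $e \le 2$, so the Gorenstein input is only needed to dispose of the single exceptional case $e = 3$.
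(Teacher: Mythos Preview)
Your proposal is correct and follows essentially the same approach as the paper: both reduce to Proposition \ref{Cayley-Hamilton} by showing $\dim_k \mm/(\varpi,\mm^2)\le 2$, using that $R/(x,\varpi)$ is Gorenstein Artinian of length $e\le 3$ (from Proposition \ref{gore} and its proof) to force $\dim_k \mm/(\varpi,x,\mm^2)\le 1$. Your length-$\ge 4$ argument for a Gorenstein Artinian ring with embedding dimension $\ge 2$ is the contrapositive of the paper's observation that if $e=3$ and the embedding dimension were $2$ then $\mm^2=0$ and the socle would be two-dimensional.
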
 
\begin{proof} Proposition \ref{gore} implies that $R/(\varpi, x)$ is Gorenstein and the dimension of $R/(\varpi, x)$ as a $k$-vector space is equal to $e$, which is $1$, $2$ or $3$. In all the cases, 
$\dim_k \mm/ (\varpi, x, \mm^2)=1$. This is obvious if $e=1$ or $e=2$. If $e=3$ then $\dim_k \mm/ (\varpi, x, \mm^2)=2$ would imply 
that $\mm^2$ is zero  in $R/(\varpi, x)$ and hence $\Hom_R( k, R/(\varpi, x))\cong \mm/ (\varpi, x, \mm^2)$ is $2$-dimensional, which contradicts Gorensteinness of $R/(\varpi, x)$.
The assertion follows from Proposition \ref{Cayley-Hamilton}.
\end{proof}

\section{Global applications}\label{global_app}

Let $F$ be a totally real field in which $p$ splits completely, let $\Sigma$ and $S$ be   finite sets  of places of $F$ containing all the places above $p$ and all the archimedean places, such that  $\Sigma \subset S$. 
We fix an algebraic closure $\overline{F}$ of $F$. Let $G_{F,S}$ be 
the absolute Galois group of the maximal extension of $F$ in $\overline{F}$ which is unramified outside $S$. 
Let $\rhobar: G_{F, S}\rightarrow \GL_2(k)$ be a continuous irreducible representation, which we assume  to be modular. 
  We will combine the result of the local part together with the results of Jack Shotton \cite{shotton}, \S \ref{appendix_jack} concerning the potentially semi-stable deformation rings at $\ell\neq p$ to prove that certain quotients of  the universal  deformation ring of $\rhobar$ parameterizing deformations which are potentially semistable of a given inertial type at $v\in \Sigma$ and given Hodge--Tate weights at $v\mid p$  are $\OO$-torsion free.  The strategy of the proof, given the local results, is well known, see for example \cite[\S 5]{sn} or 
Remark after Lemma 4.6 in \cite{kw2}. However, the result has not been stated in the generality that we prove it here, since the Cohen--Macaulayness of local potentially semi-stable deformation rings was known 
only in a few cases. We will then explain that these quotients are reduced. This is a byproduct of Kisin's approach to Taylor--Wiles method. This will allow us 
to upgrade Kisin's $R[1/p]=\mathbb{T}[1/p]$ theorem to an integral version $R=\mathbb T$. 
 
\subsection{Presentations of global deformation rings}\label{present}
 
 Let $\chi_{\cyc}: G_{F, S}\rightarrow \OO^{\times}$ be the global $p$-adic cyclotomic character. Let $\psi: G_{F, S}\rightarrow \OO^{\times}$ be a continuous character such
 that $\psi \chi_{\cyc}$ is congruent to $\det \rhobar$ modulo $\varpi$. If $p=2$ then we additionally assume that $\psi$ is totally even, 
 which means that its restriction to $G_{F_v}$ is trivial for all archimedean places $v$. If $p>2$ then this follows from the assumption
 on modularity of $\rhobar$.

 For each place $v$ of $F$ we fix an algebraic closure $\overline{F}_v$ of $F_v$ and an embedding 
 $\overline{F} \hookrightarrow \overline{F}_v$, which extends the embedding $F\hookrightarrow F_v$. This induces a continuous homomorphism of Galois groups 
 $G_{F_v}\hookrightarrow G_F \twoheadrightarrow  G_{F,S}$. This allows us to consider $\rhobar$ as a representation of $G_{F_v}$.
 
 We fix a basis of the underlying $k$-vector space $V$ of $\rhobar$. For each $v\in S$ let $R_v^{\square}$ be the universal framed deformation ring of $\rhobar_v:=\rhobar|_{G_{F_v}}$
 and let 
 $R_v^{\square, \psi}$ be the quotient of $R_v^{\square}$ parameterizing deformations of determinant $\psi\chi_{\cyc}$. We fix a subset $\Sigma$ of the set of finite places in $S$, which contains all the places above $p$ and all the infinite places. 
 Let $R^{\square, \psi}_{\Sigma}:=
 \wtimes_{v\in \Sigma} R_v^{\square, \psi}$, where the completed tensor product is taken over $\OO$.
 
  We define $R^{\psi}_{F, S}$ to be the quotient of the universal deformation ring of $\rhobar$, parameterizing deformations with determinant 
 $\psi\chi_{\cyc}$.  
 Denote by $R^{\square, \psi}_{F, S}$ the complete local $\OO$-algebra representing the functor which assigns to an artinian, augmented $\OO$-algebra 
$A$  the set of isomorphism classes of tuples $\{V_A,\beta_w\}_{w\in \Sigma}$, where $V_A$ is a deformation of $\rhobar$ to $A$ with determinant $\psi\chi_{\cyc}$ and $\beta_w$ is a lift of a chosen basis of $V_k$ to a basis of $V_A$. The map $\{V_A,\beta_w\}_{w\in \Sigma}\mapsto \{V_A, \beta_v\}$
 induces a homomorphism of $\OO$-algebras $R^{\square, \psi}_v\rightarrow R^{\square, \psi}_{F, S}$ for every $v\in \Sigma$ and 
 hence a homomorphism of $\OO$-algebras $R^{\square, \psi}_\Sigma \rightarrow R^{\square, \psi}_{F, S}$.

 Let $\ad \rhobar$ be  the $G_F$-representation obtained by letting $G_F$ act on 
$\End_k(V)$ by conjugation and let
$\ad^0(\rhobar)$ be  the subspace of endomorphisms having trace zero. 
We note that $\ad \rhobar\cong \rhobar \otimes (\rhobar)^*$ as $G_F$-representations, and hence $(\ad \rhobar)^*\cong \ad \rhobar$. 

\begin{lem}\label{H20} Let $\zeta_p$ be a primitive $p$-th root of unity. If $p>2$ and $\rhobar|_{G_{F(\zeta_p)}}$ is absolutely irreducible then $H^0(G_F, (\ad^0\rhobar)^*(1))=0$.
\end{lem}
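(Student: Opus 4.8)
The plan is to restrict the coefficient module to the open subgroup $G_{F(\zeta_p)}$ and exploit two things simultaneously: that the cyclotomic twist becomes trivial there, and that $\rhobar|_{G_{F(\zeta_p)}}$ is absolutely irreducible. The hypothesis $p>2$ will be used twice, as I note at the end.

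First, for an arbitrary $k[G_F]$-module $M$ one has $H^0(G_F,M)=M^{G_F}\subseteq M^{G_{F(\zeta_p)}}=H^0(G_{F(\zeta_p)},M)$, simply because invariance under the larger group implies invariance under the smaller one. Applying this with $M=(\ad^0\rhobar)^*(1)$, it suffices to show $H^0(G_{F(\zeta_p)},(\ad^0\rhobar)^*(1))=0$. Now $\omega$ is trivial on $G_{F(\zeta_p)}$, so the Tate twist disappears upon restriction and we are reduced to proving $H^0(G_{F(\zeta_p)},(\ad^0\rhobar)^*)=0$.

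Next I would identify $(\ad^0\rhobar)^*$ with $\ad^0\rhobar$ as $G_F$-modules. Since $p>2$, the trace pairing $(X,Y)\mapsto \tr(XY)$ on the space $\mathfrak{sl}_2$ of trace-zero $2\times 2$ matrices over $k$ is a symmetric bilinear form, invariant under the conjugation action of $\GL_2(k)$, and non-degenerate: in the standard basis $\mathrm{diag}(1,-1)$, $\bigl(\begin{smallmatrix}0&1\\0&0\end{smallmatrix}\bigr)$, $\bigl(\begin{smallmatrix}0&0\\1&0\end{smallmatrix}\bigr)$ its Gram matrix has determinant $-2\neq 0$ in $k$. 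Hence it induces a $G_F$-equivariant (a fortiori $G_{F(\zeta_p)}$-equivariant) isomorphism $\ad^0\rhobar\xrightarrow{\sim}(\ad^0\rhobar)^*$, so $H^0(G_{F(\zeta_p)},(\ad^0\rhobar)^*)\cong H^0(G_{F(\zeta_p)},\ad^0\rhobar)$.

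Finally, $H^0(G_{F(\zeta_p)},\ad^0\rhobar)$ is exactly the space of trace-zero elements of $\End_{k[G_{F(\zeta_p)}]}(\rhobar)$. By hypothesis $\rhobar|_{G_{F(\zeta_p)}}$ is absolutely irreducible, so Schur's lemma gives $\End_{k[G_{F(\zeta_p)}]}(\rhobar)=k\cdot\mathrm{Id}$; a scalar $c\cdot\mathrm{Id}$ has trace $2c$, which vanishes only when $c=0$ because $p>2$. Therefore $H^0(G_{F(\zeta_p)},\ad^0\rhobar)=0$, and the claim follows. There is no real obstacle here — the proof is essentially formal once one restricts to $G_{F(\zeta_p)}$ — but one should keep in mind that $p>2$ is genuinely needed both to get the isomorphism $(\ad^0\rhobar)^*\cong\ad^0\rhobar$ (equivalently, to split off the scalars from $\ad\rhobar$) and to conclude that the only trace-zero scalar matrix is $0$.
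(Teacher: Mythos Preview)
Your proof is correct and follows essentially the same strategy as the paper: restrict to $G_{F(\zeta_p)}$, apply Schur's lemma to the absolutely irreducible restriction, and invoke $p>2$. The only organisational difference is that the paper works with the full $\ad\rhobar$ first (where the self-duality $(\ad\rhobar)^*\cong\ad\rhobar$ holds unconditionally) and passes to the direct summand $\ad^0\rhobar$ at the end, whereas you work with $\ad^0\rhobar$ throughout via the trace pairing on $\mathfrak{sl}_2$.
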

\begin{proof}
Since the restriction of $\rhobar$ to $G_{F(\zeta_p)}$ is absolutely irreducible by assumption, 
$\Hom_{G_{F(\zeta_p)}}(\rhobar, \rhobar)$ is one dimensional. This implies that 
$\Hom_{G_F}( \rhobar, \rhobar \otimes \bar{\chi})=0$ for any non-trivial character $\bar{\chi}: G_F/ G_{F(\zeta_p)} \rightarrow k^{\times}$.
Since $p>2$ the cyclotomic character is not trivial modulo $\varpi$ and thus  $H^0(G_F, (\ad\rhobar)^*(1))$ is zero.  Since $p>2$, $\ad^0\rhobar$ is a direct summand of $\ad \rhobar$. Thus $H^0(G_F, (\ad^0\rhobar)^*(1))$ is also zero.
\end{proof}

\begin{prop}\label{presentation} If $\Sigma \neq S$ or $H^0(G_F, (\ad^0\rhobar)^*(1))=0$  then
for some non-negative integer $r$ there is an isomorphism of $R^{\square, \psi}_{\Sigma}$-algebras: 
$$   R^{\square, \psi}_{F, S} \cong R^{\square, \psi}_{\Sigma}[[ x_1, \ldots, x_{r+|\Sigma| -1}]]/ (f_1, \ldots, f_r),$$
where $f_1, \ldots, f_{r}\in R^{\square, \psi}_{\Sigma}[[ x_1, \ldots, x_{r+|\Sigma| -1}]]$.
\end{prop}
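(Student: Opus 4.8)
The plan is to run the standard obstruction-theoretic presentation of a global Galois deformation ring over the (completed tensor product of the) local framed ones, as in Mazur's original argument and its refinements by B\"ockle and by Kisin. First I would recognise $R^{\square,\psi}_{F,S}$ as pro-representing the \emph{relative} framed deformation problem over $R^{\square,\psi}_\Sigma$: to an artinian local $R^{\square,\psi}_\Sigma$-algebra $A$ with residue field $k$ it assigns the tuples $(V_A,\{\beta_w\}_{w\in\Sigma})$ consisting of a deformation $V_A$ of $\rhobar$ to $A$ with determinant $\psi\chi_{\cyc}$, unramified outside $S$, together with framings $\beta_w$ at the places $w\in\Sigma$ compatible with the framings recorded by the structure map $R^{\square,\psi}_\Sigma\to A$. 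The general lemma on presentations of deformation functors (see, e.g., Kisin's \emph{Moduli of finite flat group schemes, and modularity}, or B\"ockle's appendix to Khare--Wintenberger) then yields an isomorphism of $R^{\square,\psi}_\Sigma$-algebras
\[
R^{\square,\psi}_{F,S}\;\cong\;R^{\square,\psi}_\Sigma[[x_1,\dots,x_g]]/(f_1,\dots,f_r),
\]
where $g$ is the $k$-dimension of the relative mod-$\mm$ tangent space and $r$ may be taken to be the $k$-dimension of a relative obstruction space (allowing possibly redundant relations).

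Next I would translate $g$ and $r$ into Galois cohomology. Inflation--restriction at the places of $\Sigma$ identifies the relative tangent space with the Selmer-type group $H^1_\Sigma(G_{F,S},\ad^0\rhobar)$ in which the local conditions at all $v\in\Sigma$ are \emph{unrestricted} (there we have already divided out by the full local framed rings), corrected by the local framing degrees $\sum_{v\in\Sigma}\bigl(\dim_k\ad^0\rhobar-\dim_k H^0(G_{F_v},\ad^0\rhobar)\bigr)$, by $-\dim_k\ad^0\rhobar$ coming from the global $\mathrm{PGL}_2$-conjugation (using the fixed determinant), and by $+\dim_k H^0(G_F,\ad^0\rhobar)$; the obstruction space is the corresponding $H^2_\Sigma(G_{F,S},\ad^0\rhobar)$.

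Finally --- and this is the crux --- I would prove $g-r=|\Sigma|-1$ by a global Euler-characteristic computation. Tate's formula for $\chi(G_{F,S},\ad^0\rhobar)$ produces the archimedean contributions $\sum_{v\mid\infty}\bigl(\dim_k H^0(G_{F_v},\ad^0\rhobar)-\dim_k\ad^0\rhobar\bigr)$, which (since $F$ is totally real and the determinant $\psi\chi_{\cyc}$ is odd) cancel exactly against the framing variables attached to the infinite places of $\Sigma$; together with $H^0(G_F,\ad^0\rhobar)=0$, which holds because $\rhobar$ is absolutely irreducible under the running hypotheses (its image is non-solvable when $p=2$, and $\rhobar|_{G_{F(\zeta_p)}}$ is irreducible when $p>2$), this leaves precisely $|\Sigma|-1$. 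The dichotomy in the hypothesis (``$\Sigma\neq S$ or $H^0(G_F,(\ad^0\rhobar)^*(1))=0$'') is exactly what guarantees the obstruction space is not artificially enlarged, so that this count is correct: by Poitou--Tate duality the relevant defect is measured by a Tate--Shafarevich group of $(\ad^0\rhobar)^*(1)$, which vanishes immediately if $H^0(G_F,(\ad^0\rhobar)^*(1))=0$, whereas if $\Sigma\neq S$ one exploits an auxiliary place $v_0\in S\setminus\Sigma$, at which no local condition is imposed, to annihilate that defect (via the surjectivity of $H^2(G_{F,S},\ad^0\rhobar)\to H^2(G_{F,S\setminus\{v_0\}},\ad^0\rhobar)$, or equivalently a Chebotarev argument at $v_0$). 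Combining the three steps gives the asserted presentation.

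\textbf{Main obstacle.} The delicate part is the bookkeeping in the last two steps: keeping the fixed-determinant normalisation straight ($\ad^0$ versus $\ad$, $\mathrm{PGL}_2$ versus $\GL_2$), correctly counting the framing variables at \emph{every} place of $\Sigma$ including the archimedean ones, and matching these against Tate's Euler-characteristic formula so that the surplus of generators over relations comes out to be \emph{exactly} $|\Sigma|-1$ --- while simultaneously checking that each branch of the hypothesis genuinely kills the Tate--Shafarevich obstruction. The $p=2$ case, where the archimedean and fixed-determinant contributions behave differently, needs particular care, which is why the statement forces $\Sigma\neq S$ there.
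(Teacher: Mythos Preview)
Your proposal is correct and is precisely the argument underlying \cite[Prop.~4.1.5]{kisin_cdm}, which the paper simply invokes rather than reproves: the paper's proof consists only of noting that all archimedean places lie in $\Sigma$ (so Kisin's integer $s$ is zero) and that either branch of the hypothesis makes Kisin's map $(\dagger)$ injective. One small imprecision in your write-up: the defect you need to kill is not a Tate--Shafarevich group of $(\ad^0\rhobar)^*(1)$ in the usual sense but, via the nine-term Poitou--Tate sequence, is controlled directly by $H^0(G_F,(\ad^0\rhobar)^*(1))$ when $\Sigma=S$ (this is why that $H^0$-vanishing alone suffices), while for $\Sigma\neq S$ the extra summand $H^0(G_{F_{v_0}},(\ad^0\rhobar)^*(1))$ at any $v_0\in S\setminus\Sigma$ already absorbs the global $H^0$ in the target of $(\dagger)$; your ``surjectivity of $H^2(G_{F,S})\to H^2(G_{F,S\setminus\{v_0\}})$'' is not the right map to look at, but the conclusion you draw from it is.
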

\begin{proof} This follows from \cite[Prop.4.1.5]{kisin_cdm} as we will now explain. Since all the infinite places of $F$ lie in $\Sigma$ the integer denoted 
by $s$ in \cite[Prop.4.1.5]{kisin_cdm} is zero. Moreover, as explained in \textit{loc. cit.}
our assumptions imply that 
the map denoted by $(\dagger)$ in \cite[Prop.4.1.5]{kisin_cdm} is injective, hence the conditions
of \cite[Prop.4.1.5]{kisin_cdm} are satisfied. 
\end{proof}

\begin{prop}\label{abstract_prop} For each $v\in \Sigma$ let $\bar{R}^{\square, \psi}_v$ be an equidimensional, $\OO$-torsion free quotient of $R^{\square, \psi}_v$ such that 
its dimension is $5$ if $v\mid p$, $3$ if $v\mid \infty$ and $4$ otherwise. Assume that the conditions of Proposition \ref{presentation} hold and that the deformation problem defined by $\bar{R}^{\square, \psi}_v$ does not 
depend on the framing. Let 
$$\bar{R}_\Sigma^{\square, \psi}:=\wtimes_{v\in \Sigma} \bar{R}_v^{\square, \psi}, \quad 
\bar{R}^{\square, \psi}_{F, S}:= \bar{R}_\Sigma^{\square, \psi}\otimes_{R_{\Sigma}^{\square, \psi}} R^{\square, \psi}_{F,S},$$
and let $\bar{R}^{\psi}_{F, S}$ be the image of 
$R^{\psi}_{F, S}$ under the natural map $R^{\psi}_{F,S}\rightarrow R^{\square, \psi}_{F, S}\rightarrow \bar{R}^{\square,\psi}_{F, S}$, where the first arrow is obtained by forgetting the framing. 

 If $\bar{R}^{\psi}_{F, S}$ is a finitely generated $\OO$-module then its rank is at least $1$ and every irreducible component of $\Spec \bar{R}_\Sigma^{\square, \psi}$ contains a point of $\mSpec \bar{R}^{\psi}_{F, S}[1/p]$. If we additionally assume that $\bar{R}^{\square, \psi}_v$ is Cohen--Macaulay for all $v\in \Sigma$ then  $\bar{R}^{\psi}_{F, S}$ is $\OO$-torsion free. 
\end{prop}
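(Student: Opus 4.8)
The plan is to reduce the whole statement to a dimension count, combining the presentation of Proposition~\ref{presentation}, the fact that $\wtimes_{\OO}$ preserves equidimensionality (Appendix~\ref{equip}), and Krull's Hauptidealsatz; the Cohen--Macaulay assertion will then follow by a depth argument. First I would record two descriptions of $\bar{R}^{\square,\psi}_{F,S}$. Base-changing the isomorphism of Proposition~\ref{presentation} along $R^{\square,\psi}_{\Sigma}\to\bar{R}^{\square,\psi}_{\Sigma}$ gives
$$\bar{R}^{\square,\psi}_{F,S}\cong\bar{R}^{\square,\psi}_{\Sigma}[[x_1,\dots,x_{r+|\Sigma|-1}]]/(\bar{f}_1,\dots,\bar{f}_r).$$
On the other hand, since $\rhobar$ has only scalar endomorphisms the ring $R^{\square,\psi}_{F,S}$ is a power series ring over $R^{\psi}_{F,S}$ in $j:=4|\Sigma|-1$ variables, and the hypothesis that the local conditions cutting out the $\bar{R}^{\square,\psi}_v$ do not depend on the framing forces the kernel of $R^{\square,\psi}_{F,S}\twoheadrightarrow\bar{R}^{\square,\psi}_{F,S}$ to be extended from $R^{\psi}_{F,S}$; hence $\bar{R}^{\square,\psi}_{F,S}\cong\bar{R}^{\psi}_{F,S}[[y_1,\dots,y_j]]$. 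Assuming $\bar{R}^{\psi}_{F,S}$ is module-finite over $\OO$, it follows that $\bar{R}^{\square,\psi}_{F,S}$ and all of its quotients are module-finite over $\OO[[y_1,\dots,y_j]]$, a ring of dimension $j+1=4|\Sigma|$.

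Next I would compute $\dim\bar{R}^{\square,\psi}_{\Sigma}$. Put $d=[F:\Q]$; as $p$ splits completely in the totally real field $F$, there are exactly $d$ places above $p$ and exactly $d$ archimedean places, so with $n_0$ the number of remaining places of $\Sigma$ we have $|\Sigma|=2d+n_0$. By Appendix~\ref{equip}, $\bar{R}^{\square,\psi}_{\Sigma}$ is equidimensional of dimension $\sum_{v\in\Sigma}\dim\bar{R}^{\square,\psi}_v-(|\Sigma|-1)=(5d+3d+4n_0)-(2d+n_0-1)=6d+3n_0+1$, and therefore
$$\dim\bar{R}^{\square,\psi}_{\Sigma}+|\Sigma|-1=4(2d+n_0)=4|\Sigma|=j+1.$$
By the first displayed isomorphism and Krull's Hauptidealsatz, every irreducible component of $\Spec\bar{R}^{\square,\psi}_{F,S}$ has dimension at least $\dim\bar{R}^{\square,\psi}_{\Sigma}+|\Sigma|-1=4|\Sigma|$; since $\bar{R}^{\square,\psi}_{F,S}$ is module-finite over $\OO[[y_1,\dots,y_j]]$ of dimension $4|\Sigma|$, it must be equidimensional of dimension exactly $4|\Sigma|$. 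Hence $\dim\bar{R}^{\psi}_{F,S}=1$, and a finitely generated $\OO$-module of dimension $1$ has positive $\OO$-rank.

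For the claim about components of $\Spec\bar{R}^{\square,\psi}_{\Sigma}$: given a minimal prime $\qf$ of $\bar{R}^{\square,\psi}_{\Sigma}$, equidimensionality gives $\dim\bar{R}^{\square,\psi}_{\Sigma}/\qf=6d+3n_0+1$, so the same Hauptidealsatz computation applied to $\bar{R}^{\square,\psi}_{F,S}/\qf\bar{R}^{\square,\psi}_{F,S}\cong(\bar{R}^{\square,\psi}_{\Sigma}/\qf)[[x_1,\dots,x_{r+|\Sigma|-1}]]/(\bar{f}_1,\dots,\bar{f}_r)$ shows this quotient has dimension $\ge 4|\Sigma|$, hence exactly $4|\Sigma|$; being module-finite over $\OO[[y_1,\dots,y_j]]$ it is then not killed by any power of $\varpi$ (else it would be finite over $k[[y_1,\dots,y_j]]$, of dimension $<4|\Sigma|$). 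Therefore $(\bar{R}^{\square,\psi}_{F,S}/\qf\bar{R}^{\square,\psi}_{F,S})[1/p]\ne 0$ has a maximal ideal $\mf$ lying over $\qf$; the associated global deformation of $\rhobar$, with the framing forgotten, defines a point of $\mSpec\bar{R}^{\psi}_{F,S}[1/p]$ (as $\bar{R}^{\psi}_{F,S}[1/p]$ is Artinian) whose collection of local framed deformations at the places of $\Sigma$ lies on $V(\qf)$.

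Finally, suppose in addition that every $\bar{R}^{\square,\psi}_v$ is Cohen--Macaulay. Then $\bar{R}^{\square,\psi}_{\Sigma}$ is Cohen--Macaulay as well, by the fibrewise argument underlying Appendix~\ref{equip} (the $\wtimes_{\OO}$ of $\OO$-flat Cohen--Macaulay complete local $\OO$-algebras is flat over each factor with Cohen--Macaulay fibres, since $\varpi$ is a nonzerodivisor and inverting $p$ preserves the property), so $C:=\bar{R}^{\square,\psi}_{\Sigma}[[x_1,\dots,x_{r+|\Sigma|-1}]]$ is Cohen--Macaulay of dimension $4|\Sigma|+r$. As $\bar{R}^{\square,\psi}_{F,S}=C/(\bar{f}_1,\dots,\bar{f}_r)$ has dimension $4|\Sigma|=\dim C-r$, the $\bar{f}_i$ form part of a system of parameters of $C$, hence a regular sequence, and $\bar{R}^{\square,\psi}_{F,S}$ is Cohen--Macaulay. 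Since $\dim(\bar{R}^{\square,\psi}_{F,S}/\varpi\bar{R}^{\square,\psi}_{F,S})=\dim(\bar{R}^{\psi}_{F,S}/\varpi)+j=j<4|\Sigma|$, the element $\varpi$ is a nonzerodivisor on the Cohen--Macaulay ring $\bar{R}^{\square,\psi}_{F,S}\cong\bar{R}^{\psi}_{F,S}[[y_1,\dots,y_j]]$, whence $\bar{R}^{\psi}_{F,S}$ is $\OO$-torsion free. The main obstacle is the bookkeeping that makes the lower bound $\dim\bar{R}^{\square,\psi}_{\Sigma}+|\Sigma|-1$ coming from Kisin's presentation match $j+1=4|\Sigma|$ on the nose; this coincidence, which rests on $p$ splitting completely in $F$ so that the $d$ places above $p$ are balanced by the $d$ archimedean places, is what pins down the dimensions of $\bar{R}^{\square,\psi}_{F,S}$ and $\bar{R}^{\psi}_{F,S}$. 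A secondary technical point is the Cohen--Macaulay strengthening of Appendix~\ref{equip}.
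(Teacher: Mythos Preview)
Your proof is correct and follows essentially the same route as the paper: both combine the presentation of Proposition~\ref{presentation} with the equidimensionality result of Appendix~\ref{equip} and Krull's Hauptidealsatz to match the lower bound $\dim\bar{R}^{\square,\psi}_\Sigma+|\Sigma|-1$ against $4|\Sigma|$, and both deduce $\OO$-torsion freeness from the fact that a system of parameters in a Cohen--Macaulay ring is a regular sequence. The only packaging difference is that the paper first lifts $y_1,\dots,y_{4|\Sigma|-1}$ to elements $\bar f_{r+1},\dots,\bar f_{r+4|\Sigma|-1}$ so as to present $\bar{R}^{\psi}_{F,S}$ itself as $\bar{R}^{\square,\psi}_\Sigma[[x_1,\dots,x_{r+|\Sigma|-1}]]/(\bar f_1,\dots,\bar f_{r+4|\Sigma|-1})$ and then argues with the single system of parameters $\bar f_1,\dots,\bar f_{r+4|\Sigma|-1},\varpi$ (invoking \cite[Lem.~3.9]{blocksp2} for the component claim and \cite[Thm.~17.4]{mat} for regularity), whereas you stay in $\bar{R}^{\square,\psi}_{F,S}$ and handle each minimal prime $\qf$ by a direct Hauptidealsatz argument.
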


\begin{proof}

The ring $\bar{R}^{\square, \psi}_{F, S}$ parameterises all the framed deformations of $\rhobar$ satisfying the local deformation
conditions imposed at $v\in \Sigma$. Proposition \ref{presentation} implies that 
\begin{equation}\label{presentation2} 
\bar{R}^{\square, \psi}_{F, S}\cong \bar{R}_\Sigma^{\square, \psi}[[x_1, \ldots, x_{r+|\Sigma|-1}]]/(\bar{f}_1, \ldots, \bar{f}_{r}), 
\end{equation}
where $\bar{f}_1, \ldots, \bar{f}_{r} \in \bar{R}^{\square, \psi}_{\Sigma}[[ x_1, \ldots, x_{r+|\Sigma| -1}]]$ are the images of 
$f_1, \ldots, f_{r}$ of Proposition \ref{presentation} under the natural map.
By forgetting the framing we obtain a map $R^{\psi}_{F, S}\rightarrow R^{\psi, \square}_{F,S}$, which induces an isomorphism
$$R^{\square, \psi}_{F, S}\cong R^{\psi}_{F, S}[[y_1, \ldots, y_{4|\Sigma|-1}]].$$ Since the local deformation conditions at $v\in \Sigma$ do not interfere with framing we have:
\begin{equation}\label{presentation3} 
\bar{R}^{\square, \psi}_{F, S}\cong \bar{R}^{\psi}_{F, S}[[y_1, \ldots, y_{4|\Sigma|-1}]].
\end{equation}

Since $\bar{R}^{\square, \psi}_v$ is $\OO$-torsion free for all $v\in \Sigma$ it follows from Lemma \ref{equidim} that 
the Krull dimension of $\bar{R}_{\Sigma}^{\square, \psi}$ is equal to 
$$1+\sum_{v\in \Sigma} \dim \bar{R}_v^{\square, \psi}[1/p]= 1+ 4 \sum_{v\mid p} 1 + 2\sum_{v\mid \infty}1+3( |\Sigma|- \sum_{v\mid p} 1-\sum_{v\mid\infty} 1)= 3|\Sigma|+1,$$
where in the last equality we have used the assumption that $F$ is totally real and $p$ splits completely in $F$. It follows from \eqref{presentation2} that 
$\dim \bar{R}^{\square, \psi}_{F, S}\ge 4|\Sigma|$ and \eqref{presentation3} implies that $\dim \bar{R}^{\psi}_{F, S}\ge 1$. Since 
$\bar{R}^{\psi}_{F, S}$ is a finitely generated $\OO$-module by assumption, we deduce that the rank of $\bar{R}^{\psi}_{F, S}$ 
 as an $\OO$-module is at least $1$.  
 
 Choose elements $\bar{f}_{r+1}, \ldots, \bar{f}_{r+4|\Sigma|-1}\in \bar{R}^{\square, \psi}_\Sigma[[x_1, \ldots, x_{r+|\Sigma|-1}]]$ which map to elements 
 $y_1, \ldots, y_{4|\Sigma|-1}$ via the isomorphisms \eqref{presentation2} and \eqref{presentation3}. Since 
 \begin{equation}\label{present5}
 \bar{R}^{\psi}_{F, S}\cong \bar{R}^{\square, \psi}_{\Sigma}[[x_1, \ldots, x_{r+|\Sigma|-1}]]/(\bar{f}_1, \ldots, \bar{f}_{r+4|\Sigma|-1})
 \end{equation}
 and $\bar{R}^{\psi}_{F, S}$ is a finitely generated $\OO$-module of rank at least $1$, we deduce that 
 $\bar{f}_1, \ldots, \bar{f}_{r+4|\Sigma|-1},\varpi$ is a system of parameters in 
 $\bar{R}^{\square, \psi}_\Sigma[[x_1, \ldots, x_{r+|\Sigma|-1}]]$. Since $\bar{R}^{\square, \psi}_v$ are equidimensional for all $v\in \Sigma$, 
 Lemma \ref{equidim} implies that $\bar{R}^{\square, \psi}_{\Sigma}$ is equidimensional, and hence the same holds for the formal power series ring over $\bar{R}^{\square, \psi}_{\Sigma}$.   Lemma 3.9 of \cite{blocksp2} implies that 
every irreducible component of $\Spec \bar{R}^{\square, \psi}_{\Sigma}[[x_1, \ldots, x_{r+|\Sigma|-1}]]$ contains a 
maximal ideal of $\bar{R}_{F, S}^{\psi}[1/p]$. Since every irreducible component of  $$\Spec \bar{R}^{\square, \psi}_{\Sigma}[[x_1, \ldots, x_{r+|\Sigma|-1}]]$$ is of the form $V(\pp[[x_1,\ldots, x_{r+|\Sigma|-1}]])$ for a minimal prime $\pp$ of $\bar{R}^{\square, \psi}_{\Sigma}$, we deduce that every irreducible component 
of $\Spec \bar{R}_{\Sigma}^{\square, \psi}$ meets $\Spec \bar{R}^{\psi}_{F, S}[1/p]$.
 
If $\bar{R}_{v}^{\square, \psi}$ is Cohen--Macaulay for each $v\in \Sigma$ then $\bar{R}_{\Sigma}^{\square, \psi}$ is also Cohen--Macaulay. 
The proof of this claim, which we leave as an exercise for the reader, uses that the rings $\bar{R}_{v}^{\square, \psi}$ 
are $\OO$-torsion free, which implies that the functor $\wtimes_{\OO} \bar{R}_v^{\square, \psi}$ is exact.
 Since a system of parameters in a Cohen--Macaulay ring is a regular sequence by \cite[Theorem 17.4]{mat},  it follows from \eqref{present5} that  $\varpi$ is 
 $\bar{R}^{\psi}_{F, S}$-regular, and so $\bar{R}^{\psi}_{F, S}$ is $\OO$-torsion free. 
\end{proof}
\subsection{Global potentially semi-stable deformation rings}\label{global_pst} We will now specify the quotients $\bar{R}^{\square, \psi}_v$ in Proposition \ref{abstract_prop}.

If $v$ is infinite then we let $\bar{R}^{\square, \psi}_v=R^{\square, \psi}_v$. Let $c\in \Gal(\mathbb{C}/\mathbb{R})$ be the complex conjugation. If $p>2$ then the assumption that 
$\rhobar$ is modular implies that $\det \rhobar(c)=-1$, hence $\psi(c)=1$. Then $\bar{R}^{\square, \psi}_v\cong \OO[[x, y,z]]/( (1+x)^2+yz-1)\cong \OO[[y,z]]$ 
and the universal framed deformation is given by sending $c$ to the matrix $\bigl( \begin{smallmatrix} 1+ x& y\\ z & -1-x \end{smallmatrix} \bigr)$. If $p=2$ then 
$\psi$ is totally even by assumption and $R^{\square, \psi}_v$ is an integral domain of dimension $3$,  which is a complete intersection ring, and hence Cohen--Macaulay, see \cite[Prop.2.5.6]{kisin_serre_2}. 

For each finite $v\in \Sigma$ we fix a semisimple representation $\tau_v: I_{F_v} \rightarrow \GL_2(L)$, where $I_v$ is the 
inertia subgroup of $G_{F_v}$.

If  $v\nmid p$  we let $\bar{R}_v^{\square, \psi}$ be the maximal reduced and $p$-torsion free quotient of $R_v^{\square, \psi}$ all of whose $\overline{L}$-points give rise to representations $\rho$ of $G_{F_v}$, such that the semisimplification 
of the restriction of $\rho$ to $I_{F_v}$ is isomorphic to $\tau_v$. If $\bar{R}_v^{\square, \psi}$ is non-zero then it is equidimensional of dimension $4$
and the irreducible components of $\bar{R}_v^{\square, \psi}[1/p]$ are formally smooth, \cite[Thm.4.1.1]{pil}. The ring $\bar{R}_v^{\square, \psi}$ is Cohen--Macaulay by \cite[\S 5.5]{shotton} in the case $p>2$ and by Theorem  \ref{thm:CM} proved by Jack Shotton in the appendix below in the case  $p=2$.

For each $v\mid p$ we additionally fix a pair of integers $\wt_v=(a_v,b_v)$ with $a_v<b_v$.   We let $\bar{R}_v^{\square, \psi}$ be the maximal reduced and $p$-torsion free quotient of $R_v^{\square, \psi}$ all of whose $\overline{L}$-points give rise to representations $\rho$ of $G_{F_v}$, which are potentially semistable of type $(\mathbf{w}_v, \tau_v, \psi)$. If $\bar{R}_v^{\square, \psi}$ is non-zero then $\bar{R}_v^{\square, \psi}$ is equidimensional of dimension $5$ by \cite[Thm.3.3.4]{ki08}.
If $\End_{G_{F_v}}(\rhobar)=k$ then $\bar{R}_v^{\square, \psi}$ is formally smooth of relative dimension $3$ over $R^{\psi}_{\rhobar_v}(\wt_v, \tau_v)$. 
Thus if we are in the setting of Theorems \ref{theorem-main}, \ref{gen_split} then $\bar{R}_v^{\square, \psi}$ is Cohen--Macaulay.

Once we have specified the quotients $\bar{R}_v^{\square, \psi}$ for all $v\in \Sigma$ we let $\bar{R}_\Sigma^{\square, \psi}$, $\bar{R}^{\square, \psi}_{F, S}$ and $\bar{R}^{\psi}_{F, S}$ be as in 
Proposition \ref{abstract_prop}.  We will refer to $\bar{R}^{\psi}_{F, S}$ as a global potentially semi-stable deformation ring.  We will show that 
$\bar{R}^{\psi}_{F, S}$ is a finitely generated $\OO$-module and $\bar{R}^{\psi}_{F, S}[1/p]$ is reduced. 

\subsection{Odd primes} In this subsection we assume that $p>2$. We further assume that $\rhobar$ is modular, 
the restriction of $\rhobar$ to $G_{F(\zeta_p)}$ is irreducible,  if $p=5$ then we further assume that the projective image of $\rhobar|_{G_{F(\zeta_p)}}$ is not isomorphic to $A_5$ and if $p=3$ then we further assume $\rhobar_v\not\sim \bigl ( \begin{smallmatrix}\chi \omega & \ast \\ 0 & \chi \end{smallmatrix}\bigr )$ for any character $\chi: G_{F_v}\rightarrow k^{\times}$ and for any $v\mid p$. We note that Lemma \ref{H20} implies that the assumptions in Proposition \ref{presentation} are satisfied.

\begin{prop}\label{finiteO}  The ring  $\bar{R}^{ \psi}_{F, S}$ is a finitely generated $\OO$-module. 
\end{prop}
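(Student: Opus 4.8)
The proof is the standard Khare--Wintenberger argument combining Taylor's potential modularity theorem with the Taylor--Wiles--Kisin patching method; granting the local results of \S\ref{global_pst} (that each $\bar{R}^{\square,\psi}_v$ is $\OO$-torsion free and equidimensional of the dimension prescribed in Proposition \ref{abstract_prop}), the argument is routine, so the plan is only to indicate the main steps. First I would make the elementary reduction: $\bar{R}^{\psi}_{F,S}$ is a complete Noetherian local $\OO$-algebra, hence $\varpi$-adically separated, and such a ring is a finitely generated $\OO$-module as soon as $\bar{R}^{\psi}_{F,S}/\varpi$ is a finite-dimensional $k$-vector space, i.e. as soon as $\dim \bar{R}^{\psi}_{F,S}/\varpi=0$, i.e. $\dim\bar{R}^{\psi}_{F,S}\le 1$. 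Since forgetting the framing exhibits $\bar{R}^{\square,\psi}_{F,S}$ as a power series ring over $\bar{R}^{\psi}_{F,S}$, it is equivalent to bound $\dim\bar{R}^{\square,\psi}_{F,S}$ appropriately. By Lemma \ref{H20} the hypotheses of Proposition \ref{presentation} are in force, so we have the presentations of that proposition at our disposal.

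Next comes the automorphic input. Under the running hypotheses on $\rhobar$ (irreducibility of $\rhobar|_{G_{F(\zeta_p)}}$, the exclusion of $A_5$ when $p=5$, the condition on $\rhobar_v$ when $p=3$), Taylor's potential modularity theorem produces a totally real solvable extension $F'/F$, in which $p$ still splits completely, linearly disjoint over $F$ from $F(\zeta_p)$ and from the fixed field of $\ker(\rhobar)$, such that $\rhobar|_{G_{F'}}$ is modular; one arranges in addition that the local deformation problems at the places of $F'$ over $\Sigma$ take a simple shape. I would then run the Taylor--Wiles--Kisin patching construction of \cite{ki09} in the generality provided by \cite{gee_kisin} (for $p>2$), respectively \cite{blocksp2} (for $p=2$, where one must also carry along the totally even central character $\psi$): this yields a nonzero finitely generated module $M_\infty$ over a power series ring $R_\infty=\bar{R}^{\square,\psi}_\Sigma[[x_1,\dots,x_g]]$, together with an ideal $\mathfrak{a}_\infty\subset R_\infty$ generated by the patching variables and by the augmentation ideal of the Taylor--Wiles diamond algebra, such that $R_\infty/\mathfrak{a}_\infty$ recovers the global framed ring, the quotient $M_\infty/\mathfrak{a}_\infty M_\infty$ is a finitely generated $\OO$-module (the space of algebraic automorphic forms of the prescribed weight and level localised at $\rhobar$), and the induced action of $R_\infty/\mathfrak{a}_\infty$ on $M_\infty/\mathfrak{a}_\infty M_\infty$ factors, after forgetting the framing, through $\bar{R}^{\psi}_{F,S}$.

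Since $\bar{R}^{\square,\psi}_\Sigma$ is equidimensional by Lemma \ref{equidim}, $R_\infty$ is equidimensional, and $M_\infty$ is maximal Cohen--Macaulay over $R_\infty$; hence $\Supp_{R_\infty}M_\infty$ is a union of irreducible components of $\Spec R_\infty$. It is here that potential modularity is essential: each such component corresponds to a choice of irreducible component of $\Spec\bar{R}^{\square,\psi}_v[1/p]$ at every $v\in\Sigma$, and picking a smooth $\bQp$-point of the corresponding global deformation space gives a representation which, by solvable base change to $F'$, the automorphy lifting theorem of \cite{gee_kisin} (resp. \cite{blocksp2}), and descent, is modular; therefore it lies in $\Supp_{R_\infty}M_\infty$, and so $\Supp_{R_\infty}M_\infty=\Spec R_\infty$. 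Consequently $M_\infty/\mathfrak{a}_\infty M_\infty$ has full support over $R_\infty/\mathfrak{a}_\infty$, so $\bar{R}^{\psi}_{F,S}$ acts on this finitely generated $\OO$-module with nilpotent kernel, whence $\bar{R}^{\psi}_{F,S}$ embeds modulo a nilpotent ideal into the endomorphism ring of a finitely generated $\OO$-module and is itself a finitely generated $\OO$-module. The commutative algebra is formal; the real work — and the only place any genuine difficulty lies — is the bookkeeping of inertial types and Hodge--Tate weights under the base change $F'/F$ together with the verification that the patching axioms of \cite{gee_kisin} (resp. \cite{blocksp2}) apply for the weights, types, and prime $p$ at hand, and that every component of the local rings $\bar{R}^{\square,\psi}_v$ contains a modular point, which is why the equidimensionality and component structure of those local rings, recorded in \S\ref{global_pst}, are needed as input.
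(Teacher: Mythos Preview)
Your approach is genuinely different from the paper's and, as written, has a gap.

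The paper does \emph{not} run patching for arbitrary Hodge--Tate weights. It first notes that the case $\wt_v=(0,1)$ for all $v\mid p$ is already \cite[Lem.~4.4.3(3)]{gee_kisin}, and then reduces the general case to this one using the geometric Breuil--M\'ezard conjecture. For each $v\mid p$ one chooses a principal series type $\tau'_v$ of conductor $\geq 2$, so that every Serre weight of $\rhobar_v$ appears in $\overline{\sigma(\tau'_v)}$. Writing $\widetilde R_v^{\square,\psi}$ for the potentially semistable ring of type $((0,1),\tau'_v,\psi)$, the geometric Breuil--M\'ezard statement gives an inclusion of underlying topological spaces $\Spec \bar R_v^{\square,\psi}/(\varpi)\subset \Spec \widetilde R_v^{\square,\psi}/(\varpi)$ for every potentially semistable quotient $\bar R_v^{\square,\psi}$. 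Pushing this along the map to the global ring yields $\Spec \bar R_{F,S}^{\psi}/(\varpi)\subset \Spec \widetilde R_{F,S}^{\psi}/(\varpi)$; since $\widetilde R_{F,S}^{\psi}$ is finite over $\OO$ by the weight-$(0,1)$ case, its special fibre is the closed point, hence $\bar R_{F,S}^{\psi}/(\varpi)$ is artinian and Nakayama finishes.

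The gap in your argument is the sentence ``picking a smooth $\bQp$-point of the corresponding global deformation space''. You have no a priori source of global characteristic-zero lifts of $\rhobar$ landing on a prescribed local component: in fact Proposition~\ref{abstract_prop} shows that the existence of global points on every component of $\Spec \bar R_\Sigma^{\square,\psi}$ is a \emph{consequence} of the finiteness you are trying to prove. Automorphy lifting theorems certify that a given global lift is modular; they do not manufacture the lift. (Note also that the lifting theorem in \cite{gee_kisin} is for potentially Barsotti--Tate representations; for higher weight you would need \cite{ki09} or \cite{ht}.) Indeed the paper runs exactly your patching argument later, in the proof of Proposition~\ref{reduced}, but there it invokes Proposition~\ref{finiteO} together with Proposition~\ref{abstract_prop} precisely in order to populate every component of $\Spec \bar R_\Sigma^{\square,\psi}$ with modular points. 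The Breuil--M\'ezard reduction is what breaks this circularity.
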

\begin{proof} If $\wt_v=(0,1)$ for all $v\mid p$ then the assertion follows from Lemma 4.4.3(3) of \cite{gee_kisin}. We will deduce the general 
case from this result and (a weak form of) the geometric Breuil--M\'ezard conjecture proved in \cite{eg, ht, pa12}.

We fix a place $v\mid p$. Let $\theta_{v,1}, \theta_{v, 2}:I_{F_v}\rightarrow L^{\times}$ be smooth characters such that $\theta_{v,1}\theta_{2,v}=\psi|_{I_{F_v}}$.  We view $\theta_{v,1}$ and $\theta_{v,2}$ as  characters of 
$\OO_{F_v}^{\times}= \Zp^{\times}$ via the class field theory. Let $c$ be the smallest integer such that $\theta_{v,1}\theta_{v,2}^{-1}$ is 
trivial on $1+p^c \Zp$.  We assume that $c\ge 2$. Let $J_c=\biggl( \begin{smallmatrix} \Zp^{\times} & \Zp\\ p^c\Zp & \Zp^{\times}\end{smallmatrix}\biggr)$,  
$K=\GL_2(\Zp)$ and $\theta_v: J_c\rightarrow L^{\times}$ be the character $\bigl( \begin{smallmatrix} x & y\\ z & w\end{smallmatrix}\bigr)\mapsto \theta_{v,1}(x) \theta_{v,2}(w)$. The representation $\sigma(\tau_v):=\Ind_{J_c}^K \theta_v$ has the following property: 
if $\pi$ is an irreducible smooth $\overline{L}$-representation of $\GL_2(\Qp)$  then 
$\Hom_K(\sigma(\tau), \pi)\neq 0$ if and only if $r|_{I_{\Qp}}\cong \tau_v:=\theta_{v,1}\oplus\theta_{v,2}$, where $r$ is the Weil--Deligne representation corresponding to 
$\pi$ via the classical local Langlands correspondence.
The central character 
of $\sigma(\tau_v)$ is equal to $\theta_v|_{\Zp^{\times}}$. Let us choose  a $K$-invariant $\OO$-lattice in $\sigma(\tau_v)$ and  let 
$\overline{\sigma(\tau_v)}$ be the semi-simplification of its reduction modulo $\varpi$.
Since $c\ge 2$ it follows from 
\cite[Lem.A.3]{pa10} that every irreducible $k$-representation $\sigma$ of $K$ with central character  congruent to $\theta_v|_{\Zp^{\times}}$ modulo $\varpi$ occurs 
as a subquotient of $\overline{\sigma(\tau_v)}$. In particular, the Serre weights corresponding to $\rhobar_v$ will also occur as subquotients
of $\overline{\sigma(\tau_v)}$.

Let $\widetilde{R}_v^{\square, \psi}$ be the maximal reduced, $p$-torsion free quotient of $R_v^{\square, \psi}$ 
all of whose $\overline{L}$-points give rise to representations  of $G_{F_v}$, which are potentially semistable
 of type $((0,1), \tau_v, \psi)$. It follows from the geometric Breuil--M\'ezard conjecture proved in \cite{eg, ht, pa12} that 
 $\widetilde{R}_v^{\square, \psi}\neq 0$ and for all potentially semi-sta\-ble deformation rings $\bar{R}_v^{\square, \psi}$ the underlying topological space of $\Spec \bar{R}_v^{\square, \psi}/(\varpi)$ is contained in the underlying topological 
 space of $\Spec \widetilde{R}_v^{\square, \psi}/(\varpi)$. We define the ring $\widetilde{R}^{\psi}_{F, S}$ in the same way as the ring $\bar{R}^{\psi}_{F, S}$ with the same deformation conditions
 at $v\nmid p$, but setting
 $\wt_v=(0,1)$ and $\tau_v=\theta_{v,1}\oplus \theta_{v,2}$ as above for every $v\mid p$. 
  
 If $A$ is a commutative ring, $B$ is an $A$-algebra   and $\mathfrak a_1$, $\mathfrak a_2$ are ideals of $A$ such that $V(\mathfrak a_1)\subset V(\mathfrak a_2)$ 
 in $\Spec A$ then $V(\mathfrak a_1 B)\subset V(\mathfrak a_2 B)$ in $\Spec B$ as $V(\mathfrak a B)$ is the preimage of $V(\mathfrak a)$ 
  in $\Spec B$. Combining this observation with the local results explained above we obtain an inclusion of the underlying topological spaces 
  $\Spec \bar{R}_{F, S}^{\psi}/(\varpi) \subset \Spec \widetilde{R}^{\psi}_{F, S}/(\varpi)$ in $\Spec R^{\psi}_{F, S}$. Since  $\widetilde{R}^{\psi}_{F, S}$ is a finitely generated $\OO$-module by Lemma 4.4.3(3) of \cite{gee_kisin}, where we take both subsets $\Sigma$ and $\Sigma'$ in 
  \textit{loc.~cit.} to be empty, the underlying topological space of  $\Spec \widetilde{R}^{\psi}_{F, S}/(\varpi)$ consists
  only of the maximal ideal of $R^{\psi}_{F, S}$. This implies that $\bar{R}_{F, S}^{\psi}/(\varpi)$ is a finite dimensional $k$-vector space 
  and Nakayama's lemma  for compact $\OO$-modules implies that $\bar{R}_{F, S}^{\psi}$ is a finitely generated $\OO$-module.
\end{proof} 

\begin{prop}\label{reduced} The ring  $\bar{R}^{\psi}_{F, S}[1/p]$ is reduced. 
\end{prop}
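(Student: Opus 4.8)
The plan is to run the Taylor--Wiles--Kisin patching method in the form already used in the proof of Proposition \ref{finiteO}, to identify $\bar{R}^{\psi}_{F, S}[1/p]$ with a Hecke algebra via a commutative algebra argument (Lemma \ref{com_alg}), and then to invoke the classical semisimplicity of the Hecke action on automorphic forms. Concretely, I would first set up the patching, following \cite{gee_kisin} when $p>2$ and \cite{blocksp2} when $p=2$, exactly as in the proof of Proposition \ref{finiteO}. This produces a complete local Noetherian $\OO$-algebra $R_\infty \cong \bar{R}^{\square, \psi}_\Sigma[[z_1,\dots,z_g]]$, a formal power series ring $S_\infty = \OO[[y_1,\dots,y_d]]$ with $\dim S_\infty = \dim R_\infty$, a local $\OO$-algebra map $S_\infty \to R_\infty$, a finite $R_\infty$-module $M_\infty$ which is \emph{free} over $S_\infty$ and carries a commuting $R_\infty$-action, and an ideal $\mathfrak{a} \subseteq S_\infty$ with $S_\infty/\mathfrak{a} \cong \OO$, such that $R_\infty/\mathfrak{a}R_\infty \cong \bar{R}^{\square, \psi}_{F, S}$ and $M_\infty/\mathfrak{a}M_\infty$ is the relevant (localized) space of automorphic forms $M$, the action of $\bar{R}^{\psi}_{F, S}$ on $M$ factoring through the Hecke algebra $\mathbb{T} \subseteq \End_{\OO}(M)$ generated by the operators $T_v, S_v$ at places $v \notin S$. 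The patching is arranged so that $M_\infty$ is supported on all of $\Spec R_\infty$, i.e.\ $R_\infty$ acts faithfully on $M_\infty$.

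The commutative algebra step comes next. Since $M_\infty$ is free over $S_\infty$ of dimension $\dim S_\infty = \dim R_\infty$, it is a maximal Cohen--Macaulay $R_\infty$-module; Lemma \ref{com_alg} then lets one pass from the faithful $R_\infty$-action on $M_\infty$ to a faithful $(R_\infty/\mathfrak{a}R_\infty)$-action on $M_\infty/\mathfrak{a}M_\infty = M$. Hence $\bar{R}^{\psi}_{F, S}$ acts faithfully on $M$ and is therefore identified, via this action, with the Hecke algebra $\mathbb{T}$; in particular $\bar{R}^{\psi}_{F, S}[1/p] \cong \mathbb{T}[1/p]$. This is Kisin's $R[1/p] = \mathbb{T}[1/p]$ theorem, reproved here in the generality of arbitrary (principal series, or more general) inertial types, using the Breuil--M\'ezard input of \cite{pa12} and \cite{blocksp2}.

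Finally, it remains to observe that $\mathbb{T}[1/p]$ is reduced. It is a finite-dimensional $L$-algebra acting faithfully on the finite-dimensional $L$-vector space $M[1/p]$, and the commuting operators $\{T_v, S_v : v \notin S\}$ are simultaneously diagonalizable over $\overline{L}$ by the theory of newforms and strong multiplicity one; thus $\mathbb{T}[1/p] \otimes_L \overline{L}$ embeds into a finite product of copies of $\overline{L}$, so $\mathbb{T}[1/p]$, and hence $\bar{R}^{\psi}_{F, S}[1/p]$, is reduced.

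The main obstacle is the middle step: making the patching construction precise enough that the $\bar{R}^{\psi}_{F, S}$-action on $M$ provably factors through $\mathbb{T}$, and that $M_\infty$ is free over $S_\infty$ and supported on every irreducible component of $\Spec R_\infty$, so that Lemma \ref{com_alg} applies and gives the faithfulness descent. (Note it is genuinely \emph{not} enough to know that $R_\infty[1/p]$ is reduced, since the fibre of a finite morphism at a single closed point need not be reduced; one really does need the external automorphic semisimplicity input.) All of this is more delicate when $p = 2$, where one substitutes the automorphic and local inputs from the appendix (\S\ref{appendix_jack}) and from \cite{blocksp2}, as is already signalled by the standing hypotheses of this section.
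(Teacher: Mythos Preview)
Your overall strategy---patching plus Lemma~\ref{com_alg} plus Hecke semisimplicity---is exactly the paper's, but there is a genuine gap in how you invoke Lemma~\ref{com_alg}, and the logical order of your argument is inverted.

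Lemma~\ref{com_alg} does \emph{not} say that a faithful Cohen--Macaulay module descends to a faithful module on the quotient. Its hypotheses are: (1) $(M_\infty/\mathfrak{a}M_\infty)[1/p]$ is a \emph{semi\-simple} $R_\infty[1/p]$-module, and (2) $(R_\infty)_\nn$ is \emph{regular} for every maximal ideal $\nn$ of $R_\infty[1/p]$ in the support of $M[1/p]$. You never address (2). The paper checks (2) by the purity argument from \cite[Lem.~4.4.3]{gee_kisin}: any maximal ideal of $\bar{R}^{\square,\psi}_\Sigma[1/p]$ in the support of $M$ corresponds to an automorphic Galois representation, which is generic on each local factor, hence lies in the smooth locus. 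Without this, the lemma simply does not apply. As for (1), this \emph{is} the Hecke semisimplicity from \cite[Lem.~1.3(1)]{taylor_deg2}---but in the paper it is used as an \emph{input} to Lemma~\ref{com_alg}, whose conclusion is then directly that $R_\infty/(y_1,\ldots,y_h)[1/p]$ is reduced. Your proposal uses the lemma to get faithfulness, then identifies with $\mathbb T$, then invokes semisimplicity again to conclude $\mathbb T[1/p]$ is reduced; this is circular (you cannot apply the lemma without already knowing (1)) and the final detour through $R=\mathbb T$ is superfluous.

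Two smaller points. First, the proof of Proposition~\ref{finiteO} does \emph{not} carry out any patching---it reduces to the parallel weight $(0,1)$ case via the geometric Breuil--M\'ezard conjecture and quotes \cite{gee_kisin}---so ``patching as in Proposition~\ref{finiteO}'' is not a reference you can invoke; the paper sets up the quaternionic automorphic forms and the patching from scratch here, after first reducing to $[F:\Q]$ even by a base-change argument. Second, in the \cite{gee_kisin} convention $R_\infty$ is \emph{defined} as a power series ring over $\bar{R}^{\square,\psi}_\Sigma$, so \eqref{patched_ring} is only a surjection $R_\infty/(y_1,\ldots,y_h)\twoheadrightarrow \bar{R}^{\psi}_{F,S}$, not an isomorphism; the faithfulness conclusion of Lemma~\ref{com_alg} is precisely what promotes it to an isomorphism after inverting $p$ (see the Remark following the lemma). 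Finally, this proposition is in the odd-prime subsection; the $p=2$ case is handled separately in Proposition~\ref{reduced_p2} by a different reduction.
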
 
\begin{proof} We may assume that $\bar{R}^{\psi}_{F, S}[1/p]$ is non-zero. If $\wt_v=(0,1)$ for all $v\mid p$ then the result 
follows from \cite[Lem.4.4.3(3)]{gee_kisin}. We will explain how to modify the proof of \textit{loc.~cit.} so that the result holds in general.

We will first reduce the proof to the case when $[F:\Q]$ is even. Let $F'$ be a totally real quadratic extension of $F$ linearly disjoint from 
$\overline{F}^{\ker \rhobar}(\zeta_p)$ over $F$ such that $p$ splits completely 
in $F'$. Let $S'$ be the set of places of $F'$ above $S$ and let $\psi'$ be the restriction of $\psi$ to $G_{F'}$. The composition
$G_{F'}\rightarrow G_F\twoheadrightarrow G_{F, S}$ factors through $G_{F', S'}$. Let $N$ be the image of $G_{F', S'}$ in $G_{F, S}$. 
Then $N$ is an open  normal subgroup of $G_{F, S}$ of index $1$ or $2$. In particular, the index is not divisible by $p$. The inflation-restriction 
exact sequence implies that restriction induces an injection 
$$ H^1(G_{F, S}, \ad^0 \rhobar)\hookrightarrow H^1(N, \ad^0\rhobar).$$
Since the action of $G_{F', S'}$ on $\ad^0\rhobar$  factors through the action of $N$ we have 
$$B^1(N, \ad^0\rhobar)=B^1(G_{F', S'}, \ad^0\rhobar), \quad Z^1(N, \ad^0\rhobar)\subset Z^1(G_{F',S'}, \ad^0 \rhobar).$$ 
Hence, the group homomorphism $G_{F', S'}\twoheadrightarrow N$ induces an  injection 
$$H^1(N, \ad^0\rhobar)\hookrightarrow  H^1(G_{F', S'}, \ad^0\rhobar).$$
 We conclude that the group homomorphism $G_{F', S'}\rightarrow G_{F, S}$ induces an injection 
$$ H^1(G_{F, S}, \ad^0 \rhobar)\hookrightarrow H^1(G_{F', S'}, \ad^0\rhobar).$$
The restriction to $G_{F'}$ induces a homomorphism of local $\OO$-algebras $R^{\psi'}_{F', S'}\rightarrow R^{\psi}_{F, S}$. The injection 
on the $H^1$-groups above implies that the map induces a surjection on tangent spaces, and hence the map is surjective. For each 
$v\in \Sigma$ and each $w\mid v$ we let $\tau_w:= \tau_v|_{F'_w}$. Moreover, for each $v\in \Sigma$ such that $v\mid p$ and each 
$w\mid v$ we let $\wt_w=\wt_v$. The surjection $R^{\psi'}_{F', S'}\twoheadrightarrow R^{\psi}_{F, S}\twoheadrightarrow \bar{R}^{\psi}_{F,S}$
factors through the map $\bar{R}^{\psi'}_{F', S'}\rightarrow \bar{R}^{\psi}_{F, S}$. We conclude that this map and 
hence the map $\bar{R}^{\psi'}_{F', S'}[1/p]\rightarrow \bar{R}^{\psi}_{F, S}[1/p]$ are surjective. Since both rings are zero dimensional 
by Proposition \ref{finiteO}, if $\bar{R}^{\psi'}_{F', S'}[1/p]$ is reduced, so is $\bar{R}^{\psi}_{F,S}[1/p]$.  The degree $[F':\Q]$ is even as 
$F'$ is a quadratic extension of $F$.

Let us assume that $[F:\Q]$ is even and let $D$ be the quaternion algebra with centre $F$, which is  split at all finite and is ramified at all infinite places. We fix a maximal order $\OO_D$ of $D$, 
and an isomorphism $(\OO_D)_v\cong  M_2(\OO_{F_v} )$ at each finite place $v$. Let $\mathbb{A}_F^{\infty}$ be the ring of finite adeles over $F$.  Let $U=\prod_v U_v$ be a compact open subgroup of 
$(D\otimes_F \mathbb{A}_F^{\infty})^{\times}$ such  that $U_v=(\OO_D)^{\times}_v$ for all $v\in \Sigma$ and all $v\not \in S$, except for one carefully chosen place $v_1\not\in S$  in \cite[\S 4.3.2]{gee_kisin}, where
 $U_{v_1}$ is the subgroup of $(\OO_{D})_{v_1}^{\times}\cong \GL_2(\OO_{F_{v_1}})$ consisting of elements which are upper triangular unipotent modulo $v_1$. If $v\in S\setminus \Sigma$ is  a finite place  then we choose $U_v$ to be an open compact subgroup of $(\OO_D)_v$, which is small enough in a sense to be made precise below. 
 
If $v$ is a finite place and $\tau_v: I_{F_v}\rightarrow \GL_2(L)$ is a representation with an open kernel then we let  $\sigma(\tau_v)$ be a smooth irreducible 
 $L$-representation of $\GL_2(\OO_{F_v})$ defined as in \cite[\S 3.2]{shotton} following \cite{he}. If $\tau_v$ is scalar, so that $\tau_v\cong \theta_v \oplus \theta_v$, then we let 
 $\sigma(\tau_v)$ be the inflation of the Steinberg representation of $\GL_2(k_v)$ twisted by $\theta_v\circ\det$. 
 
 If $v\in S\setminus \Sigma$ is a finite place then there are only finitely many $\tau_v: I_{F_v}\rightarrow \GL_2(\overline{L})$ such that 
 there is a lift $\rho: G_{F_v}\rightarrow \GL_2(\overline{L})$ of $\rhobar|_{G_{F_v}}$ such that the semi-simplification of 
 $\rho|_{I_{F_v}}$ is isomorphic to $\tau_v$, \cite[Lem.3.7, 3.8]{shotton}. In this case we choose $U_v$ to be an open subgroup
 of $(\OO_D)_v^{\times}$, which is contained in the kernel of $\sigma(\tau_v)$ for each $\tau_v$ as above.

 For every $v\mid p$ we let 
 $\sigma(\wt_v, \tau_v)= \Sym^{b_v-a_v-1} L^2 \otimes \det^{a_v} \otimes \sigma(\tau_v)$. 
 We fix a $U_v$-invariant $\OO$-lattice 
 $\mathcal{L}_v$ in $\sigma(\tau_v)$ if $v\nmid p$ and in $\sigma(\wt_v, \tau_v)$ if $v \mid p$. 
 Then $\mathcal{L}:=\otimes_{v\in \Sigma} \mathcal {L}_v$ is a free $\OO$-module of finite rank with a continuous action of $U$, 
 where each factor $U_v$ acts trivially if $v\not\in \Sigma$ and on the
 factor $\mathcal{L}_v$ if $v\in \Sigma$. The central character of $\mathcal L$ is equal to the restriction of $\psi$ to $U\cap (\mathbb{A}_F^{\infty})^{\times}$.
 
 We let $S_{\tau}(U, \OO)$ be the set of continuous functions $f: D^{\times}\backslash (D\otimes_F \mathbb A_F^{\infty})^{\times}\rightarrow 
 \mathcal L$ such that for $g\in (D\otimes_F \mathbb A_F^{\infty})^{\times}$ we have $f(gu)= u^{-1} f(g)$ for all $u\in U$    and $f(gz)=\psi^{-1}(z) f(g)$ for all $z\in (\mathbb A_F^{\infty})^{\times}$. Then $S_{\tau}(U, \OO)$ is a free $\OO$-module of finite rank. Let 
 $\mathbb{T}_{S, \OO}^{\univ}=\OO[T_v, S_v]_{v\not\in S, v\neq v_1}$ be a commutative polynomial ring in the indicated formal variables. 
Then $S_{\tau}(U, \OO)$ is naturally a $\mathbb{T}_{S, \OO}^{\univ}$-module with $T_v$ acting as a Hecke operator corresponding 
to the double coset $U_v\bigl ( \begin{smallmatrix} \varpi_v & 0 \\ 0 & 1 \end{smallmatrix}\bigr) U_v$ and $S_v$ acting as a 
Hecke operator corresponding to the double coset $ U_v\bigl ( \begin{smallmatrix} \varpi_v & 0 \\ 0 & \varpi_v \end{smallmatrix}\bigr) U_v$.
Let $\mathbb{T}$ be the image of $\mathbb{T}_{S, \OO}^{\univ}$ in $\End_{\OO}(S_{\tau}(U, \OO))$. It follows from \cite[Lem.1.3(1)]{taylor_deg2}  that the action of $\mathbb T[1/p]$ on $S_{\tau}(U, \OO)\otimes_{\OO} L$ is semi-simple. 

There is a continuous representation $\rho^{\mathrm{mod}}: G_{F, S}\rightarrow \GL_2(\mathbb T\otimes_{\OO} \overline{L})$, such that for each $v\not\in S$ 
we have $\tr \rho^{\mathrm{mod}}(\Frob_v)= T_v$ and $\det \rho^{\mathrm{mod}}(\Frob_v)=S_v \mathbf Nv$, where $\mathbf Nv$ denotes the 
number of elements in the residue field of $v$ and $\Frob_v$ is the arithmetic Frobenius, see \cite[\S 1]{taylor_deg2} or \cite[\S 5.2]{notes_gee} 
for a nice exposition. 
Let $\mathfrak{m}$ be a maximal ideal of $\mathbb{T}_{S, \OO}^{\univ}$ generated  by $\varpi$ and all elements, which reduce modulo 
$\varpi$ to  $T_v- \tr \rhobar(\Frob_v), S_v \mathbf Nv- \det \rhobar(\Frob_v)$ for all $v\not\in S$. 
Let $M:=S_{\tau}(U, \OO)_{\mathfrak{m}}$ be the localization of $S_{\tau}(U, \OO)$ at $\mathfrak{m}$. We will see in the next paragraph that 
$M$ is non-zero. Hence the localization $\mathbb T_{\mathfrak m}$ is non-zero and from $\rho^{\mathrm{mod}}$ we obtain a continuous 
representation $\rho^{\mathrm{mod}}_{\mathfrak m}: G_{F, S}\rightarrow \GL_2(\mathbb T_{\mathfrak m})$ which reduces to $\rhobar$ modulo 
$\mathfrak m$. The representations obtained by specializing $\rho^{\mathrm{mod}}_{\mathfrak m}$ at the maximal ideals of 
$\mathbb T_{\mathfrak m}[1/p]$ have determinant $\psi \chi_{\cyc}$ and satisfy the local conditions at $v\in \Sigma$ by the local-global 
compatibility of Langlands correspondence. Since $\mathbb T_{\mathfrak m}$ is reduced and $\OO$-torsion free we deduce that 
$\rho^{\mathrm{mod}}_{\mathfrak m}$ satisfies the local conditions at $v\in \Sigma$ and has determinant $\psi\chi_{\cyc}$. The universal property of $\bar{R}_{F, S}^{\psi}$ gives us a natural map 
$\bar{R}_{F, S}^{\psi}\rightarrow \mathbb T_{\mathfrak m}$. This map is surjective since the images of $T_v$ and $S_v \mathbf Nv$ for
 $v\not \in S$  will be equal to the image of the trace and the determinant, respectively, of $\rho^{\univ}(\Frob_v)$, where $\rho^{\univ}$ is the 
 tautological deformation of $\rhobar$ to $\bar{R}_{F, S}^{\psi}$. Thus   $M$ is  naturally an $\bar{R}^{\psi}_{F, S}$-module. 
 
 A representation $\rho_x:G_{F, S}\rightarrow \GL_2(\kappa(x))$ corresponding to a maximal ideal $x\in \mSpec \bar{R}_{F, S}^{\psi}[1/p]$ 
 is modular by \cite{ki09} and \cite[Thm.6.3]{ht}, where the assumption $\rhobar|_{G_{F_v}}
 \not\sim ( \begin{smallmatrix} \chi\omega & \ast\\ 0 & \chi\end{smallmatrix} \bigr )$ for all $v\mid p$ is removed, if $p\ge 5$. It follows from 
 Jacquet--Langlands and the compatibility of local and global Langlands correspondences that $x$ lies in the support of $M$.
 In particular, $M$ is non-zero. We use \eqref{present5} to get a map $\bar{R}_{\Sigma}^{\square,\psi}\rightarrow \bar{R}^{\psi}_{F, S}$, which 
 makes $M$ into an $\bar{R}_{\Sigma}^{\square,\psi}$-module.
 It follows from Propositions \ref{abstract_prop} and \ref{finiteO}  that every irreducible component of 
 $\Spec \bar{R}_{\Sigma}^{\square,\psi}$ meets the support of $M$. Moreover, the argument with purity in the proof of \cite[Lem.4.4.3]{gee_kisin} shows that every $x\in \mSpec \bar{R}_{\Sigma}^{\square, \psi}[1/p]$ in the
 support of $M$ lies in the smooth locus of  $\Spec \bar{R}_{\Sigma}^{\square, \psi}[1/p]$.
 
 The patching argument carried out in \cite[\S 4.4]{gee_kisin} gives a finitely generated Cohen-Macaulay module $M_{\infty}$ of 
 $R_{\infty}:=\bar{R}_{\Sigma}^{\square, \psi}[[x_1, \ldots, x_g]]$, an $M_{\infty}$-regular sequence of elements $y_1, \ldots, y_h\in R_{\infty}$ such that 
 $\dim R_{\infty}= h+1$, a surjection of $\bar{R}_{\Sigma}^{\square, \psi}$-algebras
 \begin{equation}\label{patched_ring}
  R_{\infty}/(y_1, \ldots, y_h)\twoheadrightarrow \bar{R}_{F, S}^{\psi},
  \end{equation}
 and an isomorphism of $R_{\infty}$-modules $M_{\infty}/(y_1, \ldots, y_h)M_{\infty} \cong M$. Every irreducible component of $\Spec R_{\infty}$ will 
 contain a smooth point in the support of $M$, since we have already established this for $\Spec \bar{R}_{\Sigma}^{\square, \psi}$ and the irreducible 
 components of $\Spec R_{\infty}$ are of the form $V(\pp[[x_1, \ldots, x_g]])$, where $\pp$ is a minimal prime ideal of $\bar{R}_{\Sigma}^{\square, \psi}$.
 Since $R_{\infty}$ is reduced and $\dim M_{\infty}=h+1=\dim R_{\infty}$ we conclude that $R_{\infty}$ acts faithfully on $M_{\infty}$. It follows from Lemma \ref{com_alg}
 below applied with $A=R_{\infty}$, $M=M_{\infty}$, $x_1=p$, $x_i= y_{i-1}$ for $2\le i\le h+1$ that $\bar{R}_{F, S}^{\psi}[1/p]$ is equal to 
 $R_{\infty} /(y_1, \ldots, y_h)[1/p]$, which is reduced. 
 \end{proof}

\begin{lem}\label{com_alg} Let $A$  be a local noetherian ring. Let $M$ be a finitely generated Cohen-Macaulay faithful $A$-module. 
Let $x_1,\ldots, x_d$ be a system of parameters of $A$. Assume 
\begin{itemize}
\item[(1)]  $(M/(x_2,\ldots, x_d) M) [1/x_1]$ is a semi-simple $A[1/x_1]$-module;
\item[(2)] the localization $A_{\nn}$ is regular, for every maximal ideal $\nn$ of $A[1/x_1]$ in the support of $(M/(x_2,\ldots, x_d) M) [1/x_1]$.
\end{itemize}
Then $A/(x_2,\ldots, x_d)[1/x_1]$ is  reduced  and it acts on $(M/(x_2,\ldots, x_d) M) [1/x_1]$ faithfully.
\end{lem}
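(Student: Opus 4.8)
The statement is a piece of pure commutative algebra, Lemma \ref{com_alg}, which abstracts the local input needed in the proof of Proposition \ref{reduced}. We are given a local noetherian ring $A$, a finitely generated Cohen--Macaulay faithful $A$-module $M$, a system of parameters $x_1,\ldots,x_d$, and the two hypotheses on the localization $(M/(x_2,\ldots,x_d)M)[1/x_1]$. The plan is to reduce everything to a statement about the $A/(x_2,\ldots,x_d)$-module $M/(x_2,\ldots,x_d)M$ after inverting $x_1$, using the fact that a system of parameters in a Cohen--Macaulay module is a regular sequence.

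\emph{Step 1: reduce to $d=1$.} Since $M$ is Cohen--Macaulay of dimension $d=\dim A$ (here one uses that $M$ is faithful, so $\dim M=\dim A$ and a system of parameters of $A$ is a system of parameters of $M$), the sequence $x_1,\ldots,x_d$ is $M$-regular, hence so is $x_2,\ldots,x_d,x_1$ after reordering; in particular $M':=M/(x_2,\ldots,x_d)M$ is a Cohen--Macaulay module of dimension $1$ over $\bar A:=A/(x_2,\ldots,x_d)$, and $x_1$ is $M'$-regular. Replacing $A$ by $\bar A$, $M$ by $M'$, and $x_1$ by its image, I would first check that $\bar A$ still acts faithfully on $M'$: this is a standard consequence of the fact that a system of parameters of a Cohen--Macaulay \emph{ring} is a regular sequence, so that killing $x_2,\ldots,x_d$ kills no associated prime of $A$ improperly — more precisely, $\mathrm{Ann}_{\bar A}(M')$ is nilpotent, so faithfulness needs to be reformulated, or we can instead argue with supports directly as below. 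So I would not literally reduce the ring, but reduce notationally to proving: if $M$ is Cohen--Macaulay of dimension $1$, $x:=x_1$ is $M$-regular, $M[1/x]$ is a semisimple $A[1/x]$-module, and $A_{\mathfrak n}$ is regular for all $\mathfrak n$ in the support of $M[1/x]$, then $A[1/x]$ is reduced and acts faithfully on $M[1/x]$.

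\emph{Step 2: the one-dimensional case.} Since $x$ is $M$-regular and $\dim M=1$, the associated primes of $M$ are exactly the minimal primes of $\mathrm{Supp}(M)$, all of coheight $1$ (height $\dim A-1$), and none contains $x$. Inverting $x$, $A[1/x]$ is semilocal of dimension $0$ at the primes in $\mathrm{Supp}(M[1/x])$; in fact $\mathrm{Spec}\,A[1/x]\cap\mathrm{Supp}(M)$ consists of finitely many maximal ideals $\mathfrak n_1,\ldots,\mathfrak n_r$ of $A[1/x]$. At each such $\mathfrak n_i$, hypothesis (2) says $A_{\mathfrak n_i}$ is regular, of dimension $0$ — hence a field. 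Now $M[1/x]$ is semisimple as an $A[1/x]$-module by (1), so it is a finite direct sum $\bigoplus_j \kappa_j$ of simple modules, each $\kappa_j\cong A[1/x]/\mathfrak m_j$ for some maximal ideal $\mathfrak m_j$ lying in $\mathrm{Supp}(M[1/x])$, hence $\mathfrak m_j\in\{\mathfrak n_1,\ldots,\mathfrak n_r\}$. The annihilator of $M[1/x]$ in $A[1/x]$ is $\bigcap_j\mathfrak m_j=\bigcap_{i\in I}\mathfrak n_i$ for some subset $I$, and the quotient $A[1/x]/\mathrm{Ann}(M[1/x])\cong\prod_{i\in I}A[1/x]/\mathfrak n_i$ is a finite product of fields, hence reduced, and acts faithfully on $M[1/x]$ by construction. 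It remains to see that $I$ is everything, i.e. that $\mathrm{Ann}_{A[1/x]}(M[1/x])=0$: this is exactly faithfulness of $A$ on $M$ localized — since $M$ is faithful over $A$, $\mathrm{Ann}_A(M)=0$, and localization is exact, so $\mathrm{Ann}_{A[1/x]}(M[1/x])=\mathrm{Ann}_A(M)[1/x]=0$. Hence $A[1/x]=\prod_{i=1}^r A[1/x]/\mathfrak n_i$ is reduced and acts faithfully.

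\emph{The main obstacle.} The genuinely delicate point is passing from the multi-parameter statement to the one-parameter statement while keeping track of both faithfulness and the support condition (2): the hypothesis (2) is stated for maximal ideals of $A[1/x_1]$ in the support of $M/(x_2,\ldots,x_d)M$, not of $A$, so one must be careful that ``regular of dimension $0$'' genuinely means ``field'', and that the Cohen--Macaulay property of $M$ guarantees $x_2,\ldots,x_d$ behaves as a regular sequence so that $M/(x_2,\ldots,x_d)M$ is again Cohen--Macaulay of the expected dimension and its support has the expected codimension. Once that bookkeeping is done, the semisimplicity hypothesis (1) does all the remaining work: it forces the zero-dimensional quotient ring $A[1/x_1]/\mathrm{Ann}$ to be a product of the residue fields, and exactness of localization upgrades this to faithfulness on the nose. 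I would also double-check that $A[1/x_1]$ itself (not just the quotient by the annihilator) is what we want to call reduced — but since $M$ is faithful this annihilator vanishes, so the two coincide.
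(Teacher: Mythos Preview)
There is a genuine gap. When you ``reduce notationally'' to the one-dimensional case and then write ``hypothesis (2) says $A_{\mathfrak n_i}$ is regular, of dimension $0$ --- hence a field'', you are silently using a statement about $\bar A = A/(x_2,\ldots,x_d)$ that is not given. Hypothesis (2) concerns the \emph{original} ring: $A_{\mathfrak n}$ is regular, but $\mathfrak n$ is a prime of $A$ minimal over $(x_2,\ldots,x_d)$, so $\dim A_{\mathfrak n}\le d-1$ by Krull, and in the intended application it is exactly $d-1$, not $0$. The passage from ``$A_{\mathfrak n}$ regular'' to ``$\bar A_{\mathfrak n}$ is a field'' is precisely where the Cohen--Macaulayness of $M$ has to be used a second time, and you skip it. Your Step~1 does not help here: replacing $A$ by $\bar A$ loses hypothesis (2), since regularity of $A_{\mathfrak n}$ does not formally imply regularity of $\bar A_{\mathfrak n}=A_{\mathfrak n}/(x_2,\ldots,x_d)$ without knowing that $x_2,\ldots,x_d$ is part of a regular system of parameters there.

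The paper fills this gap via Auslander--Buchsbaum: $M_{\mathfrak n}$ is Cohen--Macaulay (localization of a CM module) over the regular local ring $A_{\mathfrak n}$, with full support since $M$ is faithful, hence $M_{\mathfrak n}$ is free over $A_{\mathfrak n}$; therefore $(M/(x_2,\ldots,x_d)M)_{\mathfrak n}$ is free over $\bar A_{\mathfrak n}$, and semisimplicity of the module (hypothesis (1)) then forces the local artinian ring $\bar A_{\mathfrak n}$ to be a field. Your faithfulness argument at the end has the same underlying problem: you invoke $\mathrm{Ann}_{A[1/x]}(M[1/x])=\mathrm{Ann}_A(M)[1/x]=0$, but with $A$ now meaning $\bar A$ this requires $\mathrm{Ann}_{\bar A}(M')=0$, which you yourself flag in Step~1 as unproven. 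The paper instead observes that faithfulness of $M$ gives $\mathrm{Supp}(M'[1/x_1])=\Spec \bar A[1/x_1]$, and a reduced artinian ring acting with full support acts faithfully.
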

\begin{proof} For every $\nn$ in (2) $M_{\nn}$ is a finitely generated Cohen--Macaulay 
module over a regular ring $A_\nn$.
 Auslander--Buchsbaum implies that $M_{\nn}$ is free of finite rank over $A_\nn$. This yields 
that $(M/(x_2, \ldots, x_d)M)_\nn$ is finitely generated and free over  $(A/(x_2,\ldots, x_n))_\nn$. It follows from (1) that   $(A/(x_2,\ldots, x_n))_\nn$ is reduced. 
Since $A/(x_2,\ldots, x_d)[1/x_1]$ is artinian, we have an injection 
$$A/(x_2,\ldots, x_d)[1/x_1] \hookrightarrow \prod_{\nn} A/(x_2,\ldots, x_d)_{\nn}.$$
 So $A/(x_2, \ldots, x_d)[1/x_1]$ is reduced. Since $M$ is a faithful $A$-module, the support of 
$(M/(x_2,\ldots, x_d) M) [1/x_1]$ is equal to $\Spec A/(x_2, \ldots, x_d)[1/x_1]$. Since the ring is reduced this implies that the action is faithful.
\end{proof} 
\begin{rem} In this subsection we have followed the notation of \cite{gee_kisin}  and \cite{ki09}, so that  $R_{\infty}$ is by definition a formal power series ring over
$R_{\mathrm{loc}}$, which is a completed tensor product of local deformation rings. In particular, it is not clear a priori that \eqref{patched_ring} is an isomorphism. In the next subsection we will follow the notation of \cite{kisin_serre_2, kw2, blocksp2}, and $R_\infty$ will denote the projective limit of  the finite rings that we patch. In particular, the analog of \eqref{patched_ring}, see \eqref{mod_y}, is an isomorphism. 
\end{rem} 

\begin{thm} In addition to the hypotheses made in the beginning of the subsection, assume that the following hold: 
\begin{itemize}
\item[(1)] $\bar{R}^{\square, \psi}_v\neq 0$ for each $v\in \Sigma$;
\item[(2)] if $v\mid p$ then $\tau_v\cong \theta_{1,v}\oplus \theta_{2, v}$, for distinct characters $\theta_{1,v}, \theta_{2,v}: I_v\rightarrow L^{\times}$ with open kernel, 
which extend to $W_{F_v}$;
\item[(3)] for each $v\mid p$ either $\End_{G_{F_v}}(\rhobar)=k$ or $\rhobar|_{G_{F_v}}\sim \bigl ( \begin{smallmatrix} \delta_1 & 0 \\ 0 & \delta_2\end{smallmatrix} \bigr)$, such that $\delta_1 \delta_2^{-1}\neq 1, \omega^{\pm 1}$ and $\theta_{1,v}$ is not congruent modulo $\varpi$ to any of the four characters 
$\delta_1\omega^{-a_v}$, $\delta_1\omega^{-b_v}$, $\delta_2\omega^{-a_v}$, $\delta_2\omega^{-b_v}$.
\end{itemize}
Then $\bar{R}^{\psi}_{F, S}$ is reduced and is a finite free $\OO$-module of rank at least $1$. 
\end{thm}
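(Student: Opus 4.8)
The plan is to combine the three ingredients already assembled in this section: the presentation of the global framed deformation ring (Proposition~\ref{presentation}), the abstract criterion of Proposition~\ref{abstract_prop}, the finiteness result Proposition~\ref{finiteO}, and the reducedness result Proposition~\ref{reduced}. First I would check that the hypotheses made at the start of the subsection guarantee the conditions of Proposition~\ref{presentation}: for $p>2$ one has $H^0(G_F,(\ad^0\rhobar)^*(1))=0$ by Lemma~\ref{H20}, since we are assuming $\rhobar|_{G_{F(\zeta_p)}}$ is absolutely irreducible. Next I would verify, place by place in $\Sigma$, that the local quotients $\bar{R}^{\square,\psi}_v$ specified in \S\ref{global_pst} satisfy the hypotheses of Proposition~\ref{abstract_prop}: for $v\mid\infty$ the ring is a complete intersection (hence Cohen--Macaulay) of dimension $3$; for finite $v\nmid p$ it is Cohen--Macaulay of dimension $4$ by \cite{shotton} (and by the appendix when $p=2$, although $p=2$ is not in force here); and for $v\mid p$, by assumption~(3), either $\End_{G_{F_v}}(\rhobar)=k$ together with assumption~(2), so Theorem~\ref{theorem-main} applies and the ring $R^{\psi}_{\rhobar_v}(\wt_v,\tau_v)$ is Cohen--Macaulay, or $\rhobar|_{G_{F_v}}$ is split generic and Theorem~\ref{gen_split} applies; in either case $\bar{R}^{\square,\psi}_v$, being formally smooth of relative dimension $3$ over the corresponding unframed ring when $\End_{G_{F_v}}(\rhobar)=k$ (and directly Cohen--Macaulay in the split case), is Cohen--Macaulay of dimension $5$. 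Also by assumption~(1) these rings are all nonzero, and each deformation condition is independent of the framing.

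Having checked the hypotheses, Proposition~\ref{finiteO} gives that $\bar{R}^{\psi}_{F,S}$ is a finitely generated $\OO$-module. Feeding this into Proposition~\ref{abstract_prop}: the $\OO$-rank of $\bar{R}^{\psi}_{F,S}$ is at least $1$, and since all the $\bar{R}^{\square,\psi}_v$ are Cohen--Macaulay, $\bar{R}^{\psi}_{F,S}$ is $\OO$-torsion free. A finitely generated $\OO$-module which is $\OO$-torsion free is free, so $\bar{R}^{\psi}_{F,S}$ is a finite free $\OO$-module of rank at least $1$. Finally, Proposition~\ref{reduced} gives that $\bar{R}^{\psi}_{F,S}[1/p]$ is reduced; since $\bar{R}^{\psi}_{F,S}$ is $\OO$-torsion free it embeds into $\bar{R}^{\psi}_{F,S}[1/p]$, hence $\bar{R}^{\psi}_{F,S}$ itself is reduced.

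The only genuine work is bookkeeping: matching the hypotheses of the present theorem to those of the earlier propositions, and in particular tracking that assumptions~(2) and~(3) are exactly what is needed so that every $v\mid p$ falls under either Theorem~\ref{theorem-main} or Theorem~\ref{gen_split}. The substantive content — that the local rings are Cohen--Macaulay, that the global ring is module-finite over $\OO$, and that it is reduced after inverting $p$ — has already been established; there is no new obstacle at this stage. One minor point to be careful about is that Proposition~\ref{finiteO} and Proposition~\ref{reduced} were proved under the running hypotheses of \S\S\ref{global_pst}--\ref{reduced}, which are the same as those in the statement of this theorem together with $p>2$ (the case $p=2$ being handled separately elsewhere in the paper), so one should simply note that we are invoking them verbatim.

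\begin{proof}
By Lemma~\ref{H20} the hypotheses of Proposition~\ref{presentation} are satisfied. By the choices made in \S\ref{global_pst} and assumptions~(1),~(2),~(3), for each $v\in\Sigma$ the ring $\bar{R}^{\square,\psi}_v$ is a nonzero, $\OO$-torsion free, equidimensional quotient of $R^{\square,\psi}_v$ of dimension $5$ if $v\mid p$, $3$ if $v\mid\infty$ and $4$ otherwise, and the associated deformation problem does not depend on the framing; moreover each such ring is Cohen--Macaulay, using that $\bar{R}^{\square,\psi}_v$ is a complete intersection for $v\mid\infty$, the results of \cite{shotton} for finite $v\nmid p$, and Theorem~\ref{theorem-main} or Theorem~\ref{gen_split} (via the formal smoothness over the unframed ring when $\End_{G_{F_v}}(\rhobar)=k$) for $v\mid p$. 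Hence Proposition~\ref{abstract_prop} applies. By Proposition~\ref{finiteO}, $\bar{R}^{\psi}_{F,S}$ is a finitely generated $\OO$-module; then Proposition~\ref{abstract_prop} gives that its $\OO$-rank is at least $1$ and that it is $\OO$-torsion free, hence free of rank at least $1$ over $\OO$. Finally, by Proposition~\ref{reduced} the ring $\bar{R}^{\psi}_{F,S}[1/p]$ is reduced; since $\bar{R}^{\psi}_{F,S}$ is $\OO$-torsion free, the natural map $\bar{R}^{\psi}_{F,S}\to\bar{R}^{\psi}_{F,S}[1/p]$ is injective, so $\bar{R}^{\psi}_{F,S}$ is reduced.
\end{proof}
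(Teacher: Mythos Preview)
Your proof is correct and follows essentially the same route as the paper's own argument: verify that the local rings $\bar{R}^{\square,\psi}_v$ are Cohen--Macaulay via Theorems~\ref{theorem-main}, \ref{gen_split} and \cite{shotton}, invoke Proposition~\ref{abstract_prop} (with finiteness from Proposition~\ref{finiteO}) to get that $\bar{R}^{\psi}_{F,S}$ is finite free over $\OO$ of rank at least $1$, and then deduce reducedness from Proposition~\ref{reduced} via the injection into $\bar{R}^{\psi}_{F,S}[1/p]$. Your write-up is more explicit than the paper's two-line proof, but the logical structure is identical.
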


\begin{proof} Theorems \ref{theorem-main}, \ref{gen_split} and \cite[\S 5.5]{shotton} show that the hypothesis of Proposition  \ref{abstract_prop} are satisfied in this case and hence $\bar{R}^{ \psi}_{F, S}$ is a finite free $\OO$-module of rank at least $1$. Hence $\bar{R}_{F, S}^{\psi}$ injects 
into $\bar{R}_{F, S}^{\psi}[1/p]$, which is reduced by Proposition \ref{reduced}.
\end{proof}

\subsection{Even primes} 

In this subsection we assume that $p=2$, so that $L$ is a finite extension of $\Q_2$. We will prove a similar result as in the case when $p$ is odd by replacing the patching argument of \cite{gee_kisin} with \cite{blocksp2}, which is based on \cite{kisin_serre_2} and \cite{kw2}. We assume throughout that
$\rhobar: G_{F, S}\rightarrow \GL_2(k)$ is modular and the image of $\rhobar$ is non-solvable. We also assume that the character $\psi: G_{F, S}\rightarrow 
\OO^{\times}$, which we fixed at the beginning of \S \ref{global_app} is totally even, which means that its restriction to  $G_{F_v}$ is trivial for all archimedean places $v$. 
 The argument 
is slightly different to the previous subsection: we will first prove the result, when the ramification of $\rhobar$ is minimal by appealing to the results of \cite{blocksp2}, and then 
deduce the general case from it. The following proposition will allow us  to verify that $\bar{R}^{\psi}_{F, S}$ is a finitely generated $\OO$-module after replacing $F$ by a finite extension. 

\begin{prop}\label{base_change_fg} Let $F'$ be a finite Galois extension of of $F$ and let $S'$ be the set of places of $F'$ above $S$. Assume that $\End_{G_{F'}}(\rhobar)=k$. 
Then the natural map $R^{\psi}_{F', S'}\rightarrow R^{\psi}_{F,S}$, induced by restricting deformations of $\rhobar$ to $G_{F'}$, makes $R^{\psi}_{F,S}$
into a finitely generated $R^{\psi}_{F', S'}$-module.
\end{prop}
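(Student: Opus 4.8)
The plan is to reduce, by the topological Nakayama lemma, to showing that the $k$-algebra
$\bar R:=R^{\psi}_{F,S}/\mathfrak m' R^{\psi}_{F,S}$ is finite-dimensional over $k$, where $\mathfrak m'$ is the maximal ideal of $R^{\psi}_{F',S'}$ (note that $\rhobar|_{G_{F'}}$ factors through $G_{F',S'}$, being unramified outside $S'$, and $\End_{G_{F'}}(\rhobar)=k$, so $R^{\psi}_{F',S'}$ is defined; likewise the restriction map is a local homomorphism of complete local noetherian $\OO$-algebras with residue field $k$). Indeed, for a local homomorphism $R'\to R$ of such rings one has: $R$ is a finite $R'$-module as soon as $R/\mathfrak m_{R'}R$ is a finite $R'/\mathfrak m_{R'}=k$-module. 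To see this, lift a $k$-basis of $\bar R$ to elements $r_1,\dots,r_n\in R$ and set $M=\sum_i R'r_i$; then $M+\mathfrak m_{R'}R=R$, so $R/M=\mathfrak m_{R'}(R/M)\subseteq\mathfrak m_R(R/M)$, and Nakayama over the noetherian local ring $R$ gives $R/M=0$.

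To analyse $\bar R$, let $N\subseteq G_{F,S}$ be the image of $G_{F'}$ (equivalently of $G_{F',S'}$); since $F'/F$ is Galois, $N$ is normal with finite quotient $\Delta:=G_{F,S}/N$, a subquotient of $\Gal(F'/F)$. Let $\rho^{\univ}\colon G_{F,S}\to\GL_2(R^{\psi}_{F,S})$ be the universal deformation and $\bar\rho$ its reduction modulo $\mathfrak m' R^{\psi}_{F,S}$, a deformation of $\rhobar$ with determinant $\psi\chi_{\cyc}$. By construction $\bar\rho$ restricted to $N$ is classified by $R^{\psi}_{F',S'}\to R^{\psi}_{F,S}\to\bar R$, a map that kills $\mathfrak m'$ and hence factors through $k$; therefore $\bar\rho|_N$ is the trivial deformation, i.e.\ it is conjugate to $\rhobar|_N\otimes_k\bar R$ by an element of $1+M_2(\mathfrak m_{\bar R})$. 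Replacing $\bar\rho$ by this conjugate we obtain a homomorphism $\sigma\colon G_{F,S}\to\GL_2(\bar R)$ lifting $\rhobar$, with $\det\sigma=\psi\chi_{\cyc}$ and $\sigma|_N=\rhobar|_N\otimes_k\bar R$.

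Now comes the rigidity step. For $g\in G_{F,S}$ and $h\in N$ we have $ghg^{-1}\in N$, and comparing $\sigma(ghg^{-1})=\sigma(g)\rhobar(h)\sigma(g)^{-1}$ with $\sigma(ghg^{-1})=\rhobar(ghg^{-1})=\rhobar(g)\rhobar(h)\rhobar(g)^{-1}$ (using that $\rhobar$ is a homomorphism on all of $G_{F,S}$) shows that $\rhobar(g)^{-1}\sigma(g)$ commutes with $\rhobar(h)$ for all $h\in N$, hence lies in $\bigl(\End_{k[N]}(\rhobar|_N)\otimes_k\bar R\bigr)^{\times}$. Since $\End_{k[N]}(\rhobar|_N)=\End_{G_{F'}}(\rhobar)=k$ and $\End$ commutes with the flat base change $k\to\bar R$, this group is $\bar R^{\times}$, so $\sigma(g)=c(g)\rhobar(g)$ for a map $c\colon G_{F,S}\to\bar R^{\times}$; as $\sigma$ and $\rhobar$ are homomorphisms with central scalars, $c$ is a homomorphism, it is trivial on $N$ (because $\sigma|_N=\rhobar|_N$) and congruent to $1$ modulo $\mathfrak m_{\bar R}$ (because $\sigma\equiv\rhobar$), so $c$ is a character of the finite group $\Delta$ valued in $1+\mathfrak m_{\bar R}$. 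Taking determinants, $c^2\det\rhobar=\det\sigma=\psi\chi_{\cyc}$, and since $\varpi=0$ in $\bar R$ and $\psi\chi_{\cyc}\equiv\det\rhobar\pmod{\varpi}$, we get $c^2=1$.

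Finally, let $\bar R_0\subseteq\bar R$ be the $k$-subalgebra generated by the finitely many elements $c(\delta)$, $\delta\in\Delta$; each satisfies $c(\delta)^2=1$, so $\bar R_0$ is finite over $k$, and $\sigma$ takes values in $\GL_2(\bar R_0)$, defining a deformation of $\rhobar$ to the finite local $\OO$-algebra $\bar R_0$. By universality there is a map $R^{\psi}_{F,S}\to\bar R_0$ classifying $\sigma$; composing with $\bar R_0\hookrightarrow\bar R$ gives a map classifying $\sigma$ over $\bar R$, which by uniqueness equals the quotient $R^{\psi}_{F,S}\twoheadrightarrow\bar R$. Hence $\bar R_0\hookrightarrow\bar R$ is surjective, so $\bar R=\bar R_0$ is finite over $k$, completing the proof. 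The point requiring the most care is the rigidity step — exploiting that $N$ is normal and that $\End_{k[N]}(\rhobar|_N)=k$ to force $\bar\rho$ to be a twist of $\rhobar$ by a finite-order character of $\Delta$ — together with the determinant computation giving $c^2=1$; after that, finiteness of $\bar R$ is formal. (When $p$ is odd, $1+\mathfrak m_{\bar R}$ has no $2$-torsion, so $c=1$ and in fact $R^{\psi}_{F',S'}\to R^{\psi}_{F,S}$ is surjective; this is the classical inflation–restriction computation, $H^0(N,\ad^0\rhobar)=0$.)
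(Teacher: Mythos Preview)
Your proof is correct. Both you and the paper reduce via Nakayama to showing that $\bar R = R^{\psi}_{F,S}/\mm' R^{\psi}_{F,S}$ is finite over $k$, and both use that the map $R^{\psi}_{F',S'}\to\bar R$ factors through $k$ to identify $\bar\rho|_N$ with the trivial deformation. At that point the paper simply notes that the image of $G_{F,S}$ in $\GL_2(\bar R)$ is finite (a finite-index subgroup has finite image) and cites Lemma~3.6 of \cite{kw_annals}, which supplies the implication ``finite image $\Rightarrow$ $\bar R$ finite''. You instead prove this implication by hand in the present situation: the Schur-type rigidity step, using $\End_{k[N]}(\rhobar|_N)=k$, shows $\bar\rho=c\cdot\rhobar$ for a character $c$ of $\Delta$ with $c^2=1$, and universality then identifies $\bar R$ with the finite $k$-algebra generated by the values of $c$. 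Your route is self-contained and yields the bonus in your closing parenthetical (surjectivity when $p$ is odd), which the paper obtains separately, via inflation--restriction, in the odd-prime subsection.
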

\begin{proof} Let $\mm'$ be the maximal ideal of $R^{\psi}_{F', S'}$. It follows from Lemma 3.6 of \cite{kw_annals} that it is enough to show that 
the image of $G_{F, S}\rightarrow \GL_2(R^{\psi}_{F, S})\rightarrow \GL_2(R^{\psi}_{F,S}/\mm' R^{\psi}_{F,S})$ is finite. Since $G_{F', S'}$  is of finite index in $G_{F, S}$ and it gets mapped to the finite subgroup $\rhobar(G_{F', S'})$, we are done. 
\end{proof}

\begin{lem}\label{v_not_2} Let $v$ be a finite place not dividing $2$. Assume that $\rhobar|_{G_{F_v}}$ is unramified. Let $\bar{R}^{\square, \psi}_v$ 
(resp. $R^{\square, \psi, \mathrm{ur}}_v$) be 
the maximal reduced and $2$-torsion free quotient of $R_v^{\square, \psi}$ all of whose $\overline{L}$-points give rise to representations $\rho$ of $G_{F_v}$, such that $\rho|_{I_{F_v}}\sim \bigl (\begin{smallmatrix} 1 & \ast \\ 0 & 1 \end{smallmatrix}\bigr )$  (resp.  $\sim \bigl (\begin{smallmatrix} 1 & 0 \\ 0 & 1 \end{smallmatrix}\bigr )$).  Let $V(\pp)$ be an irreducible component 
of $\Spec \bar{R}^{\square, \psi}_v$. 
Then one of the following holds:
\begin{itemize}
\item[(i)] $\bar{R}^{\square, \psi}_v/\pp= R^{\square, \psi, \mathrm{ur}}_v$ is formally smooth over $\OO$ of relative dimension $3$;
\item[(ii)] the eigenvalues of $\rhobar(\Frob_v)$ are equal and $\bar{R}^{\square, \psi}_v/\pp= R^{\square, \psi}_v(\gamma)$ is of relative dimension $3$ over $\OO$, where
 $\gamma: G_{F_v}\rightarrow \OO^{\times}$ is an unramified character such that
$\gamma^2= \psi$ and $R^{\square, \psi}_v(\gamma)$ is  
the maximal reduced and $2$-torsion free quotient of $R_v^{\square, \psi}$ all of whose $\overline{L}$-points give rise to representations $\rho$ of $G_{F_v}$, such that $\rho\sim \bigl (\begin{smallmatrix} \gamma \chi_{\cyc} & \ast \\ 0 & \gamma \end{smallmatrix}\bigr )$. 
\end{itemize}
Moreover, if the eigenvalues of $\rhobar(\Frob_v)$ are distinct then $\bar{R}^{\square, \psi}_v=R^{\square, \psi, \mathrm{ur}}_v$.
\end{lem}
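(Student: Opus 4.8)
The strategy is to describe $\Spec\bar R^{\square,\psi}_v$ explicitly. I will use that, among reduced $\cO$-flat quotients of $R^{\square,\psi}_v$, the quotient $\bar R^{\square,\psi}_v$ is determined by the set of its $\overline L$-points in $\Spec R^{\square,\psi}_v[1/p]$: such a quotient is $R^{\square,\psi}_v/I$ for a radical ideal $I$ no minimal prime of which contains $\varpi$, and $I$ is recovered from $V(I)$ in the reduced Jacobson ring $R^{\square,\psi}_v[1/p]$. There is nothing to prove unless $\psi|_{G_{F_v}}$ is unramified, since $\rho|_{I_{F_v}}$ unipotent forces $\psi|_{I_{F_v}}=\det\rho|_{I_{F_v}}\cdot\chi_{\cyc}^{-1}|_{I_{F_v}}=1$, and otherwise $\bar R^{\square,\psi}_v=0$; so I assume this and, enlarging $L$ if necessary, fix an unramified $\gamma\colon G_{F_v}\to\cO^\times$ with $\gamma^2=\psi$. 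To classify the $\overline L$-points: as $\rhobar_v$ is unramified, any deformation of $\rhobar_v$ over an Artinian $\cO$-algebra $A$ kills wild inertia and the prime-to-$2$ part of tame inertia (their images lie in the pro-$2$ group $\Ker(\GL_2(A)\to\GL_2(k))$), hence factors through $\langle\phi,\tau\mid\phi\tau\phi^{-1}=\tau^{q}\rangle$, $q=\mathbf Nv$, $\phi$ lifting Frobenius. If $\rho$ is an $\overline L$-point with $\rho(\tau)$ unipotent then either $\rho(\tau)=1$, so $\rho$ is unramified, or $\rho(\tau)$ is a nontrivial unipotent Jordan block and the relation $\rho(\phi)\rho(\tau)\rho(\phi)^{-1}=\rho(\tau)^{q}$ forces $\rho(\phi)$ to preserve the fixed line of $\rho(\tau)$ with eigenvalue ratio $q=\chi_{\cyc}(\Frob_v)$, so $\rho\sim\bigl(\begin{smallmatrix}\gamma'\chi_{\cyc}&\ast\\0&\gamma'\end{smallmatrix}\bigr)$ with $\gamma'$ unramified and $(\gamma')^2=\psi$; reducing mod $\varpi$ and using $\overline{\chi_{\cyc}}|_{G_{F_v}}=\mathbf1$ (as $q$ is odd) shows $\rhobar(\Frob_v)$ has $\overline{\gamma'}(\Frob_v)$ as a repeated eigenvalue.

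Next I would identify $R^{\square,\psi,\mathrm{ur}}_v$. It represents the functor of framed unramified deformations with determinant $\psi\chi_{\cyc}$, i.e. the completion at the $k$-point $\rhobar(\Frob_v)$ of the closed subscheme $\{\det=\psi\chi_{\cyc}(\Frob_v)\}$ of $\GL_{2,\cO}$; since $\psi\chi_{\cyc}(\Frob_v)\in\cO^\times$ and $\det$ has surjective differential at invertible matrices, this subscheme is smooth over $\cO$ of relative dimension $3$, so $R^{\square,\psi,\mathrm{ur}}_v\cong\cO[[x,y,z]]$. Being reduced and $\cO$-flat, it is the maximal reduced $\cO$-flat quotient of $R^{\square,\psi}_v$ whose $\overline L$-points are unramified. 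Hence, if the eigenvalues of $\rhobar(\Frob_v)$ are distinct, the classification above shows every $\overline L$-point of $\bar R^{\square,\psi}_v$ is unramified, so $\bar R^{\square,\psi}_v=R^{\square,\psi,\mathrm{ur}}_v$: this is the last assertion of the lemma, and it also gives case (i) with $\pp=(0)$.

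It remains to treat the case of equal eigenvalues. By the classification, the $\overline L$-points of $\bar R^{\square,\psi}_v$ are the union of the unramified locus $\Spec R^{\square,\psi,\mathrm{ur}}_v[1/p]$ and, for each unramified square root $\gamma'$ of $\psi$ over $\cO$ reducing to the common eigenvalue at $\Frob_v$, the locus $\Spec R^{\square,\psi}_v(\gamma')[1/p]$. Each $R^{\square,\psi}_v(\gamma')$ is reduced and $\cO$-flat, and its generic fibre is the Zariski closure of the $\GL_2$-conjugacy orbit of a fixed non-split $\bigl(\begin{smallmatrix}\gamma'\chi_{\cyc}&\ast\\0&\gamma'\end{smallmatrix}\bigr)$, hence irreducible; so $\Spec R^{\square,\psi}_v(\gamma')$ is irreducible and $R^{\square,\psi}_v(\gamma')$ is a \emph{domain}. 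By the determinacy principle above, $\bar R^{\square,\psi}_v=R^{\square,\psi}_v/\bigl(\pp^{\mathrm{ur}}\cap\bigcap_{\gamma'}\pp_{\gamma'}\bigr)$ with $\pp^{\mathrm{ur}}=\Ker(R^{\square,\psi}_v\to R^{\square,\psi,\mathrm{ur}}_v)$ and $\pp_{\gamma'}=\Ker(R^{\square,\psi}_v\to R^{\square,\psi}_v(\gamma'))$; these primes are pairwise incomparable because the quotients have pairwise distinct generic fibres, so they are precisely the minimal primes, giving the dichotomy (i)/(ii). Finally, for $\pp=\pp_{\gamma'}$ the ring $R^{\square,\psi}_v(\gamma')$ is $\cO$-flat, so its Krull dimension is $1+\dim R^{\square,\psi}_v(\gamma')[1/p]$; and its generic fibre, being the closure of one $\GL_2$-orbit whose stabiliser is the scalars (the representation being indecomposable and non-scalar), has dimension $\dim\GL_2-1=3$. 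Thus $R^{\square,\psi}_v(\gamma')$ has relative dimension $3$ over $\cO$, completing case (ii). One can alternatively deduce the dimension statements from Theorem \ref{thm:CM} of the appendix: $\bar R^{\square,\psi}_v$ is then Cohen--Macaulay, hence equidimensional, so every component has the dimension $4$ of $R^{\square,\psi,\mathrm{ur}}_v$.

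The main obstacle is the equal-eigenvalue case: one must be certain the explicit loci above exhaust $\Spec\bar R^{\square,\psi}_v$ with no spurious components, for which the determinacy principle together with the orbit description of the Steinberg loci are the right tools, and one must check that each $R^{\square,\psi}_v(\gamma')$ is a domain of the stated dimension — not merely that its generic fibre is — which needs a little care with $\cO$-flatness and with irreducibility of $\GL_2$-orbit closures. The only genuinely arithmetic input is the vanishing $\overline{\chi_{\cyc}}|_{G_{F_v}}=\mathbf1$ at $p=2$, which both forces the Steinberg reductions to be residually unramified and allows one residual $\rhobar_v$ to lie on two distinct Steinberg components, indexed by the two unramified square roots of $\psi$ (congruent mod $\varpi$ since $-1\equiv1$).
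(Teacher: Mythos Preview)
Your proof is correct and follows essentially the same strategy as the paper's: classify the $\overline L$-points via the tame presentation $\phi\tau\phi^{-1}=\tau^{q}$, deduce the dichotomy unramified/Steinberg, and then identify the minimal primes. The paper, however, does not argue the structural facts about the individual components directly: it simply cites \cite[Prop.~2.5.3]{kisin_serre_2} for formal smoothness of $R^{\square,\psi,\mathrm{ur}}_v$ and \cite[Prop.~2.5.2]{kisin_serre_2} for the fact that $R^{\square,\psi}_v(\gamma)$ is an integral domain of relative dimension $3$.

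Your direct argument for the unramified ring (completion of $\{\det=\psi\chi_{\cyc}(\Frob_v)\}\subset\GL_{2,\cO}$ at a point) is clean and in fact reproduces Kisin's proof. Your argument for the Steinberg component is where you diverge: describing $R^{\square,\psi}_v(\gamma')[1/p]$ as ``the Zariski closure of a single $\GL_2$-conjugacy orbit'' is imprecise in the completed-local setting, since $\GL_2$ does not literally act on $\Spec R^{\square,\psi}_v$ and transitivity on $\overline L$-points does not by itself force irreducibility of the formal scheme. You flag this yourself, and your fallback via Theorem~\ref{thm:CM} (Cohen--Macaulay $\Rightarrow$ equidimensional, so every component has the same relative dimension $3$ as the unramified one) is valid and not circular here. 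If you want to keep the orbit argument self-contained, the cleanest fix is to note that on the open locus $\{\rho(\tau)\neq 1\}$ the data $(\rho(\tau),\rho(\phi))$ is rigidified by the unique $\rho(\tau)$-fixed line, giving explicit smooth coordinates and hence irreducibility; the closed complement $\{\rho(\tau)=1\}$ has strictly smaller dimension, so the whole generic fibre is irreducible. This is essentially the content of Kisin's \cite[Prop.~2.5.2]{kisin_serre_2}.
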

\begin{proof} Let $x\in \mSpec \bar{R}^{\square, \psi}_v[1/2]$ and let $\rho: G_{F_v}\rightarrow \GL_2(\kappa(x))$ be the corresponding Galois representation. 
If the image of $I_{F_v}$ is trivial then $x$ lies in $\Spec R^{\square, \psi, \mathrm{ur}}_v$. Assume that the image of $I_{F_v}$ is non-trivial. Since $\rhobar$
is unramified at $v$, the action of $G_{F_v}$ on any deformation of $\rhobar$ factors through the quotient isomorphic to 
$\mathbb{Z}_2\rtimes \widehat{\mathbb{Z}}$, 
where $\mathbb{Z}_2$ is identified with the maximal pro-$2$ quotient of the tame inertia, and if we further quotient out by it, then $\widehat{\mathbb{Z}}$ is identified with the 
absolute Galois group of the residue field of $F_v$. If we let $\sigma$ be a topological generator of $\mathbb{Z}_2$ and $\phi$ be the topological generator 
of $\widehat{\mathbb{Z}}$ which maps to $\Frob_v$ in $G_{F_v}/I_{F_v}$, then $\phi \sigma \phi^{-1}= \sigma^{q_v}$, where $q_v$ is the number of  elements 
in the residue field of $F_v$. Since $\tau_v$ is trivial and $\rho$ is ramified after conjugation we may assume that $\rho(\sigma)=
\bigl (\begin{smallmatrix} 1 & 1\\ 0  & 1 \end{smallmatrix}\bigr)$. The relation $\rho(\phi)\rho(\sigma)=\rho(\sigma)^{q_v} \rho(\phi)$ can be written as
$$ \begin{pmatrix} a &  b\\ c & d\end{pmatrix} \begin{pmatrix} 1 & 1 \\ 0 & 1 \end{pmatrix}= \begin{pmatrix} 1 & q_v \\ 0 & 1 \end{pmatrix}\begin{pmatrix} a &  b\\ c & d\end{pmatrix},$$
and is equivalent to  $c=0$ and $ a=q_vd$. Thus $\rho \sim  \bigl (\begin{smallmatrix} \gamma \chi_{\cyc} & \ast \\ 0 & \gamma \end{smallmatrix}\bigr )$, 
where $\gamma$ is an unramified character with $\gamma(\Frob_v)=d$. Since $\det \rho=\psi \chi_{\cyc}$ we conclude that $\gamma^2=\psi$. Hence, 
$x$ lies in $\Spec R^{\square, \psi}_v(\gamma)$.

The characteristic polynomial of $\rhobar(\Frob_v)$ is congruent modulo $\varpi$ to the characteristic polynomial of 
$\rho(\phi)$, which is equal to $x^2 - d(1+q_v) x + d^2 q_v$. Since $q_v$ is odd we deduce that $\rhobar(\Frob_v)$ has equal eigenvalues. Hence if the 
eigenvalues of $\rhobar(\Frob_v)$ are distinct we conclude that  $\bar{R}^{\square, \psi}_v=R^{\square, \psi, \mathrm{ur}}_v$.

It is shown in \cite[Prop.2.5.3]{kisin_serre_2} that $R^{\square, \psi, \mathrm{ur}}_v$ is formally smooth over $\OO$ of relative dimension $3$. 
Hence, if the eigenvalues of $\rhobar(\Frob_v)$ are distinct then $\pp=0$ and we are in part (i) of the Lemma. Let us assume that the eigenvalues of 
$\rhobar(\Frob_v)$ are equal. 
Since $\chi_{\cyc}$ is trivial modulo $2$, it follows from \cite[Prop.2.5.2]{kisin_serre_2} that 
$R^{\square, \psi}_v(\gamma)$ is an integral domain of relative dimension $3$ over $\OO$. Let $\pp_{\mathrm{ur}}$ and $\pp_{\gamma}$ be the kernels
of $\bar{R}^{\square, \psi}_v\twoheadrightarrow R^{\square, \psi, \mathrm{ur}}_v$ and 
$\bar{R}^{\square, \psi}_v\twoheadrightarrow R^{\square, \psi}_v(\gamma)$, respectively. 
There are exactly two unramified characters $\gamma$ 
such that $\gamma^2=\psi$, which we denote by $\gamma_1$ and $\gamma_2$. We have shown that $\mSpec \bar{R}^{\square, \psi}_v[1/2]$ is contained 
in the union of  $V(\pp_{\mathrm{ur}})$, $V(\pp_{\gamma_1})$ and  $V(\pp_{\gamma_2})$. Since $\bar{R}^{\square, \psi}_v$ is $\OO$-torsion free and 
$\bar{R}^{\square, \psi}_v[1/2]$ is Jacobson, we deduce that $\bar{R}^{\square, \psi}_v$ is equidimensional of relative dimension $3$ over $\OO$, 
and either (i) or (ii) of the lemma hold.
\end{proof}

Let $\Sigma$ be a subset of $S$ containing all the places of $F$ above $2$ and all infinite places. 

\begin{prop}\label{components_meet} Assume that $\rhobar$ is unramified outside the places dividing $2$ and $\tau_v$ is trivial for all  $v\in \Sigma$, such that  $v\nmid 2 \infty$, and
for all $v\mid 2$ we have $\rhobar|_{G_{F_v}}\not\sim \bigl (\begin{smallmatrix} \chi & \ast \\ 0 & \chi \end{smallmatrix}\bigr )$ for any character 
$\chi: G_{F_v}\rightarrow k^{\times}$.  
Assume that the eigenvalues of $\rhobar(\Frob_v)$ are distinct for all $v\in S\setminus \Sigma$. Let 
$$\bar{R}_S^{\square,\psi}:=(\wtimes_{v\in \Sigma} \bar{R}^{\square, \psi}_v)\wtimes_{\OO} (\wtimes_{v\in S\setminus \Sigma} R^{\square, \psi}_v).$$
Then every irreducible component of $\Spec \bar{R}_S^{\square, \psi}$ meets $\mSpec \bar{R}^{\psi}_{F, S}[1/2]$.
\end{prop}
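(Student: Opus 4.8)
The plan is to run the Taylor--Wiles--Kisin patching construction of \cite{blocksp2} (which replaces \cite{gee_kisin} in residual characteristic $2$) and then read off the statement by a dimension count, exactly as in the proof of Proposition \ref{abstract_prop}. We may assume $\bar R^{\square,\psi}_v\neq 0$ for every $v\in\Sigma$, as otherwise $\bar R_S^{\square,\psi}=0$ and there is nothing to prove. Under the standing hypotheses---$\rhobar$ modular with non-solvable image, $\psi$ totally even, $\rhobar$ unramified outside $2$, $\tau_v$ trivial at the finite places of $\Sigma$ away from $2$, $\rhobar|_{G_{F_v}}\not\sim \smatr{\chi}{\ast}{0}{\chi}$ for $v\mid 2$, and $\Sigma\neq S$ with $\rhobar(\Frob_v)$ having distinct eigenvalues for $v\in S\setminus\Sigma$---the patching machinery of \cite{blocksp2} (following \cite{kisin_serre_2, kw2}) produces: an integer $g\ge 0$, the ring $R_\infty:=\bar R_S^{\square,\psi}[[x_1,\dots,x_g]]$, a non-zero maximal Cohen--Macaulay $R_\infty$-module $M_\infty$ which is $\OO$-flat, and a sequence $y_1,\dots,y_h$ in the maximal ideal of $R_\infty$ which is $M_\infty$-regular, together with an isomorphism of rings $R_\infty/(y_1,\dots,y_h)\cong \bar R^\psi_{F,S}$ and an isomorphism of $R_\infty$-modules $M_\infty/(y_1,\dots,y_h)M_\infty\cong M$, where $M$ is a localized space of algebraic modular forms, a finite free $\OO$-module, non-zero because $\rhobar$ is modular, on which $\bar R^\psi_{F,S}$ acts. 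Moreover, as in Proposition \ref{finiteO} for odd $p$, the results of \cite{blocksp2} (via the geometric Breuil--M\'ezard conjecture for $\GL_2(\Q_2)$ proved there) show that $\bar R^\psi_{F,S}$ is a finitely generated $\OO$-module; since $M\neq 0$ is $\OO$-flat, $\bar R^\psi_{F,S}$ is not killed by any power of $\varpi$, so $\bar R^\psi_{F,S}[1/2]\neq 0$.

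The next step is to verify that $\bar R_S^{\square,\psi}$, and hence $R_\infty$, is $\OO$-flat and equidimensional. Each local factor is $\OO$-flat and equidimensional: for $v\mid\infty$, $\bar R^{\square,\psi}_v=R^{\square,\psi}_v$ is a complete intersection domain of dimension $3$ by \cite[Prop.~2.5.6]{kisin_serre_2} (using that $\psi$ is totally even); for $v\mid 2$, $\bar R^{\square,\psi}_v$ is $\OO$-flat and equidimensional of dimension $5$ by \cite[Thm.~3.3.4]{ki08}; for a finite $v\in\Sigma$ with $v\nmid 2$, the type $\tau_v$ is trivial and $\rhobar|_{G_{F_v}}$ is unramified, so $\bar R^{\square,\psi}_v$ is exactly the ring considered in Lemma \ref{v_not_2} and is therefore $\OO$-flat and equidimensional of dimension $4$; and for $v\in S\setminus\Sigma$ the ring $R^{\square,\psi}_v$ is $\OO$-flat and equidimensional of dimension $4$, this being the standard structure of the framed deformation ring at a place $v\nmid p$. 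Completed tensor products over $\OO$ preserve $\OO$-flatness, and by Lemma \ref{equidim} (Appendix \ref{equip}) they preserve equidimensionality; hence $\bar R_S^{\square,\psi}$ and $R_\infty=\bar R_S^{\square,\psi}[[x_1,\dots,x_g]]$ are $\OO$-flat and equidimensional.

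Finally I would carry out the dimension count. As $M_\infty$ is a non-zero maximal Cohen--Macaulay $R_\infty$-module, $\dim M_\infty=\dim R_\infty$; as $y_1,\dots,y_h$ is $M_\infty$-regular, $\dim M=\dim M_\infty - h$; and as $M$ is a non-zero finite free $\OO$-module, $\dim M=1$. Hence $\dim R_\infty=h+1$. Since $R_\infty/(y_1,\dots,y_h,\varpi)\cong \bar R^\psi_{F,S}/\varpi$ is a finite-dimensional $k$-vector space, the sequence $y_1,\dots,y_h,\varpi$ is a system of parameters in the equidimensional local ring $R_\infty$ (which is a quotient of a regular local ring, hence catenary) of dimension $h+1$. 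By Lemma 3.9 of \cite{blocksp2}, applied exactly as in the proof of Proposition \ref{abstract_prop} with $A=R_\infty$ and the ideal $(y_1,\dots,y_h)$, every irreducible component of $\Spec R_\infty$ contains a point of $\mSpec\big(R_\infty/(y_1,\dots,y_h)\big)[1/2]=\mSpec \bar R^\psi_{F,S}[1/2]$. But the irreducible components of $\Spec R_\infty=\Spec \bar R_S^{\square,\psi}[[x_1,\dots,x_g]]$ are exactly the $V\big(\qf[[x_1,\dots,x_g]]\big)$ for $\qf$ a minimal prime of $\bar R_S^{\square,\psi}$, and a maximal ideal of $\bar R^\psi_{F,S}[1/2]$ lying in such a component pulls back, along $\bar R_S^{\square,\psi}\to R_\infty\to R_\infty/(y_1,\dots,y_h)\cong \bar R^\psi_{F,S}$, to a prime of $\bar R_S^{\square,\psi}$ containing $\qf$. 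Thus $V(\qf)$ meets $\mSpec \bar R^\psi_{F,S}[1/2]$ for every minimal prime $\qf$ of $\bar R_S^{\square,\psi}$, which is the assertion.

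The main difficulty is not conceptual but lies in the bookkeeping of the patching: one must check that the patched ring is (a power series ring over) the completed tensor product $\bar R_S^{\square,\psi}$ of the local deformation rings with \emph{precisely} the prescribed local conditions (the full framed rings at $v\in S\setminus\Sigma$, the potentially semi-stable rings of type $(\mathbf w_v,\tau_v,\psi)$ at $v\mid 2$, and the unipotent-inertia rings at the remaining finite places of $\Sigma$), that $M_\infty/(y_1,\dots,y_h)M_\infty\cong M$ is honestly a localized space of modular forms, and that the auxiliary hypotheses needed in \cite{blocksp2} to set up the definite quaternion algebra and the auxiliary place $v\in S\setminus\Sigma$ (in particular the distinctness of the eigenvalues of $\rhobar(\Frob_v)$) are subsumed in our assumptions.
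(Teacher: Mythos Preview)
Your argument has a genuine circularity. You assert two things that are not available as black boxes from \cite{blocksp2}: (i) that the patching of \cite{blocksp2} produces $R_\infty=\bar R_S^{\square,\psi}[[x_1,\dots,x_g]]$ on the nose, and (ii) that $\bar R^\psi_{F,S}$ is a finitely generated $\OO$-module. Neither holds directly. In the conventions of \cite{blocksp2} patching yields only a \emph{surjection} $\bar R_S^{\square,\psi}[[x_1,\dots,x_g]]\twoheadrightarrow R'_\infty\twoheadrightarrow R_\infty$ (this is property (P1)); the patched ring $R_\infty$ is a quotient, and its irreducible components are a priori only a subset of those of the power series ring. Moreover, as the paper notes in the proof of Proposition~\ref{almost_done}, the setup of \cite[\S 3B]{blocksp2} uses a quaternion algebra ramified at the finite places of $\Sigma$ away from $2$, so the local conditions there correspond to the genuinely semi-stable components $R^{\square,\psi}_v(\gamma)$ of Lemma~\ref{v_not_2}(ii), not to the full $\bar R^{\square,\psi}_v$. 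Thus the finiteness result you want to quote from \cite{blocksp2} is for a strictly smaller quotient of $\bar R^\psi_{F,S}$.

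The paper's proof avoids this by working component-by-component: given a minimal prime $\qq$ of $\bar R_S^{\square,\psi}$, one records for each finite $v\in\Sigma$, $v\nmid 2$, whether the induced component is the unramified one (case~(i) of Lemma~\ref{v_not_2}) or a semi-stable one $R^{\square,\psi}_v(\gamma_v)$ (case~(ii)), calling the corresponding subsets $\Sigma_1$ and $\Sigma_2$. The $\Sigma_1$ factors are formally smooth and may be discarded; for the remaining ring one applies Proposition~\ref{abstract_prop} with the \emph{refined} local conditions $\bar R^{\square,\psi}_v=R^{\square,\psi}_v(\gamma_v)$ at $v\in\Sigma_2$, reducing the claim to finite generation of the corresponding $\bar R^\psi_{F,S}$. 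That finite generation is then obtained by a solvable base change $F'/F$ (Proposition~\ref{base_change_fg}) placing us exactly in the quaternionic setup of \cite[\S 3B]{blocksp2}, where \cite[Cor.~3.27, Prop.~3.17(d)]{blocksp2} apply. In other words, the component decomposition and base change \emph{are} the content of the proposition; the re-patching with the full $\bar R_S^{\square,\psi}$ that you are sketching is carried out only later, in Proposition~\ref{almost_done}, and it uses Proposition~\ref{components_meet} as an input.
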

\begin{proof} Since $\bar{R}_v^{\square, \psi}$ and $R^{\square, \psi}_v$ are equidimensional for all $v\in S$, either by Lemma \ref{v_not_2}, \cite[Prop.2.5.6, 2.5.4]{kisin_serre_2} or  \cite[Thm.3.3.4]{ki08}, Lemma \ref{equidim} implies that $\bar{R}_S^{\square,\psi}$ and 
$R^{\square, \psi}_{S\setminus \Sigma}:=\wtimes_{v\in S\setminus \Sigma} R^{\square, \psi}_v$ are  equidimensional. Let $\qq$ be a minimal prime of $\bar{R}_S^{\square,\psi}$ and for each $v\in \Sigma$ let  $\pp_v$ be the kernel of 
the natural map $\bar{R}_v^{\square, \psi} \rightarrow \bar{R}_S^{\square,\psi}/\qq$.
Then $\qq$ is also a minimal prime of $(\wtimes_{v\in \Sigma} \bar{R}^{\square, \psi}_v /\pp_v) \wtimes R^{\square, \psi}_{S\setminus \Sigma}$. Lemma \ref{equidim} implies that for all $v\in \Sigma$,  $\pp_v$ is a minimal prime of $\bar{R}^{\square, \psi}_v$. Let $\Sigma_1$ (resp. $\Sigma_2$) be the subset of $\Sigma$ consisting of finite places not above $2$ such that part (i) (resp. part (ii)) of Lemma \ref{v_not_2} applied with $\pp=\pp_v$ holds.
Then it is enough to prove that every irreducible component of the ring 
$$(\wtimes_{v\in \Sigma_1} R^{\square, \psi, \mathrm{ur}}_v)\wtimes_{\OO}(\wtimes_{v\in \Sigma_2}  R^{\square, \psi}_v(\gamma_v)) \wtimes_{\OO}
(\wtimes_{v\mid 2\infty} \bar{R}_v^{\square, \psi})\wtimes_{\OO} R^{\square, \psi}_{S\setminus \Sigma}$$
contains a point of $\mSpec \bar{R}_{S, F}^{\psi}[1/2]$. Since $R^{\square, \psi, \mathrm{ur}}_v$ is formally smooth over $\OO$, it is enough to prove that 
every irreducible component of 
\begin{equation}\label{ring}
(\wtimes_{v\in \Sigma_2}  R^{\square, \psi}_v(\gamma_v)) \wtimes_{\OO}
(\wtimes_{v\mid 2\infty} \bar{R}_v^{\square, \psi})\wtimes_{\OO} R^{\square, \psi}_{S\setminus \Sigma}
\end{equation}
contains a point of $\mSpec \bar{R}_{S\setminus \Sigma_1, F}^{\psi}[1/2]$.  So without loss of generality we may assume that $\Sigma_1=\emptyset$.

We note that the same argument allows us to enlarge $S$. If  we can prove that every irreducible component of $\Spec \bar{R}_{S\cup\{v_1\}}^{\square, \psi}$ meets $\mSpec \bar{R}^{\psi}_{F, S\cup \{v_1\}}[1/2]$, then since $\Spec R^{\square, \psi,  \mathrm{ur}}_{v_1}$ is an irreducible component of 
$\Spec R^{\square, \psi}_{v_1}$ by \cite[Prop.2.5.4]{kisin_serre_2}, the claim for $S\cup \{v_1\}$ implies the claim for $S$. Thus we may assume that 
$S\setminus \Sigma_2$ contains a finite place not dividing $2$. It follows from Proposition \ref{abstract_prop} applied with $\Sigma=\Sigma_2\cup \{ v\mid 2\infty\}$ and 
$\bar{R}_v^{\square, \psi}=R^{\square, \psi}_v(\gamma_v)$ for $v\in \Sigma_2$ 
that it is enough to prove that $\bar{R}^{\psi}_{F, S}$ is a finitely generated $\OO$-module.

Let $F'$ be a finite totally real solvable extension of $F$ in which $2$ splits completely, let $\Sigma_2'$ and $S'$ be the set of places of $F'$ above the places of $\Sigma_2$ and $S$ respectively. For each $w\in \Sigma_2'$ let $\bar{R}^{\square, \psi}_w= R^{\square, \psi}_w(\gamma_w)$, where $\gamma_w$ is the restriction of 
$\gamma_v$ to $G_{F'_w}$, where $v\in \Sigma_2$ such that $w\mid v$. Since the image of $\rhobar$ is non-solvable, the restriction of $\rhobar$ to $G_{F'}$ remains
irreducible. The composition $R^{\psi}_{F', S'}\rightarrow  R^{\psi}_{F, S}\twoheadrightarrow \bar{R}^{\psi}_{F, S}$ factors through 
$\bar{R}^{\psi}_{F', S'}\rightarrow \bar{R}^{\psi}_{F, S}$. It follows from Proposition \ref{base_change_fg} that $\bar{R}^{\psi}_{F, S}$ is a finitely generated $\bar{R}^{\psi}_{F', S'}$-module.
If we can show that $\bar{R}^{\psi}_{F', S'}$ is a finitely generated $\OO$-module, then $\bar{R}^{\psi}_{F, S}$ is also a finitely generated $\OO$-module.

By standard base change and level raising arguments, as used in the proof of \cite[Thm.3.30]{blocksp2}, we can get ourselves in the setup of \cite[\S 3B]{blocksp2}: 
we may find $F'$ as above such that both $[F':\Q]$ and $|\Sigma_2'|$
are even, and $\rhobar|_{G_{F'}}$ is congruent to a Galois representation associated to a Hilbert eigenform $g$ over $F'$ of parallel weight $2$, which  is special of 
conductor $1$ at $v\in \Sigma_2$ and is unramified otherwise. Let $D$ be the quaternion algebra with centre $F'$ ramified precisely at $\Sigma_2'$ and all the infinite 
places. Then $g$ corresponds to an automorphic form on $D$ by the Jacquet--Langlands correspondence. It is explained at the end of the proof of 
\cite[Thm.3.30]{blocksp2} that this implies that the hypothesis 
$S_{\sigma, \psi}(U, \OO)_\mm\neq 0$ made in \cite[Cor.3.27]{blocksp2} is satisfied.  Then the ring in \eqref{ring} 
(with $F$, $\Sigma_2$ and $S$ replaced by $F'$, $\Sigma_2'$ and $S'$ respectively) is the quotient  of the ring denoted by $R^{\psi, \square}_S(\sigma)$ in \cite{blocksp2} (with $S$ replaced by $S'$). Corollary 3.27 of \cite{blocksp2} and part (d) of \cite[Prop.3.17]{blocksp2}
imply that $\bar{R}^{\psi}_{F', S'}$ is a finitely generated $\OO$-module.
\end{proof} 

\begin{prop}\label{almost_done} Assume that $[F:\Q]$ is even, $\rhobar$ is unramified outside the places dividing $2$ and for all $v\mid 2$ we have $\rhobar|_{G_{F_v}}\not\sim \bigl (\begin{smallmatrix} \chi & \ast \\ 0 & \chi \end{smallmatrix}\bigr )$ for any character 
$\chi: G_{F_v}\rightarrow k^{\times}$. Assume that the eigenvalues of $\rhobar(\Frob_v)$ are distinct for all $v\in S\setminus \Sigma$.  Assume that $\rhobar $ is congruent to a Galois representation associated to a Hilbert eigenform $f$ over $F$, which is unramified outside $S$ and either special of conductor $1$ or unramified 
principal series at $v\in \Sigma$, $v\nmid 2\infty$.

 Let $\bar{R}^{\square, \psi}_v$ 
be defined as in \S \ref{global_pst} with $\tau_v$  trivial for all  $v\in \Sigma$, such that $v\nmid 2 \infty$. If $\bar{R}_v^{\square, \psi}\neq 0$ for all $v\in \Sigma$ then $\bar{R}^{\psi}_{F, S}$ is a finitely generated $\OO$-module and $\bar{R}^{\psi}_{F, S}[1/2]$ is reduced. 
\end{prop}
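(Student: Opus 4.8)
The plan is to prove this exactly as the $p=2$ counterpart of Propositions \ref{finiteO} and \ref{reduced}: run the Taylor--Wiles--Kisin patching argument in the form of \cite{blocksp2} (which rests on \cite{kisin_serre_2, kw2}) in place of the patching of \cite{gee_kisin}, and then feed the resulting patched module into Lemma \ref{com_alg}. First I would use the congruence hypothesis to set up the automorphic input. Since $\rhobar$ is congruent to the Hilbert eigenform $f$, which is special of conductor $1$ or an unramified principal series at the finite places $v\in\Sigma$ with $v\nmid 2$, and since $[F:\Q]$ is even, $f$ transfers by Jacquet--Langlands to an automorphic form on the definite quaternion algebra $D/F$ used in \cite[\S 3B]{blocksp2} (ramified at the infinite places, together with the ``special'' places of $\Sigma$). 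As at the end of the proof of \cite[Thm.3.30]{blocksp2}, this shows that the relevant space of algebraic automorphic forms $M:=S_{\sigma,\psi}(U,\OO)_{\mm}$ is non-zero, where $\sigma$ encodes the inertial types $\tau_v$ for $v\in\Sigma$, $v\nmid 2\infty$, and the Hodge types $\wt_v$ for $v\mid 2$, as in \S\ref{global_pst}; thus the hypothesis $S_{\sigma,\psi}(U,\OO)_{\mm}\neq 0$ of \cite[Cor.3.27]{blocksp2} is satisfied.

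Granting this, I would deduce finite generation of $\bar R^{\psi}_{F,S}$ over $\OO$ from \cite[Cor.3.27]{blocksp2} and part (d) of \cite[Prop.3.17]{blocksp2}, just as in the proof of Proposition \ref{components_meet}: one identifies the completed tensor product $\bar R^{\square,\psi}_S=(\wtimes_{v\in\Sigma}\bar R^{\square,\psi}_v)\wtimes_{\OO}(\wtimes_{v\in S\setminus\Sigma}R^{\square,\psi}_v)$ of the local deformation rings over $S$ with the ring $R^{\psi,\square}_S(\sigma)$ of \cite{blocksp2} (using $\bar R^{\square,\psi}_v\neq 0$ for all $v\in\Sigma$ and the equidimensionality of the local rings, via Lemma \ref{equidim}), and the cited results then give that the corresponding global deformation ring is a finitely generated $\OO$-module.

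For the reducedness of $\bar R^{\psi}_{F,S}[1/2]$ I would use the objects produced by patching in the normalisation of \cite{kisin_serre_2, kw2, blocksp2}: a ring $R_\infty\cong R_{\mathrm{loc}}[[x_1,\dots,x_g]]$ with $R_{\mathrm{loc}}=\bar R^{\square,\psi}_S$, a finitely generated maximal Cohen--Macaulay $R_\infty$-module $M_\infty$ (so $\dim M_\infty=\dim R_\infty=h+1$), an $M_\infty$-regular sequence $y_1,\dots,y_h\in R_\infty$, and isomorphisms $R_\infty/(y_1,\dots,y_h)\cong \bar R^{\psi}_{F,S}$ and $M_\infty/(y_1,\dots,y_h)M_\infty\cong M$. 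Since $M\neq 0$, also $M_\infty\neq 0$, so $\Supp_{R_\infty}M_\infty$ is a union of irreducible components of $\Spec R_\infty$; as these components are the $V(\pp[[x_1,\dots,x_g]])$ with $\pp$ a minimal prime of $R_{\mathrm{loc}}$, and every irreducible component of $\Spec\bar R^{\square,\psi}_S$ meets $\mSpec\bar R^{\psi}_{F,S}[1/2]\subseteq\Supp M$ by Proposition \ref{components_meet}, I get $\Supp_{R_\infty}M_\infty=\Spec R_\infty$, so $R_\infty$ acts faithfully on $M_\infty$ and $R_\infty$ is equidimensional. Then I would apply Lemma \ref{com_alg} with $A=R_\infty$, $M=M_\infty$, $x_1=2$ and $x_{i+1}=y_i$ for $1\le i\le h$: condition (1) holds because $M[1/2]$ is a semisimple $\mathbb{T}_{\mm}[1/2]$-module by the semisimplicity of the Hecke action (as in \cite[Lem.1.3(1)]{taylor_deg2}) and the $R_\infty$-action on $M$ factors through $R_\infty\twoheadrightarrow\mathbb{T}_{\mm}$; condition (2) holds because for $\nn\in\mSpec R_\infty[1/2]$ in $\Supp M[1/2]$ the point corresponds to an automorphic Galois representation of the prescribed local type, and $R_{\mathrm{loc}}$ is regular there --- by \cite[Thm.3.3.8]{ki08} at $v\mid 2$, by the formal smoothness of the generic fibre of the inertial-type deformation ring at $v\in\Sigma$ with $v\nmid 2\infty$ (cf. Lemma \ref{v_not_2}, \cite{shotton}, \cite[Thm.4.1.1]{pil}), and by \cite[Prop.2.5.3, 2.5.6]{kisin_serre_2} at $v\mid\infty$ --- whence $(R_\infty)_{\nn}$ is regular, using that $p$ splits completely in $F$ and the purity argument as in the proof of Proposition \ref{reduced}. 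Lemma \ref{com_alg} then yields that $\bar R^{\psi}_{F,S}[1/2]=R_\infty/(y_1,\dots,y_h)[1/2]$ is reduced.

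The hardest part will be the first step: matching the global deformation ring $\bar R^{\psi}_{F,S}$ and its local components with the precise objects patched in \cite{blocksp2}, and securing the automorphic non-vanishing. The $p=2$ bookkeeping --- the parity of the ramification set of $D$, the Steinberg condition at the ``special'' places, the totally even character $\psi$, and the collapse $-1\equiv 1$ --- is where the care is needed; once it is in place, the commutative-algebra input (Lemma \ref{com_alg}) applies essentially formally, and the remainder runs parallel to the odd case.
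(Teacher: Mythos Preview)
Your overall strategy---patch, show faithfulness via Proposition \ref{components_meet}, then apply Lemma \ref{com_alg}---is the right one and matches the paper. But there is a genuine gap in how you invoke \cite{blocksp2}.

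You write that one ``identifies $\bar R^{\square,\psi}_S$ with the ring $R^{\psi,\square}_S(\sigma)$ of \cite{blocksp2}'' and then quotes \cite[Cor.~3.27, Prop.~3.17(d)]{blocksp2}. This identification is false, and the paper says so explicitly: in \cite{blocksp2} the quaternion algebra $D$ is ramified at all $v\in\Sigma$, $v\nmid 2\infty$, and the local factor at such $v$ is not the full $\bar R_v^{\square,\psi}$ but only a single component of it (the genuinely semi-stable one $R^{\square,\psi}_v(\gamma)$ in the notation of Lemma \ref{v_not_2}). Thus the ring $R^{\psi,\square}_S(\sigma)$ of \cite{blocksp2} is a proper quotient of $\bar R^{\square,\psi}_S$, and the global ring produced there is a proper quotient of $\bar R^{\psi}_{F,S}$. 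Citing \cite{blocksp2} directly therefore proves neither finite generation nor reducedness of $\bar R^{\psi}_{F,S}$ itself. The paper fixes this by redoing the patching with $D$ split at all finite places and Iwahori level at $v\in\Sigma$, $v\nmid 2\infty$, so that the automorphic side sees both the unramified and the special components; this is precisely the ``modification'' announced at the start of the proof.

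A second, related point: you assert $R_\infty\cong R_{\mathrm{loc}}[[x_1,\dots,x_g]]$. In the normalisation of \cite{kisin_serre_2, kw2, blocksp2} used here (see the Remark preceding this subsection), $R_\infty$ is defined as a projective limit of finite patched rings, and one only has a surjection $\bar R^{\square,\psi}_S[[x_1,\dots,x_g]]\twoheadrightarrow R'_\infty\twoheadrightarrow R_\infty$. The paper therefore cannot read off regularity of $(R_\infty)_\nn$ directly from the local rings; instead it reruns Lemmas 3.3--3.8 and the relevant part of Lemma 3.14 of \cite{blocksp2} with $\bar R^{\square,\psi}_S$ in place of $R^{\psi,\square}_S(\sigma)$ to show that $R_\infty$ is reduced, equidimensional, and that each irreducible component meets the support of $M_\infty$ in a regular point. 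Only then does Lemma \ref{com_alg} apply. Your argument skips this and would need to be rewritten along these lines.
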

\begin{proof} We will deduce the statement by modifying the patching argument in \cite{blocksp2}. We need to carry out the modification, because in \cite{blocksp2} at places $v\nmid 2\infty$ we did not work with $\bar{R}_v^{\square, \psi}$ but with certain quotients of it, corresponding to either
 unramified or genuinely semi-stable components, see \cite[\S 3B1]{blocksp2}. This ensured that the patched module had the same constant multiplicity on all 
 the irreducible components, which did not depend on the $p$-adic Hodge type at place above $p$, see \cite[Lem.3.10]{blocksp2}. It would have been better 
 to avoid this problem by  using types at $v\nmid 2\infty$, as is done in \cite{gee_kisin}. For the proof of the Proposition the question of multiplicities is irrelevant 
 once we use the results of \cite{blocksp2} as an input.

Let $D$ be the quaternion algebra with the centre $F$ ramified at all infinite places and unramified at all finite places. If $S'$ is a set of places of $F$ containing $S$, 
then the natural map $\bar{R}^{\psi}_{F, S'}\rightarrow \bar{R}^{\psi}_{F, S}$ is surjective, and so if $\bar{R}^{\psi}_{F, S'}$ is a finitely generated $\OO$-module such that 
$\bar{R}^{\psi}_{F, S'}[1/2]$ is reduced then the same holds for $\bar{R}^{\psi}_{F, S}$. Hence, we may assume that there is $v_1\in S\setminus \Sigma$ such that 
$\rhobar(\Frob_{v_1})$ has distinct eigenvalues and $v_1$ satisfies the conditions of \cite[Lem.3.2]{blocksp2}. 

We fix a maximal order $\OO_D$ in $D$ and identify
$D\otimes_F \A^\infty_F$ with $M_2(\A^\infty_F)$ so that $(\OO_D)_v$ gets identified with $M_2(\OO_{F_v})$ for all finite $v$. 
 Let $U=\prod_v U_v$ be a compact open subgroup of $\GL_2(\A^\infty_F)$ such  that 
 $U_v=\{g\in \GL_2(\OO_{F_v}): g \equiv \bigl( \begin{smallmatrix} \ast & \ast \\ 0 & \ast\end{smallmatrix}\bigr)\pmod{\varpi_v}\} $ for all $v\in \Sigma\setminus \{w\mid 2\infty\}$, 
  $U_v=\{g\in \GL_2(\OO_{F_v}): g \equiv \bigl( \begin{smallmatrix} 1& \ast \\ 0 & 1\end{smallmatrix}\bigr)\pmod{\varpi_v}\} $ for all $v\in S\setminus \Sigma$, 
  and $U_v=\GL_2(\OO_{F_v})$ if $v\not \in S$ or $v\mid 2$.
 
 For every $v\mid 2$ we let 
 $\sigma(\wt_v, \tau_v)= \Sym^{b_v-a_v-1} L^2 \otimes \det^{a_v} \otimes \sigma(\tau_v)$ and fix a $U_v$-invariant $\OO$-lattice 
 $\mathcal{L}_v$ in $\sigma(\wt_v, \tau_v)$. 
 Then $\mathcal{L}:=\otimes_{v\mid 2} \mathcal {L}_v$ is a free $\OO$-module of finite rank with a continuous action of $U$, which we denote by $\sigma$, 
 where each factor $U_v$ acts trivially if $v\nmid 2$ and naturally on the
 factor $\mathcal{L}_v$ if $v\mid 2$. The central character of $\sigma$ is equal to the restriction of $\psi$ to $U\cap (\mathbb{A}_F^{\infty})^{\times}$.  
Let $S_{\sigma, \psi}(U,\OO)$ denote the set of continuous functions 
$$f : D^{\times} \backslash( D \otimes_F  \AfF )^{\times} \rightarrow\mathcal L$$
such that for $g\in(D\otimes_F \AfF)^{\times}$ we have $f(gu)=\sigma (u)^{-1} f(g)$ for $u\in U$  and $f(gz)=
\psi^{-1}(z)f(g)$ for $z\in (\AfF )^{\times}$. As in the proof of Proposition \ref{reduced} the polynomial ring 
$\mathbb{T}_{S, \OO}^{\univ}=\OO[T_v, S_v]_{v\not\in S}$ acts on $S_{\sigma, \psi}(U,\OO)$ by Hecke operators and we 
let $\mathbb{T}$ be the image of $\mathbb{T}_{S, \OO}^{\univ}$ in $\End_{\OO}(S_{\sigma, \psi}(U, \OO))$. It follows from \cite[Lemma 1.3 (1)]{taylor_deg2}  
that the action of $\mathbb T[1/2]$ on $S_{\sigma, \psi}(U, \OO)\otimes_{\OO} L$ is semi-simple. 

Let $\mm$ be the maximal ideal of $\mathbb{T}_{S, \OO}^{\univ}$ corresponding to $\rhobar$. Since $D$ is split at all finite places there is an automorphic form 
on $D$ corresponding to $f$ via the Jacquet--Langlands correspondence.  The assumption that $\rhobar(\Frob_v)$ has distinct eigenvalues at all 
$v\in S\setminus \Sigma$ implies that  $f$ is tamely ramified principal series at $v\in S\setminus \Sigma$. 
Hence, $f$ gives rise to an eigenvector for $\mathbb{T}_{S, \OO}^{\univ}$ in $S_{\sigma', \psi'}(U, \OO)_{\mm} [1/2]$, for some $\sigma'$ and $\psi'$. 
This implies that $S_{\sigma', \psi'}(U, \OO)_{\mm}$, and hence $M:=S_{\sigma, \psi}(U, \OO)_{\mm}$,  are non-zero, see
Lemma 3.29 and the fourth paragraph in the proof of Theorem 3.30 in \cite{blocksp2}. As in the proof of Proposition \ref{reduced} we have a surjection 
$\bar{R}^{\psi}_{F, S}\twoheadrightarrow \mathbb{T}_{\mm}$, which makes $M$ into an $\bar{R}^{\psi}_{F, S}$-module.

Now we will redo the patching argument carried out in \cite{blocksp2}. Note that we are almost in the setting of \cite[\S 3B]{blocksp2}, except there $D$ was ramified at all $v\in \Sigma$, $v\nmid 2\infty$ and $U_v=(\OO_D)_v^{\times}$
for all such $v$, so that the ring denoted by $R^{\psi}_{F, S}(\sigma)$ is the quotient of our ring  $\bar{R}^{\psi}_{F, S}$ corresponding to components
described in (ii) of Lemma \ref{v_not_2}. The patching argument is based on \cite{kw2}, \cite{kisin_serre_2},  the details are sketched in the proof of 
\cite[Prop.3.22]{blocksp2}. The output is three complete local noetherian $\OO$-algebras $R'_{\infty}$, $R^{\inv}_\infty$, $R_{\infty}$ satisfying properties (P1)--(P5) of 
\cite[\S3C]{blocksp2}, where in (P1) one has to replace the ring denoted by $R^{\psi, \square}_S(\sigma)$ in \cite{blocksp2} with $\bar{R}^{\square, \psi}_S$ defined in Proposition \ref{components_meet}, a finitely generated Cohen--Macaulay $R_{\infty}$-module $M_{\infty}$ and elements $y_1, \ldots, y_{h+j}$ in the maximal ideal of $R_{\infty}$, such that 
$\varpi, y_1, \ldots, y_{h+j}$ is a regular sequence  for $M_{\infty}$, 
\begin{equation}\label{mod_y}
R_{\infty}/(y_1, \ldots, y_{h+j})\cong \bar{R}^{\psi}_{F, S}, \quad M_{\infty}/ (y_1, \ldots, y_{h+j}) M_{\infty}\cong M,
\end{equation}
and the action of $\bar{R}^{\psi}_{F, S}$ on $M$ given by these isomorphisms coincides with the action defined above.

 If we can show that $R_{\infty}$ acts faithfully on 
$M_{\infty}$, then  $\varpi, y_1, \ldots, y_{h+j}$ is a system of parameters for $R_{\infty}$ and the above isomorphism implies that $\bar{R}^{\psi}_{F, S}$ is a finitely 
generated $\OO$-module. Moreover, if we additionally can show that the support of $M[1/2]$ lies in the regular locus of $\Spec R_{\infty}[1/2]$ then Lemma \ref{com_alg}
implies that $\bar{R}^{\psi}_{F, S}[1/2]$ is reduced. 

This is proved by repeating the arguments of \cite{blocksp2} with $R^{\psi, \square}_S(\sigma)$, $R'_{\infty}(\sigma)$, $R^{\inv}_{\infty}(\sigma)$, $R_{\infty}(\sigma)$
and $M_{\infty}(\sigma)$ replaced with $\bar{R}^{\square, \psi}_S$, $R'_{\infty}$, $R^{\inv}_\infty$, $R_{\infty}$ and $M_{\infty}$, respectively: the proofs of Lemmas 3.3 to 3.8 
in \cite{blocksp2} go through unchanged. This last lemma 
says that if $\nn$ is a maximal ideal of $R_{\infty}[1/2]$ in the support of $M$ then the localization is a regular ring. Proposition \ref{components_meet} implies that 
every irreducible component  of $\Spec \bar{R}^{\square, \psi}_S[[x_1, \ldots, x_g]]$ meets $\Spec \bar{R}^{\psi}_{F, S}[1/2]$ via the map:
$$\bar{R}^{\square, \psi}_S[[x_1, \ldots, x_g]]\twoheadrightarrow R'_{\infty}\twoheadrightarrow R_{\infty}\twoheadrightarrow \bar{R}^{\psi}_{F, S},$$
where the first two arrows are given by patching in (P1) and the last is \eqref{mod_y}. This means that the assumption analogous to the assumption of 
\cite[Lem.3.14]{blocksp2} is satisfied, and since the proof of parts (i) to (iv) of that Lemma use only Lemmas 3.3 to 3.8 of \cite{blocksp2}, the conclusion continues to hold in our setting. In particular, $R_{\infty}$ is $\OO$-torsion free, reduced, equidimensional and the support of $M_{\infty}$ meets every irreducible component of $\Spec R_{\infty}$. Moreover, 
the intersection of the support of $M_\infty$ with each irreducible component of $R_\infty$ contains a regular point of that component.
Since the support of $M_\infty$ is a union of irreducible components of $\Spec R_{\infty}$ by \cite[Lem.3.5]{blocksp2}, we conclude that $R_{\infty}$ acts faithfully on $M_{\infty}$.
\end{proof}

\begin{lem}\label{reduced_fibre} Let $F'$ be a finite Galois extension of $F$ and let $S'$ be the set of places of $F'$ above the places in $S$. Assume that 
$\End_{G_{F', S'}}(\rhobar)=k$,  let $\nn$ be a maximal ideal of $R^{\psi}_{F, S}[1/2]$ and let $\nn'$ be the image of $\nn$ in 
$\Spec R^{\psi}_{F', S'}$. Then the ring $\kappa(\nn')\otimes_{R^{\psi}_{F', S'}} R^{\psi}_{F, S}$
is zero dimensional and reduced. 
\end{lem}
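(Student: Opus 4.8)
Write $R=R^{\psi}_{F,S}$ and $R'=R^{\psi}_{F',S'}$. The plan is to interpret $\kappa(\nn')\otimes_{R'}R$ as the fibre over $\nn'$ of the morphism $\Spec R[1/2]\to\Spec R'[1/2]$ coming from restricting deformations of $\rhobar$ from $G_{F,S}$ to $G_{F',S'}$, and to show this morphism is unramified at every point lying over $\nn'$. First I would observe that $\End_{G_{F'}}(\rhobar)=\End_{G_{F',S'}}(\rhobar)=k$, so Proposition~\ref{base_change_fg} applies and $R$ is a finitely generated $R'$-module; consequently $R[1/2]$ is a finite $R'[1/2]$-algebra, the image $\nn'$ of $\nn$ does not contain $\varpi$ and is a maximal ideal of $R'[1/2]$, $\kappa(\nn')$ is a finite extension of $L$, and $A:=\kappa(\nn')\otimes_{R'}R=\kappa(\nn')\otimes_{R'[1/2]}R[1/2]$ is a finite $\kappa(\nn')$-algebra. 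In particular $A$ is Artinian, which gives zero-dimensionality at once; and since $A$ then decomposes as the product of its localisations $A_{\nn''}$ at the finitely many maximal ideals $\nn''$ of $R[1/2]$ over $\nn'$, the reducedness assertion reduces to showing that each Artinian local ring $A_{\nn''}$ is a field. As $\kappa(\nn')$ has characteristic zero the residue extension $\kappa(\nn'')/\kappa(\nn')$ is separable, so by Nakayama's lemma it suffices to prove that the relative cotangent space $\Omega_{R[1/2]/R'[1/2]}\otimes_{R[1/2]}\kappa(\nn'')$ vanishes.

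Next I would translate this into Galois cohomology. Let $\rho_{\nn''}\colon G_{F,S}\to\GL_2(\kappa(\nn''))$ be the representation attached to $\nn''$ and $\ad^{0}\rho_{\nn''}$ its adjoint module of trace-zero matrices. Since $\End_{G_{F',S'}}(\rhobar)=k$ means $\rhobar|_{G_{F'}}$ is absolutely irreducible, the lift $\rho_{\nn''}|_{G_{F'}}$ is absolutely irreducible too (were it reducible, reducing a suitable stable lattice would contradict the irreducibility of $\rhobar|_{G_{F'}}$); in particular $\End_{G_{F,S}}(\rho_{\nn''})=\kappa(\nn'')$, and by the standard identification of the completed local ring of a deformation ring at a point of its generic fibre with the universal fixed-determinant deformation ring of the associated representation, the tangent space of $\Spec R[1/2]$ at $\nn''$ is $H^{1}(G_{F,S},\ad^{0}\rho_{\nn''})$, that of $\Spec R'[1/2]$ at $\nn'$ is $H^{1}(G_{F',S'},\ad^{0}\rho_{\nn''})$, and the map between them is restriction of cohomology classes. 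Dualising the standard exact sequence of K\"ahler differentials for the tower $R'[1/2]\to R[1/2]$ at $\nn''$, the vanishing of $\Omega_{R[1/2]/R'[1/2]}\otimes_{R[1/2]}\kappa(\nn'')$ becomes the statement that $\mathrm{res}\colon H^{1}(G_{F,S},\ad^{0}\rho_{\nn''})\to H^{1}(G_{F',S'},\ad^{0}\rho_{\nn''})$ is injective.

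I would then conclude exactly as in the inflation--restriction argument used for Proposition~\ref{reduced}. Letting $F_S$ be the maximal extension of $F$ in $\overline F$ unramified outside $S$ and $F''=F_S\cap F'$ (Galois over $F$ because $F'/F$ is), the image $N$ of $G_{F',S'}$ in $G_{F,S}$ is the open normal subgroup $\Gal(F_S/F'')$, with $G_{F,S}/N\cong\Gal(F''/F)$; since restriction to $G_{F',S'}$ factors as restriction to $N$ followed by the injective inflation map along $G_{F',S'}\twoheadrightarrow N$, the kernel of $\mathrm{res}$ coincides with the kernel of restriction to $N$, which by the five-term inflation--restriction exact sequence is $H^{1}(G_{F,S}/N,(\ad^{0}\rho_{\nn''})^{N})$. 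As $\rho_{\nn''}(N)=\rho_{\nn''}(G_{F',S'})$ acts absolutely irreducibly, any matrix commuting with it is scalar, and the only trace-zero scalar in characteristic zero is $0$; hence $(\ad^{0}\rho_{\nn''})^{N}=0$, the kernel vanishes, and each $A_{\nn''}$ is a field, so $A$ is reduced. I expect the main difficulty to be not the cohomology --- which is essentially the computation already done for Proposition~\ref{reduced} --- but the bookkeeping of the first two paragraphs: ensuring that the tangent space of the generic-fibre deformation ring at $\nn''$ really is $H^{1}(G_{F,S},\ad^{0}\rho_{\nn''})$, i.e. invoking correctly that $\widehat{R[1/2]_{\nn''}}$ pro-represents the fixed-determinant deformation functor of $\rho_{\nn''}$ and that $\rho_{\nn''}$ has only scalar endomorphisms so that one may work up to isomorphism with the trace-zero adjoint.
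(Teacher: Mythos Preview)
Your proof is correct and follows essentially the same route as the paper: use Proposition~\ref{base_change_fg} for zero-dimensionality, identify the completed local ring at each maximal ideal $\nn''$ above $\nn'$ with a characteristic-zero fixed-determinant deformation ring, and show the fibre is a field by computing the tangent space of the relative functor as the kernel of the restriction map on $H^1(-,\ad^0\rho_{\nn''})$, which vanishes by inflation--restriction. The only minor difference is in the last step: you kill $H^1(G_{F,S}/N,(\ad^0\rho_{\nn''})^N)$ by observing that the coefficients are already zero (via absolute irreducibility of $\rho_{\nn''}|_{G_{F'}}$), whereas the paper simply notes that $H^1$ of a finite group with coefficients in a characteristic-zero vector space always vanishes, so no appeal to irreducibility is needed at that point.
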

\begin{proof} Proposition \ref{base_change_fg} implies that  $\nn'$ is a maximal ideal 
of $R^{\psi}_{F', S'}[1/2]$ and the ring $\kappa(\nn')\otimes_{R^{\psi}_{F', S'}} R^{\psi}_{F, S}$ is zero dimensional. Let $\rho_{\nn}$ be the Galois representation 
corresponding to $\nn$. The $\nn$-adic completion of $R^{\psi}_{F, S}[1/2]$, which we denote by $A$, can be identified with the universal deformation 
ring of $\rho_{\nn}$ to local artinian $\kappa(\nn)$-algebras with determinant $\psi\chi_{\cyc}$. The quotient $\kappa(\nn')\otimes_{R^{\psi}_{F', S'}} A$ parameterizes deformations of $\rho_{\nn}$, which become trivial deformations after restriction to $G_{F', S'}$.  The tangent space of this closed subfunctor 
is the $\kappa(\nn)$-vector space dual of  the kernel of the natural map 
$$H^1(G_{F, S}, \ad^0\rho_{\nn})\rightarrow H^1(G_{F', S'}, \ad^0 \rho_{\nn}),$$
which can be calculated as $H^1(\Gal(F'/F), (\ad^0 \rho_{\nn})^{G_{F', S'}})$, see the proof of Proposition \ref{reduced}. This group vanishes as
$\Gal(F'/F)$ is a finite group and we work over a field of characteristic $0$. Hence, $\kappa(\nn')\otimes_{R^{\psi}_{F', S'}} A=\kappa(\nn)$, and the 
same holds for any maximal ideal of $R^{\psi}_{F, S}[1/2]$ above $\nn'$. Hence, the localization of $\kappa(\nn')\otimes_{R^{\psi}_{F', S'}} R^{\psi}_{F, S}$
at any maximal ideal is a field, and so the algebra is reduced.
\end{proof}

\begin{prop}\label{reduced_p2} Assume that for all $v\mid 2$ we have $\rhobar|_{G_{F_v}}\not\sim \bigl (\begin{smallmatrix} \chi & \ast \\ 0 & \chi \end{smallmatrix}\bigr )$ 
for any character $\chi: G_{F_v}\rightarrow k^{\times}$. Assume that $\Sigma\neq S$.  If $\bar{R}^{\square, \psi}_v\neq 0$ for all $v\in \Sigma$ then 
$\bar{R}^{\psi}_{F, S}$ is a finitely generated $\OO$-module of rank at least $1$  and $\bar{R}^{\psi}_{F, S}[1/2]$ is reduced.
\end{prop}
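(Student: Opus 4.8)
The plan is to run the $p=2$ analogue of the proof of Propositions \ref{finiteO} and \ref{reduced}, replacing the Taylor--Wiles--Kisin patching of \cite{gee_kisin} by the one of \cite{blocksp2} packaged in Propositions \ref{components_meet} and \ref{almost_done}. First I would reduce to the minimal situation in which Proposition \ref{almost_done} applies, and then transfer the conclusions back along a solvable base change. For the reduction, enlarge $S$ by an auxiliary prime (harmless, since $\bar{R}^{\psi}_{F,S'}\onto \bar{R}^{\psi}_{F,S}$ for $S'\supseteq S$) and choose a totally real solvable Galois extension $F'/F$, linearly disjoint from $\overline{F}^{\ker\rhobar}$ over $F$, in which $2$ splits completely, such that $\rhobar|_{G_{F'}}$ is unramified outside $2$, $[F':\Q]$ is even, the inertial types $\tau_w$ at the finite places $w\in\Sigma'$ with $w\nmid 2$ become trivial, and $\rhobar|_{G_{F'}}$ is congruent to a Hilbert eigenform over $F'$ which is special of conductor $1$ or unramified principal series at those $w$ (using solvable base change for modularity together with level raising). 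Because $2$ splits completely in $F'$, the local data at $w\mid 2$ is literally unchanged, so the hypotheses $\rhobar_v\not\sim\smatr{\chi}{\ast}{0}{\chi}$ and $\bar{R}^{\square,\psi}_v\neq 0$ at those places persist; and linear disjointness ensures $\rhobar|_{G_{F'}}$ still has non-solvable image, in particular $\End_{G_{F'}}(\rhobar)=k$. Proposition \ref{almost_done} then shows that $\bar{R}^{\psi}_{F',S'}$ is a finitely generated $\OO$-module and $\bar{R}^{\psi}_{F',S'}[1/2]$ is reduced.

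Next I would transfer these properties back to $F$. By Proposition \ref{base_change_fg} restriction of deformations makes $R^{\psi}_{F,S}$ a finitely generated $R^{\psi}_{F',S'}$-module; since the local conditions defining $\bar{R}^{\psi}_{F,S}$ at $v\in\Sigma$ restrict at the places of $F'$ above them to conditions no stronger than those defining $\bar{R}^{\psi}_{F',S'}$, this descends to a homomorphism $\bar{R}^{\psi}_{F',S'}\to\bar{R}^{\psi}_{F,S}$ over which $\bar{R}^{\psi}_{F,S}$ is still module-finite, so $\bar{R}^{\psi}_{F,S}$ is a finitely generated $\OO$-module. For reducedness, $\bar{R}^{\psi}_{F',S'}[1/2]$ is reduced and Artinian, hence a finite product of fields $\prod_i\kappa(\nn'_i)$, so $\bar{R}^{\psi}_{F,S}[1/2]=\prod_i\big(\kappa(\nn'_i)\otimes_{\bar{R}^{\psi}_{F',S'}}\bar{R}^{\psi}_{F,S}\big)$; each factor is a quotient of $\kappa(\nn'_i)\otimes_{R^{\psi}_{F',S'}}R^{\psi}_{F,S}$, which is Artinian and reduced by Lemma \ref{reduced_fibre} (applicable since $\End_{G_{F'}}(\rhobar)=k$), and a quotient of a finite product of fields is again a finite product of fields; hence $\bar{R}^{\psi}_{F,S}[1/2]$ is reduced. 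Finally, now that $\bar{R}^{\psi}_{F,S}$ is known to be a finitely generated $\OO$-module, Proposition \ref{abstract_prop} applied directly to the original data gives that its $\OO$-rank is at least $1$: its hypotheses hold because $\Sigma\neq S$ makes Proposition \ref{presentation} available, and each $\bar{R}^{\square,\psi}_v$ with $v\in\Sigma$ is equidimensional, $\OO$-torsion free, of dimension $5$ (if $v\mid p$, by \cite[Thm.3.3.4]{ki08}), $3$ (if $v\mid\infty$) or $4$ (otherwise), with framing-independent deformation problem. We do not need the Cohen--Macaulayness input of that proposition, which only yields $\OO$-torsion-freeness.

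The main obstacle is executing the reduction of the first paragraph: one must produce a single $F'$ realising all the constraints simultaneously (totally real, solvable, $2$ split completely, $[F':\Q]$ even, killing both the ramification and the inertial types of $\rhobar$ away from $2$, keeping the residual image large, and residually modular with the prescribed Steinberg/unramified local behaviour), which is a routine but delicate assembly of local conditions via class field theory together with solvable base change and level raising for Hilbert modular forms. The subtlest point is the Frobenius-eigenvalue condition at $S'\setminus\Sigma'$ required by Propositions \ref{components_meet} and \ref{almost_done}: places where $\rhobar(\Frob)$ has repeated eigenvalues cannot be removed by a base change, so they must be handled using the explicit description of the local deformation ring in Lemma \ref{v_not_2} (where a second, twist-of-Steinberg component appears) together with level raising at such places in the patching, exactly as in the $\Sigma_2$-part of the proof of Proposition \ref{components_meet}; I expect this bookkeeping, rather than any new idea, to be the bulk of the work.
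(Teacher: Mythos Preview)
Your proposal is correct and follows essentially the same route as the paper's proof: pass to a totally real solvable $F'/F$ in which $2$ splits completely and $\rhobar|_{G_{F'}}$ is unramified outside $2$, apply Proposition~\ref{almost_done} over $F'$, then descend finite generation via Proposition~\ref{base_change_fg} and reducedness via Lemma~\ref{reduced_fibre}, with Proposition~\ref{abstract_prop} giving rank $\geq 1$. One simplification you miss: the paper takes $\Sigma' = S'$ over $F'$ (with $\tau_w$ trivial at all finite $w\nmid 2$), which makes the distinct-eigenvalue hypothesis of Proposition~\ref{almost_done} vacuous and eliminates the ``subtlest point'' you worry about in your final paragraph; the work that remains is only to choose $F'$ so that the image of $I_{F'_w}$ in $\GL_2(R^{\square,\psi}_v)$ is trivial or pro-$2$ at each finite $v\in S$, $v\nmid 2\infty$, which is what makes the factorisation $R^{\square,\psi}_w \to R^{\square,\psi}_v$ pass through the trivial-type quotient (your phrase ``no stronger than'' elides this step).
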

\begin{proof} Using Lemma 2.2 of \cite{taylor_ico2} we may find a finite totally real soluble Galois extension $F'$ of $F$ of even degree such that $2$ splits completely in $F'$ and $\rhobar|_{G_{F'}}$ is unramified outside $2$. After enlarging $F'$ again we may assume that  if $v\in S$, $v\nmid 2\infty$ and  $w\mid v$ then 
the image of $I_{F'_w}$ in $\GL_2(R^{\square, \psi}_v)$ is either trivial or is isomorphic to $\Z_p$. This implies that if $f$ is a Hilbert eigenform over $F$, such that 
$\rhobar\cong \rho_f \mod{\varpi}$ then the base change of $f$ to $F'$ will be unramified outside $S'$, the set of places of $F'$ above $S$, 
and either unramified or special of conductor $1$ at $v\in S'$, $v\nmid 2\infty$. 
 Let $\bar{R}^{\square, \psi}_w$ be the quotient of
$R^{\square, \psi}_w$  corresponding to the trivial inertial type. Since $R^{\square, \psi}_v$ is reduced and $\OO$-torsion free by
\cite[Thm.2.5]{shotton2} or \cite[Cor.8.3]{helm}, the natural map $R^{\square, \psi}_w\rightarrow R^{\square, \psi}_v$, induced by restriction of deformations to $G_{F'_w}$,
 factors through $\bar{R}^{\square, \psi}_w\rightarrow R^{\square, \psi}_v$. This implies that if we let $\bar{R}^{\psi}_{F', S'}$ be the ring defined in 
 \S\ref{global_pst} by taking $\Sigma=S'$ and $\tau_w$ trivial for all finite $w\in S'$ not dividing $2$ then the natural map 
 $R^{\psi}_{F', S'}\rightarrow R^{\psi}_{F, S}\twoheadrightarrow \bar{R}^{\psi}_{F, S}$ factors through $\bar{R}^{\psi}_{F', S'}\rightarrow \bar{R}^{\psi}_{F, S}$. 
 Propositions \ref{base_change_fg} and  \ref{almost_done} imply that $\bar{R}^{\psi}_{F, S}$ is a finitely generated $\OO$-module. Proposition \ref{abstract_prop} implies that its rank is at least $1$.
 
Let $A$ be the image of $\bar{R}^{\psi}_{F', S'}[1/2]$ in $\bar{R}^{\psi}_{F, S}[1/2]$.  It follows from Proposition \ref{almost_done} that $A$ is a product of fields. Thus it is enough to show that $\kappa(\nn)\otimes_A\bar{R}^{\psi}_{F, S}[1/2]$ is reduced for all maximal ideals $\nn$ of $A$. This algebra is a quotient 
 of $\kappa(\nn)\otimes_{R^{\psi}_{F', S'}} R^{\psi}_{F, S}$, which is a product of fields by Lemma \ref{reduced_fibre}. Hence, $\bar{R}^{\psi}_{F, S}[1/2]$ is reduced.
 \end{proof}
\begin{rem} If $\Sigma=S$ then we can still conclude that $\bar{R}^{\psi}_{F, S}$ is finitely generated $\OO$-module and $\bar{R}^{\psi}_{F, S}[1/2]$ is reduced, 
but we cannot rule out that $\bar{R}^{\psi}_{F, S}[1/2]=0$.
\end{rem}

\begin{thm} In addition to the hypotheses made in the beginning of the subsection, assume that $ \Sigma\neq S$ and that the following hold: 
\begin{itemize}
\item[(1)] $\bar{R}^{\square, \psi}_v\neq 0$ for each $v\in \Sigma$;
\item[(2)] if $v\mid 2$ then $\tau_v\cong \theta_{1,v}\oplus \theta_{2, v}$, for distinct characters $\theta_{1,v}, \theta_{2,v}: I_v\rightarrow L^{\times}$ with open kernel, which extend to $W_{F_v}$;
\item[(3)] for each $v\mid 2$,  $\End_{G_{F_v}}(\rhobar)=k$.
\end{itemize}
Then $\bar{R}^{\psi}_{F, S}$ is reduced and is a finite free $\OO$-module of rank at least $1$. 
\end{thm}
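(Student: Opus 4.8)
The plan is to verify the hypotheses of Proposition \ref{abstract_prop} and then invoke Proposition \ref{reduced_p2}, exactly as in the proof of the corresponding statement for odd $p$. Since we assume $\Sigma\neq S$, the hypothesis of Proposition \ref{presentation} holds automatically, so it remains only to check that for every $v\in\Sigma$ the ring $\bar{R}^{\square,\psi}_v$ is Cohen--Macaulay; by the discussion in \S\ref{global_pst} these rings are already equidimensional, $\OO$-torsion free of the expected relative dimension, and define framing-independent deformation problems.

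First I would go through the three kinds of places. If $v\mid\infty$ then $\bar{R}^{\square,\psi}_v=R^{\square,\psi}_v$ is a complete intersection by \cite[Prop.2.5.6]{kisin_serre_2}, hence Cohen--Macaulay. If $v\nmid 2\infty$ then $\bar{R}^{\square,\psi}_v$ is Cohen--Macaulay by Theorem \ref{thm:CM} of the appendix. If $v\mid 2$ then hypothesis (3) gives $\End_{G_{F_v}}(\rhobar)=k$, which in particular rules out $\rhobar|_{G_{F_v}}\sim \bigl(\begin{smallmatrix}\chi & \ast \\ 0 & \chi\end{smallmatrix}\bigr)$ for any character $\chi\colon G_{F_v}\to k^{\times}$; since $p=2$ there is no further restriction in Theorem \ref{theorem-main}, and hypothesis (2) together with the remark following Theorem \ref{theorem-main-intro} identifies $R^{\psi}_{\rhobar_v}(\wt_v,\tau_v)$ with $R^{\psi,\mathrm{cr}}_{\rhobar_v}(\wt_v,\tau_v)$, which is Cohen--Macaulay by Theorem \ref{theorem-main} --- it is non-zero since $\bar{R}^{\square,\psi}_v\neq 0$ by (1), and $\bar{R}^{\square,\psi}_v$ is formally smooth of relative dimension $3$ over it by \S\ref{global_pst}. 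Hence $\bar{R}^{\square,\psi}_v$ is Cohen--Macaulay in all cases.

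It then remains to feed this into the global machinery. Proposition \ref{reduced_p2}, whose hypotheses hold here (they are precisely (1), the consequence of (3) that $\rhobar|_{G_{F_v}}\not\sim \bigl(\begin{smallmatrix}\chi & \ast \\ 0 & \chi\end{smallmatrix}\bigr)$ for $v\mid 2$, and $\Sigma\neq S$), shows that $\bar{R}^{\psi}_{F,S}$ is a finitely generated $\OO$-module of rank at least $1$ and that $\bar{R}^{\psi}_{F,S}[1/2]$ is reduced. Proposition \ref{abstract_prop}, applied with the local Cohen--Macaulay rings found above, then gives that $\bar{R}^{\psi}_{F,S}$ is $\OO$-torsion free; being finitely generated and torsion free over the discrete valuation ring $\OO$, it is finite free of rank at least $1$, and the injection $\bar{R}^{\psi}_{F,S}\hookrightarrow\bar{R}^{\psi}_{F,S}[1/2]$ forces it to be reduced. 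I expect the main obstacle to lie not in this final bookkeeping but in the two $p=2$ patching inputs already established in Propositions \ref{reduced_p2} and \ref{components_meet} (based on \cite{kisin_serre_2}, \cite{kw2}, \cite{blocksp2}), which supply both the finiteness over $\OO$ and the reducedness after inverting $2$.
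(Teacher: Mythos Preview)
Your proof is correct and follows essentially the same approach as the paper: verify Cohen--Macaulayness of each $\bar{R}^{\square,\psi}_v$ via Theorem \ref{theorem-main} (for $v\mid 2$), Theorem \ref{thm:CM} (for finite $v\nmid 2$), and \cite[Prop.2.5.6]{kisin_serre_2} (for $v\mid\infty$), then combine Proposition \ref{reduced_p2} (finiteness and reducedness after inverting $2$) with Proposition \ref{abstract_prop} ($\OO$-torsion freeness) to conclude. The paper's proof is simply a terser version of exactly this argument.
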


\begin{proof} Theorem \ref{theorem-main}, Theorem \ref{thm:CM},  proved by Jack Shotton in the appendix below, and Proposition \ref{reduced_p2} show that the hypothesis of Proposition  \ref{abstract_prop} are satisfied in this case and hence $\bar{R}^{\square, \psi}_{F, S}$ is a finite free $\OO$-module of rank at least $1$. Hence $\bar{R}_{F, S}^{\psi}$ injects 
into $\bar{R}_{F, S}^{\psi}[1/2]$, which is reduced by Proposition \ref{reduced_p2}.
\end{proof}

\appendix \normalsize

\section{Equidimensionality of completed tensor products}\label{equip}

Let $\mathcal C(\OO)$ be the category of local noetherian $\OO$-algebras with residue field $k$ which are $\OO$-torsion free. If $A, B\in \mathcal C(\OO)$ 
then the completed tensor product $A\wtimes_{\OO} B$ also lies in $\mathcal C(\OO)$. The purpose of the appendix is to prove the following result, the proof 
of which we were not able to find in the literature. 

\begin{lem}\label{equidim} If $A$ and $B$ in $\mathcal C(\OO)$ are both equidimensional then  $A\wtimes_{\OO} B$ is equidimensional of
dimension $\dim A+\dim B-1$.
\end{lem}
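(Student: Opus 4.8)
The plan is to reduce to a statement about fibers of a flat morphism. First I would recall that $A, B \in \mathcal{C}(\OO)$ are complete local noetherian rings with residue field $k$ and with $\varpi$ a nonzerodivisor; write $\bar{A} = A/\varpi A$ and $\bar{B} = B/\varpi B$, which are complete local noetherian $k$-algebras. Since $A$ is $\OO$-flat and equidimensional, every minimal prime $\pp$ of $A$ satisfies $\dim A/\pp = \dim A$, and since $\varpi \in \pp$ would force $A/\pp$ to be a quotient of $\bar A$ with dimension $\dim A - 1 < \dim A$ (as $A$ is $\OO$-torsion free, $\dim \bar A = \dim A - 1$), no minimal prime contains $\varpi$; hence $A$ has no $\varpi$-torsion and $\dim \bar A = \dim A - 1$. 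The key structural input is that $\bar A \wtimes_k \bar B \cong (A \wtimes_\OO B)/\varpi(A\wtimes_\OO B)$, so it suffices to show: (1) $\bar A \wtimes_k \bar B$ is equidimensional of dimension $\dim \bar A + \dim \bar B$ when $\bar A, \bar B$ are equidimensional complete local $k$-algebras, and (2) $A \wtimes_\OO B$ is $\OO$-flat with $\dim(A\wtimes_\OO B) = \dim(\bar A \wtimes_k \bar B) + 1$ and is equidimensional. The second point follows from the first once we know $\varpi$ is a nonzerodivisor on $A \wtimes_\OO B$ and that the ring is $\varpi$-adically separated, since then every minimal prime avoids $\varpi$ and quotienting by $\varpi$ preserves the equidimensionality bookkeeping (dimension drops by exactly one on each component).

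For step (1), the main tool is that for complete local noetherian $k$-algebras, $\bar A \wtimes_k \bar B$ is faithfully flat over both $\bar A$ and $\bar B$ (this is standard: completed tensor product over a field of complete local rings with residue field $k$ is flat, e.g. because it can be computed as a limit of ordinary tensor products of Artinian quotients). Then I would invoke the dimension formula for flat local homomorphisms: if $\bar A \to C := \bar A \wtimes_k \bar B$ is flat local with closed fiber $C/\m_{\bar A}C \cong \bar B$ (using residue field $k$), then for any prime $\qq$ of $C$ lying over $\pp \subset \bar A$ one has $\dim C_\qq = \dim \bar A_\pp + \dim (C/\pp C)_\qq$, and globally $\dim C = \dim \bar A + \dim \bar B$. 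Equidimensionality of $C$ then follows: given a minimal prime $\qq$ of $C$, let $\pp = \qq \cap \bar A$; flatness (going-down) forces $\pp$ to be a minimal prime of $\bar A$, and then $\dim C/\qq$ can be computed via the flat map $\bar A/\pp \to C/\pp C$ — here I would want that $C/\pp C = (\bar A/\pp) \wtimes_k \bar B$ is again a completed tensor product of equidimensional rings, reducing to the case $\bar A$ a domain, where the fibers are equidimensional of constant dimension $\dim \bar B$ (using equidimensionality of $\bar B$) and one concludes $\dim C/\qq = \dim \bar A/\pp + \dim \bar B = \dim \bar A + \dim \bar B = \dim C$.

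The step I expect to be the main obstacle is verifying that the generic fiber dimension is constant — more precisely, controlling $\dim(C/\pp C)_\qq$ for $\qq$ a minimal prime over $\pp C$. The flat local homomorphism formula gives the dimension of $C$ and relates localizations, but deducing that \emph{every} minimal prime $\qq$ of $C$ gives $\dim C/\qq = \dim C$ requires knowing that the fiber rings $\kappa(\pp) \otimes_{\bar A} C$ are equidimensional of dimension $\dim \bar B$ for minimal primes $\pp$; this is where equidimensionality of $\bar B$ (and of $\bar A$) must genuinely be used, and where I would need a careful argument that completed tensor product is compatible with passing to $\bar A/\pp$ and with the relevant localizations. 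A clean way to package this: first handle the case where $\bar A$ and $\bar B$ are domains (then $C$ is flat over the domain $\bar A$ with irreducible fibers of dimension $\dim \bar B$, giving $C$ irreducible — or at least equidimensional — of the right dimension), then for the general equidimensional case use that $\bar A_{\red}$ and $\bar B_{\red}$ are finite over... no: better, use that $\Spec C \to \Spec \bar A$ is flat, the preimage of each irreducible component $V(\pp)$ of $\Spec \bar A$ is $V(\pp C) = \Spec((\bar A/\pp)\wtimes_k \bar B)$, and these cover $\Spec C$; each such is equidimensional of dimension $\dim(\bar A/\pp) + \dim \bar B = \dim \bar A + \dim \bar B$ by the domain case applied componentwise to $\bar B$, and every minimal prime of $C$ is minimal in some $V(\pp C)$, so all have the same coheight. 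Assembling these reductions carefully, together with the $\OO$-flatness bookkeeping in step (2), completes the proof.
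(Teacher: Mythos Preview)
Your overall architecture is sound and genuinely different from the paper's proof, but there is a real gap at the point you yourself flag as the obstacle. In the ``domain case'' you write that $C = \bar A \wtimes_k \bar B$ is ``flat over the domain $\bar A$ with irreducible fibers of dimension $\dim \bar B$, giving $C$ irreducible --- or at least equidimensional''. Neither clause is justified. The closed fibre is $\bar B$, which is irreducible, but you give no argument for the non-closed fibres (and over a non-algebraically closed $k$ there is no reason for $\bar A \wtimes_k \bar B$ to be irreducible even when both factors are domains). More importantly, even granting that the closed fibre is equidimensional, the inference ``flat local over an equidimensional base with equidimensional closed fibre $\Rightarrow$ total space equidimensional'' is a genuine theorem, not a formality. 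One clean way to fill the gap is to invoke the preservation of quasi-unmixedness under flat local homomorphisms (for instance Herrmann--Ikeda--Orbanz, \emph{Equimultiplicity and Blowing Up}, Theorem 18.17): since complete local rings are excellent, equidimensional and quasi-unmixed coincide, and the theorem gives exactly that $\bar A \wtimes_k \bar B$ is equidimensional directly from the equidimensionality of $\bar A$ and of the closed fibre $\bar B$. With that input, your reduction to the domain case becomes unnecessary and the whole proof collapses to a couple of lines. Your step (2), lifting equidimensionality from $\bar C$ to $A\wtimes_\OO B$, is fine and can be made elementary: for a minimal prime $\qq$ of $A\wtimes_\OO B$ one checks that any prime of $A\wtimes_\OO B$ minimal over $\qq + (\varpi)$ has height $1$ (using that $\varpi$ is a nonzerodivisor) and is therefore minimal over $(\varpi)$, so its image in $\bar C$ is a minimal prime; equidimensionality of $\bar C$ then forces $\dim (A\wtimes_\OO B)/\qq = 1 + \dim \bar C$.

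For comparison, the paper's proof never passes to the special fibre. Instead it enlarges $\OO$ to a finite extension $\OO'$ chosen (via Lemmas \ref{appendix_bound} and \ref{base_change}) so that $A' = A\otimes_\OO \OO'$ and $B' = B\otimes_\OO \OO'$ have the property that $A'/\mathfrak P$ and $B'/\mathfrak Q$ are \emph{geometrically integral} for all minimal primes; then it cites \cite[Lemma~3.3]{blght}, which says that completed tensor products of geometrically integral $\OO$-algebras are again integral, to conclude that $A'\wtimes_{\OO'} B'$ is equidimensional, and finally descends via Lemma \ref{base_equidim}. So the paper trades your appeal to a flatness/quasi-unmixedness theorem for a base-change argument plus a black-box lemma about geometric integrality; your route is arguably more direct once the right commutative-algebra input is named, while the paper's route stays closer to what is already available in the literature on potential automorphy.
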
  

Recall that a ring of finite Krull dimension is called \textit{equidimensional} if $\dim A=\dim A/\mathfrak p$ for all minimal primes $\pp$ of $A$. Recall that $A\in \mathcal C(\OO)$ is called \textit{geometrically integral} if  the algebra $A\otimes_{\OO} \OO'$ is an integral domain for all finite extensions $L'/L$, where $\OO'$ denotes the ring of integers in $L'$. 
We note that even if both $A$ and $B$ are integral domains the completed tensor product need not be an integral domain. For example, if $\OO=\Zp$, 
$A=B=\Zp[x]/(x^2-p)$ then $A\wtimes_{\OO} B\cong \Zp[x, y]/( x^2-p, y^2-p)$ and $(x-y)(x+y)=0$ in $A\wtimes_{\OO} B$. However, if both $A$ and $B$ are geometrically integral then $A \wtimes_{\OO} B$ is also geometrically integral by \cite[Lemma 3.3 (3)]{blght}. Moreover, Lemma \ref{equidim} holds if $A/\pp$ and $B/\qq$ are geometrically integral 
for all   the minimal primes $\pp$ of $A$ and all the minimal primes $\qq$ of $B$ by \cite[Lemma 3.3 (2), (5)]{blght}. We will prove the lemma by reducing to this case. 

\begin{lem}\label{base_equidim} If $A\in \mathcal C(\OO)$ then the minimal primes of $A':=A\otimes_{\OO} \OO'$ are precisely the prime ideals
lying above the minimal primes of $A$. Moreover, $A$ is equidimensional if and only if $A'$ is equidimensional.
\end{lem}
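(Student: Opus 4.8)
The plan is to use that $\OO\to\OO'$ is a finite extension of discrete valuation rings which is free as an $\OO$-module, so that after base change the map $\varphi\colon A\to A':=A\otimes_{\OO}\OO'$ becomes a module-finite, faithfully flat ring homomorphism. All the assertions then follow from the Cohen--Seidenberg theorems together with the invariance of Krull dimension under integral extensions.

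First I would record the elementary properties of $\varphi$. Since $\OO$ is a discrete valuation ring and $\OO'$ is a finitely generated torsion-free $\OO$-module, $\OO'$ is free over $\OO$ of some rank $n\geq 1$; base-changing, $A'$ is free of rank $n$ over $A$, so $\varphi$ is faithfully flat, in particular injective, and module-finite, hence integral. From flatness I get the going-down property and surjectivity of $\Spec A'\to\Spec A$; from injectivity and integrality I get going-up, lying-over and incomparability (two comparable primes of $A'$ with the same contraction to $A$ coincide), as well as $\dim A'=\dim A$ and, for every prime $\qq$ of $A'$ with contraction $\pp:=\qq\cap A$, the equality $\dim A'/\qq=\dim A/\pp$, since $A/\pp\hookrightarrow A'/\qq$ is then an integral extension of domains.

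Next I would identify the minimal primes. If $\qq\in\Spec A'$ is minimal, then $\pp:=\qq\cap A$ is minimal in $A$: otherwise a prime $\pp_0\subsetneq\pp$ would lift, by going-down, to a prime $\qq_0\subsetneq\qq$, contradicting minimality of $\qq$. Conversely, if $\pp$ is a minimal prime of $A$ and $\qq$ lies over $\pp$, then any prime $\qq'\subseteq\qq$ satisfies $\qq'\cap A\subseteq\pp$, hence $\qq'\cap A=\pp$ by minimality of $\pp$, hence $\qq'=\qq$ by incomparability; so $\qq$ is minimal in $A'$. Combined with lying-over, this shows that the minimal primes of $A'$ are exactly the primes lying over the finitely many minimal primes of $A$, and that contraction restricts to a surjection $\mathrm{Min}(A')\to\mathrm{Min}(A)$.

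Finally I would assemble the equidimensionality statement. Writing $d=\dim A=\dim A'$, the ring $A'$ is equidimensional iff $\dim A'/\qq=d$ for every minimal prime $\qq$ of $A'$; by the second paragraph this reads $\dim A/(\qq\cap A)=d$, and since $\qq\mapsto\qq\cap A$ maps $\mathrm{Min}(A')$ onto $\mathrm{Min}(A)$, this holds for all such $\qq$ precisely when $\dim A/\pp=d$ for every minimal prime $\pp$ of $A$, i.e. precisely when $A$ is equidimensional. I expect no deep obstacle here; the only point needing care is the bookkeeping of which Cohen--Seidenberg property is invoked where, namely that going-down uses flatness of $\varphi$ while going-up, incomparability and the dimension equalities use that $\varphi$ is injective and integral.
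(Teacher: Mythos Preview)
Your proof is correct and follows essentially the same approach as the paper's: both use that $A\to A'$ is finite and flat, then invoke going-down (for flatness) to show minimal primes of $A'$ lie over minimal primes of $A$, lying-over/incomparability (for integrality) for the converse, and the equality $\dim A'/\qq=\dim A/(\qq\cap A)$ for integral extensions of domains to transfer equidimensionality in both directions. The paper's version is terser, citing Matsumura's Theorem 9.3 directly, but the underlying argument is the same.
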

\begin{proof} The algebra $A'$ is finite and flat over $A$. In particular, $\dim A'=\dim A$ and $\dim A/\pp=\dim A'/\mathfrak P$ for all  
primes $\mathfrak P$ of $A'$ with $\pp= A\cap \mathfrak P$.

If $\pp$ is a minimal prime of $A$ then it follows from \cite[Theorem 9.3 (i), (ii)]{mat} that there is a prime $\mathfrak P$ of $A'$ lying above 
$\pp$  and any such $\mathfrak P$ is minimal. If $A'$ is equidimensional then $\dim A/\pp =\dim A'/\mathfrak P= \dim A'=\dim A$, 
 and thus $A$ is equidimensional.

If $\mathfrak P$ is a minimal prime of $A'$ then Going down theorem for flat extensions implies that 
$\mathfrak \pp:=\mathfrak P\cap A$ is a minimal prime of $A$. If $A$ is equidimensional 
then $\dim A'=\dim A= \dim A/\pp=\dim A'/\mathfrak P$ and thus $A'$ is equidimensional. 
\end{proof}

\begin{lem}\label{appendix_bound} If  $A\in \mathcal C(\OO)$ is an integral  domain then there is a number $n(A)$ such that for all finite extensions $L'/L$,  
$A\otimes_{\OO} \OO'$  has at most $n(A)$ minimal primes. 
\end{lem}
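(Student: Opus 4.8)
The plan is to compare $A\otimes_{\OO}\OO'$ with its generic fibre over $L$ and to count connected components there. Since $A$ is $\OO$-torsion free it is $\OO$-flat, so $A':=A\otimes_{\OO}\OO'$ (which is module-finite and free over $A$, hence noetherian) is $\OO'$-flat; in particular $\varpi'$ is a non-zero-divisor in $A'$, so no minimal prime of $A'$ contains $\varpi'$ (minimal primes are associated primes, hence consist of zero-divisors), and inverting $p$ induces a bijection between the minimal primes of $A'$ and those of $A'[1/p]=A[1/p]\otimes_{L}L'$. Moreover $B:=A[1/p]$ is a localisation of the domain $A$, hence a noetherian domain containing $L$. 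Thus it suffices to bound, independently of the finite extension $L'/L$, the number of minimal primes of $B\otimes_{L}L'$.

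The key point is that \emph{the algebraic closure $\ell$ of $L$ in $E:=\mathrm{Frac}(A)$ is a finite extension of $L$}. I will deduce this from the Cohen structure theorem: writing $A$ as a quotient of some $\OO[[x_1,\dots,x_d]]$ by a prime ideal, Noether normalisation for complete local rings gives a module-finite injection $S:=\OO[[t_1,\dots,t_e]]\hookrightarrow A$, so $E$ is finite over $\mathrm{Frac}(S)$. For every finite extension $L'/L$ the ring $\OO'\otimes_{\OO}S\cong\OO'[[t_1,\dots,t_e]]$ is $\varpi'$-adically complete and separated and $\varpi'$-torsion free, with reduction $k'[[t_1,\dots,t_e]]$ a domain (here $k'$ is the residue field of $\OO'$); an elementary valuation argument then shows that $\OO'\otimes_{\OO}S$ is itself a domain, hence that $\mathrm{Frac}(S)\otimes_{L}L'=(S\setminus\{0\})^{-1}(\OO'\otimes_{\OO}S)$ is a domain. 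Since this holds for all $L'$, the field $L$ is separably --- hence, in characteristic $0$, absolutely --- algebraically closed in $\mathrm{Frac}(S)$. Consequently $\ell\otimes_{L}\mathrm{Frac}(S)$ is a field, and its multiplication map to $E$ is an embedding, so $[\ell:L]=[\ell\otimes_{L}\mathrm{Frac}(S):\mathrm{Frac}(S)]\le[E:\mathrm{Frac}(S)]<\infty$.

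Granting this, put $m:=[\ell:L]$ and $n(A):=m$; I claim $B\otimes_{L}L'$ has at most $m$ minimal primes for every finite $L'/L$. Since $B\otimes_{L}L'$ is free over $B$, the nonzero elements of $B$ are non-zero-divisors in it, and exactly as in the first paragraph inverting them puts the minimal primes of $B\otimes_{L}L'$ in bijection with those of $(B\setminus\{0\})^{-1}(B\otimes_{L}L')=E\otimes_{L}L'$. Now $E\otimes_{L}L'=E\otimes_{\ell}(\ell\otimes_{L}L')$, and $\ell\otimes_{L}L'$ is an étale $L'$-algebra (characteristic $0$), hence a finite product $\prod_{j=1}^{r}F_{j}$ of fields finite over $L'$ with $r\le\sum_{j}[F_{j}:L']=[\ell:L]=m$, each $F_{j}$ containing a copy of $\ell$. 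Therefore $E\otimes_{L}L'\cong\prod_{j=1}^{r}(E\otimes_{\ell}F_{j})$, and since $\ell$ is algebraically closed in $E$ and each $F_{j}/\ell$ is finite separable, each factor $E\otimes_{\ell}F_{j}$ is a field. Hence $E\otimes_{L}L'$ has exactly $r\le m$ minimal primes, and therefore so does $A\otimes_{\OO}\OO'$; we may take $n(A)=[\ell:L]$, which does not depend on $L'$.

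The main obstacle is the finiteness assertion of the second paragraph --- that $\ell/L$ is finite. It amounts to the complete local domain $A$ being geometrically integral over $\OO$ up to torsion, which I reduce via the Cohen structure theorem to the elementary corresponding statement for the power series ring $\OO[[t_1,\dots,t_e]]$. The remaining steps are routine manipulations with localisations and finite étale algebras, and they produce a bound $n(A)=[\ell:L]$ that is visibly uniform in $L'$.
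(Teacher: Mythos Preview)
Your proof is correct and takes a genuinely different route from the paper's. Both arguments invoke Cohen's structure theorem to find a module-finite inclusion $S=\OO[[t_1,\dots,t_e]]\hookrightarrow A$, but they diverge after that. The paper works directly with $A'=A\otimes_\OO\OO'$ and $B'=S\otimes_\OO\OO'\cong\OO'[[t_1,\dots,t_e]]$: since each minimal prime $\mathfrak{P}_i$ of $A'$ is associated, one has embeddings $A'/\mathfrak{P}_i\hookrightarrow A'$, and after tensoring with $K(B')$ these images are shown to be linearly independent, bounding the number of minimal primes by $\dim_{K(B')}K(B')\otimes_{B'}A'$, which is in turn bounded by the number of generators of $A$ as an $S$-module. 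Your argument instead passes to the generic fibre, identifies the minimal primes of $A\otimes_\OO\OO'$ with those of $E\otimes_L L'$ where $E=\mathrm{Frac}(A)$, and then uses the field-theoretic fact that $E\otimes_L L'\cong\prod_j E\otimes_\ell F_j$ is a product of fields once you know the algebraic closure $\ell$ of $L$ in $E$ is finite; the bound you obtain is $n(A)=[\ell:L]$. Your bound is sharper (indeed exact for $L'$ containing a Galois closure of $\ell/L$), and the argument is more conceptual, at the cost of a short digression into separable field extensions; the paper's bound is coarser but the counting is more direct.
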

\begin{proof}  Since $A$ is $\OO$-torsion free, by Cohen's structure theorem for complete local rings there is a subring $B\subset A$ such that $A$ is finite over $B$ and $B\cong \OO[[x_1, \ldots, x_d]]$, see \cite[Theorem 29.4 (iii)]{mat} and the Remark following it.  
Let $A'= A\otimes_\OO \OO'$ and let $B'=B\otimes_{\OO} \OO'$. 
Then $B'\subset A'$, $A'$ is finite over $B'$ 
and $B'\cong \OO'[[x_1, \ldots, x_d]]$. In particular, $B'$ is an integral  domain. If $\mathfrak P$ is a minimal prime of $A'$ then 
$\mathfrak P\cap A$ is a minimal prime of $A$ by Lemma \ref{base_equidim} and hence $\mathfrak P\cap A=0$, as $A$ is an integral domain. 
Thus $\mathfrak P\cap B=0$. Since $(\mathfrak P\cap B')\cap B=0$ and $B'$ is an integral domain, Lemma \ref{base_equidim}
implies that $\mathfrak P\cap B'=0$. Hence $K(B')\otimes_{B'} A'/\mathfrak P$ is non-zero, where $K(B')$ denotes the fraction field of $B'$. 

Let $\mathfrak P_1, \ldots, \mathfrak P_n$ be the minimal primes of $A'$. 
Since every 
minimal prime ideal is also an associated prime by \cite[Theorem 6.5 (iii)]{mat} for each $i$ there is an injection of $A'$-modules $A'/\mathfrak P_i\hookrightarrow A'$.
The kernel $K$ of the map $\oplus_{i=1}^n A'/\mathfrak P_i \rightarrow A'$ is not supported on any $\mathfrak P_i$. Hence, $\dim K< \dim B'$ and so 
$K(B')\otimes_{B'} K=0$. Hence the map  $\oplus_{i=1}^n K(B')\otimes_{B'} A'/\mathfrak P_i \rightarrow K(B')\otimes_{B'} A'$ is injective and 
we conclude that $A'$ can have at most 
$\dim_{K(B')} K(B')\otimes_{B'} A'$ minimal ideals. The dimension of  $K(B')\otimes_{B'} A'$ as $K(B')$-vector space can be bounded by the number of generators 
of $A'$ as a $B'$-module, which can be bounded by the number of generators of $A$ as a $B$-module. This number does not depend on the 
extension $L'/L$.
\end{proof}

If $A\in \mathcal C(\OO)$ is a domain with fraction field $K(A)$ 
then we have injections:
$$ A\otimes_{\OO} \OO'\hookrightarrow K(A)\otimes_{\OO} \OO'\cong K(A) \otimes_L L'.$$
Since $L'/L$ is a finite separable extension $K(A)\otimes_L L'$ is a finite product of fields and the map induces a bijection between the minimal primes of $A\otimes_{\OO} \OO'$ and the minimal primes of $K(A)\otimes_L L'$. In particular, we obtain:
 
\begin{lem}\label{domain_scalar} If $A\in \mathcal C(\OO)$ is an integral domain such that $A\otimes_{\OO} \OO'$ has only one minimal prime then 
$A\otimes_{\OO} \OO'$ is an integral domain. 
\end{lem}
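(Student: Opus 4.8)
The plan is to exploit the embedding constructed in the paragraph just before the statement. Since $A$ is $\OO$-torsion free and $\OO'$ is free of finite rank over $\OO$, tensoring the inclusion $A\hookrightarrow K(A)$ with $\OO'$ stays injective, so $A\otimes_{\OO}\OO'$ embeds into $K(A)\otimes_{\OO}\OO'\cong K(A)\otimes_L L'$; and because $L'/L$ is a finite \emph{separable} extension (we work with finite extensions of $\Qp$), the ring $K(A)\otimes_L L'$ is a finite product of fields $\prod_{i=1}^{n}F_i$, whose minimal prime ideals are precisely the $n$ kernels of the coordinate projections $\prod_i F_i\twoheadrightarrow F_j$. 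First I would recall, as is done above the lemma, that $K(A)\otimes_L L'$ is the localization of $A\otimes_{\OO}\OO'$ along the multiplicative set $A\setminus\{0\}$ (nonzerodivisors of $A$ remain nonzerodivisors in $A\otimes_{\OO}\OO'$ since $A\to A\otimes_{\OO}\OO'$ is free, hence flat), and that since $A$ is a domain every minimal prime of $A\otimes_{\OO}\OO'$ contracts to $(0)\subset A$ (flatness gives lying-over and going-down, finiteness gives incomparability); hence this localization induces a bijection between the minimal primes of $A\otimes_{\OO}\OO'$ and those of $K(A)\otimes_L L'$.

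Granting this, the lemma is immediate. If $A\otimes_{\OO}\OO'$ has a single minimal prime, then so does $K(A)\otimes_L L'$, which forces $n=1$, i.e.\ $K(A)\otimes_L L'$ is a field; hence $A\otimes_{\OO}\OO'$ is a subring of a field and therefore an integral domain. Equivalently, one may argue that $A\otimes_{\OO}\OO'$ sits inside the reduced ring $K(A)\otimes_L L'$, so it is reduced, so $(0)$ equals the intersection of its minimal primes, and a reduced ring with a unique minimal prime $\pp$ has $\pp=(0)$ and is thus a domain.

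There is no real obstacle here. The only substantive ingredient is that $L'/L$ be separable, so that $K(A)\otimes_L L'$ is reduced — merely counting minimal primes would not suffice for a ring that might carry nilpotents — and this separability holds automatically for extensions of $p$-adic fields. Everything else is the bookkeeping already recorded in the surrounding text: flatness (indeed freeness) of $\OO'$ over $\OO$, the identification of $K(A)\otimes_L L'$ with the localization of $A\otimes_\OO\OO'$, and the correspondence of primes under that localization.
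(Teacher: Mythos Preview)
Your proposal is correct and is essentially the same argument as the paper's: the paper records the injection $A\otimes_{\OO}\OO'\hookrightarrow K(A)\otimes_L L'$, observes that $K(A)\otimes_L L'$ is a finite product of fields by separability of $L'/L$, and that the minimal primes correspond; the lemma is then stated as an immediate consequence, exactly as you deduce it.
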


\begin{lem}\label{base_change} Let $A\in \mathcal C (\OO)$ and let $L'$ be a finite extension of $L$ with the ring of integers $\OO'$ such that 
the number of minimal primes of $A':=A\otimes_{\OO} \OO'$ is maximal. Then $A'/\mathfrak P$ is geometrically integral for all minimal primes $\mathfrak P$ of $A'$.
\end{lem}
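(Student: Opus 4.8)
\textbf{Proof strategy for Lemma \ref{base_change}.}
The plan is to prove that once we pass to a large enough finite extension $L'/L$ so that the number of minimal primes of $A' = A\otimes_{\OO}\OO'$ is \emph{maximal} among all finite extensions, the quotients $A'/\mathfrak P$ become geometrically integral. First I would reduce to the case where $A$ is an integral domain: by Lemma \ref{base_equidim} the minimal primes of $A'$ lie above the minimal primes of $A$, so it suffices to treat each quotient $A/\pp$ separately, noting that the maximality of the number of minimal primes of $A'$ forces, for each minimal prime $\pp$ of $A$, the number of minimal primes of $(A/\pp)\otimes_\OO\OO'$ to be maximal as well (if it could be increased by a further extension, the total count for $A'$ would increase, contradicting maximality). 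So assume $A$ is a domain and $\mathfrak P$ is one of its minimal primes after base change; I must show $A'/\mathfrak P$ stays a domain after any further finite extension.

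The key point is Lemma \ref{appendix_bound}: the number of minimal primes of $A\otimes_\OO\OO''$ is bounded uniformly in $L''/L$ by a constant $n(A)$. Hence the supremum of these numbers is achieved by some finite extension $L'/L$, which justifies the hypothesis. Now fix such an $L'$ and let $\mathfrak P$ be a minimal prime of $A'=A\otimes_\OO\OO'$, so $B:=A'/\mathfrak P$ is a domain. For any further finite extension $L''/L'$ with ring of integers $\OO''$, the minimal primes of $B\otimes_{\OO'}\OO''$ correspond bijectively to those of $K(B)\otimes_{L'}L''$, as in the discussion preceding Lemma \ref{domain_scalar}. If $B\otimes_{\OO'}\OO''$ had more than one minimal prime for some $L''$, then since $A'\otimes_{\OO'}\OO''\cong A\otimes_\OO\OO''$ surjects onto $B\otimes_{\OO'}\OO''$ and the minimal primes of $A\otimes_\OO\OO''$ lying over $\mathfrak P\cap A$ are exactly the minimal primes of $B\otimes_{\OO'}\OO''$ pulled back, the total number of minimal primes of $A\otimes_\OO\OO''$ would strictly exceed that of $A' = A\otimes_\OO\OO'$, contradicting maximality of the latter. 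Therefore $B\otimes_{\OO'}\OO''$ has exactly one minimal prime for every finite $L''/L'$, and by Lemma \ref{domain_scalar} (applied over $\OO'$ in place of $\OO$) it is an integral domain. Since this holds for all finite $L''/L'$, the ring $B = A'/\mathfrak P$ is geometrically integral.

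I expect the main obstacle to be the bookkeeping in the reduction to the domain case — specifically, making precise that maximality of the minimal-prime count for $A'$ propagates to maximality for each $(A/\pp)\otimes_\OO\OO'$, and that base change by $\OO'$ and then $\OO''$ is compatible with the correspondence of minimal primes (this uses Lemma \ref{base_equidim}'s Going-down/lying-over assertions and the fact that $A/\pp\hookrightarrow A'/\mathfrak P$ identifies minimal primes correctly). Once that is set up, the argument is essentially a counting argument combined with Lemma \ref{domain_scalar}, and no further commutative-algebra input is needed. A minor technical care: one should check that when $A$ is not itself a domain, the relevant quotients $A'/\mathfrak P$ each arise from a unique minimal prime of $A$ (again Lemma \ref{base_equidim}), so that applying the domain case to $A/\pp$ and tracking which $\mathfrak P$ lies over $\pp$ gives the claim for all minimal primes of $A'$ at once.
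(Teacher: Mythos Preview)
Your proposal is correct and follows essentially the same approach as the paper: use Lemma~\ref{appendix_bound} to justify that the maximum is attained, then argue by contradiction that if $(A'/\mathfrak P)\otimes_{\OO'}\OO''$ had more than one minimal prime the total count for $A\otimes_{\OO}\OO''$ would strictly exceed that for $A'$, and conclude via Lemma~\ref{domain_scalar}. The only difference is that the paper does not first reduce to the case where $A$ is a domain---it works directly with general $A$, using Lemma~\ref{base_equidim} only to bound the total number of minimal primes (as a sum over minimal primes of $A$)---so the bookkeeping you flag as the ``main obstacle'' is in fact avoidable.
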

\begin{proof} Since $A$ is noetherian, it has only finitely many minimal primes. The minimal primes of $A\otimes_{\OO} \OO'$ are precisely 
the primes lying over the minimal primes of $A$. It follows from Lemma \ref{appendix_bound} that 
the set of numbers of minimal primes of $A\otimes_{\OO} \OO'$ as $L'$ ranges over all finite extensions of $L$ is bounded from above.
Thus we may pick an extension $L'$ such that this number is maximal. If $\mathfrak P$ is a minimal prime of $A'$ and $L''$ is a finite extension 
of $L'$ with the ring of integers $\OO''$ such that  $(A'/\mathfrak P)\otimes_{\OO'} \OO''$ has more than one minimal prime then
we would conclude that $A\otimes_{\OO} \OO''$ has strictly more minimal primes than $A\otimes_{\OO} \OO'$ contradicting 
the choice of $L'$. Hence,  $(A'/\mathfrak P)\otimes_{\OO'} \OO''$ has only one minimal prime and Lemma \ref{domain_scalar} implies that 
it is an integral domain. 
\end{proof}

\begin{proof}[Proof of Lemma \ref{equidim}]  Let $L'$ be a finite extension of $L$ with the ring of integers $\OO'$ such that 
$A'/\mathfrak P$ and $B'/\mathfrak Q$ are geometrically integral 
for all the minimal primes $\mathfrak P$ of $A'$ and all the minimal 
primes $\mathfrak Q$ of $B'$, where $A':= A\otimes_{\OO} \OO'$ and $B':= B\otimes_{\OO} \OO'$. The existence of such extension is
granted by Lemma \ref{base_change}. Since $A'$ and $B'$ are equidimensional by Lemma \ref{base_equidim}, it follows from 
\cite[Lemma 3.3 (2), (5)]{blght} that $A'\wtimes_{\OO'} B'$ is equidimensional. Since 
$$(A\wtimes_{\OO} B)\otimes_{\OO} \OO'\cong A'\wtimes_{\OO'} B'$$
we use Lemma \ref{base_equidim} again to conclude that $A\wtimes_{\OO} B$ is equidimensional. 

Since $B$ is $\OO$-flat, 
$A\wtimes_{\OO} B$ is a flat $A$-algebra, thus $$\dim A\wtimes_{\OO} B=\dim A + \dim k\otimes_A ( A\wtimes_{\OO} B)$$ by \cite[Thm.15.1]{mat}.
The fibre ring $k\otimes_A ( A\wtimes_{\OO} B)$ is isomorphic to $B/\varpi B$, which has dimension equal to $\dim B-1$, as $\varpi$ is $B$-regular.
\end{proof}

\section{Local deformation rings for $2$-adic representations of $G_{\QQ_l}$, $l \neq 2$. By \textsc{Jack Shotton}.}\label{appendix_jack}
Let $l$ and $p$ be distinct primes. Let $L/\QQ_p$ be a finite extension with ring of integers $\Oc$,
uniformiser $\varpi$ and residue field $k$.  Let $F/\QQ_l$ be a finite extension with absolute
Galois group $G_F$, inertia group $I_F$, and wild inertia group $P_F$.  Let $\tilde{P}_F$ be the
kernel of the maximal pro-$l$ quotient of $I_F$.  Let $q$ be the order of the residue field of $F$.
We assume that $L$ contains all $(q^2-1)th$ roots of unity.  Choose a pro-generator $\sigma$ of
$I_F/\tilde{P}_F$ and $\phi \in G_F/\tilde{P}_F$ lifting the arithmetic Frobenius element of
$G_F/I_F$.  Then we have the relation
\begin{equation}\label{eq:relation}\phi \sigma \phi^{-1} = \sigma^q.\end{equation}

If $\rhobar : G_F \rarrow \GL_2(k)$ is a continuous homomorphism, let $R^{\square}_{\rhobar}$ be the universal
framed deformation ring for $\rhobar$ parametrising lifts with coefficients in $\Oc$-algebras.  By
\cite{shotton2} Theorem~2.5, $R^{\square}_{\rhobar}$ is a reduced, $\Oc$-flat complete intersection ring of
relative dimension $4$ over $\Oc$.

If $\tau : I_F \rarrow \GL_2(L)$ is a continuous semisimple representation that extends to $G_F$, let
$R^{\square}_{\rhobar}(\tau)$ be the maximal reduced, $p$-torsion free quotient of
$R^{\square}_{\rhobar}$ such that, for every $\Oc$-algebra homomorphism
$x : R^{\square}_{\rhobar} \rarrow \bar{L}$, the corresponding representation
$\rho_x : G_F \rarrow \GL_2(\bar{L})$ satisfies $(\rho_x\!\mid_{I_F})^{ss} \cong \tau$.

The goal of this appendix is to prove:
\begin{thm} \label{thm:CM}
  For any $\rhobar$ and $\tau$ as above, the ring $R^{\square}_{\rhobar}(\tau)$ is either
  Cohen--Macaulay or zero.
\end{thm}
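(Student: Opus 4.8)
The plan is to follow the strategy of \cite{shotton}, where Theorem \ref{thm:CM} is proved for $p$ odd, and to isolate the steps that use $p \neq 2$ and redo them in residue characteristic $2$.

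First I would make the usual reductions. Enlarging $L$ and twisting $\rhobar$ (and $\tau$ correspondingly) by the Teichm\"uller lift of a character of $G_F$, one reduces to $\tau$ of one of the shapes $\theta_1 \oplus \theta_2$ with $\theta_1\theta_2^{-1}$ a non-trivial character of $I_F$ stable or interchanged under the $q$-power Frobenius action, or the trivial type $\ide \oplus \ide$. Because $\ell \neq p$ and $\tilde P_F$ is the kernel of the maximal pro-$\ell$ quotient of the pro-$\ell$ group $I_F$, the image of $\tilde P_F$ under any deformation of $\rhobar$ is bounded, so by the rigidity argument of \cite{shotton} (and the explicit description of the components of $R^{\square}_{\rhobar}$ in \cite{shotton2}) the deformation problem is controlled by the pair of matrices $\Phi = \rho(\phi)$, $\Sigma = \rho(\sigma)$ subject to the relation \eqref{eq:relation}. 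The key structural point, also from \cite{shotton,shotton2}, is that the inertial type is locally constant on $\Spec R^{\square}_{\rhobar}[1/p]$; since $R^{\square}_{\rhobar}$ is reduced and $\Oc$-flat, its irreducible components correspond bijectively to those of its generic fibre, and $\Spec R^{\square}_{\rhobar}(\tau)$ is exactly the union of those components of $\Spec R^{\square}_{\rhobar}$ on which the type is $\tau$ (if there are none, the ring is zero). It therefore suffices to prove that such a union of components is Cohen--Macaulay. This I would do by a Mayer--Vietoris induction over the components: if $Z_1 \cup Z_2$ is a partition of the relevant components into two unions, with ideals $I_1, I_2$ in $R^{\square}_{\rhobar}$, the exact sequence $0 \to R^{\square}_{\rhobar}/(I_1 \cap I_2) \to R^{\square}_{\rhobar}/I_1 \oplus R^{\square}_{\rhobar}/I_2 \to R^{\square}_{\rhobar}/(I_1 + I_2) \to 0$ and the depth inequality for short exact sequences reduce Cohen--Macaulayness of $R^{\square}_{\rhobar}(\tau)$ to: (i) each single component is Cohen--Macaulay; and (ii) the scheme-theoretic intersection $R^{\square}_{\rhobar}/(I_1 + I_2)$ is Cohen--Macaulay of dimension one less, for every such partition.

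For (i): when $\tau(\sigma)$ is non-scalar, $\Sigma$ is regular, and solving the equation $\Phi\Sigma = \Sigma^q\Phi$ (linear in $\Phi$) after diagonalising $\Sigma$ over the henselisation shows the component is regular, in particular Cohen--Macaulay, so there is nothing to prove. When $\tau(\sigma)$ is scalar one may, after a further twist, take $\tau = \ide \oplus \ide$; writing $\Sigma = 1 + N$ with $N$ nilpotent of trace zero, the relation \eqref{eq:relation} becomes $\Phi N \Phi^{-1} = q N$, and the relevant ring is an explicit determinantal-type ring of pairs $(\Phi, N)$ with $\Phi \in \GL_2$, $\tr N = \det N = 0$, $\Phi N = q N\Phi$, whose Cohen--Macaulayness is established in \cite{shotton} via the Hochster--Eagon-type theory of determinantal ideals, adapted to work over $\Oc$. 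For (ii), the intersections are cut out inside rings of this kind by the further equations pinning down the Jordan type of $\Sigma$ or the monodromy; using the explicit description of \cite{shotton2} one checks that these equations form a regular sequence of the expected length on the relevant locus, forcing the dimension drop.

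The main obstacle is exactly the scalar-type case of (i) and the intersection estimates in (ii) when $p = 2$. Since $\ell$ is odd, $q$ is odd, so $q \equiv 1 \pmod{\varpi}$, and the relation $\Phi N = q N \Phi$ degenerates modulo $\varpi$ to the genuine commutation relation $\Phi N = N\Phi$; the special fibre thus becomes a slice of the commuting variety rather than of its $q$-twisted analogue, and the transversality used for $p$ odd (which relied on $q-1$ being a $p$-adic unit in the relevant places) breaks down. To get around this I would argue directly with presentations, exploiting that the \emph{full} framed ring $R^{\square}_{\rhobar}$ is already known to be a complete intersection — hence Cohen--Macaulay — at $p = 2$ by \cite[Thm.~2.5]{shotton2}: one realises each component of $R^{\square}_{\rhobar}(\tau)$ as the quotient of a formally smooth neighbourhood inside $\Spec R^{\square}_{\rhobar}$ by an explicit ideal, and uses the complete intersection property of the ambient ring together with the Eagon--Northcott or Buchsbaum--Rim complexes attached to the determinantal ideals that appear to verify the required depth statements in characteristic $2$ directly. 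The bookkeeping of which components of $R^{\square}_{\rhobar}$ survive, of the precise shape of these determinantal ideals for the various $\rhobar$ (reducible, irreducible, scalar on inertia, and so on), and of the verification that the cutting equations form regular sequences of the right length, is the genuinely case-heavy part of the argument and is where the residue characteristic $2$ degeneracies must be handled one by one.
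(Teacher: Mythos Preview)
Your reduction to the pair $(\Sigma,\Phi)$ with $\Phi\Sigma=\Sigma^q\Phi$ is right, and the Mayer--Vietoris/depth argument you sketch is close in spirit to what is used for the type $\tau_\zeta$. But there is a genuine gap in step~(i). You assert that when $\tau(\sigma)$ is non-scalar one can diagonalise $\Sigma$ and conclude the component is regular. For $p=2$ this fails for \emph{every} non-scalar type on the list: the eigenvalues $\zeta_1,\zeta_2$ (resp.\ $\xi,\xi^q$) are $2$-power roots of unity, hence all $\equiv 1\pmod\varpi$, so on the special fibre $\Sigma$ degenerates to a unipotent matrix and cannot be diagonalised. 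Concretely, for $\tau=\tau_\xi$ the special fibre is $\bar{\Sc}/I_N$ with $I_N=(A^2+BC,\,CQ+BR,\,T,\,A+D)$, a $4$-dimensional complete intersection that is singular at $A=B=C=0$; for $\tau=\tau_{\zeta_1,\zeta_2}$ the special fibre is not even reduced (it has multiplicity~$2$ along the component $I_1$). So the ``non-scalar $\Rightarrow$ regular'' shortcut is unavailable, and every type, not just the scalar one, is a genuine case.

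The paper's route is more direct and avoids the component-by-component induction. It works with the finite-type $\Oc$-scheme $\Xc=\{(\Sigma,\Phi):\Phi\Sigma=\Sigma^q\Phi\}$ and its closed subschemes $\Xc(\tau)$, so that $R^{\square}_{\rhobar}(\tau)$ is a completion of $\Ac(\tau)=\Gamma(\Xc(\tau),\Oc)$. For each of the three type families it writes down an explicit candidate ideal $J\subset\Sc$ containing the defining ideal of $\Xc(\tau)$, reduces $J$ mod $\varpi$, and then shows the resulting surjection $\bar{\Sc}/\bar J\twoheadrightarrow\bar{\Ac}(\tau)$ is an isomorphism. The device for this last step is a multiplicity-comparison lemma: if $R\twoheadrightarrow S$ are equidimensional of the same dimension with $R$ S1 and Nagata, and for each minimal prime $\pf_i$ of $R$ one can find a maximal ideal $\mf_i$ of $S$ lying only over $\pf_i$ with $e_R(R,\pf_i)\le e_{S^\wedge_{\mf_i}}(S^\wedge_{\mf_i},\qf_i)$, then $R\cong S$. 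The required multiplicities on the $S$-side are read off from known formally smooth descriptions of $R^{\square}_{\rhobar}(\tau)$ at well-chosen $\rhobar$ (e.g.\ diagonal with distinct Frobenius eigenvalues). Once the special fibre is identified explicitly, Cohen--Macaulayness is checked by hand: $\bar{\Sc}/I_N$ is a complete intersection; $\bar{\Sc}/(I_1\cap I_N)$ is CM by the Mayer--Vietoris trick (\cite[Ex.~18.13]{eis}) since $\bar{\Sc}/(I_1+I_N)$ is regular of the correct dimension; and for $\tau_{\zeta_1,\zeta_2}$ one exhibits a short exact sequence $0\to\bar{\Sc}/I_1\to\bar{\Sc}/I(\tau)\to\bar{\Sc}/(I_1\cap I_N)\to 0$ and uses that extensions of maximal CM modules are maximal CM. Your Eagon--Northcott/Buchsbaum--Rim suggestion does not obviously apply, since the ideals $(A^2+BC,\,CQ+BR,\,BT,\,CT,\ldots)$ that appear are not standard determinantal ideals.
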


If $p > 2$, then this is the content of section~5.5 of \cite{shotton}.  If $p = 2$ and
$\rhobar|_{\tilde{P}_F}$ is non-scalar, then the proof of proposition~5.1 of \cite{shotton} shows
that $R^{\square}_{\rhobar}$ is a completed tensor product of deformation rings of characters, all
of whose irreducible components are formally smooth, and that $R^{\square}_{\rhobar}(\tau)$ is an
irreducible component of $R^{\square}_{\rhobar}$; thus $R^{\square}_{\rhobar}(\tau)$ is formally
smooth in this case.  From now on, then, we assume that $p = 2$ and that $\rhobar|_{\tilde{P}_F}$ is
scalar; by twisting, we may and do assume that $\rhobar|_{\tilde{P}_F}$ is trivial.  In this case,
we may list the semisimple inertial types $\tau$ for which $R^{\square}_{\rhobar}(\tau)$ may be non-zero.  They are
determined by the eigenvalues of $\tau(\sigma)$, which must be of $2$-power order and either fixed
or interchanged by raising to the power $q$.  Writing $a = v_2(q-1)$ and $b = v_2(q^2 - 1)$, if
$R^{\square}_{\rhobar}(\tau)$ is non-zero then either
\begin{itemize}
\item $\tau = \tau_\zeta$ is the inertial type in which the eigenvalues of $\tau(\sigma)$ are both
  equal to a $2^a$th root of unity, $\zeta$;
\item $\tau = \tau_{\zeta_1, \zeta_2}$ is the inertial type in which the eigenvalues of
  $\tau(\sigma)$ are equal to distinct $2^a$th roots of unity $\zeta_1$ and $\zeta_2$;
\item $\tau = \tau_{\xi}$ is the inertial type in which the eigenvalues of $\tau(\sigma)$ are equal
  to $\xi$ and $\xi^q$ for $\xi$ a $2^b$th root of unity with $\xi \neq \xi^q$ (equivalently, with
  $\xi$ not a $2^a$th root of unity).
\end{itemize}

We also give a version with fixed determinant:

\begin{cor}
  If $\psi$ is any lift of $\det \rhobar$ to $\Oc^{\times}$ such
  that $\psi|_{I_F} = \det \tau$, let $R^{\square, \psi}_{\rhobar}(\tau)$ be the universal framed
  deformation ring with determinant $\psi$ and type $\tau$.  Then $R^{\square, \psi}_{\rhobar}(\tau)$ is
  Cohen--Macaulay or zero.
\end{cor}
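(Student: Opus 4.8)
The plan is to deduce the Corollary from Theorem~\ref{thm:CM} together with the unramified‑twisting symmetry of the deformation problem. First observe that if $\rho\colon G_F\to\GL_2(A)$ lifts $\rhobar$ with $(\rho\mid_{I_F})^{\mathrm{ss}}\cong\tau$, then $\det(\rho\mid_{I_F})=\det\tau$, because the determinant is unchanged by semisimplification; in particular every point of $\Spec R^{\square}_{\rhobar}(\tau)$ has determinant restricting to $\det\tau$ on $I_F$. Taking determinants therefore gives an $\Oc$‑algebra map $\alpha\colon R^{\det\rhobar}(\det\tau)\to R^{\square}_{\rhobar}(\tau)$, where $R^{\det\rhobar}(\det\tau)$ is the deformation ring of the character $\det\rhobar$ classifying lifts whose restriction to $I_F$ is $\det\tau$. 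Any two such lifts of $\det\rhobar$ differ by an unramified character congruent to $1$, and since $\psi$ itself is one of them, one checks $R^{\det\rhobar}(\det\tau)\cong\Oc[[x]]$ with $x=0$ the point corresponding to $\psi$. By the very definition of $R^{\square,\psi}_{\rhobar}(\tau)$, imposing $\det=\psi$ is base change along $\Oc[[x]]\to\Oc$, $x\mapsto 0$; writing $y:=\alpha(x)\in R^{\square}_{\rhobar}(\tau)$, this identifies $R^{\square,\psi}_{\rhobar}(\tau)$ with the maximal reduced, $\Oc$‑flat quotient of $R^{\square}_{\rhobar}(\tau)/(y)$.

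The key point is that $y$ is a non‑zero‑divisor on $R^{\square}_{\rhobar}(\tau)$, and in fact that no passage to the reduced $\Oc$‑flat quotient is needed. If $R^{\square}_{\rhobar}(\tau)=0$ there is nothing to prove, so assume it is nonzero; by Theorem~\ref{thm:CM} it is Cohen--Macaulay, hence has no embedded primes, so $y$ fails to be a non‑zero‑divisor only if it lies in some minimal prime $\mathfrak p$, equivalently only if the determinant fails to be dominant on some irreducible component $Z=V(\mathfrak p)$. To rule this out I would use the action of the (pro‑)group $U$ of unramified characters $\chi\colon G_F\to 1+\mathfrak m_A$ congruent to $1$, acting by $(\chi,\rho)\mapsto\chi\otimes\rho$: this preserves $\rhobar$ and the inertial condition (as $\chi$ is unramified), so it descends to $\Spec R^{\square}_{\rhobar}(\tau)$ and is intertwined by $\alpha$ with the action of $U$ on $\Spec\Oc[[x]]$ through $\chi\mapsto\chi(\Frob)^2$. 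Being connected, $U$ preserves each component $Z$; choosing a $\bar L$‑point of $Z$ (which exists, as $R^{\square}_{\rhobar}(\tau)$ is $\Oc$‑flat and reduced, so no component is vertical), its image under the determinant has $U(\bar L)$‑orbit equal to a translate of the set of squares in $1+\mathfrak m_{\bar L}$, which is Zariski dense in the $x$‑line since squaring is étale in characteristic zero. Hence the determinant is dominant on $Z$, so $y\notin\mathfrak p$ and $y$ is a non‑zero‑divisor; consequently $R^{\square}_{\rhobar}(\tau)/(y)$ is Cohen--Macaulay, being a quotient of a Cohen--Macaulay local ring by a non‑zero‑divisor.

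It remains to check that $R^{\square}_{\rhobar}(\tau)/(y)$ is already reduced and $\Oc$‑flat, so that it coincides with $R^{\square,\psi}_{\rhobar}(\tau)$; for this I would appeal to the explicit description of the three possible types $\tau_\zeta$, $\tau_{\zeta_1,\zeta_2}$, $\tau_\xi$ set up above and run the determinant‑fixing argument inside the presentations used in the proof of Theorem~\ref{thm:CM}: in each case the fixed‑determinant locus is cut out by a single equation which is visibly a non‑zero‑divisor on an $\Oc$‑flat complete intersection, and the quotient stays $\Oc$‑flat and reduced, equidimensional of the expected dimension $4$. I expect the main obstacle to be precisely this last verification of $\Oc$‑flatness of the quotient: for $p>2$ one avoids it entirely because squaring is an automorphism of $1+\mathfrak m$, so that $R^{\square}_{\rhobar}(\tau)\cong R^{\square,\psi}_{\rhobar}(\tau)\wtimes_{\Oc}\Oc[[x]]$ splits off a power‑series variable, whereas for $p=2$ the squaring map is only finite flat of degree $2$, so one is genuinely forced to argue through the non‑zero‑divisor $y$ and to confirm by hand that cutting by it introduces neither vertical components nor nilpotents.
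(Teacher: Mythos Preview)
Your argument is essentially the paper's. Both deduce the result from Theorem~\ref{thm:CM} by showing that the element $\det\rho(\phi)-\psi(\phi)$ is a non-zerodivisor in $R^{\square}_{\rhobar}(\tau)$, using that the connected group of unramified twists preserves each irreducible component while moving the determinant. The paper phrases this as the action of $\GG_m^{\wedge}$ and needs only that the determinant is non-constant on each component; its entire proof is three sentences.

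Where you diverge is in your final paragraph. You worry whether the quotient by this single equation is already reduced and $\Oc$-flat, and propose to verify this case-by-case from the explicit presentations of $\Ac(\tau)$. The paper does not do this: it simply identifies $R^{\square,\psi}_{\rhobar}(\tau)$ with $R^{\square}_{\rhobar}(\tau)/(\det\rho(\phi)-\psi(\phi))$ and stops once that quotient is shown to be Cohen--Macaulay. So your extra step is not wrong, and your diagnosis of why the $p=2$ case is more delicate than $p>2$ (squaring on $1+\mathfrak{m}$ is no longer an isomorphism) is accurate, but the paper sidesteps the whole issue by taking the one-equation quotient as the ring in question rather than comparing it with a further ``maximal reduced $\Oc$-flat'' quotient.
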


\begin{proof}
  By Theorem~\ref{thm:CM}, $R^{\square}_{\rhobar}(\tau)$ is Cohen--Macaulay.  If we impose a single
   additional equation $\det \rho(\phi) = \psi(\phi)$, then the ring will still be Cohen--Macaulay
  provided that $\det \rho(\phi) - \psi(\phi)$ is a non-zerodivisor --- in other words, that it
  doesn't vanish on any irreducible components of $\Spec R^\square_{\rhobar}(\tau)$.  This is the
  case, since the action of $\GG_m^{\wedge}$ on $\Spec R^\square_{\rhobar}(\tau)$ given by making
  unramified twists preserves irreducible components but varies the determinant.
\end{proof}
Let $\Xc$ be the affine $\Oc$-scheme whose $R$ points, for an $\Oc$-algebra $R$, are pairs
\[\{(\Sigma, \Phi) \in \GL_2(R) \times \GL_2(R) : \Phi \Sigma = \Sigma^q \Phi\}.\] Then $\Xc$ is a
reduced, $\Oc$-flat complete intersection of relative dimension 4 over $\Spec \Oc$ by the proof of
Theorem~2.5 of \cite{shotton2}.  Let $\Ac$ be the coordinate ring of $\Xc$.  We write
$\Sigma = \twomat{1+A}{B}{C}{1+D}$ and $\Phi = \twomat{P}{Q}{R}{T-P}$, so that $\Ac$ is a quotient
of
\[\Sc = \Oc[A,B,C,D,P,Q,R,T][(\det \Sigma)^{-1}, (\det \Phi)^{-1}].\]
For any continuous $\rhobar : G_F \rarrow \GL_2(k)$, the pair of matrices $\rhobar(\sigma)$ and
$\rhobar(\phi)$ give rise to a closed point of $\Xc$, and so a maximal ideal $\mf$ of $\Ac$.  Then
$R^{\square}_{\rhobar} = \Ac^\wedge_{\mf}$.  If $\Cc$ is a conjugacy class in $\GL_2(\bar{L})$, then
there is a unique irreducible component of $\Xc$ such that, for a dense set of geometric points of
that component, the corresponding matrix $\Sigma$ has conjugacy class $\Cc$. This provides a
bijection between the irreducible components of $\Xc$ and the conjugacy classes of $\GL_2(\bar{L})$
that are preserved under the $q$-power map (by \cite{shotton2} Proposition~2.6).
If $\tau$ is one of the above inertial types then we write $\Xc(\tau)$ for the union of those
irreducible components corresponding to conjugacy classes with the same characteristic polynomial as
$\tau(\sigma)$, with the reduced subscheme structure, and $\Ac(\tau)$ for its coordinate ring.
Note that, since $\Xc$ is $\Oc$-flat and $\Xc(\tau)$ is an irreducible component of $\Xc$,
$\Xc(\tau)$ is also $\Oc$-flat, so that $\Ac(\tau)$ is $\varpi$-torsion free.

\begin{lem} \label{lem:type-compl}
  If $\tau = \tau_\zeta, \tau_{\zeta_1, \zeta_2},$ or $\tau_{\xi}$, then
  $\Ac(\tau)_{\mf}^\wedge = R^{\square}_{\rhobar}(\tau)$.
\end{lem}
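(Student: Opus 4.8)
The plan is to identify both $\Ac(\tau)_{\mf}^\wedge$ and $R^{\square}_{\rhobar}(\tau)$ as the unique reduced, $\Oc$-flat quotient of $R^{\square}_{\rhobar}=\Ac_{\mf}^\wedge$ whose $\bar{L}$-points are exactly the homomorphisms $x\colon R^{\square}_{\rhobar}\to\bar{L}$ with $(\rho_x\!\mid_{I_F})^{ss}\cong\tau$, and then to use the general fact that a reduced, $\Oc$-flat quotient of a complete local Noetherian $\Oc$-algebra is determined by its set of $\bar{L}$-points.

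First I would set up the dictionary between points of $\Xc$ and inertial types. For a homomorphism $x\colon R^{\square}_{\rhobar}\to\bar{L}$ the associated representation $\rho_x$ is built from the pair $(\Sigma_x,\Phi_x)=(\rho_x(\sigma),\rho_x(\phi))$ attached to $x$ via $R^{\square}_{\rhobar}=\Ac_{\mf}^\wedge$, and $\rho_x\!\mid_{I_F}$ factors through the procyclic group $I_F/\tilde{P}_F$, which is topologically generated by $\sigma$. A character of $I_F/\tilde{P}_F$ is determined by its value on $\sigma$, so the isomorphism class of the semisimple representation $(\rho_x\!\mid_{I_F})^{ss}$ is determined by the multiset of eigenvalues of $\Sigma_x$, i.e.\ by its characteristic polynomial; the same holds for $\tau$, which restricts on $I_F$ to a sum of two characters of $I_F/\tilde{P}_F$. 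Hence $(\rho_x\!\mid_{I_F})^{ss}\cong\tau$ if and only if $\Sigma_x$ and $\tau(\sigma)$ have the same characteristic polynomial. On the scheme side, $\Xc$ is reduced, and by \cite[Prop.~2.6]{shotton2} each irreducible component $\Xc(\Cc)$ has $\Sigma$ in $\Cc$ on a dense set of geometric points, so the regular functions $\tr\Sigma$ and $\det\Sigma$ are constant on $\Xc(\Cc)$ and equal to their values on $\Cc$. Therefore $\Xc(\tau)=V(I_\tau)$, which by its definition is the union of the components $\Xc(\Cc)$ with $\Cc$ of the same characteristic polynomial as $\tau(\sigma)$, is \emph{set-theoretically} the locus on which $\Sigma$ has that characteristic polynomial; this description is stable under base change to $\Spec R^{\square}_{\rhobar}$. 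Combining, the $\bar{L}$-points of $\Ac(\tau)_{\mf}^\wedge$ are precisely the $\bar{L}$-points $x$ of $R^{\square}_{\rhobar}$ with $(\rho_x\!\mid_{I_F})^{ss}\cong\tau$.

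Next I would check the structural properties. Since $\Ac(\tau)$ is reduced and is a localization of a finitely generated $\Oc$-algebra, it is excellent, so the completion $\Ac(\tau)_{\mf}^\wedge$ of the reduced excellent local ring $\Ac(\tau)_{\mf}$ is again reduced; and since $\Xc(\tau)$ is $\Oc$-flat, $\Ac(\tau)$ is $\varpi$-torsion free, and $\Ac(\tau)_{\mf}^\wedge$, being flat over $\Ac(\tau)$, is $\Oc$-flat. The ring $R^{\square}_{\rhobar}(\tau)$ is by definition reduced and $p$-torsion free, and is the quotient of $R^{\square}_{\rhobar}$ by the radical of the intersection of the kernels of all $\bar{L}$-points of type $\tau$; using that $R^{\square}_{\rhobar}[1/p]$ is Jacobson, its $\bar{L}$-points are exactly the type-$\tau$ points of $R^{\square}_{\rhobar}$. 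For any reduced $\Oc$-flat quotient of $R^{\square}_{\rhobar}$ no minimal prime contains $\varpi$, so its characteristic-zero locus is dense; that locus is a reduced Jacobson scheme determined by its closed points, and the quotient is its closure in $\Spec R^{\square}_{\rhobar}$. Hence two reduced $\Oc$-flat quotients of $R^{\square}_{\rhobar}$ with the same $\bar{L}$-points coincide, and applying this to $\Ac(\tau)_{\mf}^\wedge$ and $R^{\square}_{\rhobar}(\tau)$ — which have the same $\bar{L}$-points by the previous paragraph — gives $\Ac(\tau)_{\mf}^\wedge=R^{\square}_{\rhobar}(\tau)$ (both being zero when $\mf\notin\Xc(\tau)$, since then $\rhobar$ admits no lift of type $\tau$).

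The step I expect to be the main obstacle is the comparison between the ``naive'' matrix scheme $\Xc$ and genuine Galois representations: one must be sure that on each irreducible component $\Xc(\Cc)$ the characteristic polynomial of $\Sigma$ is genuinely constant — so that $\Xc(\tau)$ really has the expected set of points — and that for a continuous $\rho_x$ the semisimple restriction to $I_F$ is pinned down by the eigenvalues of $\rho_x(\sigma)$ alone (this is where one uses that $\rho_x\!\mid_{I_F}$ factors through the procyclic quotient $I_F/\tilde{P}_F$). Once these are in hand, the remainder is the standard bookkeeping with reduced, $\Oc$-flat quotients of a local deformation ring, exactly as in the treatment of the case $p>2$ in \cite[\S5.5]{shotton}.
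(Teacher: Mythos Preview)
Your proposal is correct and follows essentially the same approach as the paper: both verify that $\Ac(\tau)_{\mf}^\wedge$ is reduced and $\Oc$-flat, identify its $\bar{L}$-points as exactly those of type $\tau$, and then invoke the defining property of $R^{\square}_{\rhobar}(\tau)$. The only cosmetic difference is that the paper phrases the final step by first constructing an explicit surjection $\Ac(\tau)_{\mf}^\wedge \twoheadrightarrow R^{\square}_{\rhobar}(\tau)$ and then using maximality of $R^{\square}_{\rhobar}(\tau)$ to see it is an isomorphism, whereas you invoke the equivalent general principle that a reduced $\Oc$-flat quotient is determined by its $\bar{L}$-points.
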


\begin{proof}
  Since $\Ac$ is $\Oc$-flat and $\Ac(\tau)$ is the quotient of $\Ac$ by an intersection of minimal
  prime ideals, it is also $\Oc$-flat.  Thus $\Ac(\tau)_{\mf}^\wedge$ is also $\Oc$-flat, by
  flatness of localisation and completion. Since $\Ac(\tau)$ is of finite type over a DVR it is
  Nagata by \cite[Tag 0335]{stacks-project}.  Since $\Ac(\tau)$ is reduced, the completion
  $\Ac(\tau)_{\mf}^\wedge$ is also reduced by \cite[Tag 07NZ]{stacks-project}.  The composite map
  $\Ac \rarrow R^{\square}_{\rhobar} \onto R^{\square}_{\rhobar}(\tau)$ factors through a map
  $\Ac(\tau) \rarrow R^{\square}_{\rhobar}(\tau)$, since any function in $\Ac$ that vanishes on all
  $\bar{L}$-points of type $\tau$ must vanish in $R^{\square}_{\rhobar}(\tau)$ by definition. Thus
  we get a surjection
  $\Ac(\tau)_{\mf}^{\wedge} =\Ac(\tau) \otimes_{\Ac} R^{\square}_{\rhobar} \onto
  R^{\square}_{\rhobar}(\tau)$.  However, since $\Ac(\tau)_{\mf}^{\wedge}$ is reduced and
  $\Oc$-torsion free, and has the property that every $\bar{L}$-point gives a Galois representation
  of type $\tau$, this map is an isomorphism by the definition of $R^{\square}_{\rhobar}(\tau)$.
\end{proof}

Let $\bar{\Sc} = \Sc \otimes_{\Oc} k$, $\bar{\Ac} = \Ac \otimes_{\Oc} k$, and
$\bar{\Xc} = \Spec \bar{\Ac}$.  Then the irreducible components of $\bar{\Xc}$ are in bijection with
the conjugacy classes of $\GL_2(\bar{k})$ that are stable under the $q$-power map (again by
\cite{shotton2} Proposition~2.6).  Let $\bar{\Xc}_1$ be the irreducible component corresponding to
the trivial conjugacy class --- this is just the locus where $\Sigma = 1$ --- and let $\bar{\Xc}_N$
be that corresponding to the non-trivial unipotent conjugacy class (we give the irreducible
components the reduced subscheme structure).  Let $I_1$ and $I_N$ be the prime ideals of $\bar{\Sc}$
cutting out $\bar{\Xc}_1$ and $\bar{\Xc}_N$; these correspond to minimal primes of $\bar{\Ac}$.  If
$\tau$ is one of the above inertial types, then we write $I(\tau)$ for the ideal of $\bar{\Sc}$
cutting out $\Ac(\tau) \otimes_\Oc k$.

\begin{lem}\label{lem:primes} The ideals $I_1$ and $I_N$ have generators
  \begin{align*}
    I_1 &= (A,B,C,D) \\
    I_N &= (A^2 + BC, CQ + BR, T, A + D).
  \end{align*}
\end{lem}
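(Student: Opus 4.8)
The plan is to treat $I_1$ by inspection and $I_N$ by a sandwich argument: first exhibit the four listed elements inside $I_N$, then show that the ideal $J$ they generate cuts out exactly $\overline{\Xc}_N$ set-theoretically, and finally check that $J$ is already radical, so $J=I_N$. Throughout I would use that $\overline{\Sc}$ is a localisation of a polynomial ring over $k$, hence regular of dimension $8$, that $\overline{\Xc}=\Spec\overline{\Ac}$ is a complete intersection in it, pure of dimension $4$ (by the proof of Theorem~2.5 of \cite{shotton2}), and that $p=2$ while $q$ is odd, so $q\equiv 1$ in $k$. For $I_1$ the point is that setting $A=B=C=D=0$ forces $\Sigma=1$, hence $\Sigma^{q}=1$ and the relation $\Phi\Sigma=\Sigma^{q}\Phi$ holds automatically; thus $V(A,B,C,D)\subseteq\overline{\Xc}$, and since $\overline{\Sc}/(A,B,C,D)\cong k[P,Q,R,T][(\det\Phi)^{-1}]$ is a $4$-dimensional domain it is an irreducible component of $\overline{\Xc}$, namely the one on which $\Sigma$ lies in the trivial conjugacy class, i.e. $\overline{\Xc}_1$. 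Hence $I_1=(A,B,C,D)$.

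Now set $J=(A^{2}+BC,\,CQ+BR,\,T,\,A+D)$. First I would show $J\subseteq I_N$. On $\overline{\Xc}_N$ the matrix $\Sigma$ is generically a non-central unipotent, so it has trace $2$ and, by Cayley--Hamilton, $(\Sigma-1)^{2}=0$; this puts $A+D$ and every entry of $(\Sigma-1)^{2}$ in $I_N$, in particular the $(1,1)$-entry $A^{2}+BC$. Since $q\equiv 1$ in $k$,
\[
\Sigma^{q}=\bigl(1+(\Sigma-1)\bigr)^{q}\equiv 1+q(\Sigma-1)\equiv\Sigma\pmod{I_N},
\]
so the defining relation $\Phi\Sigma=\Sigma^{q}\Phi$ forces $\Phi$ to commute with $\Sigma-1$ modulo $I_N$; reading off the four entries of $\Phi(\Sigma-1)-(\Sigma-1)\Phi$ and reducing by $A+D\in I_N$ yields $CQ+BR\in I_N$, $BT\in I_N$ and $CT\in I_N$. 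A nonzero $2\times 2$ nilpotent matrix has a nonzero off-diagonal entry, so $\Sigma-1$ being a nontrivial unipotent generically on $\overline{\Xc}_N$ means $B$ and $C$ do not both lie in $I_N$; as $I_N$ is prime, $T\in I_N$. Hence $J\subseteq I_N$.

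Next I would prove $\sqrt J=I_N$. Running the same computation modulo $J$ (using $A^{2}+BC,A+D\in J$ to get $(\Sigma-1)^{2}\equiv 0$, hence $\Sigma^{q}\equiv\Sigma$, hence $[\Phi,\Sigma]\equiv 0$) shows $V(J)\subseteq\overline{\Xc}$. Since $J$ is generated by four elements of the $8$-dimensional regular ring $\overline{\Sc}$ while $V(J)\subseteq\overline{\Xc}$ has dimension $\le 4$, the ring $\overline{\Sc}/J$ has dimension exactly $4$ and is a complete intersection, hence Cohen--Macaulay and pure of dimension $4$; so $V(J)$ is a union of irreducible components of $\overline{\Xc}$. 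On $V(J)$ the matrix $\Sigma$ has trace $2$ and determinant $1$ (both from $A+D,A^{2}+BC\in J$), hence characteristic polynomial $(x-1)^{2}$; by the classification of the components of $\overline{\Xc}$ in terms of $q$-stable conjugacy classes (\cite{shotton2}, Proposition~2.6) the only components on which this holds are $\overline{\Xc}_1$ and $\overline{\Xc}_N$. As $T\notin(A,B,C,D)=I_1$ we have $\overline{\Xc}_1\not\subseteq V(J)$, and $V(J)\neq\varnothing$ since $J\subseteq(A,B,C,D,T)\neq\overline{\Sc}$; therefore $V(J)=\overline{\Xc}_N$ and $\sqrt J=I_N$. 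To finish, it remains to see $J$ is radical, i.e. that $\overline{\Sc}/J$ is reduced; as it is Cohen--Macaulay it has no embedded primes, so it suffices to check it is reduced at its unique minimal prime $I_N$. The Jacobian of $(A^{2}+BC,CQ+BR,T,A+D)$ with respect to the variables $B,R,T,D$ is lower triangular with diagonal $C,B,1,1$ in characteristic $2$, hence of determinant $BC$, which is a unit in $\overline{\Sc}_{I_N}$ because $B,C\notin I_N$; the Jacobian criterion then shows $\overline{\Sc}/J$ is smooth over $k$, in particular reduced, at $I_N$. Thus $\overline{\Sc}/J$ is reduced and $J=\sqrt J=I_N$.

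I expect the main obstacle to be organising the characteristic-$2$ collapse cleanly: the identity $q\equiv 1$ is what makes $\Sigma^{q}$ reduce to $\Sigma$ on the unipotent component and forces $\Phi$ to centralise $\Sigma$ there, which is precisely the mechanism producing the non-obvious generators $T$ and $CQ+BR$ — and then the reducedness step, which rests on the explicit minor $BC$ being nonzero along $\overline{\Xc}_N$, equivalently on $\Sigma-1$ having rank $1$ generically there. Everything else is bookkeeping with dimensions and the component classification already quoted in the appendix.
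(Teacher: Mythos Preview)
Your argument is correct. For $I_N$ you take a genuinely different route from the paper: after establishing $J:=(A^2+BC,\,CQ+BR,\,T,\,A+D)\subseteq I_N$, the paper simply identifies $\bar{\Sc}/J$ with a localisation of $\bigl(k[A,B,C]/(A^2+BC)\bigr)[P,Q,R]/(CQ+BR)$ and checks by hand that this is a $4$-dimensional integral domain, so that $J\subseteq I_N$ are primes of equal dimension and hence coincide. Your sandwich argument --- $\bar{\Sc}/J$ is a complete intersection of dimension $4$ lying in $\bar{\Xc}$, $V(J)=\bar{\Xc}_N$ set-theoretically via the component classification and $T\notin I_1$, then $J$ radical by the Jacobian criterion at the generic point --- is longer but more mechanical, trading the ad hoc recognition of a domain for standard commutative algebra. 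The paper's route also gives $T\in I_N$ by a different mechanism (the eigenvalue ratio of $\Phi$ at points with $N\neq 0$) rather than via $BT,CT\in I_N$.

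One point to tighten: for the Jacobian minor you need \emph{both} $B\notin I_N$ and $C\notin I_N$, whereas your off-diagonal argument and the phrase ``rank $1$ generically'' only yield that not both lie in $I_N$ (a rank-$1$ nilpotent can have $C=0$). The stronger statement is still true: exhibit a closed point of $\bar{\Xc}_N$ with $A\neq 0$ (e.g.\ $\Sigma-1=\bigl(\begin{smallmatrix}a&a\\a&a\end{smallmatrix}\bigr)$ for $a\in k^\times$, $\Phi=1$), so $A\notin I_N$, and then $BC\equiv A^2\neq 0$ in the domain $\bar{\Sc}/I_N$ forces $B,C\notin I_N$.
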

\begin{proof} The presentation for $I_1$ is obvious.  For $I_N$, the condition that $\Sigma$ is
  unipotent gives $A + D \in I_N$ and $A^2 + BC \in I_N$.  If $N = \Sigma - 1$, then the relation
  $\Phi \Sigma = \Sigma^q \Phi$ becomes $\Phi N = q N \Phi = N\Phi$ (since we are working mod $2$),
  which implies that $CQ + BR = 0$.  At any closed point of $\bar{\Xc}_N$ where $N \neq 0$, the
  eigenvalues of $\Phi$ must be in the ratio $1:q = 1:1$, and so $T = 0$.  As such closed points are
  dense on $\bar{\Xc}_N$, we see that $T \in I_N$.  Therefore
  \[( A^2 + BC, CQ + BR, T, A + D) \subset I_N.\] The ideal $I = (A^2 + BC, CQ + BR, T, A+D)$ is
  prime of dimension 4; indeed, $\Sc/I$ is isomorphic to a localisation of
  \[\frac{k[A,B,C]}{(A^2 + BC)}[P,Q,R]/(CQ + BR)\] which is easily seen to be a 4-dimensional
  domain.  Thus $I \subset I_N$ are prime ideals of $\bar{\Sc}$ of the same dimension, and so must
  be equal.
\end{proof}

\begin{prop} \label{prop:induced}
 Let $\tau = \tau_\xi$.   Then $I(\tau) = I_N$.  
\end{prop}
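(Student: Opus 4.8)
The plan is to produce the four generators of $I_N$ from Lemma~\ref{lem:primes} as explicit elements of $I(\tau)$ and then conclude by a dimension count. First I would record the structure of $\Xc(\tau_\xi)$: since $\xi\neq\xi^q$ the matrix $\tau_\xi(\sigma)$ is regular semisimple, so there is a unique conjugacy class in $\GL_2(\bar{L})$ with characteristic polynomial $(X-\xi)(X-\xi^q)$; hence $\Xc(\tau_\xi)$ is a single irreducible component of $\Xc$, and with its reduced structure $\Ac(\tau_\xi)$ is an $\Oc$-flat domain of Krull dimension $5$. Its generic point records that $\Sigma$ is regular semisimple with eigenvalues $\xi,\xi^q$, so the following identities hold in $\Ac(\tau_\xi)$:
\[\tr\Sigma=\xi+\xi^q,\qquad \det\Sigma=\xi^{1+q},\qquad \Sigma^q=-\Sigma+(\xi+\xi^q)\,\mathrm{Id},\qquad \tr\Phi=0.\]
The first two are the defining condition of the component; the third follows from Cayley--Hamilton applied on the two eigenvalues together with $\xi^{q^2}=\xi$ (valid since $\xi^{2^b}=1$ and $q^2\equiv1\pmod{2^b}$); and the fourth holds because the relation $\Phi\Sigma\Phi^{-1}=\Sigma^q$ forces $\Phi$ to interchange the two eigenlines of $\Sigma$, so $\Phi$ is anti-diagonal in an eigenbasis of $\Sigma$. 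Substituting the third identity into $\Phi\Sigma=\Sigma^q\Phi$ gives $\Phi\Sigma+\Sigma\Phi=(\xi+\xi^q)\Phi$ on $\Xc(\tau_\xi)$.

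Next I would reduce modulo $\varpi$. Since $\xi$ and $\xi^q$ are roots of unity of $2$-power order in $\Oc$, they are congruent to $1$ modulo $\varpi$, so in $k$ one has $\xi+\xi^q\equiv0$, $\xi^{1+q}\equiv1$ and $(\xi-1)(\xi^q-1)\equiv0$. Writing $\Sigma=\smatr{1+A}{B}{C}{1+D}$ and $\Phi=\smatr{P}{Q}{R}{T-P}$ as in the appendix, the four identities above become, in $\bar{\Sc}/I(\tau)=\Ac(\tau_\xi)\otimes_\Oc k$: the relation $A+D=0$; the relation $A^2+BC=0$ (using $A+D=0$ and $\mathrm{char}\,k=2$); the relation $T=0$; and the relation $\Phi\Sigma=\Sigma\Phi$, whose $(1,1)$-entry is exactly $QC+BR$. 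Hence $A+D$, $A^2+BC$, $T$ and $CQ+BR$ all lie in $I(\tau)$, so by Lemma~\ref{lem:primes} we get $I_N\subseteq I(\tau)$, and therefore a surjection $\bar{\Sc}/I_N\twoheadrightarrow\Ac(\tau_\xi)\otimes_\Oc k$.

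Finally, by Lemma~\ref{lem:primes} the ring $\bar{\Sc}/I_N$ is a domain of dimension $4$, while $\Ac(\tau_\xi)\otimes_\Oc k$ has dimension $4$ because $\Ac(\tau_\xi)$ is an $\Oc$-flat domain of dimension $5$ and $\varpi$ is a non-zerodivisor on it. If the kernel of $\bar{\Sc}/I_N\twoheadrightarrow\Ac(\tau_\xi)\otimes_\Oc k$ were nonzero it would have height at least $1$ in the domain $\bar{\Sc}/I_N$, which would force $\dim\!\big(\Ac(\tau_\xi)\otimes_\Oc k\big)\le3$, a contradiction; hence the surjection is an isomorphism and $I(\tau)=I_N$. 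The point needing the most care is the vanishing $\tr\Phi=0$: it says that $\Phi$ swaps the eigenlines of $\Sigma$ rather than fixing them --- equivalently, that $\Phi$ lies in the non-identity component of the normaliser of the centraliser of $\Sigma$ --- and this is exactly where the hypothesis $\xi\neq\xi^q$ defining $\tau_\xi$ is used; the rest is routine bookkeeping in reducing the characteristic-zero matrix identities modulo $\varpi$.
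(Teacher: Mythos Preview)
Your proof is correct and follows essentially the same route as the paper's: derive the trace, determinant, and $\tr\Phi=0$ relations on $\Ac(\tau_\xi)$, use Cayley--Hamilton to rewrite $\Sigma^q$ as $(\xi+\xi^q)\mathrm{Id}-\Sigma$, extract the relation $CQ+BR$ from $\Phi\Sigma=\Sigma^q\Phi$, reduce modulo $\varpi$ (using $\xi\equiv\xi^q\equiv 1$) to land in $I_N$, and finish with the dimension comparison between the $4$-dimensional domain $\bar\Sc/I_N$ and the $4$-dimensional quotient $\bar\Ac(\tau)$. The only cosmetic difference is that the paper writes $\Sigma^q$ out entrywise and reads off $(2A-\eta)P+BR+CQ=0$ directly, whereas you pass through the anticommutator identity $\Phi\Sigma+\Sigma\Phi=(\xi+\xi^q)\Phi$; these are equivalent manipulations.
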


\begin{proof} Write $\eta = \xi + \xi^{q} - 2$.  The condition that $\Sigma$ has characteristic
  polynomial $(X - \xi)(X - \xi^{q})$ shows that, on $\Xc(\tau)$, we have the equations
  \begin{align*}
    A + D &= \eta \\
    A(A - \eta) + BC &= \eta.
  \end{align*}
  Using the first of these, we replace $D$ by $\eta - A$ everywhere.  Now, if $x$ is an
  $\bar{L}$-point of $\Xc(\tau)$ corresponding to a pair of matrices $(\Sigma_x, \Phi_x)$, then
  $\Phi_x$ exchanges the $\xi$ and $\xi^q$ eigenspaces of $\Sigma_x$ and so must have trace zero.
  Therefore on $\Xc(\tau)$ we have the equation \[T = 0.\] Lastly, by the Cayley--Hamilton theorem,
  and the fact that \[X^q \equiv \xi + \xi^q - X \mod (X - \xi)(X - \xi^{q}),\] we see that
  $\Sigma^q = \twomat{1 + \eta - A}{-B}{-C}{1 + A}$ on $\Xc(\tau)$. Equating matrix entries in the
  relation $\Phi \Sigma = \Sigma^q \Phi$, and noting that $T = 0$, we obtain one new equation
  \[(2A - \eta)P + BR + CQ = 0.\]
  Thus, letting
  \[ J = (A+D, T, A(A - \eta) + BC - \eta, (2A - \eta)P + BR + CQ)\]
  we obtain a surjection $\Sc/J \onto \Ac(\tau)$, and therefore a surjection
  \[\bar{\Sc}/J \onto \bar{\Ac}(\tau).\]
  As $\eta$ is divisible by $\varpi$, we see that $J + (\varpi) = I_N$, and so we have a surjection
  $\bar{\Sc}/I_N \onto \bar{\Ac}(\tau)$.  This must be an isomorphism since $\bar{\Sc}/I_N$ is a
4-dimensional domain and $\bar{\Ac}(\tau)$ is a non-zero 4-dimensional ring.  Therefore $I_N = I(\tau)$ as
required.
\end{proof}

For the remaining types the following lemma will be useful.  If $R$ is a noetherian ring, $\pf$ is
a minimal prime of $R$, and $M$ is a finitely-generated $R$-module, let $e_R(M, \pf) =
l_{R_{\pf}}(M_{\pf})$ (this is a special case of the Hilbert--Samuel multiplicity).

\begin{lem} \label{lem:mult-completion} Let $f : R \rarrow S$ be a surjection of equidimensional
  rings of the same dimension, and suppose that $R$ is S1 and Nagata.  Let
  $\pf_1, \ldots, \pf_n$ be the minimal primes of $R$.  Suppose that, for $i = 1, \ldots, n$, there
  is a maximal ideal $\mf_i$ of $S$ such that $\pf_i \subset \mf_i$
  but $\pf_j \not \subset \mf_i$ for $i \neq j$.  If, for each $i$, we have
  \[e_R(R, \pf_i) \leq e_{S^{\wedge}_{\mf_i}}(S^{\wedge}_{\mf_i}, \qf_i)\]
  for some minimal prime $\qf_i$ of $S^{\wedge}_{\mf_i}$, then $f$ is an isomorphism.
\end{lem}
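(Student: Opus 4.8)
The plan is to show that $I:=\ker f$ vanishes. \textbf{Locating the minimal primes.} Since $R$ and $S=R/I$ are equidimensional of the same dimension $d$, every minimal prime of $S$ has dimension $d=\dim R$ and is therefore a minimal prime of $R$; hence the minimal primes of $S$ are exactly the $\pf_j$ that contain $I$. Fix $i$, and write $\mf_i$ also for its preimage $f^{-1}(\mf_i)$ in $R$, a maximal ideal containing $I$ and $\pf_i$ but none of the other $\pf_j$. As $S_{\mf_i}\neq 0$ it has a minimal prime, which is a minimal prime of $S$ contained in $\mf_i$; by the previous sentence it is one of the $\pf_j$, and by the hypothesis on $\mf_i$ it must be $\pf_i$. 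So $I\subseteq\pf_i$, and since $i$ was arbitrary $I\subseteq\pf_j$ for every $j$. In particular each $\pf_j$ is a minimal prime of $S$, and $\pf_i S_{\mf_i}$ is the unique minimal prime of the local ring $S_{\mf_i}$.

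\textbf{Reduction to a length computation.} Because $R$ is $S_1$ it has no embedded primes, so the localization map $R\hookrightarrow\prod_i R_{\pf_i}$ is injective, and it is enough to prove $I_{\pf_i}=0$ for each $i$. From the exact sequence $0\to I_{\pf_i}\to R_{\pf_i}\to S_{\pf_i}\to 0$ of finite-length modules over the Artinian local ring $R_{\pf_i}$ we get $e_R(R,\pf_i)=\ell_{R_{\pf_i}}(I_{\pf_i})+e_S(S,\pf_i)$, so $I_{\pf_i}=0$ as soon as $e_R(R,\pf_i)\le e_S(S,\pf_i)$. Given the hypothesis $e_R(R,\pf_i)\le e_{S^{\wedge}_{\mf_i}}(S^{\wedge}_{\mf_i},\qf_i)$, the whole statement reduces to proving the equality $e_{S^{\wedge}_{\mf_i}}(S^{\wedge}_{\mf_i},\qf_i)=e_S(S,\pf_i)$.

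\textbf{Multiplicities under completion.} Put $B=S_{\mf_i}$, $\widehat{B}=S^{\wedge}_{\mf_i}$ and $\widehat{A}=R^{\wedge}_{\mf_i}$, so that $\widehat{B}=\widehat{A}/I\widehat{A}$ is faithfully flat over $B$. Since $\pf_i B$ is the unique minimal prime of $B$, the minimal prime $\qf_i$ of $\widehat{B}$ contracts to $\pf_i B$ and contains $\pf_i\widehat{B}$; applying the flat local homomorphism $B_{\pf_i B}=S_{\pf_i}\to\widehat{B}_{\qf_i}$, whose source is Artinian, yields $e_{\widehat{B}}(\widehat{B},\qf_i)=e_S(S,\pf_i)\cdot\ell_{\widehat{B}_{\qf_i}}\bigl(\widehat{B}_{\qf_i}/\pf_i\widehat{B}_{\qf_i}\bigr)$. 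The key point is that the correction factor is $1$: since $I\subseteq\pf_i$ we have $\widehat{B}/\pf_i\widehat{B}=\widehat{A}/\pf_i\widehat{A}=(R/\pf_i)^{\wedge}_{\mf_i}$, the completion of the Nagata local \emph{domain} $(R/\pf_i)_{\mf_i}$; a reduced Nagata local ring is analytically unramified, so this completion is reduced, and its localization at the minimal prime $\qf_i$ is a field. Hence the desired equality holds, $\ell_{R_{\pf_i}}(I_{\pf_i})=0$ for all $i$, and $I=0$, i.e.\ $f$ is an isomorphism.

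\textbf{Main obstacle.} The real content is the last step: one must rule out that passing to the completion $S^{\wedge}_{\mf_i}$ inflates the length at a minimal prime. This does fail for general local rings, and here it is controlled precisely by the Nagata hypothesis through the identification $\widehat{B}/\pf_i\widehat{B}=(R/\pf_i)^{\wedge}_{\mf_i}$, which in turn rests on the inclusion $I\subseteq\pf_i$ extracted in the first step from equidimensionality and the assumption on the $\mf_i$. The remaining ingredients ($S_1$ to reduce to the $R_{\pf_i}$, the flat base change length formula, and "reduced Nagata local $\Rightarrow$ analytically unramified") are standard, so I would only sketch them.
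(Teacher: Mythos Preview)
Your proof is correct and follows essentially the same route as the paper's: reduce via the $S_1$ hypothesis to showing $I_{\pf_i}=0$, convert this to the multiplicity inequality $e_R(R,\pf_i)\le e_S(S,\pf_i)$, and then identify $e_S(S,\pf_i)$ with $e_{S^{\wedge}_{\mf_i}}(S^{\wedge}_{\mf_i},\qf_i)$ using that $(R/\pf_i)^{\wedge}_{\mf_i}$ is reduced by the Nagata hypothesis. The paper packages the last step as an appeal to \cite[Tag~02M1]{stacks-project}, whereas you unpack it via the flat base change length formula; otherwise the arguments coincide.
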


\begin{rem}
  For those primes $\pf_i$ such that $e_R(R, \pf_i) = 1$ --- which is all of them if $R$ is reduced
  --- the required inequality is implied simply by the existence of the $\mf_i$.
\end{rem}

\begin{proof} Since $R$ is S1, every associated prime of $\ker f$ is minimal and so, by \cite[Tag
  0311]{stacks-project}, it is enough to show that $f$ induces an isomorphism
  $f_{\pf_i} : R_{\pf_i}\rarrow S_{\pf_i}$ for each $i$.  Since $f$ is surjective and $R_{\pf_i}$ is
  artinian, it is enough to show that $e_R(R,\pf_i) \leq e_R(S, \pf_i)$.  Let $i \in \{1, \ldots,
  n\}$.  Choose $\mf_i$ and $\qf_i$ as in the
  hypotheses of the lemma.
  It is enough to show that for each $i$,
  \[e_R(S, \pf_i) = e_{S^\wedge_{\mf_i}}(S^{\wedge}_{\mf_i}, \qf_i).\] Since $\mf_i$ contains a
  unique minimal prime of $R$, after localising at $\mf_i$ we may assume that $R\rarrow S$ is a
  local map of local rings, and that $\pf_i$ is the unique minimal prime of $R$, and drop $i$ from
  the notation.  The hypothesis that $R$ and $S$ are equidimensional of the same dimension implies
  that $\pf S$ is the unique minimal prime of $S$, which we also denote by $\pf$. We have
  $e_R(S,\pf) = e_S(S, \pf)$ since both are just the length of $S_{\pf}$.  Since
  $S \rarrow S^\wedge$ is flat and $S^\wedge/\pf = (S/\pf)^\wedge$ is reduced because $R$ (and hence
  $S$) is Nagata, \cite[Tag 02M1]{stacks-project} implies that
  $e_S(S, \pf) = e_{S^\wedge}(S^{\wedge}, \qf)$.  So
  \[e_R(S, \pf) = e_S(S, \pf) = e_{S^\wedge}(S^{\wedge}, \qf) \geq
    e_{R}(R, \pf)\] as required.
\end{proof}

The S1 condition holds, in particular, if $R$ is reduced or Cohen--Macaulay, while the Nagata
condition holds if $R$ is of finite type over a field or DVR.

\begin{prop} \label{prop:unip}
  Let $\tau = \tau_\zeta$.   Then
  $$  I(\tau) = I_N \cap I_1 
            = (A+D, AT, BT, CT, A^2 + BC, BR + CQ).$$
\end{prop}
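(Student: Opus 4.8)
The plan is to compute the defining ideal $I(\tau)$ for $\tau = \tau_\zeta$ by the same strategy used in Proposition \ref{prop:induced}: write down the equations forced on $\Xc(\tau)$ by the characteristic polynomial condition and the relation $\Phi\Sigma = \Sigma^q\Phi$, obtain a surjection from an explicit quotient of $\bar{\Sc}$ onto $\bar{\Ac}(\tau)$, and then show it is an isomorphism by a dimension/multiplicity count. First I would note that after twisting by an unramified character we may assume $\zeta = 1$, so $\tau$ is the trivial type and $\Xc(\tau) = \Xc(\tau_1)$ is the union of irreducible components of $\Xc$ lying over the $q$-power-stable conjugacy classes with characteristic polynomial $(X-1)^2$ — concretely the closures of the trivial and the regular unipotent loci. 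Reducing mod $\varpi$, the components of $\bar\Xc$ appearing are exactly $\bar\Xc_1$ and $\bar\Xc_N$, so one expects $I(\tau) = I_1 \cap I_N$, and the displayed generating set is the claimed explicit description of this intersection.

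The first main step is the purely commutative-algebra computation of $I_1 \cap I_N$ using the generators from Lemma \ref{lem:primes}: $I_1 = (A,B,C,D)$ and $I_N = (A^2+BC, CQ+BR, T, A+D)$. I would verify directly that each of $A+D$ (no — careful: $A+D \notin I_1$), so instead: check that $AT, BT, CT, A^2+BC, BR+CQ$ all lie in $I_1 \cap I_N$ (for $I_N$-membership of $AT$ etc.\ use $T \in I_N$; for $I_1$-membership use $A, B, C \in I_1$; note $A^2 + BC \in I_1$ and $A^2+BC \in I_N$, and $BR+CQ = CQ+BR \in I_N$ while $B, C \in I_1$), and that $A + D$ times anything lands there appropriately — in fact one should double-check whether $A+D$ itself belongs: it does not lie in $I_1$, so it is correctly absent from the generating list. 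The reverse inclusion — that $(A+D, AT, BT, CT, A^2+BC, BR+CQ)$ is \emph{all} of $I_1 \cap I_N$ — I would get either by a direct elimination argument (writing a general element of $I_1$ that also lies in $I_N$ and reducing modulo the listed generators) or, more cleanly, by the same device as in the other propositions: produce a surjection $\bar\Sc/(A+D, AT, BT, CT, A^2+BC, BR+CQ) \onto \bar\Sc/(I_1 \cap I_N)$ and compare Hilbert--Samuel data.

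The second main step is to show $\bar\Sc/I(\tau)$ equals this explicit quotient, i.e.\ that the equations cut out by the type-$\tau_\zeta$ condition are precisely $A+D$, $AT$, $BT$, $CT$, $A^2+BC$, $BR+CQ$ (after the reduction $\zeta=1$). On $\Xc(\tau)$ the characteristic polynomial condition gives $A+D = 0$ (using $\zeta = 1$, so trace of $\Sigma$ is $2 \equiv 0$) and $A^2 + BC = $ (some multiple of $\varpi$, vanishing mod $\varpi$). Substituting $\Sigma^q$: since $X^q \equiv qX - (q-1) \pmod{(X-1)^2}$ and $q - 1 \equiv 0 \pmod 2$, $q \equiv 1$, so $\Sigma^q \equiv \Sigma \pmod\varpi$ — wait, more precisely on $\Xc(\tau)$ itself $\Sigma^q = (q)\Sigma - (q-1)I$, and equating entries of $\Phi\Sigma = \Sigma^q\Phi$ yields relations that reduce mod $\varpi$ (using $v_2(q-1) = a \ge 1$) to $AT \equiv BT \equiv CT \equiv 0$ and $BR + CQ \equiv 0$. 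This gives a surjection $\bar\Sc/(A+D, AT, BT, CT, A^2+BC, BR+CQ) \onto \bar\Ac(\tau)\otimes_\Oc k = \bar\Sc/I(\tau)$.

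The final step — and the one I expect to be the main obstacle — is upgrading this surjection to an isomorphism, which simultaneously proves that the displayed ideal equals $I_1 \cap I_N$ and equals $I(\tau)$. Unlike the $\tau_\xi$ case, here $\bar\Sc/I(\tau)$ is \emph{not} a domain — it has (at least) the two components $\bar\Xc_1$, $\bar\Xc_N$ — so one cannot just invoke ``domain of the same dimension.'' Instead I would apply Lemma \ref{lem:mult-completion} with $R = \bar\Sc/(A+D, AT, BT, CT, A^2+BC, BR+CQ)$ and $S = \bar\Ac(\tau)$: check $R$ is equidimensional of dimension $4$ with exactly the two minimal primes $I_1, I_N$, that it is S1 (ideally Cohen--Macaulay, or at least reduced — this needs a direct check, e.g.\ exhibiting $R$ as a complete intersection after inverting suitable elements, or showing the two components meet in the expected codimension), and that for each of the two minimal primes there is a maximal ideal of $S$ containing it and not the other. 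Since $\Ac(\tau)$ is reduced (being a union of reduced irreducible components of the reduced ring $\Ac$), the multiplicity inequality is automatic once the $\mf_i$ exist, which one gets from the density of the appropriate loci on each component. The delicate point is verifying the S1/reducedness of the explicit quotient $R$ and that it has \emph{no other} minimal primes beyond $I_1$ and $I_N$; I would handle this by localizing at $T$ (where the equations force $A=B=C=0$, recovering a smooth chart of $\bar\Xc_N$-type) and at $A,B,C$ complement (recovering $\bar\Xc_1$), and checking the two charts glue with no embedded or extra components.
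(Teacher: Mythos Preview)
Your overall strategy matches the paper's, but there are two slips and one unnecessary detour.

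First, you claim $A+D \notin I_1$, but $I_1 = (A,B,C,D)$ so $A+D \in I_1$ trivially; it is also in $I_N$, so it is correctly the first generator of $I_1 \cap I_N$. Second, your localization description is swapped: inverting $T$ forces $A=B=C=D=0$ (via $AT=BT=CT=0$ and $A+D=0$), landing you on $\bar\Xc_1$, not $\bar\Xc_N$; conversely, inverting one of $A,B,C$ forces $T=0$ and puts you on $\bar\Xc_N$.

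The more substantive point is your final step. You propose to check that $R = \bar\Sc/(A+D, AT, BT, CT, A^2+BC, BR+CQ)$ is S1/reduced with exactly the two minimal primes $I_1, I_N$ by a localization argument. The paper bypasses this: once the equations are in hand, it directly verifies the explicit ideal equals $I_1 \cap I_N$. This is a one-line computation using the known generators of $I_N$ from Lemma~\ref{lem:primes}: any $x \in I_N \cap I_1$ can be written as $\alpha(A^2+BC)+\beta(CQ+BR)+\gamma T + \delta(A+D)$, and since the first, second, and fourth terms already lie in $I_1$, one gets $\gamma T \in I_1$, hence $\gamma \in I_1$ and $\gamma T \in (AT,BT,CT,DT)$. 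Once you know the ideal is $I_1 \cap I_N$, the quotient is \emph{automatically} reduced with exactly those two minimal primes, each of multiplicity $1$; then Lemma~\ref{lem:mult-completion} applies immediately upon exhibiting separating maximal ideals (the paper uses an unramified $\rho_1$ with distinct Frobenius eigenvalues, and a ramified $\rho_N$). Your localization approach would work, but it is more labour than needed.
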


\begin{proof} For simplicity, we twist so that $\zeta = 1$.    Write $N = \Sigma - 1 = \twomat{A}{B}{C}{D}$.  On $\Ac(\tau)$, $\Sigma$ has characteristic
  polynomial $(X - 1)^2$, and so the equations
  \begin{align*}
    A + D &= 0 \\
    A^2 + BC &= 0
  \end{align*}
  hold on $\Ac(\tau)$.  Moreover, since $(\Sigma - 1)^2 = 0$ on $\Ac(\tau)$, by the Cayley--Hamilton theorem we
  have that $\Sigma^q = 1 + q(\Sigma - 1) = 1  + qN$ on $\Ac(\tau)$.  The equation $\Phi \Sigma = \Sigma^q
  \Phi$ becomes $\Phi N = q N \Phi$, and comparing matrix entries we get equations
  \begin{align*}
    qBR - CQ + (q - 1)AP &= 0 \\
    (q+1)QA + B(qT - (q+1)P) &= 0 \\
    (q+1)RA + C(T - (q+1)P)) &= 0 \\
    qCQ - BR + (q-1)A(P-T) &= 0.
  \end{align*}
  Summing the first and fourth of these gives $(q-1)(BR + CQ + A(2P - T)) = 0$; since $\Ac(\tau)$ is
  $(q-1)$-torsion free, we deduce that \[BR + CQ + A(2P - T) = 0\] in $\Ac(\tau)$ and
  can replace the fourth of the above equations by this.

  The ideal cutting out $\Ac(\tau)$ therefore contains the ideal
  \begin{multline*}J = (A+D, A^2 + BC, qBR - CQ + (q - 1)AP,  (q+1)QA + B(qT - (q+1)P), \\ (q+1)RA +
    C(T - (q+1)P), CQ + BR + A(2P-T)).
    \end{multline*}
    Now, the image of $J$ in $\bar{\Sc}$ is 
    \[(A+D, A^2+ BC, BR + CQ, BT, CT, BR + CQ + AT) \]
    which is equal to $(A + D, A^2 + BC, BR + CQ) + I_1 \cap (T) = I_N \cap I_1$.  Therefore there is a surjection
    \[f:\bar{\Sc}/(I_N \cap I_1) \onto \bar{\Ac}(\tau).\] Write
    $\tilde{R} = \bar{\Sc}/(I_N \cap I_1)$.  Then $\tilde{R}$ is reduced with two minimal primes,
    which we also call $I_N$ and $I_1$.  Let $\rho_1 : G_F \rarrow \GL_2(\Oc)$ be diagonal unramified
    with distinct eigenvalues of Frobenius, and let $\rho_N : G_F \rarrow \GL_2(\Oc)$ send
    $\sigma \mapsto \twomat{1}{1}{0}{1}$ and $\phi \mapsto \twomat{q}{0}{0}{1}$.  Let $\mf_1$ and
    $\mf_N$ be the corresponding maximal ideals of $\bar{\Ac}(\tau)$.  Then $I_1 \subset \mf_1$,
    $I_N \not \subset \mf_1$, $I_1 \not \subset \mf_N$ and $I_N \subset \mf_N$, so $f$ is an
    isomorphism by the remark following lemma~\ref{lem:mult-completion}.
  \end{proof}

\begin{prop} \label{prop:PS}
  Let $\tau = \tau_{\zeta_1, \zeta_2}$.   Then
  \[I(\tau) = (A+D, BT, CT, CQ+BR, A^2+BC).\]
\end{prop}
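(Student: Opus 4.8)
The plan is to mimic the proofs of Propositions~\ref{prop:unip} and~\ref{prop:induced}: write down explicit equations cutting out $\Ac(\tau)$, reduce them modulo $\varpi$ to get a surjection $\bar{\Sc}/I' \onto \bar{\Ac}(\tau)$ for the ideal $I' = (A+D,\,BT,\,CT,\,CQ+BR,\,A^2+BC)$, and then prove it is an isomorphism. First I would record the consequences of $\tau(\sigma)$ having characteristic polynomial $(X-\zeta_1)(X-\zeta_2)$: taking trace gives $A+D = \zeta_1+\zeta_2-2$, and the determinant equation, after eliminating $D$, becomes $A^2 - (\zeta_1+\zeta_2-2)A + BC + (\zeta_1-1)(\zeta_2-1) = 0$. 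The key point is that $\zeta_1,\zeta_2$ have $2$-power order dividing $q-1$ (as $a = v_2(q-1)$ and they are $2^a$th roots of unity), so $\zeta_i^q=\zeta_i$; hence $X^q \equiv X \pmod{(X-\zeta_1)(X-\zeta_2)}$, and by Cayley--Hamilton $\Sigma^q=\Sigma$ on $\Ac(\tau)$. Therefore the relation $\Phi\Sigma=\Sigma^q\Phi$ becomes $\Phi\Sigma=\Sigma\Phi$, i.e. $\Phi N = N\Phi$ with $N=\Sigma-I=\twomat{A}{B}{C}{D}$; comparing matrix entries yields $QC=BR$, $B(2P-T)=Q(A-D)$ and $R(A-D)=C(2P-T)$. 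Since each $\zeta_i\equiv 1$ and $2\equiv 0$ in $k$, reducing all of these equations modulo $\varpi$ (note $A-D=2A-(\zeta_1+\zeta_2-2)\equiv 0$ and $2P-T\equiv T$) produces exactly the ideal $I'$, so $I' \subseteq I(\tau)$ and we obtain a surjection $\bar{\Sc}/I' \onto \bar{\Ac}(\tau)$.

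Next I would analyse $\tilde R := \bar{\Sc}/I'$. It has two minimal primes: $I_1=(A,B,C,D)$, at which $T$ becomes a unit so $I'\bar{\Sc}_{I_1}=(A+D,B,C,A^2)\bar{\Sc}_{I_1}$ and $\tilde R_{I_1}\cong k[A]/(A^2)$ has length $2$; and $\mathfrak q := (T,\,A+D,\,A^2+BC,\,CQ+BR)$, which is prime with $\bar{\Sc}/\mathfrak q$ a $4$-dimensional domain (by the same kind of argument as in Lemma~\ref{lem:primes}: modulo $T$ and $A+D$ it is $k[A,B,C,P,Q,R][\text{inv.}]/(A^2+BC,\,CQ+BR)$, an integral domain) and $\tilde R_{\mathfrak q}$ a field. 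The essential ring-theoretic input — and the \emph{main obstacle} — is that $I'$ is \textbf{unmixed}: it has no embedded primes, so $\tilde R$ is equidimensional of dimension $4$ and $I'$ equals the intersection of its $I_1$-primary component with $\mathfrak q$. This forces a local analysis of $I'$ along $V(A,B,C,D,T)$, the (non-reduced) locus where the two components meet; one checks, after eliminating $D$, that $(A^2+BC,\,BT,\,CQ+BR)$ is a regular sequence and studies the effect of adjoining $CT$, verifying that no embedded prime is created. (Once unmixedness is in hand, one may either argue directly as below or invoke Lemma~\ref{lem:mult-completion} with $R=\tilde R$, $S=\bar{\Ac}(\tau)$.)

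To finish I would exhibit two closed points separating $I_1$ and $\mathfrak q$ and compute one local multiplicity. The point $\rho_1$ with $\Sigma=\mathrm{diag}(\zeta_1,\zeta_2)$ and $\Phi=\mathrm{diag}(\mu_1,\mu_2)$, $\bar\mu_1\neq\bar\mu_2$, is an $\bar L$-point of $\Xc(\tau)$ (the matrices commute and $\Sigma$ has the right characteristic polynomial) reducing to a maximal ideal $\mf_1$ with $\bar\Sigma=I$ and $T=\bar\mu_1+\bar\mu_2\neq 0$, so $I_1\subseteq\mf_1$ and $\mathfrak q\not\subseteq\mf_1$. At $\mf_1$ the element $2P-T$ is a unit, so one eliminates $B=(2P-T)^{-1}Q(A-D)$ and $C=(2P-T)^{-1}R(A-D)$; the determinant relation becomes
\[
A^2-(\zeta_1+\zeta_2-2)A+(\zeta_1-1)(\zeta_2-1)+(2P-T)^{-2}QR(A-D)^2=0
\]
in $\Oc[[A,P,Q,R,T]]$, and reducing modulo $\varpi$ every term except $A^2$ dies (crucially $(A-D)^2\equiv(2A)^2\equiv 0$), so $R^{\square}_{\bar\rho_1}(\tau)/\varpi\cong k[[A,P,Q,R,T]]/(A^2)$, which has length $2$ at its minimal prime — matching $\tilde R_{I_1}$. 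The point $\rho_2$ with $\Sigma=I+M$, $M$ the companion matrix of $X^2-(\zeta_1+\zeta_2-2)X+(\zeta_1-1)(\zeta_2-1)$, and $\Phi=aI+bM$ with $a$ a unit, is an $\bar L$-point of $\Xc(\tau)$ reducing to a maximal ideal $\mf_2$ with $\bar\Sigma=\twomat{1}{1}{0}{1}\neq I$ (hence $B\notin\mf_2$, so $I_1\not\subseteq\mf_2$) and $\mathrm{tr}\,\bar\Phi=0$, $\bar C\bar Q=\bar B\bar R$, so $\mathfrak q\subseteq\mf_2$. Since $\bar{\Ac}(\tau)$ is a quotient of $\tilde R$ with the same underlying reduced scheme $V(I_1)\cup V(\mathfrak q)$ (both equal $\bar{\Xc}(\tau)$, by lifting the generic points of the two components as above), $\bar{\Ac}(\tau)_{\mathfrak q}$ is a quotient of the field $\tilde R_{\mathfrak q}$, hence a field, while $\bar{\Ac}(\tau)$ has length $\le 2$ at $I_1$ (surjection) and $\ge 2$ (by the computation at $\mf_1$, since length at minimal primes is preserved under completion of a Nagata ring). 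Hence the $I_1$- and $\mathfrak q$-primary components of $I(\tau)$ and $I'$ agree, and unmixedness of $I'$ gives $I(\tau)\subseteq (I'\bar{\Sc}_{I_1}\cap\bar{\Sc})\cap\mathfrak q = I'$, so $I(\tau)=I'$ as claimed.

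The hardest part will be the unmixedness of $I'$ (the explicit primary-decomposition check along $V(A,B,C,D,T)$); the second subtlety is the characteristic-$2$ collapse in the computation of $R^{\square}_{\bar\rho_1}(\tau)$, which is exactly what makes $\bar{\Ac}(\tau)$ non-reduced and distinguishes this case from $\tau=\tau_\zeta$ in Proposition~\ref{prop:unip}.
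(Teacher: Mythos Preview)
Your overall strategy coincides with the paper's: derive equations from the characteristic polynomial and the relation $\Phi\Sigma=\Sigma\Phi$ (using $\Sigma^q=\Sigma$ via Cayley--Hamilton), reduce them modulo $\varpi$ to the ideal $I'=(A+D,BT,CT,CQ+BR,A^2+BC)$, and then apply Lemma~\ref{lem:mult-completion} with separating points at the two minimal primes. Note that your prime $\mathfrak q=(T,A+D,A^2+BC,CQ+BR)$ is exactly $I_N$ from Lemma~\ref{lem:primes}.

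There are two places where your execution diverges from the paper, one of which hides a genuine gap.

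\emph{Unmixedness of $I'$.} You plan a direct primary-decomposition check along $V(A,B,C,D,T)$. The paper instead proves (in Corollary~\ref{cor:CM-rings}) that $\bar\Sc/I'$ is Cohen--Macaulay, which is stronger and cleaner: one observes $AT\cdot I_1\subset I'$ and $I'+(AT)=I_1\cap I_N$, yielding an exact sequence
\[0\longrightarrow \bar\Sc/I_1\overset{\cdot AT}{\longrightarrow}\bar\Sc/I'\longrightarrow \bar\Sc/(I_1\cap I_N)\longrightarrow 0,\]
whose outer terms are maximal Cohen--Macaulay (the right-hand one by the $\tau_\zeta$ case). Injectivity on the left follows from the multiplicity count $e_{\bar\Sc/I'}(\bar\Sc/I',I_1)=2>1=e_{\bar\Sc/I'}(\bar\Sc/(I_1\cap I_N),I_1)$. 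This replaces your laborious local analysis by a three-line filtration argument.

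\emph{The lower bound at $\mf_1$.} Here you have a real gap. Your elimination of $B,C$ computes $(\Sc/J)^\wedge_{\mf_1}/\varpi\cong k[[A,P,Q,R,T]]/(A^2)$, but $\Sc/J$ only \emph{surjects} onto $\Ac(\tau)$, so a priori you obtain an \emph{upper} bound on the length of $\bar\Ac(\tau)$ at $I_1$, not the lower bound that Lemma~\ref{lem:mult-completion} requires. The paper closes this gap by citing \cite[Prop.~5.3]{shotton}, which identifies $R^\square_{\bar\rho_1}(\tau)$ as formally smooth over $\Oc[[X-1]]/((X-\zeta_1)(X-\zeta_2))$, so that its reduction has multiplicity exactly~$2$. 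To salvage your direct route you must show that $(\Sc/J)^\wedge_{\mf_1}\onto \Ac(\tau)^\wedge_{\mf_1}$ is an isomorphism: since $\Sc/J$ is a quotient of $\Ac$ (the relation $\Phi\Sigma=\Sigma^q\Phi$ holds in $\Sc/J$ because $\Sigma^q=\Sigma$ there) with the correct $\bar L$-points, it suffices to check that $(\Sc/J)^\wedge_{\mf_1}\cong\Oc[[A,P,Q,R,T]]/(f)$ is reduced and $\Oc$-flat. Flatness is clear since $\bar f=A^2\neq 0$; for reducedness, observe that $1+4QR(2P-T)^{-2}$ is a square in $\Oc[[P,Q,R,T]]$, so $f$ factors into two distinct primes (distinct because $\zeta_1\neq\zeta_2$). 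Without this verification your lower bound is unjustified.
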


\begin{proof} Write $\mu = \zeta_1 + \zeta_2  - 2$.  The condition that $\Sigma$ has characteristic
  polynomial $(X - \zeta_1)(X - \zeta_2)$ is equivalent to the equations
  \begin{align*}
    A + D &= \mu \\
    A(A - \mu) + BC &= \mu.
  \end{align*}
  As $X^q \equiv X \mod (X - \zeta_1)(X - \zeta_2)$, we have by the Cayley--Hamilton theorem that
  $\Sigma^q = \Sigma$ on $\Ac(\tau)$.  The equation $\Phi \Sigma = \Sigma^q \Phi$ therefore becomes
  $\Phi \Sigma = \Sigma \Phi$, and comparing matrix entries we get three equations (the fourth being
  redundant):
  \begin{align*}
    BR - CQ &= 0 \\
    Q(2A - \mu) &= B(2P - T) \\
    R(2A - \mu) &= C(2P - T).
  \end{align*}
  Let 
  \begin{equation}
  \begin{split}
  J = (A+D - \mu, & A(A-\mu) + BC - \mu, BR-CQ, \\ & Q(2A - \mu) - B(2P-T), R(2A - \mu) - C(2P - T)).
  \end{split}
  \end{equation}
  Let $I$ be the image of $J$ in $\bar{\Sc}$, so that
  \[I = \left(A+D, BT, CT, CQ+BR, A^2 + BC\right).\] We have shown that there is a surjection
  $\Sc/J \onto \Ac(\tau)$, and therefore there is a surjection
  $f: \bar{\Sc}/I \rarrow \bar{\Ac}(\tau)$.  We have to show that $f$ is an isomorphism.  Write
  $\tilde{R} = \bar{\Sc}/I$.

  Then (see the proof of corollary~\ref{cor:CM-rings} below) $\bar{\Sc}/I$ is Cohen--Macaulay, with
  minimal primes $I_1$ and $I_N$, and it is easy to see that $e_{\tilde{R}}(\tilde{R},I_N) = 1$
  while $e_{\tilde{R}}(\tilde{R},I_1) = 2$.

  Let $\rho_1 : G_F \rarrow \GL_2(\Oc)$ be diagonal such that the eigenvalues of $\rho_1(\sigma)$ are
  $\zeta_1$ and $\zeta_2$, and the eigenvalues of $\rho_1(\phi)$ are distinct modulo $\varpi$.  Let
  $\rho_N : G_F \rarrow \GL_2(\Oc)$ send $\sigma \mapsto \twomat{\zeta_1}{1}{0}{\zeta_2}$ and
  $\phi \mapsto \twomat{1}{0}{0}{1}$.  Let $\mf_1$ and $\mf_N$ be the corresponding maximal ideals
  of $\bar{\Ac}(\tau)$.  Then $I_1 \subset \mf_1$, $I_N \not \subset \mf_1$,
  $I_1 \not \subset \mf_N$ and $I_N \subset \mf_N$.  By \cite{shotton}~Proposition 5.3, which
  remains valid when $p = 2$, $R^{\square}_{\rhobar_1}(\tau)$ is formally smooth
  over \[\frac{\Oc[[X-1]]}{(X - \zeta_1)(X - \zeta_2)}.\]  Therefore
  $R^{\square}_{\rhobar_1}(\tau)\otimes k$
    has a unique minimal prime $\qf$ and its multiplicity is 2.  By
    lemmas~\ref{lem:type-compl} and~\ref{lem:mult-completion}, $f$ is an isomorphism.
 \end{proof}

\begin{cor} (of propositions \ref{prop:induced}, \ref{prop:unip} and \ref{prop:PS}) \label{cor:CM-rings} For
  $\tau = \tau_\xi$, $\tau_\zeta$, or $\tau_{\zeta_1, \zeta_2}$, $\Ac(\tau)$ is Cohen--Macaulay.
\end{cor}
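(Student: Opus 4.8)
The plan is to read the Cohen--Macaulay property off the explicit presentations of $\bar\Ac(\tau):=\Ac(\tau)\otimes_\Oc k\cong\bar\Sc/I(\tau)$ produced in Propositions~\ref{prop:induced}, \ref{prop:unip} and \ref{prop:PS}, and then descend it to $\Ac(\tau)$ using $\Oc$-flatness. Recall that $\bar\Sc$ is a localization of the polynomial ring $k[A,B,C,D,P,Q,R,T]$, hence regular of dimension $8$, while $\dim\bar\Ac(\tau)=4$ since $\Xc(\tau)$ is $\Oc$-flat of relative dimension $4$ by \cite{shotton2}. For $\tau=\tau_\xi$ this already finishes matters: by Proposition~\ref{prop:induced}, $I(\tau)=I_N$ is generated by the four elements $A^2+BC$, $CQ+BR$, $T$, $A+D$ and has height $4$, so $\bar\Sc/I_N$ is a complete intersection in a regular ring, in particular Cohen--Macaulay (this is also visible from the explicit description in the proof of Lemma~\ref{lem:primes}).

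For $\tau=\tau_\zeta$ and $\tau=\tau_{\zeta_1,\zeta_2}$ the idea is to glue $\bar\Sc/I(\tau)$ out of Cohen--Macaulay pieces supported on the two components $\bar\Xc_1$ and $\bar\Xc_N$. In all three cases one checks readily that $I(\tau)+(T)=I_N$, so (using that $\bar\Sc$ is a domain and $T$ a nonzerodivisor) there is a short exact sequence of $\bar\Sc$-modules
$$0\longrightarrow \bar\Sc/\bigl(I(\tau):T\bigr)\xrightarrow{\;\cdot T\;} \bar\Sc/I(\tau)\longrightarrow \bar\Sc/I_N\longrightarrow 0,$$
whose right-hand term is Cohen--Macaulay of dimension $4$ by the $\tau_\xi$ case. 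It then remains to compute the colon ideal $I(\tau):T$ and to verify that the left-hand term is also Cohen--Macaulay of dimension $4$; granting this, the depth inequality $\mathrm{depth}\,\bar\Sc/I(\tau)\ge\min(4,4)=4=\dim\bar\Sc/I(\tau)$ forces $\bar\Ac(\tau)$ to be Cohen--Macaulay. For $\tau=\tau_\zeta$, where $I(\tau)=I_1\cap I_N$ by Proposition~\ref{prop:unip}, one gets $I(\tau):T=I_1=(A,B,C,D)$, so the left-hand term is the regular ring $\bar\Sc/I_1$. For $\tau=\tau_{\zeta_1,\zeta_2}$, where $I(\tau)=(A+D,BT,CT,CQ+BR,A^2+BC)$, a short computation (reducing modulo the regular sequence $A+D,B,C$ and using that $A$ and $T$ then become independent variables) gives $I(\tau):T=(A+D,B,C,A^2)$, so the left-hand term is the hypersurface $\bar\Sc/(A+D,B,C,A^2)$, i.e.\ a regular local ring of dimension $5$ modulo the nonzero element $A^2$, again Cohen--Macaulay of dimension $4$.

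Finally, $\Ac(\tau)$ is $\Oc$-flat (it is $\Ac$ modulo an intersection of minimal primes, and $\Ac$ is $\Oc$-flat), so $\varpi$ is a nonzerodivisor on $\Ac(\tau)$; at any prime $\mathfrak q\ni\varpi$ the quotient $\Ac(\tau)_{\mathfrak q}/\varpi=\bar\Ac(\tau)_{\bar{\mathfrak q}}$ is Cohen--Macaulay and $\varpi$ is a nonzerodivisor in its maximal ideal, whence $\Ac(\tau)_{\mathfrak q}$ is Cohen--Macaulay; for primes avoiding $\varpi$ the same complete-intersection (for $\tau_\xi$) and gluing (for $\tau_\zeta$, $\tau_{\zeta_1,\zeta_2}$) computations run verbatim over $\Oc$ starting from the ideals $J$ exhibited in the proofs of the three propositions. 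In particular $\Ac(\tau)$ is Cohen--Macaulay at every maximal ideal $\mf$ of the kind occurring in Lemma~\ref{lem:type-compl} — such $\mf$ contains $\varpi$ — so the completion $R^\square_{\rhobar}(\tau)=\Ac(\tau)^\wedge_{\mf}$ is Cohen--Macaulay, which is what is needed for Theorem~\ref{thm:CM}.

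The main obstacle is the type $\tau_{\zeta_1,\zeta_2}$: there $I(\tau)$ is an almost complete intersection (grade $4$ but five generators) and $\bar\Ac(\tau)$ is non-reduced, so Cohen--Macaulayness is not formal; the real content is pinning down the $I_1$-primary component of $I(\tau)$ precisely enough to evaluate $I(\tau):T$ (equivalently, exhibiting the length-$4$ free resolution of $\bar\Sc/I(\tau)$ over $\bar\Sc$). Everything else is regular-sequence bookkeeping inside an explicit polynomial ring.
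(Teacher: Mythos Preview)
Your argument is correct and close in spirit to the paper's, but organises the short exact sequences differently. For $\tau_\xi$ both proofs coincide (complete intersection). For $\tau_\zeta$ the paper appeals to Eisenbud's gluing criterion (\cite[Ex.~18.13]{eis}): since $\bar\Sc/I_1$ and $\bar\Sc/I_N$ are Cohen--Macaulay of dimension $4$ and $\bar\Sc/(I_1+I_N)$ is regular of dimension $3$, the ring $\bar\Sc/(I_1\cap I_N)$ is Cohen--Macaulay. For $\tau_{\zeta_1,\zeta_2}$ the paper then bootstraps from this, using multiplication by $AT$ (rather than $T$) to fit $\bar\Sc/I(\tau)$ into an exact sequence with kernel $\bar\Sc/I_1$ and cokernel $\bar\Sc/(I_1\cap I_N)$; injectivity of the first map is argued via multiplicities rather than by computing a colon ideal. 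Your approach is more uniform: the single multiplication-by-$T$ sequence handles both non-complete-intersection cases, always with cokernel $\bar\Sc/I_N$, and the kernel is read off from $I(\tau):T$, which you compute directly as $I_1$ (for $\tau_\zeta$) and $(A+D,B,C,A^2)$ (for $\tau_{\zeta_1,\zeta_2}$). This avoids invoking the $\tau_\zeta$ case when treating $\tau_{\zeta_1,\zeta_2}$, at the modest cost of that one colon-ideal computation. Both routes rest on the same depth inequality for extensions of maximal Cohen--Macaulay modules (the paper cites \cite[Prop.~1.3]{yos}). Your closing remark that only maximal ideals $\mf\ni\varpi$ matter for Theorem~\ref{thm:CM} is well taken and makes the passage about primes avoiding $\varpi$ superfluous; the paper likewise contents itself with the reduction to $\bar\Ac(\tau)$ via $\varpi$-regularity.
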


\begin{proof} Since $\varpi$ is a regular element of $\Ac(\tau)$, it suffices to prove that
  $\bar{\Ac}(\tau)$ is Cohen--Macaulay.  This can easily be checked in magma; we sketch an
  alternative proof by hand.  If $\tau = \tau_\xi$, then by proposition~\ref{prop:induced},
  $I(\tau) = I_N$.  But $\bar{\Sc}/I_N$ is a complete intersection ring of dimension 4, and
  therefore is Cohen--Macaulay.  If $\tau = \tau_\zeta$, then by proposition~\ref{prop:unip},
  $I(\tau_\xi) = I_1 \cap I_N$.  Now, $\bar{\Sc}/I_1$ and $\bar{\Sc}/I_N$ are Cohen--Macaulay of
  dimension 4 (the latter by the previous case), while $\bar{\Sc}/(I_1 + I_N)$ is regular, and so
  Cohen--Macaulay, of dimension 3.  By exercise~18.13 of
  \cite{eis}, $\bar{\Sc}/(I_1 \cap I_N)$ is also Cohen--Macaulay.
  Finally, if $\tau = \tau_{\zeta_1,\zeta_2}$ then by proposition~\ref{prop:PS}, 
  $I(\tau) = (A + D, A^2 + BC, BR + CQ, BT, CT)$.  Let $I = I(\tau)$.  Since
  $I + (AT) = I_1 \cap I_N$ and $AT\cdot I_1 = 0$, there is an exact sequence of $\bar{\Sc}/I$-modules
  \[\bar{\Sc}/I_1 \overset{AT}{\longrightarrow} \bar{\Sc}/I \longrightarrow \bar{\Sc}/(I_1 \cap I_N)\rarrow 0.\] The first
  map must be injective, since $I_1$ is prime and
  $e_{\bar{\Sc}/I}(\bar{\Sc}/I,I_1) = 2 > 1 = e_{\bar{\Sc}/I}(\bar{\Sc}/(I_1 \cap I_N), I_1)$. Since
  we have shown that $\bar{\Sc}/I_1$ and $\bar{\Sc}/(I_1 \cap I_N)$ are maximal Cohen--Macaulay
  modules over $\bar{\Sc}/I$, so is $\bar{\Sc}/I$ (by \cite{yos} Proposition~1.3).
  \end{proof}

Since $R^{\square}_{\rhobar}(\tau)$ is a completion of $\Ac(\tau)$ by lemma~\ref{lem:type-compl},
and a completion of a Cohen--Macaulay ring is Cohen--Macaulay (by \cite[Tag 07NX]{stacks-project}), we
obtain Theorem~\ref{thm:CM}.
\medskip
 
\textbf{Acknowledgements.} YH was partially supported by National Natural Science Foundation of China Grants
 11688101;  China's Recruitement Program of Global Experts, National Center for  Mathematics and Interdisciplinary Sciences and Hua Loo-Keng Center for Mathematical Sciences of Chinese Academy of Sciences.  VP was partially supported by SFB/TR45 of DFG. The project started when YH visited VP in 2013 supported by SFB/TR45 and he would like to thank the University Duisburg-Essen for the invitation and the hospitality. The authors  would like to thank Jack Shotton for  the appendix to the paper, as well as Toby Gee, James Newton, Shu Sasaki and Jack Thorne for their comments.   We also thank the anonymous referee for their careful reading of the paper and pertinent comments.



\bigskip

\vspace{\baselineskip}

\noindent Morningside Center of Mathematics, Academy of Mathematics and Systems Science,
 Chinese Academy of Sciences, University of the Chinese Academy of Sciences
Beijing, 100190,
China\\
{\it E-mail:} {\ttfamily yhu@amss.ac.cn}\\

\noindent Fakult\"at f\"ur Mathematik, Universit\"at Duisburg-Essen, Thea-Leymann-Str. 9, 45127 Essen, Germany\\
{\it E-mail:} {\ttfamily paskunas@uni-due.de} \\

\noindent Department of Mathematics, 
University of Chicago, 
5734 S University Avenue, 
Chicago, 
IL 60615, 
USA\\
{\it E-mail:} {\ttfamily jshotton@uchicago.edu}
\end{document}